\numberwithin{equation}{subsection} %%% ajoute une numérotation des équations avec les sections %%%
\newtheorem{thm}{Theorem}[section]
\newtheorem{prop}[thm]{Proposition}
\newtheorem{pdef}[thm]{Proposition-Definition}%%% proposition definition%%%
\newtheorem{lem}[thm]{Lemma}
\newtheorem{cor}[thm]{Corollary}
\newtheoremstyle{bidule}% name
{3pt}% Space above
{3pt}% Space below
{}% Body font
{}% Indent amount
{\scshape}% Theorem head font
{.}% Punctuation after theorem head
{.5em}% Space after theorem head
{}% Theorem head spec (can be left empty, meaning `normal')
\newtheorem{df}[thm]{Definition}
\theoremstyle{definition}
\newtheorem{rmk}[thm]{Remark}
\newtheorem{ex}[thm]{Example}
\newtheorem*{defsans}{Definition} %%% normal mode %%%%
\newtheorem*{note}{Note}
\newtheorem*{warn}{Warning}
\newtheorem*{claim}{Claim}
\newtheorem{nota}[thm]{Notation}
\newcommand{\E}{\mathscr{E}}
\newcommand{\Ev}{\tx{Ev}}
\newcommand{\C}{\mathcal{C}}
\newcommand{\Ub}{\mathcal{U}}%%% Ub comme Oublie%%% forgetful functor%%%%%
\newcommand{\Ca}{\mathcal{C}}
\newcommand{\F}{\mathcal{F}}
\newcommand{\K}{\mathcal{K}}%%%ù K %% genericl locally presentable category %%%%
\newcommand{\Ar}{\text{Arr}}
\newcommand{\D}{\mathcal{D}}
\newcommand{\Ba}{\mathcal{B}}
\newcommand{\Ga}{\mathcal{G}}
\newcommand{\A}{\mathscr{A}}
\newcommand{\B}{\mathscr{B}}
\newcommand{\M}{\mathscr{M}}
\newcommand{\V}{\mathbb{V}}
\newcommand{\Je}{\mathbf{J}_{easy}} %%acyclic cofibrations%%%
\newcommand{\Ie}{\mathbf{I}_{easy}} %%acyclic cofibrations%%%
\newcommand{\Ja}{\mathbf{J}} %%acyclic cofibrations%%%
\newcommand{\J}{\mathcal{J}} %% index category for limit and colimits%%%
\newcommand{\T}{\mathcal{T}}
\newcommand{\Ea}{\mathcal{E}} %%%
\newcommand{\W}{\mathscr{W}}
\newcommand{\Fa}{\mathcal{F}}
\newcommand{\I}{\mathbf{I}}
\newcommand{\Un}{\mathbb{I}} %%%% unity in mset
\newcommand{\G}{\mathcal{G}}
\newcommand{\Za}{\mathcal{Z}}
\newcommand{\R}{\mathbb{R}}
\renewcommand{\P}{\mathscr{P}}
\renewcommand{\S}{\mathcal{S}}
\newcommand{\Fb}{\mathbf{F}}%%%%%% Free functor 
\newcommand{\Sim}{\mathscr{S}}
\renewcommand{\to}{\longrightarrow}
\newcommand{\ol}{\overline}
\newcommand{\ul}{\underline}
\renewcommand{\bf}{\mathbf}
\newcommand{\U}{\mathbb{U}}
\newcommand{\Ob}{\text{Ob}}% set of objects  
\newcommand{\n}{\textbf{n}} %object of Delta
\newcommand{\m}{\textbf{m}} % object of Delta
\renewcommand{\k}{\textbf{k}} % object of Delta
\newcommand{\0}{\textbf{0}} % 0 of Delta
\renewcommand{\1}{\textbf{1}} %  1 of Delta
\renewcommand{\2}{\textbf{2}} %  2 of Delta
\newcommand{\tx}{\text}
\newcommand{\tld}{\widetilde}
\renewcommand{\to}{\longrightarrow}
\DeclareMathOperator\Id{Id}
\DeclareMathOperator\Hom{Hom}
\DeclareMathOperator\Homco{\mathcal{H}om}
\DeclareMathOperator\rhom{\R\mathcal{H}om} % hom dérivé
\DeclareMathOperator\HOM{\ul{Hom}}
\DeclareMathOperator\Set{\textbf{Set}} % Category of Set
\DeclareMathOperator\Lax{Lax}%% Lax functors 
\DeclareMathOperator\Cat{\mathbf{Cat}}%% Cat
\DeclareMathOperator\colim{colim}%% colim
\DeclareMathOperator\Graph{\textbf{Graph}}%% Graph
\DeclareMathOperator\msx{\M_{\S}(\tx{$X$})}%% Pre-Coseg-Cat
\DeclareMathOperator\msxsu{\M_{\S}(\tx{$X$})_{\tx{$su$}}}%% Pre-Coseg-Cat
\DeclareMathOperator\msysu{\M_{\S}(\tx{$Y$})_{\tx{$su$}}}%% Pre-Coseg-Cat
\DeclareMathOperator\msxsueasy{\M_{\S}(\tx{$X$})_{\tx{$su,easy$}}}%% Pre-Coseg-Cat
\DeclareMathOperator\msxsuplus{\M_{\S}(\tx{$X$})_{\tx{$su$}}^{+}}%% Pre-Coseg-Cat 
\DeclareMathOperator\msxsuc{\M_{\S}(\tx{$X$})_{\tx{$su$}}^{\mathbf{c}}}
\DeclareMathOperator\msysuc{\M_{\S}(\tx{$Y$})_{\tx{$su$}}^{\mathbf{c}}}
\DeclareMathOperator\msysuplus{\M_{\S}(\tx{$Y$})_{\tx{$su$}}^{+}}%% Pre-Coseg-Cat
\DeclareMathOperator\msxpt{\M_{\S}(\tx{$X$})_{\star}}
\DeclareMathOperator\msxpteasy{\M_{\S}(\tx{$X$})_{\star,\tx{$easy$}}}%% Pre-Coseg-Cat
\DeclareMathOperator\msxaubb{\M_{\S}(\tx{$[X,s,B,\sigma]$})_{\star}}%% Pre-Coseg-Cat
\DeclareMathOperator\msxaaub{\M_{\S}(\tx{$[X,A,s,\sigma]$})_{\star}}%% Pre-Coseg-Cat
\DeclareMathOperator\msy{\M_{\S}(\tx{$Y$})}%% Pre-Coseg-Cat Y= set of object
\DeclareMathOperator\mset{\M_{\S}(\Set)}%% Pre-Coseg-Cat %\mset_{su}
\DeclareMathOperator\msetsu{\mset_{\tx{$su$}}}
\DeclareMathOperator\msetsuc{\mset_{\tx{$su$}}^{\bf{c}}}
\DeclareMathOperator\kset{\K_{\Set}}%% K-Set
\DeclareMathOperator\kx{\K_{\tx{$X$}}}%% K-X
\DeclareMathOperator\kxproj{\kx_{-proj}}%% K-X
\DeclareMathOperator\kxeasy{\kx_{-easy}}%% K-X
\DeclareMathOperator\msxeasy{\msx_{\tx{$easy$}}}%% K-X
\newcommand{\fstar}{f^{\star}}%% f étoile en haut
\newcommand{\gstar}{g^{\star}}%% g étoile
\newcommand{\fex}{{f_!}}%% f exclamtion
\DeclareMathOperator\Le{\mathcal{L}}%% 
\DeclareMathOperator\mcatx{\M\tx{-}\Cat(\tx{$X$})}%%
\DeclareMathOperator\smcatx{\frac{1}{2}\M\tx{-}\Cat(\tx{$X$})}%%  
\DeclareMathOperator\mgraphx{\M\tx{-}\Graph(\tx{$X$})}%% 
\DeclareMathOperator\os{\otimes_{\S}}%% 
\DeclareMathOperator\cof{\mathbf{cof}}%% cofibration 
\DeclareMathOperator\kb{\mathbf{K}} % % % 
\DeclareMathOperator\cn{\mathbf{[n]}} %%% crochet n 
\DeclareMathOperator\msn{\M_{\S}(\cn)}%% Pre-Coseg-Cat n
\DeclareMathOperator\sn{{\S}_{\cn}}%% Pre-Coseg-Cat n
\DeclareMathOperator\kn{\K_{\cn}}%% 
\DeclareMathOperator\op{\tx{op}}%% op tout court
\DeclareMathOperator\msxinj{\M_{\S}(\tx{$X$})_{\tx{inj}}} % % % msx with the injective model structure.
\DeclareMathOperator\msxproj{\M_{\S}(\tx{$X$})_{\tx{proj}}} % % % msx with the projective model structure.
\DeclareMathOperator\sx{\S_{\ol{X}}} % % % %  sx
\DeclareMathOperator\px{\P_{\ol{X}}} % % % %  sx
\DeclareMathOperator\sy{\S_{\ol{Y}}} % % % %  sx
\DeclareMathOperator\Ho{\mathbf{ho}} % % % %  homotopy category
\DeclareMathOperator\sxop{(\S_{\ol{X}})^{2\tx{-$op$}}} % % % %  all  morphisms  % \cite{DHKS}
\DeclareMathOperator\sxopn{(\S_{\ol{X}_{\leq \n}})^{2\tx{-$op$}}} % % % %  all  morphisms  % \cite{DHKS}
\DeclareMathOperator\sxopnp{(\S_{\ol{X}_{\leq \n+\1}})^{2\tx{-$op$}}} % % % %  all  morphisms  % \cite{DHKS}
\DeclareMathOperator\sxopun{(\S_{\ol{X}_{\leq \1}})^{2\tx{-$op$}}} 
\DeclareMathOperator\syop{(\S_{\ol{Y}})^{2\tx{-op}}} % % % %  all  morphisms  % \cite{DHKS}%
\DeclareMathOperator\Depiop{\Delta_{\tx{epi}}^{op}}
\DeclareMathOperator\degb{\textbf{deg}} % % % %  degree
\DeclareMathOperator\We{\mathbf{W}_{easy}}
\DeclareMathOperator\cb{\mathbf{c}}  
\DeclareMathOperator\laxlatch{\mathbf{Latch}_{lax}}  %% laxlatch 
\DeclareMathOperator\Depi{\Delta_{\tx{epi}}} %%
\DeclareMathOperator\sk{\mathbf{sk}} %%
\DeclareMathOperator\xdiez{\ol{X}<1} %%
\DeclareMathOperator\mcat{\M-\Cat} %%
\DeclareMathOperator\sxdop{(\S_{\ol{X}<1})^{2\tx{-op}}} 
\DeclareMathOperator\Rpt{\mathscr{R}_{\ast}} %
\DeclareMathOperator\dfunc{2\tx{-Func}}%% Lax functors 
\DeclareMathOperator\Finf{\tx{$F$}_{\infty}}%% F-infini limit ou colimit
\DeclareMathOperator\Qs{\mathcal{Q}}%% Q functor colimit
\DeclareMathOperator\linf{\tx{$l$}_{\infty}}%% F-infini limit ou colimit
\DeclareMathOperator\Idinf{\tx{$I$}_{\infty}}%% F-infini limit ou colimit
\DeclareMathOperator\zn{\Za_{\leq \n}}%% F-infini limit ou 
\DeclareMathOperator\znplus{\Za_{\leq \n+ \1}}%% F-infini limit ou 
\DeclareMathOperator\hb{\mathbf{h}}
\DeclareMathOperator\armeasy{\M^{[1]}_{\tx{$easy$}}}
\DeclareMathOperator\av{\alpha_{\downarrow_{\Id_V}}}%% F-infini limit ou colimit
\DeclareMathOperator\elalpha{\ell_{\alpha}}%% F-infini 
\DeclareMathOperator\xialpha{\xi_{\alpha}}%% F-infini 
\DeclareMathOperator\coscan{\Sim_{can}} 
\DeclareMathOperator\End{\ul{End}} 
\title{Pursuing Lax Diagrams and Enrichment}
\author{Hugo V. Bacard 
%\thanks{This research is supported in part by the Agence Nationale de la Recherche
%grant ANR-09-BLAN-0151-02 (HODAG)
 %}
}
 \affil{Western University}
\date{}
\begin{document}
\maketitle
\begin{abstract}
This paper is part of a project that aims to give a homotopy cousin of Kelly's treatment of enriched category theory. After introducing unital co-Segal $\M$-categories, we establish the unital version of a previous theorem that was proven for the nonunital ones; but this was done under strong hypothesis. We've removed here these assumptions and try to keep the hypothesis on $\M$ as minimal as possible. Our main result provides a sort of Bousfield localization of a model category that is not known to be left proper. In this model structure, every co-Segal $\M$-category is `canonically' equivalent to a strict $\M$-category with the same set of objects. We revisit some constructions of classical enriched category theory to set up the necessary material for our future applications.

%This work is the second piece of a project that aims to develop the category theory of co-Segal $\M$-categories. After introducing unital co-Segal categories, we establish a previous theorem that was proven for the nonunital ones, but this was done under strong hypothesis. We removed these assumptions and try to keep the hypothesis on $\M$ as minimal as possible. We revisit some constructions of classical enriched category theory to set up the necessary material for our upcoming applications. 
\end{abstract}
\setcounter{tocdepth}{1}
\tableofcontents 

\section{Introduction}
The theory of \emph{weakly enriched categories} is a subject of growing interest. The motivations for studying these categories are various and come from different areas of mathematics.  Behind the idea of a weakly enriched category, there is a mixture of two theories; namely \emph{Enriched category theory} and genuine \emph{Higher category theory}; the later being part of the former.

 Enriched category theory is a complex subject on its own; and its development requires different approaches than classical category theory. Some \emph{simple} constructions such as \emph{(co)limits, Kan extensions, comma categories} are harder to define if not impossible to get (see \cite{Ke}).

On the other side Higher category theory is famous for not having a canonical window of study; different formalisms exist and depending of what is intended to be done, one might prefer one approach over another one.\\ 

The theory that is being developed here used $2$-category theory and model categories; and the main idea behind is the notion of co-Segal enriched category \cite{COSEC1,These_Hugo,Bacard2013}. Another very interesting approach that uses the theory of \emph{quasicategories} and \emph{$\infty$-operad} has been recently considered by Haugseng and Gepner \cite{Gep_Hausg}. An obvious question is to determine how one goes from one theory to another, but this is difficult in general. However if $\M$ is either the category of simplicial sets or the category of chain complexes; it seems much easier to do it. \\

When we've started the theory of \emph{co-Segal enriched categories} over a symmetric monoidal model category $\M=(\ul{M},\otimes, I)$,  we had to assume strong hypothesis such as `all objects of $\M$ are cofibrant'. Our goal in this paper and the upcoming ones, is to work with hypothesis that are as minimal as possible. But the price for this, is a long technical theory.  It seems though  that if we want a \emph{Dwyer-Kan model structure}, it will be hard to avoid notions like \emph{interval objects}. We were unable to get this model structure with our actual restrictions for not using many ingredients.\\

The major modification is the definition of weak equivalences we adopt for co-Segal precategories. To understand this let's recall very briefly the structure of a co-Segal precategory. A co-Segal precategory with set of objects $X$, is a lax functor of $2$-categories, $$\C: \sxop \to \M,$$ where $\sx$ is a strict $2$-category with $X$ as objects, and whose $1$-morphisms are finite sequences $(A_0,...,A_n)$ of elements of $X$. It's a \emph{decorated version} of $(\Delta_{epi}^{+},+,\0)$. The data for $\C$ include a family of diagrams $$\C_{AB}:\sx(A,B)^{op} \to \M,$$ 
together with \emph{laxity maps} 
$$ \varphi: \C(A,\cdots,B) \otimes   \C(B,\cdots,C) \to  \C(A,\cdots,B, \cdots,C),$$
that are subjected to a coherence axiom for lax functors.\\

For example we have the following configuration that displays the \emph{algebraic part} and the \emph{simplicial part} of $\C$.      
\[
\xy
(-15,0)*+{\C(A,B) \otimes \C(B,C)}="X";
(30,0)*+{\C(A,B,C)}="Y";
(30,18)*+{\C(A,C)}="E";
{\ar@{->}^-{\varphi}"X";"Y"};
{\ar@{->}^-{}_{}"E";"Y"};
%{\ar@{.>}^-{}"X";"E"};
\endxy
\]

The algebraic part is the laxity map $\varphi:\C(A,B) \otimes \C(B,C)\to \C(A,B,C)$; and the simplicial part is the vertical map $\C(A,C) \to \C(A,B,C)$ which is a piece of the diagram $\C_{AC}:\sx(A,C)^{op} \to \M$.  Being a co-Segal category is demanding that each component $$\C_{AC}:\sx(A,C)^{op} \to \M,$$ lands in the subcategory of weak equivalences in $\M$. 

A key observation is that if every component $\C_{AB}$ is a constant diagram, then we get a strict $\M$-category and this is precisely how we get the inclusion
$$ \tx{$\M$-categories} \hookrightarrow \tx{co-Segal $\M$-categories}.$$

For a general co-Segal category, if we post-compose by the  localization functor $$L: \M \to \Ho(\M),$$ which is a monoidal functor, we find that $L\circ\C$ is isomorphic to a locally  constant lax functor diagram, that is a strict category enriched over $\Ho(\M)$.\\ 

In  \cite{COSEC1} we defined a map $(\sigma,f): \C \to \D$ as a weak equivalence of precategories if each natural transformation 
$\sigma_{AB}: \C_{AB} \to \D_{fA fB}$ is a level-wise weak equivalence in $\M$. But this turns out to be difficult to work with. For example it's not known whether or not we have left properness for precategories if $\M$ is left proper; even when we fix the set of objects. In fact even for strict $\M$-categories left properness is not easy to establish without some assumptions.\\ 

The new definition of weak equivalences we adopt here is:
\begin{defsans}
Say that a map $(\sigma,f): \C \to \D$ is an \emph{easy weak equivalence} if for all $(A,B)$ the component
$\C(A,B) \to \D(fA,fB)$ is a weak equivalence. 
\end{defsans}
This means that we are only imposing a weak equivalence between the corresponding \emph{initial entries} of each diagram. The reason we call them initial entries is because in each category $\sx(A,B)^{op}$, the $1$-morphism corresponding to $(A,B)$ is the initial object, just like $\1$ is initial in $(\Delta_{epi}^{+})^{op}$ (without $\0$).\\

 Now observe that if $\C$ and $\D$ are co-Segal categories, then having  a weak equivalence at one entry is equivalent, by $3$-for-$2$, to having a weak equivalence everywhere. That is to say,  for co-Segal categories, an easy weak equivalence is the same thing as a level-wise weak equivalence. In particular for strict $\M$-categories, an easy weak equivalence is the same thing as a local weak equivalence. This means that our new notion of weak equivalence is still correct for co-Segal categories.\\

The first definition of co-Segal categories that appears in \cite{These_Hugo, COSEC1} is in fact a definition of a non unital weak $\M$-category. We correct this with our notion of unital precategories. 
The main result of this paper is the existence of a model structure on precategories such that fibrant objects are unital co-Segal categories: this is Theorem \ref{fibred_localized}. This theorem appears to be an example of an \emph{implicit Bousfield localization}, in the sense that we are able to produce a model category that behaves like the Bousfield localization of a model category whose Bousfield localization is not guaranteed to exist by classical methods. In fact our previous model structure with level-wise weak equivalence is not left proper (or at least not known to be). \\
% localizes a set of map but there is a model structure with the level-wise weak equivalence that cannot be localized as we cannot guarantee left properness. But we are able to immediately produce this localization\\

This theorem has a better improvement but requires an entire generalization of precategories themselves. This is due to the fact that when we vary the set of objects it's not guaranteed that we have a global \emph{left properness} with the DK-equivalences.\\

Another important result is Theorem \ref{strict-thm} that says that in the new model structure, every co-Segal category is equivalent to a strict category with the same set of objects. This result requires a careful analysis of the structures and is a bit technical.

\paragraph{Plan of the paper} The paper is organized in $5$ sections.\\

In Section \ref{sec-unital}, we define unital precategories and study their properties.  We consider a \emph{unitalization functor} and establish the necessary results that will later ensure that this functor preserves the homotopy type. There is a \emph{hierarchy} in the definition being unital, each of them is \emph{algebraic}. By algebraic we mean that the unity is subject to satisfy some equations as if it was a strict category. There is a much flexible notion of unital category that was first given in \cite{Bacard2013} for example; but this one is not suitable to have a model structure.\\

It's in Section \ref{sec-easy-model} that we establish the first model structure on precategories that we call the \emph{easy model structure}. We also outline a class of precategories that are called \emph{$2$-constant} (Definition \ref{def-deux-const}).  The subcategory of $2$-constant co-Segal categories are the most natural examples of co-Segal categories.\\

We localize this model structure in Section \ref{sec-fibre-loc}; we call this \emph{fibred localization}. We localize the fibers of a Grothendieck bifibration where each fiber is a combinatorial model category which is left proper. What we would like is to have a Bousfield localization of the total category of such bifibration. We will address this issue separately in another work.

In Section \ref{sec-monoidal-coseg}, Section \ref{sec-intern-hom} and Section \ref{sec-distrib} we consider the corresponding notions of monoidal co-Segal category, natural transformation between co-Segal functors and the co-Segal category of functors. Finally we consider the notion of \emph{distributor} also called bimodules. These objects will be used in the upcoming papers.\\ 

In \cite{Bacard_cosdg} we leave the full generality and consider the specific case where $\M$ is the category of chain complexes. In fact the main motivation for all this was precisely the theory of linear co-Segal categories. Thanks to the existence of the model structure we are able to get `for free' classical notions such as \emph{dg-quotient, Serre functors, triangulated categories}, etc.  Notions like \emph{dg-nerve} can be easily adapted to our categories as it was done for $A_\infty$-categories (see \cite{Faonte_dg_nerve}). co-Segal dg-categories are supposed to play the same role as for $A_\infty$-categories and the constructions that exist for $A_\infty$-categories can be defined for co-Segal categories. However it seems difficult to define the \emph{product} of $A_\infty$-categories whereas there is a canonical product for co-Segal categories that is defined here. It's in \cite{Bacard_cosdg} that we will give the first applications of the material developed so far.

\begin{warn}
In this paper all the set theoretical size issues have been left aside \footnote{We can work with universes $\U \subsetneq \V \subsetneq  \cdots$ }. Some of the material provided here are well known facts and we make no claim of inventing or introducing them. Unless otherwise specified when we say `lax functor' we will mean the ones called \emph{normal lax functors} or \emph{normalized lax functor}. These are lax functors $\Fa$ such that the maps `$\Id \to \Fa(\Id)$' are identities and all the laxity maps $\Fa(\Id) \otimes \Fa(f) \to \Fa( \Id \otimes f)$ are natural isomorphisms.
\end{warn} 
\section{Unital co-Segal categories}\label{sec-unital}
\subsection{An important adjunction}
For a set $X$ we have a category $\msx$ of normal lax functors indexed by some strict $2$-category $\sxop$. Below we recall very briefly the structure of $\sx$, the reader can find details in  \cite{These_Hugo,COSEC1}. The objects of $\msx$ are called \emph{co-Segal precategories}.\\

Denote by $\mcatx$ and $\smcatx$, respectively, the category of $\M$-categories and semi-$\M$-categories (= without identities)  for a fixed set of object $X$. The following lemma is of great importance; it can be found in a general version in \cite{toward_strict}. As said in the introduction, co-Segal precategories that are locally constant are precisely semi-$\M$-categories. 
\begin{lem}
For a complete and cocomplete monoidal closed category $\M$, the inclusion:

$$ \smcatx \hookrightarrow \msx $$ 
has a left adjoint
$$|-|: \msx \to \smcatx.$$
\end{lem}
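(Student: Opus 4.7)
The plan is to construct $|-|$ pointwise by a colimit, using that semi-$\M$-categories sit inside $\msx$ as the locally constant lax functors (equivalently, those whose component diagrams $\C_{AB}:\sx(A,B)^{op}\to\M$ are constant). For $\C \in \msx$ I would set
$$|\C|(A,B) \; := \; \colim_{\sx(A,B)^{op}} \C_{AB},$$
which exists by cocompleteness of $\M$. Since each $\sx(A,B)^{op}$ has an initial object and is therefore connected, constant diagrams are sent to their value; in particular, if $\C$ already lies in $\smcatx$ then $|\C|(A,B)=\C(A,B)$, so the construction restricts to the identity on $\smcatx$. The coprojections assemble into a canonical unit $\eta_{\C}:\C \to |\C|$ in $\msx$, where $|\C|$ is reinserted as a constant-diagram lax functor.

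Next I would equip $|\C|$ with composition. Monoidal closedness forces $\otimes$ to preserve colimits in each variable, hence
$$|\C|(A,B) \otimes |\C|(B,C) \; \cong \; \colim_{(s,t)} \bigl( \C(s) \otimes \C(t) \bigr),$$
where $s,t$ range respectively over $\sx(A,B)^{op}$ and $\sx(B,C)^{op}$. The laxity maps $\varphi_{s,t}:\C(s)\otimes\C(t)\to\C(s\cdot t)$ followed by the coprojection $\C(s\cdot t)\to|\C|(A,C)$ form a cocone on this two-variable diagram, whose universal factorization is the desired composition
$$m_{A,B,C}: |\C|(A,B) \otimes |\C|(B,C) \longrightarrow |\C|(A,C).$$
Associativity of $m$ is forced by the coherence axiom of $\C$ as a lax functor: both triple composites are induced from the common map $\C(s)\otimes\C(t)\otimes\C(u)\to\C(s\cdot t\cdot u)$ upon passing to colimits.

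Finally, I would verify the adjunction by producing, for each $\D \in \smcatx$, a natural bijection
$$\smcatx(|\C|,\D) \; \cong \; \msx(\C,\D).$$
Since $\D$ is locally constant, a morphism $\C \to \D$ in $\msx$ amounts, at each pair $(A,B)$, to a cocone $\C_{AB} \Rightarrow \D(A,B)$ compatible with the laxity data of $\C$ (and with the composition of $\D$). By the universal property of the colimit, such cocones correspond bijectively to maps $|\C|(A,B) \to \D(A,B)$, and the laxity compatibility translates verbatim into compatibility with the composition $m$; that is, a semi-$\M$-functor $|\C|\to\D$. Naturality in $\C$ and $\D$ is then formal, and the triangle identities follow from the construction of $\eta_{\C}$ as the cocone structure map.

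The main technical obstacle is the construction of $m$ and the verification of its associativity; this requires care in interchanging $\otimes$ with a two-variable colimit and in matching the combinatorics of concatenation in $\sx$ with the lax coherence of $\C$. Once this is in place, the adjunction is essentially forced by the universal property of the colimit and the fact that objects of $\smcatx$ are exactly the constant-diagram objects of $\msx$.
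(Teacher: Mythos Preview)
Your proposal is correct and follows essentially the same approach as the paper's (sketch of) proof: both define $|\C|(A,B)$ as $\colim \C_{AB}$, invoke monoidal closedness so that $\otimes$ commutes with colimits to produce the composition, and read off the adjunction from the universal property of the colimit together with the identification of semi-$\M$-categories as locally constant lax functors. You have simply supplied more of the details (the construction of $m$, associativity, and the bijection on hom-sets) that the paper leaves implicit.
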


\begin{proof}[Sketch of proof]
The lemma can be proved by some ``abstract nonsense'' using the adjoint functor theorem but there is a direct proof.\\
If $\F: \sxop \to \M$ is a  (normal) lax functor, take the colimit of each component: $$\F_{AB}: \sx^{op}(A,B) \to \ul{M}.$$

As $\M$ is monoidal closed, colimits distribute over $\otimes$ and we get a semi-$\M$ category $|\F|$ where the hom-object is $|\F|(A,B):=\colim \F_{AB}$.
\end{proof}
We have a canonical map of precategories $\delta:\F \to |\F|$. 
\subsection{Pointed and unital precategories }
\subsubsection{Pointed precategories}
We will denote by $\emptyset$ the initial object of $\M=(\ul{M},\otimes, I)$. For a set $X$, we will denote by $I_X$ the $\M$-category defined as follows.

The set of objects of $I_X$ is $X$ and the hom-objects are $I_X(A,B)= I$ if $A=B$; and $I_X(A,B)=\emptyset$ if $A \neq B$. The composition in $I_X$ is the obvious one i.e, either $I \otimes I \cong I$ or $\emptyset \to I$. Usually we call $I_X$ the \emph{discrete category} associated to the set $X$.\\ 

We will denote again $I_X$ its image in $\msx$ by the inclusion functor $$\mcatx \hookrightarrow \msx.$$  
\begin{df}
A \emph{pointed co-Segal precategory} is an object of the under category $$I_X \downarrow \msx.$$
We will write for simplicity $\msxpt$ the category  $I_X \downarrow \msx$ and will denote by $$\Ub: \msxpt \to \msx $$ the forgetful functor.
\end{df}
\begin{prop}
The functor $\Ub$ has a left adjoint $\Rpt: \msx \to \msxpt$.
\end{prop}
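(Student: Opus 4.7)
The plan is to recognize this proposition as an instance of a general categorical fact: whenever a category $\Ca$ admits binary coproducts with a distinguished object $c$, the forgetful functor $c \downarrow \Ca \to \Ca$ has a left adjoint sending $x$ to the object $(c \to c \sqcup x)$ of the under-category, with structure map the canonical inclusion of the first summand. Specializing to $\Ca = \msx$ and $c = I_X$, I would set $\Rpt(\F) := (I_X \to I_X \sqcup \F)$, with functoriality induced by the functoriality of binary coproducts.

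The first step is to establish the existence of the coproduct $I_X \sqcup \F$ in $\msx$ for every $\F \in \msx$. Since $\M$ is complete, cocomplete, and monoidal closed, the category $\msx$ of normal lax functors $\sxop \to \M$ is cocomplete (a standard fact about lax-functor categories with cocomplete target). Alternatively, because $I_X$ is very simple, one can give a hands-on description of $I_X \sqcup \F$: away from the diagonal it coincides with $\F$, while on each diagonal component it freely adjoins a unit $I \to (I_X \sqcup \F)(A,A)$ to $\F(A,A)$, with laxity maps generated from those of $\F$ together with the coherence of the unit.

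Next I would verify the adjunction bijection $\Hom_{\msxpt}(\Rpt(\F),\,(u \colon I_X \to \G)) \cong \Hom_{\msx}(\F,\G)$, naturally in $\F$ and in $(u,\G)$. A morphism on the left is a map $\tau \colon I_X \sqcup \F \to \G$ in $\msx$ whose precomposition with the inclusion $I_X \to I_X \sqcup \F$ is $u$. By the universal property of coproducts, $\tau$ is uniquely determined by its two composites with the coproduct inclusions; the component along $I_X$ is forced to be $u$, leaving only the component along $\F$, which is an arbitrary map $\F \to \G$ in $\msx$. Naturality in both variables follows from the functoriality of the coproduct.

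The main obstacle is the construction of $I_X \sqcup \F$ as a normal lax functor. Because colimits of lax functors are not in general computed pointwise---the laxity maps must be compatible with the coproduct structure and with $\otimes$---some genuine work is required to check that the resulting data form a normal lax functor and that composition with the (trivial on the diagonal, absorbing off it) laxity data of $I_X$ yields the correct universal property. The explicit diagonal-versus-off-diagonal description above reduces this verification to clean bookkeeping, which suffices since we only ever take coproducts with $I_X$ and not with arbitrary precategories.
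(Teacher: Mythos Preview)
Your proposal is correct and follows exactly the same approach as the paper: the paper's proof consists of the single line ``Define $\Rpt(\F)$ to be the coproduct $I_X \coprod \F$ in $\msx$,'' relying implicitly on the cocompleteness of $\msx$ established earlier. Your additional discussion of the explicit form of this coproduct and the verification of the adjunction bijection is more detailed than what the paper provides, but the underlying idea is identical.
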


\begin{proof}
Define $\Rpt(\F)$ to be the the coproduct $I_X \coprod \F$  in $\msx$.  
\end{proof}
\begin{rmk}
By definition of $\Rpt$, for any pair of objects we have
$$\Rpt(\F)(A,B)= \F(A,B) \quad \tx{if  $A \neq B$}$$
and 
$$\Rpt(\F)(A,A)= \F(A,A) \coprod I \quad \tx{if $A=B$}.$$

Let $\sigma: \F \to \G$ be a morphism in $\msx$ such that $\sigma_{(A,B)}: \F(A,B) \to \G(A,B)$ is a weak equivalence in $\M$. Then by the above formulas, it's easy to see that the map $\Rpt(\sigma)_{(A,B)}: \Rpt(\F)(A,B) \to \Rpt(\G)(A,B) $ is also a weak equivalence if $\M$ satisfies one of the conditions listed below.
\begin{enumerate}
\item The unity $I$ is cofibrant and $\M$ is left proper.
\item For every weak equivalence $f:A \to B$ in $\M$ then  $f \coprod \Id_I: A \coprod I \to B \coprod I$ is also a weak equivalence.
\end{enumerate}
\end{rmk}

Since $\msxpt$ is a comma category, it inherits most of the properties that $\msx$ has. And clearly one has the following proposition.
\begin{prop}
\begin{enumerate}
\item If $\msx$ is complete (resp. cocomplete, locally presentable)  then so is $\msxpt$ respectively.
\item If $\msx$ carries a model structure then so does $\msxpt$ the under model structure.
\end{enumerate}
\end{prop}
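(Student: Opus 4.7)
The plan is to exploit the fact that both statements are instances of standard formal results about under/slice categories: all one must verify is that the ambient structures on $\msx$ transport through the forgetful functor $\Ub\colon \msxpt \to \msx$.

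For part (1), I would first dispatch completeness. Given a diagram $D\colon J\to \msxpt$, let $L=\lim_{\msx}\Ub D$; the family of structure maps $I_X\to \Ub D(j)$ is a cone over $\Ub D$ and so, by the universal property of $L$, factors through a unique map $I_X\to L$. This $L$, together with this canonical structure map, is the limit in $\msxpt$: projections to the $D(j)$ are morphisms in $\msxpt$ by construction, and any cone in $\msxpt$ factors uniquely through $L$ as a cone in $\msx$, the factorization automatically being a morphism under $I_X$. For cocompleteness I would describe $\colim_{\msxpt} D$ as a pushout in $\msx$: form $\colim_{\msx} \Ub D$, take the colimit $C_J$ of the constant diagram at $I_X$ (which exists as $\msx$ is cocomplete), and compute the pushout of $I_X \leftarrow C_J \to \colim_{\msx}\Ub D$; this pushout, equipped with the map from $I_X$ coming from the left leg, is the colimit in $\msxpt$. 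Local presentability then follows from the classical fact that coslice categories of locally presentable categories are locally presentable (the forgetful functor $\Ub$ is a right adjoint, since we exhibited $\Rpt$ as its left adjoint, and it preserves filtered colimits because they are created in $\msx$ by the pushout description above).

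For part (2), I would define the under model structure on $\msxpt$ by declaring that a morphism $\sigma$ is a weak equivalence (resp.\ cofibration, fibration) exactly when $\Ub\sigma$ is so in $\msx$. The verification of the model axioms is then immediate, essentially by transport. The 2-out-of-3 and retract axioms are inherited on the nose because $\Ub$ preserves and reflects the three classes. For lifting, given a square in $\msxpt$ with a trivial cofibration (or cofibration) on the left and a fibration (or trivial fibration) on the right, apply $\Ub$ to obtain a lifting problem in $\msx$ which admits a solution $h$; because the top map of the square is a morphism under $I_X$, the composite $I_X \to \Ub(\mathrm{source})\to \Ub(\mathrm{target})=h\circ\Ub(\text{top})$ automatically makes $h$ a morphism in $\msxpt$. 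For factorization, factor $\Ub\sigma$ in $\msx$ as $f\circ g$ through an intermediate $Y$; since the source of $\sigma$ lives under $I_X$, precomposing with its structure map equips $Y$ with a morphism from $I_X$, and then $g$ and $f$ become morphisms in $\msxpt$. Functoriality of the factorizations is inherited from the functorial factorizations in $\msx$.

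The main obstacle, if any, is purely bookkeeping: one must check at each step that the canonical structure maps from $I_X$ are compatible with the constructions (limits, colimits, lifts, factorizations). In every case this reduces to a one-line universal-property argument because $\Ub$ is faithful and the data under $I_X$ is uniquely determined by the forgetful image together with a single map from $I_X$. I expect no genuinely novel difficulty; the proposition is a particular case of the well-known principle that coslice categories inherit completeness, cocompleteness, local presentability, and model structures from the base.
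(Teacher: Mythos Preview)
Your proposal is correct and in the same spirit as the paper's treatment: both appeal to the general fact that coslice (under) categories inherit (co)completeness, local presentability, and model structures from the base. In fact the paper gives no proof at all for this proposition; it simply remarks that ``since $\msxpt$ is a comma category, it inherits most of the properties that $\msx$ has'' and states the result as clear, so your write-up actually supplies the standard details that the paper omits.
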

\subsubsection{Unital precategories}
Recall that for each $X$ the $2$-category $\sx$ is a sub-$2$-category of a $2$-category $\px$ that we are going to describe very briefly. The $2$-category $\px$ is  characterized by the fact that we have a functorial isomorphism of sets
$$ \Lax(\ol{X}, \B) \cong \dfunc(\px, \B),$$
for any $2$-category $\B$. This is a small version of an adjunction due to Bénabou (see \cite{Chiche} for details). There is a $2$-functor $\degb: \px \to (\Delta^+, +,0)$ which is locally a cofibred category. And $\sx$ is the pullback of that functor along the inclusion $(\Depi^+,+, 0) \hookrightarrow (\Delta^+, +,0)$.\\

We will use small letters $s$ to represent the $1$-morphisms of $\sx$ which are chains of elements of $X$ e.g $s=(A,B,C)$. If $s=(A,...,B)$ and $t=(A, ...., B)$ are two $1$-morphisms of $\sxop$, we will denote by $\sigma: s \to t$, a generic $2$-morphism in $\sxop$. The composition in $\sxop$ will be denoted by $\otimes$. This composition is just the concatenation chains, which is a generalization of the ordinal addition in $\Delta^+$.

\begin{df}\label{unital-precat}
\begin{enumerate}
\item A precategory $\F$ is said to be \emph{relatively unital} if for every $A$, there is a map $I \to \F(A,A)$ such that the composite $I \to \F(A,A) \to |\F|(A,A)$ turns the semi-category $|\F|$ to a category (with identities).
\item  A precategory $\F$ is said to be \emph{strongly unital} if for every $A \in X$ there is a map $I_A: I \to \F(A,A)$ such that for \ul{any} $s= (A,...,B)$ and \ul{any} $\sigma: s \to (A,A) \otimes s$ (resp. any $\sigma: s \to s\otimes (B,B)$), the following diagrams commute.
\[
\xy
(0,18)*+{I \otimes \F(s)}="W";
(0,0)*+{\F(A,A) \otimes \F(s) }="X";
(40,0)*+{\F[(A,A) \otimes s]}="Y";
(40,18)*+{\F(s)}="E";
{\ar@{->}^-{\varphi}"X";"Y"};
{\ar@{->}^-{I_A \otimes \Id}"W";"X"};
{\ar@{->}^-{\cong}"W";"E"};
{\ar@{->}^-{\F(\sigma)}"E";"Y"};
\endxy
\xy
(0,18)*+{\F(s) \otimes I}="W";
(0,0)*+{\F(s)  \otimes  \F(B,B)}="X";
(40,0)*+{\F[s\otimes (B,B)]}="Y";
(40,18)*+{\F(s)}="E";
{\ar@{->}^-{\varphi}"X";"Y"};
{\ar@{->}^-{\Id \otimes I_B}"W";"X"};
{\ar@{->}^-{\cong}"W";"E"};
{\ar@{->}^-{\F(\sigma)}"E";"Y"};
\endxy
\]
\item Similarly we will say that \emph{$\F$ is $n$-strongly unital} if the above diagrams commutes only for $s$ of length $n$.
\end{enumerate}

A morphism $\sigma: \F \to \G$ of strongly (resp. relative)  unital  precategories is a morphism in $\msx$ that takes unity to unity.\\

We will denote by  $\msxsu$ the category of strongly unital precategories.

\end{df}

The following remark is crucial for the rest of the paper. 
\begin{rmk}
It's important to observe that being strongly unital doesn't impose any constraint on the endomorphism-object $\F(A,A)$ nor on any $\F(A,B)$ because they don't receive algebraic data. Indeed, there is no laxity map going to any $\F(A,B)$.

This is important for us because, as we shall see below, there is a \emph{unitalization process} that takes a precategory to a strongly unital one. And this process won't change the value of $\F(A,A)$ nor of $\F(A,B)$! In fact the very basic idea of imposing a single unity condition is to take the coequalizer of these maps:
$$I \otimes \F(s) \to \F(A,A) \otimes \F(s) \xrightarrow{\varphi_j} \F[(A,A) \otimes s];$$ 
$$I \otimes \F(s) \to \F(s) \xrightarrow{\F(\sigma)} \F[(A,A) \otimes s].$$ 

This will change $\F[(A,A) \otimes s]$ but not $\F(A,A)$ nor $\F(A,B)$. 

\begin{note}
For the rest of the paper, we will give most of our results on $\msxsu$, but they hold also for the other categories.
\end{note}
\end{rmk}
\subsection{Unitalization of precategories}
We assume that $\M=(\ul{M},\otimes,I)$ is locally presentable. This is only to make the proof easier and shorter. Under this assumption we proved in \cite{COSEC1}, that $\msx$ is also locally presentable. 

\begin{prop}\label{limit-msxsu}
For a complete and cocomplete monoidal category $\M$ the following hold.
\begin{enumerate}
\item The category $\msxsu$ is complete.
\item Let   $F: \J \to \msxsu$ be a diagram such that the colimit $F_\infty$ exists in $\msx$. Assume that  for each $1$-morphism $s \in \sx$, the object $F_\infty(s)$ together with the  canonical maps attached to $F_\infty$ is a colimit of the diagram $\Ev_s \circ F: \J \to \M$.

Then $F_\infty$ lifts to an object of $\msxsu$ and is the colimit in $\msxsu$ of the diagram $F$.
\item The category $\msxsu$ has coequalizers of reflexive pairs and they are preserved by the forgetful functor.
\item The category $\msxsu$ and is closed under directed colimits. and they are preserved by the forgetful functor. 
\end{enumerate}
\end{prop}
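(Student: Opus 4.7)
The strategy exploits the preceding remark: strongly unital structure on $\F$ consists of unit maps $I_A : I \to \F(A,A)$ plus the two coherence equations of Definition \ref{unital-precat}, with no $\M$-theoretic constraint on the values $\F(s)$ themselves. So $\Ub : \msxsu \to \msx$ is an algebraic forgetful functor over pre-existing data, and we can produce (co)limits in $\msxsu$ by lifting (co)limits along $\Ub$ whenever the unit data and its equations transfer by universal properties.

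For (1), limits in $\msx$ are level-wise: $F_\infty(s) = \lim_i F(i)(s)$, with the laxity map of $F_\infty$ induced on the target of the limit by universality (no closedness hypothesis is needed here). For $F : \J \to \msxsu$, the compatible cone $\{ I_A^{F(i)} : I \to F(i)(A,A)\}_i$ --- compatibility comes from morphisms of $\msxsu$ preserving unity --- induces a unique $I_A : I \to F_\infty(A,A)$. The two strongly unital diagrams for $F_\infty$ follow from their analogues for each $F(i)$, because equality of maps into a limit is checked after projecting to each component. Universality in $\msxsu$ is immediate: a cone of $\msxsu$-morphisms induces a unique $\msx$-morphism to $F_\infty$, and this morphism automatically preserves $I_A$ by the characterization of $I_A$ via its projections.

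For (2), the hypothesis gives $F_\infty(s) = \colim_i F(i)(s)$ with coprojections $\iota_i^s$. Set $I_A := \iota_i^{(A,A)} \circ I_A^{F(i)}$; this is independent of $i$ along the zigzags of $\J$, since unit-preservation together with the cocone relations yield $\iota_j^{(A,A)} \circ I_A^{F(j)} = \iota_i^{(A,A)} \circ I_A^{F(i)}$ for every $\J$-morphism $i \to j$. To verify the two strongly unital diagrams for $F_\infty$, use that $I \otimes F_\infty(s) \cong \colim_i (I \otimes F(i)(s))$, a consequence of $I \otimes -$ preserving colimits in the monoidal closed category $\M$. Equality of two parallel maps out of this colimit is then tested after precomposition with each $\Id \otimes \iota_i^s$. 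Because $F(i) \to F_\infty$ is a morphism in $\msx$ and so preserves laxity, and because $I_A$ factors through $\iota_i^{(A,A)}$, both sides of each diagram restrict, on the $i$-th piece, to the two sides of the corresponding diagram for $F(i)$, which coincide by hypothesis. Universality in $\msxsu$ then follows from universality in $\msx$ together with unit-preservation.

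For (3) and (4), it suffices by (2) to show that reflexive coequalizers and directed colimits in $\msx$ are level-wise. Both reduce to the classical fact that reflexive coequalizers, respectively filtered colimits, commute with the tensor product in a monoidal closed category: the diagonal of the indexing shape is final and $\otimes$ preserves colimits in each variable, so $F_\infty(s) \otimes F_\infty(t) \cong \colim_i F(i)(s) \otimes F(i)(t)$, yielding laxity maps on the level-wise colimit. The genuinely technical step is the equation-check in (2); it dissolves once the interplay between laxity-preservation by the coprojections and colimit-preservation by $I \otimes -$ is made explicit.
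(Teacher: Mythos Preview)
Your proposal is correct and follows essentially the same approach as the paper's proof: for (1) you lift limits along $\Ub$ by inducing $I_A$ via the universal property of the limit and checking the coherence diagrams componentwise; for (2) you use that $I\otimes-$ preserves colimits (monoidal closedness, which the paper also invokes though the statement does not mention it) to reduce the coherence check to the level of each $F(i)$; and for (3)--(4) you reduce to (2) via the siftedness of the indexing shapes, which is exactly what the paper does by citing \cite{COSEC1} and the Linton--Wolff argument. The only difference is presentational: the paper spells out the equational manipulations in detail in the appendix, while you give the conceptual summary.
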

For the proof we give hereafter we simply check that the left invariance diagram holds for $F_\infty$. The argumentation is the same for the right invariance diagram. 
\begin{proof}
The proof is actually an easy exercise in category theory; and the idea is to play with the various commutative diagrams. We expose the proof in Appendix \ref{ap-limit-msxsu}.
\end{proof}

\begin{prop}
Let $\M$ be a symmetric monoidal closed category which is locally presentable and let $\Ub:\msxsu \to \msxpt$ be the forgetful functor. Then the following hold.
\begin{enumerate}
\item The functor $\Ub$ has a left adjoint $\Phi: \msxpt \to \msxsu$.
\item The induced adjunction is a monadic adjunction.
\item The category $\msxsu$ is locally presentable.
\end{enumerate}
\end{prop}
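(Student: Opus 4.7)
The strategy is to exhibit $\msxsu$ as a \emph{reflective full} subcategory of $\msxpt$, and then read off all three conclusions from standard locally-presentable machinery. My first observation would be that $\Ub:\msxsu\to\msxpt$ is fully faithful: a morphism in $\msxsu$ is, by Definition~\ref{unital-precat}, a morphism in $\msx$ preserving the chosen units $I\to\F(A,A)$, which is exactly the condition of commuting with the pointing $I_X\to\F$ in the coslice $\msxpt=I_X\downarrow\msx$. Hence $\msxsu$ sits inside $\msxpt$ as the full subcategory cut out by the strong-unit equations of Definition~\ref{unital-precat}(2). Moreover, $\msxpt$ is locally presentable as a coslice of the locally presentable category $\msx$ (the latter being established in \cite{COSEC1}).

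To obtain (1) and (3) in one stroke, I would invoke the Adámek--Rosický reflection theorem: a full subcategory of a locally presentable category that is closed under limits and under $\lambda$-filtered colimits for some regular $\lambda$ is automatically reflective and locally presentable. Both closure properties are provided by Proposition~\ref{limit-msxsu}: limits and directed colimits of strongly unital precategories are computed pointwise and the unit structure lifts canonically. Since the equations defining strong unitality involve only finitely many maps at each fixed $1$-morphism $s$, closure under $\lambda$-filtered colimits holds for any sufficiently large $\lambda$ (monoidal-closedness of $\M$ ensures $\otimes$ commutes with these colimits). The theorem then produces the left adjoint $\Phi:\msxpt\to\msxsu$ and simultaneously gives local presentability of $\msxsu$.

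For (2) I would apply Beck's crude monadicity theorem. The functor $\Ub$ has a left adjoint by the previous paragraph; it reflects isomorphisms because it is fully faithful; and the category $\msxsu$ admits, and $\Ub$ preserves, reflexive coequalizers by Proposition~\ref{limit-msxsu}. These are the three hypotheses of crude monadicity, so $\Ub$ is monadic. Equivalently, fully faithfulness of $\Ub$ makes the monad $T=\Ub\Phi$ idempotent; the Eilenberg--Moore category of an idempotent monad is canonically the reflective subcategory of its fixed points, which here is precisely $\msxsu$.

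The main obstacle has already been dealt with in Proposition~\ref{limit-msxsu}; once pointwise formulas for limits, reflexive coequalizers, and directed colimits of strongly unital precategories are in hand, the present proposition is a formal consequence of standard machinery. If one prefers a hands-on construction of $\Phi$ in place of the abstract appeal to Adámek--Rosický, one can build $\Phi(\F)$ as a transfinite iteration of the coequalizers indicated in the remark after Definition~\ref{unital-precat}, forcing each strong-unit axiom at each length $n$ in turn; the only technical point is termination, which is governed by the same accessibility considerations used above.
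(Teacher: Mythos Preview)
Your proof is correct and follows essentially the same line as the paper: Adámek--Rosický reflectivity for (1), Beck's crude monadicity plus Proposition~\ref{limit-msxsu} for (2). The only minor divergence is in (3): you extract local presentability directly from the stronger form of the reflection theorem, whereas the paper derives it from (2) by observing that the monad $\Ub\Phi$ is finitary (both $\Ub$ and $\Phi$ preserve directed colimits) and invoking \cite[Remark~2.78]{Adamek-Rosicky-loc-pres} on algebras of finitary monads. Both routes are standard and rest on the same closure properties established in Proposition~\ref{limit-msxsu}.
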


\begin{proof}

Assertion $(1)$ is an application of  the main theorem in \cite{Adamek-Rosicky-reflection} which asserts that for a locally presentable category, any full subcategory that is complete and closed under directed colimits is reflective.

For Assertion $(2)$ we use Beck monadicity theorem (see for example \cite{joy_cat}). The only thing that we need to check is that the $\msxsu$ has coequalizers of reflexive pairs and that $\Ub : \msxsu \to \msxpt$ preserves them. But this is given by the previous proposition.\\

Finally Assertion $(3)$ follows from the fact that the induced monad is finitary i.e, preserves directed colimits. Indeed both $\Ub$ and $\Phi$ preserves directed colimits. Now as finitary monad defined on the locally presentable category $\msxpt$, we conclude by \cite[Remark 2.78]{Adamek-Rosicky-loc-pres} that $\msxsu$, which is equivalent to the category of algebras of the monad, is also locally presentable.
\end{proof}
Unfortunately the above proof doesn't tell us much about $\Phi$. One of the important properties of the adjoint that will be needed is the following:
\begin{thm}
Let $\F \in \msxpt$ be a pointed precategory and  let $(A,B)$ be a pair of objects. Then the left adjoint $\Phi$ doesn't change, up-to an isomorphism, the value of $\F$ at the $1$-morphism $(A,B)$ i.e,
$$\Phi(\F)(A,B)\cong \F(A,B).$$ And if $\sigma : \F \to \G$ is a morphism in $\msxpt$ then the component 
$$\Phi(\sigma)_{(A,B)}: \Phi(\F)(A,B)  \to \Phi(\G)(A,B),  $$
is isomorphic to the component $\sigma_{(A,B)}$, in the category $\M^{[1]}$ (of morphisms of $\M$). 
\end{thm}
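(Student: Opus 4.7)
The plan is to construct $\Phi$ explicitly via the small object argument and then to observe that the coequalizers involved only touch the values of $\F$ at $1$-morphisms of length $\geq 2$. First I would realize $\Phi(\F)$ as follows. By Definition~\ref{unital-precat} the strong unitality conditions say that for each $s=(A,\ldots,B)$ and each $2$-morphism $\sigma$ in $\sxop$ with codomain $(A,A) \otimes s$ (resp.\ $s\otimes(B,B)$), two parallel maps
\[ I\otimes \F(s) \rightrightarrows \F[(A,A)\otimes s] \]
(resp.\ into $\F[s\otimes(B,B)]$) must agree. Since $\msxpt$ is locally presentable, freely imposing all these equalities yields the reflective localization $\Phi$; concretely $\Phi(\F)$ is built by transfinitely iterating coequalizers at the components of $\F$ indexed by the codomains of these equations.

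The key observation is that each such codomain is a $1$-morphism of the form $(A,A)\otimes s$ or $s\otimes(B,B)$, which has length $\geq 2$, since $(A,A)$ already has length $1$ and $s$ has length $\geq 1$. Consequently the iteration only modifies components indexed by $1$-morphisms of length $\geq 2$; length-$1$ components stay fixed. The remark immediately preceding the theorem makes this explicit: $(A,B)$ is initial in $\sx(A,B)^{op}$ and carries no incoming laxity map, so nothing in the unitalization can propagate backwards to $\F(A,B)$. Hence the unit $\eta_\F:\F \to \Phi(\F)$ is an isomorphism at every length-$1$ $1$-morphism, giving $\Phi(\F)(A,B) \cong \F(A,B)$.

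For the morphism statement, naturality of $\eta$ produces a commutative square
\[
\xy
(0,15)*+{\F(A,B)}="A";
(42,15)*+{\Phi(\F)(A,B)}="B";
(0,0)*+{\G(A,B)}="C";
(42,0)*+{\Phi(\G)(A,B)}="D";
{\ar@{->}^-{\eta_\F}"A";"B"};
{\ar@{->}_-{\sigma_{(A,B)}}"A";"C"};
{\ar@{->}^-{\Phi(\sigma)_{(A,B)}}"B";"D"};
{\ar@{->}_-{\eta_\G}"C";"D"};
\endxy
\]
whose horizontal arrows are isomorphisms in $\M$ by the first part, exhibiting $\Phi(\sigma)_{(A,B)}$ as isomorphic to $\sigma_{(A,B)}$ in the arrow category $\M^{[1]}$.

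The main obstacle is to give a sufficiently explicit description of $\Phi$ to certify which components are touched at each transfinite stage. Local presentability of $\msxpt$ guarantees convergence of the small object argument, and the length-arity bookkeeping above shows length-$1$ components are inert. The subtle point to watch is whether, after performing a coequalizer at a length-$\geq 2$ component, some composite laxity constraint could reach back to a length-$1$ component; because $(A,B)$ is initial in $\sx(A,B)^{op}$ and has no laxity map into it, this cannot happen, and the argument closes.
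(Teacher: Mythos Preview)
Your intuition is correct and matches the paper's, but the proposal has a genuine gap at the point you yourself flag as ``the main obstacle'': you never actually carry out the explicit construction of $\Phi$, and the sentence ``concretely $\Phi(\F)$ is built by transfinitely iterating coequalizers at the components of $\F$'' is where the argument breaks down. Colimits in $\msxpt$ are \emph{not} computed componentwise in general---the laxity maps couple the components---so one cannot simply ``coequalize at the length-$\geq 2$ components'' and declare that length-$1$ components are untouched. What must be shown is that \emph{this particular} transfinite construction, carried out in $\msxpt$, happens to be levelwise at the $1$-morphism $(A,B)$. That is a nontrivial statement about how pushouts in a category of lax diagrams interact with evaluation at an indecomposable $1$-morphism.

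The paper fills this gap with a substantial technical apparatus. For each single unity constraint it builds a monadic projection $P_s^l$ (Lemma~\ref{lem-mon-proj-left}) by iterated pushouts in $\msxpt$ along maps of the form $\Upsilon_{[(A,A)\otimes s]}(j_k)$. The crucial input is Proposition~\ref{eval-invar-ab}, which computes that these free objects evaluate to $\emptyset$ (or $I$) at $(A,B)$; this is what guarantees the pushout in $\msxpt$ is an isomorphism at $(A,B)$. One then glues all the individual projections via the crossing lemma and Propositions~\ref{glue-adj} and~\ref{glue-monad} to obtain $\Phi$, and Theorem~\ref{thm-unitalization-preservation} assembles the pieces. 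Your observation that $(A,B)$ receives no laxity maps is exactly the reason Proposition~\ref{eval-invar-ab} holds, but by itself it does not prove that the reflection preserves the value there---you still need to identify the attaching cells and check the pushouts. Once the first claim is established, your naturality square for the morphism statement is fine.
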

The proof of the theorem requires writing down explicitly the left adjoint $\Phi$. Basically $\Phi$ is obtained as a sequential directed colimit of pushouts in $\msxpt$. The construction is tedious but not hard. We will only outline the different steps. But in order to do this properly we need some definitions. 
\subsubsection{Colimits in the category of unital precategories}

Given $\n \in \Depi$ we've considered in \cite{COSEC1}, truncation of the $2$-category $\sx$, that are denoted by $\sx_{\leq \n}$. This means that we only consider $1$-morphisms that are of length $\leq \n$. What we get is not a $2$-category but an \emph{almost $2$-category}, in the sense that the composition is partially defined but remains associative.\\

This is a $2$-dimensional case of what Bonnin \cite{Bonnin} called \emph{groupements}. We have a truncation functor
$$\msx= \Lax[\sxop, \M] \xrightarrow{\tau_n} \Lax[\sxopn, \M].$$
 
We show in \cite{COSEC1} that this functor has a left adjoint $\sk_n: \Lax[\sxopn, \M] \to \Lax[\sxop, \M]$  and that $\tau_n$ \emph{creates colimits}. In fact we showed that each truncation 
$$\tau_n: \Lax[\sxopnp, \M]  \to \Lax[\sxopn, \M] $$ creates colimits. This provides a direct method to prove that the category $\Lax[\sxop, \M]$ is cocomplete. \\

Our purpose is to establish that the truncation for unital precategories also creates colimits.
\begin{nota}
\begin{enumerate}
\item Denote by $\Lax[\sxopn, \M]_{\ast}$ the category of pointed (and truncated) normal lax functors. 
\item For $\n\geq \1$, denote by  $\Lax[\sxopn, \M]_{su}$, the category of pointed (and truncated) normal lax functors that satisfy the unity conditions. Note that for $\n=\1$, there are no unity conditions.
\item We will still denote by $\Ub:\Lax[\sxopn, \M]_{su} \to \Lax[\sxopn, \M]_{\ast}$ the forgetful functor.
\end{enumerate}
\end{nota}
 
\begin{prop}\label{lem-creat-colimit-trois}
With the above notation, the following hold.
\begin{enumerate}
\item The functor $\tau_n: \msxsu \to \Lax[\sxopn, \M]_{su}.$
creates colimits. 
\item Colimits in the category $\msxsu$ are computed level-wise at the $1$-morphisms $(A,B)$. 
\end{enumerate}
\end{prop}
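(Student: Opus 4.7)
The plan is to leverage Proposition \ref{limit-msxsu} together with the results recalled from \cite{COSEC1}, namely that each truncation $\tau_n$ on $\msx$ creates colimits and that, as a consequence, colimits in $\msx$ are computed level-wise at every $1$-morphism $s \in \sx$. Because the previous proposition has already established that $\msxsu$ is locally presentable, hence cocomplete, we do not need to construct colimits from scratch; we only need to identify them. I would prove (2) first and then deduce (1).

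For (2), fix a diagram $F: \J \to \msxsu$. I form the candidate lax functor $G_\infty$ by setting $G_\infty(s) := \colim_j F(j)(s)$ in $\M$ for every $1$-morphism $s$, with laxity maps assembled from the universal property using the fact that in the monoidal closed category $\M$ the tensor preserves colimits in each variable. This is precisely how colimits are computed in $\msx$, so $G_\infty \cong \colim (\Ub \circ F)$ in $\msx$, where $\Ub:\msxsu \to \msx$ is the forgetful. The pointed structure $I_X \to G_\infty$ is inherited from the pointed structures of the $F(j)$'s via the universal cocone. The hypothesis of Proposition \ref{limit-msxsu}(2) — that $G_\infty(s)$ is the level-wise colimit of $\Ev_s \circ F$ — holds by construction, so $G_\infty$ lifts to an object of $\msxsu$ and is the colimit of $F$ there. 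This gives $F_\infty(s) = \colim_j F(j)(s)$, which is assertion (2).

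For (1), suppose $H_\infty := \colim \tau_n F$ exists in $\Lax[\sxopn, \M]_{su}$. Applying exactly the same reasoning, with the truncated $2$-category in place of $\sx$, shows that $H_\infty(s) = \colim_j F(j)(s)$ for every $s$ of length at most $n$. On the other hand, by (2) the colimit $F_\infty$ in $\msxsu$ is also computed level-wise. Thus $\tau_n F_\infty$ and $H_\infty$ coincide on every $1$-morphism of length $\leq n$, with identical laxity and unit data, giving a canonical isomorphism $\tau_n F_\infty \cong H_\infty$ and verifying creation of colimits by $\tau_n$.

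The main obstacle I anticipate is checking that the level-wise colimit $G_\infty$ really satisfies the strong unity axioms of Definition \ref{unital-precat}(2), that is, that both unity squares still commute after passage to the colimit. This is exactly where Proposition \ref{limit-msxsu} does the work: once the colimit is known to be computed level-wise, each pair of parallel composites appearing in the unity squares for $G_\infty$ is the colimit, in the arrow category $\M^{[1]}$, of the corresponding pairs for the $F(j)$'s, so the two composites agree because they do for every $j$. A minor additional point, easily checked, is that $\tau_n$ is well-defined on $\msxsu$ — truncating preserves whatever unity conditions make sense at levels $\leq n$ — so the statement of creation is well-posed to begin with.
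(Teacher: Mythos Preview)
Your argument has a genuine gap at the very first step. You claim that colimits in $\msx$ are computed level-wise at \emph{every} $1$-morphism $s$, and you build $G_\infty$ by $G_\infty(s):=\colim_j F(j)(s)$ with laxity maps ``assembled from the universal property''. This does not work for a general indexing category $\J$. To get a map
\[
G_\infty(s)\otimes G_\infty(t)\;\to\;G_\infty(s\otimes t)
\]
from the laxity maps $F_j(s)\otimes F_j(t)\to F_j(s\otimes t)$ you would need $(\colim_j F_j(s))\otimes(\colim_j F_j(t))$ to agree with $\colim_j\bigl(F_j(s)\otimes F_j(t)\bigr)$. Closedness of $\M$ only gives $(\colim_j F_j(s))\otimes(\colim_j F_j(t))\cong \colim_{(j,j')\in \J\times\J} F_j(s)\otimes F_{j'}(t)$, and the diagonal $\J\to\J\times\J$ is final only when $\J$ is sifted (filtered, or a reflexive-coequalizer shape). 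So your $G_\infty$ is not a lax functor in general, and in particular is not the colimit in $\msx$. Consequently the hypothesis of Proposition~\ref{limit-msxsu}(2) is \emph{not} satisfied ``by construction''; that proposition applies only once one already knows the $\msx$-colimit happens to be level-wise, which is exactly what fails here. Note also that assertion~(2) of the Proposition is about the specific $1$-morphisms $(A,B)$ only, precisely because those are indecomposable and receive no laxity maps; it is not claiming level-wise behaviour at arbitrary $s$.

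The paper's proof avoids this by working inductively in degree using the lax-latching object: at each $s$ of degree $n{+}1$ one first forms the object $m_s$ as the pushout of the lax-latching data over the already-constructed $\zn$, and \emph{then} takes a coequalizer (and a further pushout in the two-sided case) to enforce the unity axiom at that level. This is the mechanism that replaces the naive level-wise colimit, and it is what makes $\tau_n$ create colimits in the unital case. Assertion~(2) then falls out because at degree~$1$ there is no lax-latching contribution and no unity constraint to impose, so the construction there is genuinely level-wise.
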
 
%%%%%%%%

\begin{proof}
See Appendix \ref{ap-lem-creat-colimit-trois}
\end{proof}
%%%%%%%%
%%%%%%%%%%%%%comment %%%
%%%%%%%%%%%%%%%%%% end comment 
\subsubsection{Gluing adjunctions}
Let $\B$ be a category and let $\lambda$ and $\kappa$ be two infinite regular cardinals with $\lambda < \kappa$. Assume that $\B$ has all $\kappa$-small colimits \footnote{ In most cases we will assume also that $\B$ is locally $\lambda$-presentable (hence locally $\kappa$-presentable)}. Let's start with the following lemma which is a tautology. We mention it because it appears many times in the upcoming constructions.
\begin{lem}[\emph{Crossing lemma}]\label{cross-lem}
Let $C: \lambda \to \B$ and $D: \lambda \to \B$ be two directed diagrams in $\B$. Assume that for every $k \in \lambda$ there exists two maps $\eta_k: C_k \to D_k$ and  $\varepsilon_k: D_k \to C_{k+1}$ such that the structure maps of $C$ and $D$ are respectively the composite below.
 $$C_k \to C_{k+1}= C_k \xrightarrow{\eta_k} D_k \xrightarrow{\varepsilon_k} C_{k+1}$$ 
 $$D_k \to D_{k+1}= D_k \xrightarrow{\varepsilon_k} C_{k+1} \xrightarrow{\eta_{k+1}} D_{k+1}$$

Then $C$ and $D$ have the same colimit and the maps between the colimits that are induced by $\varepsilon_k$ and $\eta_k$ are inverse each other.
\end{lem}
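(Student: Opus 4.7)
The plan is to recognize $(\eta_k)$ and $(\varepsilon_k)$ as natural transformations between $C$ and a shifted copy of $D$, and then exploit the fact that directed colimits are insensitive to the cofinal reindexing by the shift $\sigma : \lambda \to \lambda$, $k \mapsto k+1$.

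First I would check naturality. Writing $c_k : C_k \to C_{k+1}$ and $d_k : D_k \to D_{k+1}$ for the structure maps, the hypothesis says $c_k = \varepsilon_k \circ \eta_k$ and $d_k = \eta_{k+1} \circ \varepsilon_k$. Then
\[
\eta_{k+1} \circ c_k \;=\; \eta_{k+1} \circ \varepsilon_k \circ \eta_k \;=\; d_k \circ \eta_k,
\]
so $\eta$ is a natural transformation $C \Rightarrow D$. Similarly
\[
c_{k+1} \circ \varepsilon_k \;=\; \varepsilon_{k+1} \circ \eta_{k+1} \circ \varepsilon_k \;=\; \varepsilon_{k+1} \circ d_k,
\]
which identifies $\varepsilon$ as a natural transformation $D \Rightarrow C \circ \sigma$.

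Next, since $\sigma$ is cofinal in the directed index $\lambda$, the canonical map $\colim (C \circ \sigma) \to \colim C$ is an isomorphism. Composing with $\varepsilon$ I obtain $\bar\varepsilon : \colim D \to \colim C$, while $\eta$ directly induces $\bar\eta : \colim C \to \colim D$. At this stage both maps exist purely by the universal property.

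Finally I would verify that the two composites are identities. Precomposing $\bar\varepsilon \circ \bar\eta$ with the coprojection $\iota_k : C_k \to \colim C$ yields the map $C_k \xrightarrow{\eta_k} D_k \xrightarrow{\varepsilon_k} C_{k+1} \to \colim C$, which by hypothesis is $\iota_{k+1} \circ c_k = \iota_k$. Uniqueness in the universal property of $\colim C$ then forces $\bar\varepsilon \circ \bar\eta = \Id$; the symmetric argument using $d_k = \eta_{k+1} \circ \varepsilon_k$ gives $\bar\eta \circ \bar\varepsilon = \Id$. I do not foresee any genuine obstacle here: the entire proof is bookkeeping around the cofinality of a shift on a directed diagram, and the only point requiring a line of text is spelling out why the composite of coprojections with $\varepsilon_k \circ \eta_k$ collapses to a single coprojection.
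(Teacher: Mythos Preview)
Your argument is correct; the paper itself simply writes ``Clear'' for this lemma, so you have supplied the details that the author considered tautological. The cofinality-of-the-shift framing and the verification that $\bar\varepsilon\circ\bar\eta$ and $\bar\eta\circ\bar\varepsilon$ agree with the identity on each coprojection are exactly the bookkeeping one would expect.
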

\begin{proof}
Clear.
\end{proof}

Let  $S$ be a $\kappa$ small set and let $N= \{(\A_i, P_i, \eta_i )\}_{i \in S}$ be a family of triples consisting of a \ul{full reflective} subcategory $\A_i \subset \B$, with  $P_i: \B \to A_i$ the reflection functor i.e, the left adjoint to the inclusion $\tau_i:\A_i \hookrightarrow B$; and  $ \eta_i: \Id \to P_i$ the unit of the adjunction. Denote by $\A= \cap_{i \in S} \A_i$ the \emph{intersection category} which is the full subcategory of $\B$ whose set of objects is $\Ob(\A)=\cap_{i \in S} \Ob(\A_i) $.\\  

For every $F \in \B$, consider $F_k$ the $\lambda$-directed diagram defined as follows. 

\begin{enumerate}
\item $F_0= F$
\item $F_{k+1}$ is the object obtained by taking the wide pushout $\{ F_k \xrightarrow{\eta_i} P_i(F_k) \}_{i \in S}$.
\item The structure map $\delta_k: F_k \to F_{k+1}$ is the canonical map going into the pushout. 
\item Denote by $\varepsilon_k^i: P_i(F_k) \to F_{k+1}$ the canonical map going also into the pushout object. In particular $\delta_k = \varepsilon_k^i \circ \eta_i$:
$$F_k \xrightarrow{\delta_k} F_{k+1} =  F_k \xrightarrow{\eta_i} P_i(F_k)  \xrightarrow{\varepsilon_k^i} F_{k+1}.$$
\end{enumerate} 
Define $Q(F)= \colim_{k \in \lambda} F_k$ and let $\eta: F \to Q(F) $ be the canonical map. 
\begin{prop}\label{glue-adj}
With the above notation, assume that each $\A_i$ is closed under isomorphisms and that the inclusion $\tau_i: \A_i \to \B$ preserves directed colimits. Then the functor $Q$ is a left adjoint to the inclusion $\tau: \A \to \B$ and the map $\eta: \Id \to Q$ is the unit of the adjunction.
\end{prop}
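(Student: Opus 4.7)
The plan is to prove (a) that $Q(F)$ lies in $\A$, and (b) that $\eta_F: F \to Q(F)$ has the universal property of a reflection.

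For (a), since each $\A_i$ is full and reflective, it suffices to show that the unit $\eta_i^{Q(F)}: Q(F) \to P_i Q(F)$ is an isomorphism for every $i \in S$. Because $P_i$ is a left adjoint and $\tau_i$ preserves directed colimits, the object $P_i Q(F) = P_i(\colim_k F_k)$ is canonically isomorphic, in $\B$, to $\colim_k P_i(F_k)$. I would then apply the Crossing Lemma \ref{cross-lem} to the diagrams $C_k := F_k$ and $D_k := P_i(F_k)$, with interleaving maps $\eta_i^{F_k}: C_k \to D_k$ and $\varepsilon_k^i: D_k \to C_{k+1}$. The factorization $\delta_k = \varepsilon_k^i \circ \eta_i^{F_k}$ is built into the construction of the wide pushout. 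The matching factorization $P_i(\delta_k) = \eta_i^{F_{k+1}} \circ \varepsilon_k^i$ follows from the naturality of $\eta_i$ combined with the standard reflective-subcategory identity $P_i(\eta_i^{F_k}) = \eta_i^{P_i(F_k)}$. The Crossing Lemma then produces mutually inverse induced isomorphisms between the colimits, and the one pointing from $\colim F_k$ to $\colim P_i F_k$ is the canonical comparison, hence is $\eta_i^{Q(F)}$ by naturality.

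For (b), fix $G \in \A$ and $f: F \to G$. I would build, by induction on $k \in \lambda$, a unique compatible cocone $(f_k: F_k \to G)_k$ on the diagram defining $Q(F)$, with $f_0 = f$. At a successor stage, since $G \in \A_i$ for every $i$, the map $f_k$ factors uniquely as $g_k^i \circ \eta_i^{F_k}$ for some $g_k^i: P_i(F_k) \to G$. The cocone $(f_k, \{g_k^i\}_{i \in S})$ on the wide pushout diagram defining $F_{k+1}$ induces a unique $f_{k+1}: F_{k+1} \to G$ satisfying $f_{k+1} \circ \delta_k = f_k$ and $f_{k+1} \circ \varepsilon_k^i = g_k^i$. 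At limit stages the family extends by the universal property of the directed colimit. Since each choice is forced, the induced map $\tilde f: Q(F) \to G$ is the unique morphism with $\tilde f \circ \eta_F = f$.

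The main obstacle is step (a): one must identify the two colimits produced by the Crossing Lemma with $Q(F)$ and $P_i Q(F)$, \emph{and} verify that the induced isomorphism is exactly the unit $\eta_i^{Q(F)}$. The key technical input is the reflective-subcategory identity $P_i(\eta_i^{F_k}) = \eta_i^{P_i(F_k)}$, which is what makes the second structure-map factorization required by the Crossing Lemma match up. The hypothesis that $\tau_i$ preserves directed colimits is exactly what allows the colimit of $P_i(F_k)$ computed in $\A_i$ to be transported faithfully into $\B$, where the Crossing Lemma operates.
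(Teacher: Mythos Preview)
Your proposal is correct and follows essentially the same route as the paper: the Crossing Lemma applied to the interleaved diagrams $(F_k)$ and $(P_i F_k)$ for part (a), and an inductive construction of the unique extension for part (b). The only packaging difference is that the paper simply \emph{defines} the structure map of the $P_i(F_k)$-diagram as the composite $\eta_i^{F_{k+1}}\circ\varepsilon_k^i$, so the crossing is tautological and one concludes $Q(F)\in\A_i$ via closure under isomorphisms; you instead verify that this composite equals $P_i(\delta_k)$ (using naturality and the reflective identity $P_i(\eta_i)=\eta_i^{P_i}$), which lets you identify the induced isomorphism with the unit $\eta_i^{Q(F)}$ itself. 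Your version is slightly more informative but requires the extra identity check; both are valid.
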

\begin{proof}
We need to establish two things: that $Q(F)$ is indeed in $\A$ and that we have a functorial isomorphism:
$$\Hom_{\A}(Q(F), G) \cong \Hom_{\B}(F, \tau(G)),$$
for any $F \in \B$ and any $G \in \A$.\\

For every $i$ we have by construction, a directed 
diagram formed by the $P_i(F_k)$ ($i$ fixed),  whose structure map is the following composite.
$$ P_i(F_k)  \xrightarrow{\varepsilon_k^i} F_{k+1} \xrightarrow{\eta_i} P_i(F_{k+1}) $$

The two directed diagrams of the $F_k$ and the one of the $P_i(F_k)$ are \emph{crossing each other} and by the crossing lemma (Lemma \ref{cross-lem}) above, they have the same colimit  object which is $Q(F)$.\\

Now in one hand, we have from the definition that for every $i$ and every $k$, $P_i(F_k)$ is an object in $\A_i$. On the other hand since $\A_i$ is \ul{full}, the structure map $ P_i(F_k)  \to  P_i(F_{k+1}) $ is also a map in $\A_i$ so that the directed diagram formed by the $ P_i(F_k)$ lands in $\A_i$. \\
As we assumed that $\A_i$ is closed under directed colimits, then there is a colimit \ul{inside} $\A_i$ of that sequence, which is also the colimit in $\B$ since the inclusion $\tau_i: \A_i \to \B$ preserves directed colimits by hypothesis.

If we assemble all the previous, we get that $Q(F)$ is isomorphic to an object of $\A_i$ (the colimit of $ P_i(F_k)$) and thanks to the fact that $\A_i$ is closed under isomorphisms we deduce that $Q(F) \in \Ob(\A_i)$. The previous being true for all $i$ we have $Q(F) \in \cap_{i \in S} \Ob(\A_i)= \Ob(\A)$.\\

Let $F \in \B$ and $G \in \A$ be two objects and  $h: F \to \tau(G)$ be a map in $\B$. For simplicity, we will drop the $\tau$ and will simply write $\tau(G)= G$. Note that by definition we have  $\tau(G)= \tau_i(G)= G $ for every $i$.\\ 

Thanks to adjunction $P_i \dashv \tau_i$ and since $G \in \A_i$ there is a unique map $ \epsilon_i: P_i(F) \to G$ such that $h : F \to G$ is the following composite.
$$ F \xrightarrow{\eta_i} P_i(F)  \xrightarrow{\epsilon_i} G$$

Since this is true for all $i$, we have from the construction of $F_1$ (which is a pushout), that there is a unique map $h_1: F_1 \to G$ that makes everything compatible. In particular $h$ is the composite:
$$ F \xrightarrow{\delta_0} F_1 \xrightarrow{h_1} G$$

Proceeding like that starting with $h_1$, we get by induction a compatible cocone ending $G$ whose (transfinite) composite is $h$. Therefore by the universal property of the colimit $Q(F)$, there is a unique map  $h_\infty: Q(F) \to G$ such that $h$ is the composite
$$ F \xrightarrow{\eta} Q(F) \xrightarrow{h_\infty} G.$$
\end{proof}
\subsubsection{Gluing monadic projections}
The following definition can be found in \cite[Section 9.2.1]{Simpson_HTHC}.
\begin{df}
Let $\B$ be a category and $\A \subset \B$ be a full subcategory. A \emph{monadic projection} from $\B$ to $\A$ is an endofunctor $P: \B \to \B$ together with a natural transformation $\eta_F: F \to P(F)$ such that: 
\begin{enumerate}
\item $P(F) \in \A$ for all $F \in \B$;
\item for any $F \in \A$ $\eta_F$ is an isomorphism; and
\item for any $F \in \B$, the map $P(\eta_F): P(F) \to P(P(A))$ is an isomorphism. 
\end{enumerate}
\end{df}
Let  $S$ be a $\kappa$ small set and let $N= \{(\A_i, P_i, \eta_i )\}_{i \in S}$ be a family of triples consisting of a full subcategory $\A_i \subset \B$ with a monadic projection $(P_i, \eta_i)$ from $\B$ to $\A_i$. Denote by $\A= \cap_{i \in S} \A_i$ the intersection category defined previously.\\

Apply the same construction as in the previous case to get a directed diagrams of $F_k$.\\
 Define again $Q(F)= \colim_{k \in \lambda} F_k$ and let $\eta: F \to Q(F) $ be the canonical map.

\begin{prop}\label{glue-monad}
With the above notation, assume that each $\A_i$ is closed under isomorphisms, and that each $P_i$ preserves directed colimits. Then the following hold. 
\begin{enumerate}
\item For every $i \in S$ the map $\eta_i: Q(F) \to P_i(Q(F))$ is an isomorphism, so that $Q(F)$ is in $\A= \cap_{i \in S} \A_i$.
\item The pair $(Q, \eta)$ is a monadic projection from $\B$ to $\A$.
\end{enumerate}
\end{prop}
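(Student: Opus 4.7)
The plan is to mimic the argument of Proposition \ref{glue-adj}, using the Crossing Lemma (Lemma \ref{cross-lem}) as the main engine and letting the three monadic projection axioms for each $P_i$ substitute for the adjunction used there.

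First, for a fixed index $i \in S$, I will apply the Crossing Lemma to the two $\lambda$-indexed sequences $C_k := F_k$ and $D_k := P_i(F_k)$. The ``crossing'' arrows are $\eta_i^{F_k} : F_k \to P_i(F_k)$ and $\varepsilon_k^i : P_i(F_k) \to F_{k+1}$; by construction their composite is $\delta_k$, while by naturality of $\eta_i$ the other composite $\eta_i^{F_{k+1}} \circ \varepsilon_k^i$ coincides with $P_i(\delta_k)$, which is the structure map of the $D_k$-diagram. The Crossing Lemma will then yield a canonical isomorphism $Q(F) = \colim_k F_k \xrightarrow{\sim} \colim_k P_i(F_k)$.

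Next I will use the assumption that $P_i$ preserves directed colimits to identify $\colim_k P_i(F_k) \cong P_i(Q(F))$. Tracing through the construction with naturality of $\eta_i$ against the colimit cocone, the resulting isomorphism $Q(F) \xrightarrow{\sim} P_i(Q(F))$ coincides with $\eta_i^{Q(F)}$. Since $\A_i$ is closed under isomorphisms and contains $P_i(Q(F))$, it follows that $Q(F) \in \A_i$; doing this for every $i$ gives assertion (1).

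For assertion (2), the first monadic projection axiom is exactly (1). For the second axiom, if $F \in \A$ then every $\eta_i^F$ is already an isomorphism, so the wide pushout defining $F_1$ collapses to $F$; inductively $F_k \cong F$ for all $k$, whence $\eta : F \to Q(F)$ is an isomorphism. For the third axiom, assertion (1) places $Q(F)$ in $\A$, so the second axiom just proved gives that $\eta : Q(F) \to Q(Q(F))$ is an isomorphism; the map $Q(\eta_F)$ is then also an isomorphism by the usual argument for idempotent-monad-like structures, exploiting that $\eta$ is natural and that both $Q(\eta_F)$ and $\eta_{Q(F)}$ are solutions to the same factorization problem through $\eta_F$ into an object of $\A$.

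The main obstacle I anticipate is the naturality bookkeeping needed to identify the composite of the Crossing-Lemma isomorphism with the colimit-preservation isomorphism as precisely $\eta_i^{Q(F)}$; this identification is what allows one to invoke closure under isomorphisms for $\A_i$. The remaining arguments are essentially formal consequences of the monadic projection axioms.
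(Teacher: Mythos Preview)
Your plan for assertion (1) matches the paper's in spirit: both use the Crossing Lemma together with the hypothesis that $P_i$ preserves directed colimits. The paper, however, runs the crossing argument in the arrow category $\B^{[1]}$, comparing the diagrams $\{\delta_k\}$ and $\{P_i(\delta_k)\}$ rather than $\{F_k\}$ and $\{P_i(F_k)\}$. Your claim that ``by naturality of $\eta_i$ the composite $\eta_i^{F_{k+1}}\circ\varepsilon_k^i$ coincides with $P_i(\delta_k)$'' is the delicate point: naturality of $\eta_i$ at $\varepsilon_k^i$ gives $\eta_i^{F_{k+1}}\circ\varepsilon_k^i = P_i(\varepsilon_k^i)\circ\eta_i^{P_i(F_k)}$, while $P_i(\delta_k)=P_i(\varepsilon_k^i)\circ P_i(\eta_i^{F_k})$, so you would need $\eta_i^{P_i(F_k)}=P_i(\eta_i^{F_k})$, which is a well-pointedness condition not contained in the definition of monadic projection. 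The paper sidesteps this by passing to $\B^{[1]}$ and explicitly flagging the residual identification as an abuse of notation that is resolved because colimits in $\B^{[1]}$ are computed level-wise. You correctly sense that the bookkeeping here is the obstacle, but the gap sits one step earlier than where you locate it.

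For assertion (2), your treatment of the first two axioms is fine and matches the paper. Your argument for the third axiom, however, does not go through: you invoke ``both $Q(\eta_F)$ and $\eta_{Q(F)}$ are solutions to the same factorization problem through $\eta_F$'', but at this stage $Q$ is not known to be a reflection, so there is no uniqueness of such factorizations; naturality only gives $Q(\eta_F)\circ\eta_F=\eta_{Q(F)}\circ\eta_F$, which does not force $Q(\eta_F)=\eta_{Q(F)}$. The paper instead proves $Q(\eta_F)$ is an isomorphism by a second crossing argument, comparing the sequence $\{\delta_k\}$ with the sequence of induced ``dashed'' maps $F_{k+1}\dashrightarrow F_{k+2}$ coming from the functoriality of the wide pushout applied to $\eta_F$, and reading off that the induced map on colimits is an isomorphism. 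You should replace your uniqueness appeal with an argument of this kind.
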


\begin{proof}[Sketch of proof]
By construction we have 
$$Q(F_k) \cong Q(F_{k+1}) \cong  Q(F_{k+2})\cong  \cdots \cong Q(F).$$

Consider the commutative diagram below. The dashed arrows are universal maps between colimits and are induced by universal property of the colimit. The dotted ones are canonical maps going to the colimit. 

\begin{align*}
\xy
(-60,0)+(-12,0)*+{F_k}="X";
(-60,0)+(12,0)*+{F_{k+1}}="A";
(-5,0)+(5,0)*+{Q(F_k)}="C";
{\ar@{.>}^-{}"A";"C"};
{\ar@{>}^-{\delta_k}"X";"A"};
%%% lower square%%%
(-72,-15)*+{P_i(F_k)}="P";
(-15,0)+(-33,-15)*+{P_i(F_{k+1})}="Q";
(20,0)+(0,0)+(-20,-15)*+{P_i(Q(F_k))}="R";
{\ar@{->}^-{}"X";"P"};
{\ar@{->}^-{}"C";"R"};
{\ar@{->}^-{}"A";"Q"};
{\ar@{->}^-{P_i(\delta_k)}"P";"Q"};
{\ar@{.>}^-{}"Q";"R"};
%%%%%% dot dot dot 
(-72,-30)*+{F_{k+1}}="S";
(-15,0)+(-33,-30)*+{F_{k+2}}="T";
(20,0)+(0,0)+(-20,-30)*+{Q(F_k)_1}="V";
{\ar@{->}^-{}"P";"S"};
{\ar@{->}^-{}"Q";"T"};
{\ar@{-->}^-{}"S";"T"};
{\ar@{->}^-{}"R";"V"};
{\ar@{-->}^-{}"T";"V"};
%%%%%%%%%% rajouts diag%%%
(-72,-40)+(0,-5)*+{Q(F_k)}="L";
(-15,0)+(-33,-40)+(0,-5)*+{Q(F_{k+1})}="M";
(20,0)+(0,0)+(-20,-40)+(0,-5)*+{Q(Q(F_k))}="N";
%%%%%%%%%%%%% morphisms fictifs
{\ar@{.>}^-{}"S";"L"};
{\ar@{.>}^-{}"T";"M"};
{\ar@{-->}^-{}"L";"M"};
{\ar@{-->}^-{}"M";"N"};
{\ar@{.>}^-{}"V";"N"};
%%%%%%%% fleches courbées%%%%%%
{\ar@/_1.8pc/@{.>}_{Q(\eta_{F_k})}"L"; "N"};
{\ar@/_1.8pc/@{.>}_{\eta_{F_k}}"X"; "L"};
{\ar@/^1.8pc/@{.>}^{\eta_{F_k}}"X"+(0,1); "C"};
{\ar@/^1.8pc/@{.>}^{\eta_{Q(F_k)}}"C"+(5,-1); "N"};
\endxy
%%%%%%%%%%%%%%%%%%%%% dernier morphisms verticaux
\end{align*}

Denote by $\B^{[1]}= \Ar(\B)$ be the category of morphisms (or arrows) of $\B$. Then we have two directed diagrams   $\{\delta_k: F_k \to F_{k+1} \}$ and $\{P_i(\delta_k): P_i(F_k) \to P_i(F_{k+1}) \}$ in $\B^{[1]}$, which are crossing by construction. They are connected by the maps $(\eta_i, \eta_i): \delta_k \to  P_i(\delta_k)$ and $(\varepsilon_k^i, \varepsilon_{k+1}^i) :P_i(\delta_k) \to \delta_{k+1}$.  Thanks to the crossing lemma (Lemma \ref{cross-lem}) we know that they have the same colimit.\\

The colimit of $\delta_k$ is just $\Id_{Q(F)}$ and since $P_i$ preserves directed colimits by hypothesis, the colimit of $P_i(\delta_k)$ is $ P_i(\Id_{Q(F)})= \Id_{P_i(Q(F))}$. The two being isomorphic, we get that the canonical maps hereafter are inverse each other and the assertion follows.
$$\eta_i: Q(F) \to P_i(Q(F)),$$
$$\varepsilon_{\infty}^i:P_i(Q(F)) \to Q(F).$$

Note that there is a slight abuse of notation, because the universal map $Q(F) \to P_i(Q(F))$ is not a priori the component at $Q(F)$ of the natural transformation $\eta_i$; but it's isomorphic to $\eta_i$ in $\B^{[1]}$ (because $P_i$ preserved directed colimits and in $\B^{[1]}$ colimits are computed level-wise).\\

For Assertion $(2)$ we proceed as follows. First if $F \in \A$, then $F$ is in each $\A_i$ and, therefore each map $\eta_i$ is an isomorphism by hypothesis. It follows that the wide pushout defining $F_1$ in Step $2$ gives an isomorphism $\delta_0: F \to F_1$ so the process stops i.e, $\delta_k$ is an isomorphism for all $k$ so that $\eta: F \to   Q(F)$ is an isomorphism.\\

It remains to show that $Q(\eta)$ is also an isomorphism. To see this take $k=0$ in the above big diagram, so that $\eta_{F_0}$ is just $\eta$; and then observe that we have also a directed diagram of dashed arrows $\{F_{k+1} \dashrightarrow F_{k+2}\}$ which is \ul{different} (a priori) from $\delta_{k+1}$. It's a map induced by universal property of the pushout. 

 Now by construction we have a crossing between that diagram and the diagram of $\delta_k$; thus they have the same colimit which is $\Id_{Q(F)}$.

From this we get that each induced dashed arrow $Q(F) \dashrightarrow Q(F)$ is an isomorphism; consequently the whole horizontal map on the bottom is an isomorphism. This completes the proof of the proposition.  
\end{proof}

\subsubsection{Enforcing a single unity constraint: \emph{Dévissage}} Given a triple $(A,s, \sigma)$ where $A\in X$ and $\sigma: s \to (A,A) \otimes s$ is a $2$-morphism in $\sxop$, we will denote by  
 $$\msxaaub \subset \msxpt,$$ the full subcategory of $\msxpt$ consisting of pointed precategory $\F$ such that the \emph{left invariance diagram} hereafter commutes.
\[
\xy
(0,18)*+{I \otimes \F(s)}="W";
(0,0)*+{\F(A,A) \otimes \F(s) }="X";
(40,0)*+{\F[(A,A)\otimes s]}="Y";
(40,18)*+{\F(s)}="E";
{\ar@{->}^-{\varphi}"X";"Y"};
{\ar@{->}^-{I_A \otimes \Id}"W";"X"};
{\ar@{->}^-{\cong}"W";"E"};
{\ar@{->}^-{\F(\sigma)}"E";"Y"};
\endxy
\]
\\
Our goal is to construct a left adjoint $(P_{s}^l, \eta)$ from $\msxpt$ to $\msxaaub$ (the superscript $l$ on $P^l$ represents \emph{left} as in \emph{left unity}). Similarly we have a full subcategory $\msxaubb$ consisting of the pointed diagrams such that the \emph{right invariance diagram} commutes. 

\begin{warn}
Below we only give the construction for $\msxaaub$, but obviously the same results hold for $\msxaubb$ and we will have also a left adjoint $(P_{s}^r, \eta)$ from $\msxpt$ to $\msxaubb$.
\end{warn}

 We will use the following notation.
\begin{nota}
\begin{enumerate}
\item We will write for short $\kx$ the category 
$$ \prod_{(A,B) \in X^2}\Hom[\sx(A,B)^{op}, \M] \cong \Hom[\coprod_{(A,B)\in X^2}\sx(A,B)^{op}, \M].$$
\item We will write $\Gamma: \kx \to \msx$ the left adjoint of the forgetful functor:
$$\Ub: \msx \to \kx.$$
\item For each pair $(A,B)$ of objects of $\sxop$  let $p_{AB}$ be the projection functor:
$$p_{AB}: \kx \to \Hom[\sx(A,B)^{op}, \M].$$ 
\item $p_{AB}$ has a left adjoint $\delta_{AB}$ which is the `Dirac mass' (see \cite{COSEC1}). Basically $\delta(\F)$ is the product diagrams where all other factors are constant diagram with value the initial object of $\M$ except for the
$(A,B)$-factor which is $\F$.
\end{enumerate}
\end{nota}

\paragraph*{About the left adjoint $\Gamma$}
Recall very briefly that the left adjoint $\Gamma: \kx \to \msx$ is given as follows. If $\F \in \kx$ and $z=(A_0,\cdots,A_n)$ is a $1$-morphism of $\sxop$, then $\Gamma \F$ is given by
$$[\Gamma \F] z= \F(z) \bigsqcup( \coprod_{(s_1,...,s_l) \in \otimes^{-1}(z)} \F(s_1) \otimes \cdots \otimes \F(s_l)).$$ 
Here $\otimes^{-1}(z)$ is the set of all subdivisions of $z$ i.e, all $l$-tuples of composable chain $s_i$ whose composite (concatenation) is $z$ and such that none of the $s_i$ is a chain with a single element ($\degb(s_i) >0$ for all $i$).\\

In particular for $z=(A,B)$, $[\Gamma \F] (A,B)= \F(A,B)$ and similarly on morphisms.  
\begin{rmk}
In each category  $\sx(A,B)^{op}$ the object $(A,B)$ is an initial object and there is no nonidentity morphism whose target is $(A,B)$ (because there is no surjective map $\1 \to \n$ in $\Depi$ for $\n \geq \2$). This property is important for us in the upcoming sections.
\end{rmk}

Recall that the evaluation functor at  $[(A,A)\otimes s]$
 $$\Ev_{[(A,A)\otimes s]}: \Hom[\sx(A,B)^{op}, \M] \to \M,$$ has a left adjoint $\Fb^{[(A,A)\otimes s]}$ that takes an object $m\in \M$ to the diagram $\Fb^{[(A,A)\otimes s]}_m$ defined by the following formula.
$$\Fb^{[(A,A)\otimes s]}_m(z)= m \otimes \Hom[[(A,A)\otimes s],z]:=  \coprod_{h: [(A,A)\otimes s] \to s} m. $$

With the above remark it's easy to see that:

\begin{lem}\label{lem-eval-invar}
For any object $m$ of $\M$ the value of $\Fb^{[(A,A)\otimes s]}_m$ at $(A,B)$ is the initial object $\emptyset$ of $\M$.
\end{lem}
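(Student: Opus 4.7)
The plan is to unpack the formula defining $\Fb^{[(A,A)\otimes s]}_m$ and then observe that the indexing set of the coproduct is empty, so that the coproduct is the initial object of $\M$.

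First, I would instantiate the formula at $z = (A,B)$:
\[
\Fb^{[(A,A)\otimes s]}_m((A,B)) \;=\; m \otimes \Hom[[(A,A)\otimes s],(A,B)] \;=\; \coprod_{h:\, [(A,A)\otimes s] \to (A,B)} m,
\]
where the Hom and the indexing coproduct refer to morphisms in the category $\sx(A,B)^{op}$.

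Next, I would argue that this Hom set is empty. By the remark preceding the lemma, the object $(A,B)$ in $\sx(A,B)^{op}$ has no nonidentity morphism coming into it (because under the identification with $\Depi$, the object $(A,B)$ corresponds to $\1$, and there is no surjection $\1 \to \n$ for $\n \geq \2$). Now $(A,A)\otimes s$ is, by definition of the composition in $\sx$, a chain of length strictly greater than $1$: concatenating the length-$1$ chain $(A,A)$ with $s$ yields a chain with $\degb((A,A)\otimes s) = 1 + \degb(s) \geq 2$. In particular $[(A,A)\otimes s] \neq (A,B)$, so any morphism $h:[(A,A)\otimes s] \to (A,B)$ in $\sx(A,B)^{op}$ would have to be nonidentity, contradicting the remark. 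Therefore $\Hom[[(A,A)\otimes s],(A,B)] = \emptyset$ as a set.

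Finally, an empty coproduct in $\M$ is by definition the initial object, so $\Fb^{[(A,A)\otimes s]}_m((A,B)) \cong \emptyset$. The only thing requiring a touch of care is the direction convention: one has to be sure that ``no nonidentity morphism into $(A,B)$ in $\sx(A,B)^{op}$'' is exactly what is needed, because the indexing Hom in the formula for the left adjoint to $\Ev_{[(A,A)\otimes s]}$ is from $[(A,A)\otimes s]$ to $z$ in $\sx(A,B)^{op}$, which dually is a morphism from $z$ to $[(A,A)\otimes s]$ in $\sx(A,B)$; at $z=(A,B)$ either formulation gives the empty set by the absence of surjections $\1 \to \n$ in $\Depi$ for $\n \geq \2$.
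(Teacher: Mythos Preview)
Your proof is correct and follows essentially the same approach as the paper: you observe that $\Hom[[(A,A)\otimes s],(A,B)]$ is empty in $\sx(A,B)^{op}$ (since $(A,B)$ admits no nonidentity incoming morphism and $[(A,A)\otimes s]\neq(A,B)$), hence the coproduct is empty and equals $\emptyset$. Your version is simply more explicit about why $[(A,A)\otimes s]\neq(A,B)$ and about the direction convention, but the argument is the same.
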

\begin{proof}
 Since there is no morphism $h: [(A,A)\otimes s] \to (A,B)$ in $\sx(A,B)^{op}$, the value at $s=(A,B)$ it's the empty coproduct which is the initial object of $\M$.
\end{proof}

\begin{prop}\label{eval-invar-ab}
Let $\Upsilon_{[(A,A)\otimes s]} : \M \to \msxpt$ be the composite functor below, which is the left adjoint of the evaluation $\Ev_{[(A,A)\otimes s]}$ at the $1$-morphism  $[(A,A)\otimes s]$. 
$$\M  \xrightarrow{\Fb^{[(A,A)\otimes s]}}  \Hom[\sx(A,B)^{op}, \M]  \xrightarrow{\delta_{AB}} \kx \xrightarrow{\Gamma} \msx \xrightarrow{\Rpt} \msxpt.$$

Then for every object $m$ of $\M$ we have:
$$\Upsilon_{[(A,A)\otimes s]}(m)(A,B)= \emptyset \quad \tx{if $A\neq B$,}  $$
$$\Upsilon_{[(A,A)\otimes s]}(m)(A,A)= I. $$
\end{prop}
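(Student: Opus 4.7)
The plan is to trace $m \in \M$ through each of the four functors composing $\Upsilon_{[(A,A)\otimes s]}$, evaluating the result at an arbitrary length-one $1$-morphism $(C,C')$ in $\sxop$, and splitting into the cases $C=C'$ versus $C \neq C'$ only at the last step.

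First I would apply $\Fb^{[(A,A)\otimes s]}$ to produce a diagram $\Fb^{[(A,A)\otimes s]}_m$ in $\Hom[\sx(A,B)^{op}, \M]$ whose value at the length-one chain $(A,B)$ is $\emptyset$, by Lemma \ref{lem-eval-invar}. Next, applying $\delta_{AB}$ yields $\F \in \kx$ whose $(C,C')$-factor equals $\Fb^{[(A,A)\otimes s]}_m$ when $(C,C')=(A,B)$ and is the constant initial diagram otherwise; in either case its value at the length-one object $(C,C')$ of $\sx(C,C')^{op}$ is $\emptyset$.

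Then I would apply $\Gamma$ using the explicit coproduct formula: for a length-one $z=(C,C')$, the set $\otimes^{-1}(z)$ of nontrivial subdivisions is empty (one cannot factor a length-one chain as a concatenation of factors each of degree $\geq 1$), so the second coproduct collapses and $[\Gamma\F](C,C') = \F_{(C,C')}(C,C') = \emptyset$. Finally, $\Rpt$ is governed by the formula recalled in the remark following its definition: it preserves the value off the diagonal and adjoins a coproduct with $I$ on the diagonal. Combining these gives $\Upsilon_{[(A,A)\otimes s]}(m)(C,C') = \emptyset$ for $C \neq C'$, and $\Upsilon_{[(A,A)\otimes s]}(m)(C,C) = \emptyset \coprod I \cong I$, which are exactly the two claims.

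The only delicate point is keeping the two roles of a symbol like $(A,B)$ separate, since the pair determined by $s$ is being compared to a generic pair of objects at which one evaluates. Once this bookkeeping is clear the argument is a direct unwinding of definitions, and I do not anticipate any serious obstacle — in particular neither $\Gamma$ nor $\Rpt$ can introduce new contributions at a length-one morphism beyond the trivial ones accounted for above.
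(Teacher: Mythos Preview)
Your proof is correct and follows essentially the same approach as the paper's own proof: trace $m$ through the four left adjoints, use Lemma~\ref{lem-eval-invar} for the first step, note that $\delta_{AB}$ and $\Gamma$ leave the value at a length-one $1$-morphism unchanged (the latter because such a morphism is indecomposable), and finish with the explicit formula for $\Rpt$. The paper's version is simply more terse, and your choice to work at a generic $(C,C')$ rather than only at the specific $(A,B)$ coming from $s$ is slightly more than what is strictly asserted, but it is harmless and arguably clarifies the bookkeeping you flag at the end.
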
 
\begin{proof}
Indeed the left adjoints $\delta_{AB}$ and $\Gamma$ don't change the value at $(A,B)$ (see \cite{COSEC1} for details). 
By the previous lemma the value of $\Fb^{[(A,A)\otimes s]}_m $ at $(A,B)$ is the initial object and remains the same until we arrive in $\msx$. From there, the functor $\Rpt$ brings the unit $I$ when $A=B$.
\end{proof}

\paragraph{The process} Let $\F$ be an object of $\msxpt$. We are going to construct a directed diagram $\F_k \to \F_{k+1} \to \cdots$.\begin{enumerate}
\item Set $\F_0= \F$.
\item Introduce $m_k$ the object obtained by taking the coequalizer in $\M$ of the two parallel maps below.
$$I \otimes \F_k(s) \xrightarrow{\cong} \F_k(s)\xrightarrow{\F_k(\sigma)} \F_k[(A,A)\otimes s],$$
$$I \otimes \F_k(s) \xrightarrow{I_A \otimes \Id} \F_k(A,A) \otimes \F_k(s) \xrightarrow{\varphi} \F_k[(A,A)\otimes s].$$
Let $j_k: \F_k[(A,A)\otimes s]  \to m_k$ be the canonical map going to the coequalizer object.
\item Using the unit of the adjunction $ \Upsilon_{[(A,A)\otimes s]} \dashv \Ev_{[(A,A)\otimes s]}$, the above diagram gives by adjoint transpose, the following diagram in $\msxpt$.
\[
\xy
%%%%%%%%%%%face arriere%%%%%
(-10,20)*+{\Upsilon_{[(A,A)\otimes s]}(\F_k[(A,A)\otimes s])}="A";
(10,20)+(15,0)*+{\F_k}="B";
(-10,0)*+{\Upsilon_{[(A,A)\otimes s]}(m_k)}="C";
%(10,0)+(15,0)*+{\F_k[(A,A)\otimes s]}="D";
{\ar@{->}^-{}"A";"B"};
{\ar@{->}_-{\Upsilon_{[(A,A)\otimes s]}(j_k)}"A"+(0,-3);"C"};
\endxy
\]
\item Define $\F_{k+1}$ to be the pushout in $\msxpt$ of the previous pushout diagram. We have a canonical map $\delta_k: \F_k \to \F_{k+1}$. The reason we do this is that we want to force an equality between $\F_{k}[(A,A)\otimes s]$ and $m_k$. The directed diagrams of the $(m_k)$ and $ \F_k[(A,A)\otimes s] $ will be crossing and thus have the same colimit in the sense that $j_k$ will induce an isomorphism between the  colimits (Lemma \ref{cross-lem}).
\begin{warn}
It's important to differentiate the \emph{construction pushout} from the object we get since we are doing an induction. So we will write sometimes $push(m_k, \F_k)=\F_{k+1}$
\end{warn}

\item The following diagram  commutes and this is important for us.

\[
\xy
%%%%%%%%%%%face arriere%%%%%
%(-10,20)*+{\F_k(s)}="A";
(10,10)+(3,0)*+{m_k}="B";
(-10,0)*+{\F_k[(A,A)\otimes s]}="C";
(10,0)+(20,0)*+{\F_{k+1}[(A,A)\otimes s]}="D";
%{\ar@{->}^-{\alpha_k}"A";"B"};
%{\ar@{->}_-{\F_k(\sigma)}"A"+(0,-3);"C"};
{\ar@{->}^-{\xi_k}"B";"D"};
{\ar@{->}_-{\delta_k}"C";"D"};
{\ar@{->}_-{j_k}"C";"B"};
\endxy
\]
\item Define $P_{s}^l(\F)= \colim \F_k$ and let $\eta: \F \to P_{s}^l(\F)$ be the canonical map.
\end{enumerate}

\begin{lem}\label{lem-mon-proj-left}
With the previous notation, the following hold.
\begin{enumerate}
\item For every $k \in \lambda$ the component $\delta_k: \F_k(A,B)\to \F_{k+1}(A,B)$ of $\delta_k$ at the $1$-morphism $(A,B)$ is an isomorphism in $\M$.
\item The component $\eta_{(A,B)}: \F(A,B) \to P_{s}^l(\F)(A,B)$ is an isomorphism.
\item For every $\F$, $P_{s}^l(\F)$ is in $\msxaaub$ and the functor 
$$P_{s}^l: \msxpt \to \msxaaub,$$ is a left adjoint of the forgetful functor. 
\item The pair $(P_{s}^l, \eta)$ is a monadic projection from $\msxpt$ to $\msxaaub$. Moreover $P_{s}^l$ preserves directed colimits.
\end{enumerate}
\end{lem}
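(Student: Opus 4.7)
The plan is to prove the four assertions essentially in order, leveraging the level-wise nature of colimits at the initial $1$-morphisms $(A,B)$.

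For assertions (1) and (2), the key input is that in each category $\sx(A,B)^{op}$ the object $(A,B)$ is initial and admits no non-identity morphism as target (the remark preceding Lemma \ref{lem-eval-invar}); by the same reasoning as in part (2) of Proposition \ref{lem-creat-colimit-trois}, colimits in $\msxpt$, and in particular the pushout defining $\F_{k+1}$, are computed level-wise at every such $(A,B)$. Evaluating that pushout at $(A,B)$ via Proposition \ref{eval-invar-ab} shows that both $\Upsilon_{[(A,A) \otimes s]}$-summands contribute $\emptyset$ when $A \neq B$ and both equal $I$ (with connecting map the identity of $I$) when $A=B$. In either case the pushout collapses to $\F_k(A,B)$, so $\delta_k$ is an isomorphism at $(A,B)$. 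Assertion (2) then follows by passing to the directed colimit of identities.

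For assertion (3), I would first verify that $P_{s}^{l}(\F)$ lies in $\msxaaub$. The coequalizer $m_k$ forces the two composites $I \otimes \F_k(s) \rightrightarrows \F_k[(A,A)\otimes s]$ to agree after post-composition with $j_k$, and the pushout along the adjoint transpose of $j_k$ glues $m_k$ into the $[(A,A)\otimes s]$-slot of $\F_{k+1}$. I would then apply the crossing lemma (Lemma \ref{cross-lem}) to the two $\lambda$-directed diagrams $\{\F_k[(A,A)\otimes s]\}_k$ and $\{m_k\}_k$ bridged by $j_k$ and $\xi_k$ (the commutative square of Step 5) to conclude that they share a common colimit and that $\colim_k j_k$ is invertible in $P_{s}^{l}(\F)[(A,A)\otimes s]$. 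This invertibility enforces the coequalizer relation on the colimit, giving the required commutativity of the left invariance diagram. For the universal property, given $h : \F \to \G$ with $\G \in \msxaaub$, I would extend $h$ to a unique compatible family $\{h_k : \F_k \to \G\}$ by transfinite induction: at stage $k{+}1$, the two composites $I \otimes \F_k(s) \to \G[(A,A)\otimes s]$ coincide because $\G$ is unital, hence factor uniquely through $m_k$; combining this with the $\Upsilon \dashv \Ev$ adjunction and the universal property of the pushout determines $h_{k+1}$; the colimit produces the unique $h_\infty : P_{s}^{l}(\F) \to \G$.

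For assertion (4), the monadic projection properties follow easily from (3): if $\F \in \msxaaub$ then the two parallel maps already agree in $\F[(A,A)\otimes s]$, so $m_0 \cong \F[(A,A)\otimes s]$ and the pushout is trivial at every stage, making $\eta$ an isomorphism; and $P_{s}^{l}(\eta)$ is an isomorphism either by idempotency of reflectors into full subcategories or by applying the preceding observation to $P_{s}^{l}(\F)$ in place of $\F$. Preservation of directed colimits by $P_{s}^{l}$ follows because every ingredient of the construction (coequalizers of reflexive pairs in $\M$, the pushout in $\msxpt$, the left adjoint $\Upsilon$, and the outer $\lambda$-sequential colimit) commutes with directed colimits. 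The main obstacle I anticipate is making the level-wise computation of (1) fully rigorous: the exact statement we need in $\msxpt$ is not literally Proposition \ref{lem-creat-colimit-trois}(2), so one must transfer that argument to the pointed setting and keep careful track of the maps so that Proposition \ref{eval-invar-ab} applies cleanly. A secondary delicate point is the crossing-lemma step in (3), which requires identifying precisely which maps $j_k$ and $\xi_k$ provide the crossing structure at each slot of the directed diagrams.
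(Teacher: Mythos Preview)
Your proposal is correct and follows essentially the same route as the paper: level-wise computation at $(A,B)$ via Proposition \ref{eval-invar-ab} for (1)--(2), the crossing lemma applied to $\{\F_k[(A,A)\otimes s]\}$ and $\{m_k\}$ plus transfinite induction for (3), and the observation that the process stops on objects already in $\msxaaub$ for (4). One minor point: in (4) your second option (``applying the preceding observation to $P_{s}^{l}(\F)$'') only gives that $\eta_{P_{s}^{l}(\F)}$ is an isomorphism, not directly that $P_{s}^{l}(\eta_{\F})$ is; the paper closes this gap by using uniqueness of the factorization through $\eta$ to identify the two maps, while your first option (idempotency of reflectors into full subcategories) bypasses this cleanly.
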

%%%%%%%%%%%%%
\begin{proof}
See Appendix \ref{ap-lem-mon-proj-left}.
\end{proof}
%%%%%%%%%%

\subsection{The global unitalization}
As mentioned previously we have the same results for $\msxaubb$ i.e, a left adjoint $(P_{s}^r, \eta)$ which is a monadic projection and having the same property as $P_{s}^l$. In particular the component $\eta:  \F(A,B) \to P_{s}^r(\F)(A,B)$ is an isomorphism in $\M$. To make the discussion clear let's fix some notation.
\begin{nota}
\begin{enumerate}
\item Denote $S_{AB}^l$ the following set of triples $$ \{(A,s,\sigma); s \in \sx(A,B)^{op}; \sigma:s \to (A,A)\otimes s \}$$ 
\item Similarly let $S_{AB}^r=\{(s,B,\sigma); s \in \sx(A,B)^{op}; \sigma:s \to s \otimes (B,B) \}$.
\item $S= \coprod_{(A,B)\in X^2} S_{AB}^l \cup S_{AB}^r.$
\item If $i= (A,s,\sigma) \in S$, define $\A_i= \msxaaub$.
\item If $i=(s,B,\sigma) \in S$, define $\A_i= \msxaubb$
\end{enumerate}
\end{nota}

Denote by $(P_i, \eta_i)$ the corresponding left adjoint which is also a monadic projection.\\
\\
Clearly one has an equivalence of categories between $\A= \cap_{i \in S} \A_i$ and $\msxsu$.\\
\\
Denote by $(\Phi,\eta)$ the pair $(Q,\eta)$ constructed in the previous subsection; its the \emph{gluing} of the $(P_i,\eta_i)$. Then it's not hard to see that:
\begin{thm}\label{thm-unitalization-preservation}
With the previous notation, the following hold.
\begin{enumerate}
\item For every $\F$, $\Phi(\F)$ is in $\msxsu$ and $(\Phi, \eta)$ is a left adjoint of the forgetful functor $\Ub: \msxsu \to \msxpt$; and $\eta$ is the unity of the adjunction.
\item The component $\eta_{(A,B)}: \F(A,B) \to \Phi(\F)(A,B)$ is an isomorphism.
\item The pair $(\Phi, \eta)$ is a monadic projection from $\msxpt$ to $\msxsu$.
\end{enumerate}
\end{thm}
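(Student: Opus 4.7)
The plan is to apply Proposition \ref{glue-monad} to the family of monadic projections $\{(\A_i, P_i, \eta_i)\}_{i\in S}$ produced by Lemma \ref{lem-mon-proj-left}. There are two hypotheses to verify. First, each $\A_i$ is a full subcategory of $\msxpt$ cut out by the commutativity of a single invariance diagram (left for $\A_i = \msxaaub$, right for $\A_i = \msxaubb$); an isomorphism in $\msxpt$ transports such a commutative square into another commutative square of the same shape, so $\A_i$ is closed under isomorphisms. Second, Lemma \ref{lem-mon-proj-left}(4) already records that each $P_i$ preserves directed colimits. Finally, the identification $\bigcap_{i \in S} \A_i = \msxsu$ is immediate from Definition \ref{unital-precat}, since a pointed precategory is strongly unital exactly when every left and right invariance diagram commutes, i.e., when it belongs to every $\A_i$.

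With these verifications in place, Proposition \ref{glue-monad} produces a monadic projection $(\Phi, \eta)$ from $\msxpt$ onto $\msxsu$; equivalently, $\Phi$ is a left adjoint of the forgetful functor $\Ub$ with unit $\eta$. This delivers assertions (1) and (3) of the theorem at once.

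It remains to prove assertion (2). The idea is that evaluation at the $1$-morphism $(A,B)$ commutes with every colimit that intervenes in the construction of $\Phi(\F)$. Indeed, $(A,B)$ is initial in $\sx(A,B)^{op}$ and admits no nontrivial decomposition as a concatenation of chains of positive degree, so the explicit formula for the left adjoint $\Gamma: \kx \to \msx$ gives $[\Gamma \G](A,B) = \G(A,B)$; combined with the fact that colimits in the comma category $\msxpt$ are created from those in $\msx$, this means that the value at $(A,B)$ of any colimit in $\msxpt$ is computed as the corresponding colimit in $\M$. Now, by Lemma \ref{lem-mon-proj-left}(2), the component $(\eta_i)_{(A,B)}: \F_k(A,B) \to P_i(\F_k)(A,B)$ is an isomorphism in $\M$ for every $i$ and $k$. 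Hence the wide pushout defining $\F_{k+1}$, evaluated at $(A,B)$, is a wide pushout in $\M$ of isomorphisms, and therefore $(\delta_k)_{(A,B)}$ is an isomorphism. Passing to the directed colimit, $\eta_{(A,B)}: \F(A,B) \to \Phi(\F)(A,B)$ is a directed colimit of isomorphisms in $\M$, hence itself an isomorphism.

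The main obstacle here is purely bookkeeping: one must carefully track that the $(A,B)$-component is preserved through each of the three operations involved (the pointing $\Rpt$, the wide pushouts defining $\F_{k+1}$, and the final transfinite directed colimit). Each of these preservation facts is already built into Lemma \ref{lem-mon-proj-left} and the Crossing Lemma, so the proof reduces to assembling these pieces rather than introducing any new construction.
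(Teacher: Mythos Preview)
Your proof is correct and follows essentially the same strategy as the paper: apply the gluing machinery (Propositions \ref{glue-adj}/\ref{glue-monad}) to the family $\{(\A_i,P_i,\eta_i)\}$ supplied by Lemma \ref{lem-mon-proj-left}, and for assertion (2) use that colimits at the $1$-morphism $(A,B)$ are computed level-wise. The only cosmetic difference is that the paper cites both Proposition \ref{glue-adj} and Proposition \ref{glue-monad} (and then Proposition \ref{lem-creat-colimit-trois} for the level-wise statement), whereas you invoke only Proposition \ref{glue-monad} and then unpack the $(A,B)$-component argument by hand; your route is slightly more self-contained but otherwise the same.
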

\begin{proof}
The three assertions follow by combining Proposition \ref{glue-adj} and Proposition \ref{glue-monad} together with the fact that  in $\msxsu$, colimits at each $1$-morphism $(A,B)$ are taken level-wise (Proposition \ref{lem-creat-colimit-trois}).\\
\end{proof}

%%%%%%%%%%%%%%%%%%%%%%%%%%
\section{The easy model structure}\label{sec-easy-model}
 We provided in \cite{COSEC1} two model structures on $\msx$, denoted $\msxinj$ and $\msxproj$, in which the weak equivalences are the level-wise ones. These model structures lack of \emph{left properness}, a property that is needed to guarantee the existence of a left Bousfield localization.
\begin{note}
We assume that all our categories are $\kappa$-small for a sufficiently large regular cardinal $\kappa$. 
\end{note}
 
\begin{df}
Let $\M$ be a model category and $\J$ be a small category with an initial object $e$. A natural transformation $\eta: \F \to \G$ in $\Hom(\J,\M)$ is called an \emph{easy weak equivalence} if the component $\eta_e: \F(e) \to \G(e)$ at the initial object is a weak equivalence in $\M$.
\end{df}

Clearly one has that:
\begin{prop}
\begin{enumerate}
\item Every level-wise weak equivalence is an easy weak equivalence.
\item If $\F$ and $\G$ take their value in the subcategory of weak equivalences, then $\eta: \F \to \G$ is a level-wise weak equivalence if and only if it is an easy weak equivalence. 
\end{enumerate}
\end{prop}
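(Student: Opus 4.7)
The proposition splits into two assertions and both are essentially book-keeping exercises with the $2$-out-of-$3$ axiom, so the plan is straightforward.

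For assertion $(1)$ I simply unwind the definitions: a level-wise weak equivalence is by definition a natural transformation whose component at every object of $\J$ is a weak equivalence in $\M$, so in particular the component at the initial object $e$ is a weak equivalence, which is exactly the definition of an easy weak equivalence. This needs at most one line.

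For assertion $(2)$, the ``only if'' direction is a direct consequence of $(1)$, so the content is in the ``if'' direction. Assume $\eta$ is an easy weak equivalence and that both $\F$ and $\G$ land in the subcategory of weak equivalences of $\M$. Given any object $j\in\J$, the hypothesis that $e$ is initial provides a unique morphism $e\to j$, and by the assumption on $\F,\G$ both $\F(e\to j):\F(e)\to\F(j)$ and $\G(e\to j):\G(e)\to\G(j)$ are weak equivalences in $\M$. Naturality of $\eta$ yields the commutative square
\[
\xy
(-15,10)*+{\F(e)}="A";
(15,10)*+{\F(j)}="B";
(-15,-5)*+{\G(e)}="C";
(15,-5)*+{\G(j).}="D";
{\ar@{->}^-{\F(e\to j)}"A";"B"};
{\ar@{->}_-{\eta_e}"A";"C"};
{\ar@{->}^-{\eta_j}"B";"D"};
{\ar@{->}_-{\G(e\to j)}"C";"D"};
\endxy
\]
The bottom horizontal map and the left vertical map are weak equivalences (the latter by assumption that $\eta$ is an easy weak equivalence), so the composite $\G(e\to j)\circ\eta_e=\eta_j\circ\F(e\to j)$ is a weak equivalence by $2$-out-of-$3$ applied to the bottom-left path. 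Since the top horizontal map $\F(e\to j)$ is also a weak equivalence, a second application of $2$-out-of-$3$ shows that $\eta_j$ is a weak equivalence in $\M$. As $j$ was arbitrary, $\eta$ is a level-wise weak equivalence.

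There is no genuine obstacle here; the only mild subtlety is remembering that the hypothesis ``$\F$ and $\G$ take their values in the subcategory of weak equivalences'' is what makes the morphisms $\F(e\to j)$ and $\G(e\to j)$ accessible to the $2$-out-of-$3$ argument, and that the initiality of $e$ is exactly what guarantees such a comparison morphism exists canonically for every $j$. This is precisely the ``$3$-for-$2$'' observation already used informally in the introduction (applied to the diagrams $\C_{AB}:\sx(A,B)^{op}\to\M$, whose initial object is $(A,B)$) and it explains why the new notion of easy weak equivalence remains correct on co-Segal categories.
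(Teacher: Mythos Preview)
Your proof is correct and is exactly the argument the paper has in mind: the paper simply prefaces the proposition with ``Clearly one has that'' and gives no written proof, while the introduction already sketches the key step (``having a weak equivalence at one entry is equivalent, by $3$-for-$2$, to having a weak equivalence everywhere''). Your write-up just makes this explicit.
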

We shall call any diagram indexed by a category with an initial object \emph{conical diagram}. Recall that for any object $i \in \J$, the evaluation functor at $i$ has a left adjoint:
$$\Ev_i: \Hom(\J,\M) \rightleftarrows \M: \Fb^{i},$$
described as follows.
For $U \in \M$ we have $$ \Fb_{U}^{i}(j)= U \otimes \Hom(i, j)= \coprod_{i \to j} U$$
and similarly on morphism (see \cite{Hirsch-model-loc}).  
The following proposition is the general form of Lemma \ref{lem-eval-invar}.
\begin{prop}\label{cof-are-easy-weak}
Let $\J$ be a category with an initial object $e$ such that there is no morphism whose target is $e$ except $\Id_e$.
Then for every $i \neq e$ and every morphism $f: U\to V$ in $\M$, the natural transformation $\Fb_{f}^{i}: \Fb_{U}^{i} \to \Fb_{V}^{i}$ is an easy weak equivalence.  
\end{prop}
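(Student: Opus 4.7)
The plan is to reduce the statement directly to the computation of the component of $\Fb_{f}^{i}$ at the initial object $e$ of $\J$, using the hypothesis that no non-identity arrow targets $e$. This is essentially the same argument as Lemma \ref{lem-eval-invar}, but in a general indexing category rather than in $\sx(A,B)^{op}$.

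First, I would unpack the definition of an easy weak equivalence: it suffices to check that the single component
$$(\Fb_{f}^{i})_e \colon \Fb_{U}^{i}(e) \to \Fb_{V}^{i}(e)$$
is a weak equivalence in $\M$. By the explicit formula recalled just before the proposition,
$$\Fb_{U}^{i}(e) \;=\; U \otimes \Hom_{\J}(i,e) \;=\; \coprod_{i \to e} U,$$
and similarly for $V$.

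Next I would invoke the hypothesis on $\J$: the only morphism in $\J$ whose target is $e$ is $\Id_e$. Since we assume $i \neq e$, the identity $\Id_e$ is \emph{not} a morphism $i \to e$, so $\Hom_{\J}(i,e) = \emptyset$. Therefore both $\Fb_{U}^{i}(e)$ and $\Fb_{V}^{i}(e)$ are the empty coproduct in $\M$, i.e.\ the initial object $\emptyset$.

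Finally, the naturality of the construction forces $(\Fb_{f}^{i})_e$ to be the unique endomorphism $\emptyset \to \emptyset$, namely $\Id_{\emptyset}$, which is a weak equivalence (every identity is). Hence $\Fb_{f}^{i}$ is an easy weak equivalence, and the proof is complete. There is no genuine obstacle here; the whole content of the statement is the combinatorial observation $\Hom_{\J}(i,e) = \emptyset$ when $i \neq e$, which is exactly the analogue for general conical diagrams of the fact used in Lemma \ref{lem-eval-invar} for the indexing categories $\sx(A,B)^{op}$.
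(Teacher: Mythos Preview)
Your proof is correct and follows exactly the same approach as the paper's: you compute $\Fb_{U}^{i}(e)=\Fb_{V}^{i}(e)=\emptyset$ from $\Hom_{\J}(i,e)=\emptyset$, and conclude that the component at $e$ is $\Id_{\emptyset}$, hence a weak equivalence.
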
  

\begin{rmk}\label{rmk-cofib}
Note that if $\J$ is a \ul{direct} category an initial object then necessarily $\J \downarrow e$ is the one point category. This last situation is precisely what we're looking at.
\end{rmk}

\begin{proof}
Indeed $\Fb_{U}^{i}(e)=\Fb_{V}^{i}(e)=\emptyset$ is the initial object of $\M$, the component of $f$ at $e$ is the unique endomorphism of the initial object: the identity which is a weak equivalence.
\end{proof}
\subsection{Easy projective model structure}
Recall that for a cofibrantly generated model category $\M$, there is a \emph{projective model structure} on $\Hom(\J, \M)$ in which the fibrations and weak equivalences are object wise. This is a cofibrantly generated model category and we can explicitly say what are the generating set cofibrations and trivial cofibrations. \\

Indeed if $\I$ and $\Ja$ are respectively the set of generating cofibrations and trivial cofibrations for $\M$, then the set 
$$\I_{proj}=\coprod_{i\in \J}\{ \Fb_{f}^{i}; f \in \I \} $$ 
$$\Ja_{proj}=\coprod_{i\in \J}\{ \Fb_{f}^{i}; f \in \Ja \} $$ 
are the ones for $\Hom(\J, \M)_{proj}$. 

\begin{nota}
We will denote by: 
\begin{enumerate}
\item $\We=$ the class of easy weak equivalences;
\item $\Ie= \{ \Fb_{f}^{e}; f \in \I \}$;
\item $\Je= \{ \Fb_{f}^{e}; f \in \Ja \} $. 
\end{enumerate}
\end{nota}

For simplicity we assume $\M$ to be locally presentable i.e, a combinatorial model category. We derive (or localize) that projective model structure to get a new one as follows.
\begin{thm}\label{thm-local-proj}
Let $\M$ be a combinatorial model category and $\J$ be a category 
with an initial object $e$ such that the comma category $\J \downarrow e$ is reduced to $\Id_e$. Then there is a combinatorial model structure on $\Hom(\J, \M)$, denoted $\Hom(\J, \M)_{easy}$ where:
\begin{enumerate}
\item $\We$ is the class of weak equivalences;
\item $\Ie$ is the set of generating cofibrations;
\item $\Je$ is the set of generating trivial cofibrations;
\end{enumerate}
The identity functor $\Id: \Hom(\J, \M)_{easy} \to \Hom(\J, \M)_{proj}$ is a left Quillen functor.
\end{thm}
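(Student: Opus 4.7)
The plan is to construct $\Hom(\J,\M)_{easy}$ as a right-transferred model structure along the adjunction $\Fb^e : \M \rightleftarrows \Hom(\J,\M) : \Ev_e$, and then compare generating (trivial) cofibrations to obtain the left Quillen statement.

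First I would record two observations about this adjunction. Because $e$ is initial in $\J$, the hom-set $\Hom_\J(e,j)$ is a singleton for every $j$, so $\Fb_U^e$ is simply the constant diagram with value $U$; consequently $\Ev_e \circ \Fb^e \cong \Id_\M$, and more generally $\Ev_e(\Fb_f^e) \cong f$ for every morphism $f$ of $\M$. Moreover $\Ev_e$ is both a right adjoint (to $\Fb^e$) and a left adjoint, since colimits in a functor category are computed pointwise; in particular it preserves all colimits.

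Next I would apply the standard right-transfer / Smith recognition theorem for cofibrantly generated model categories to the data $(\We, \Ie, \Je)$. Smallness is automatic because $\Hom(\J,\M)$ is locally presentable. The class $\We$ is closed under retracts, satisfies two-out-of-three, and is accessible, since it is the $\Ev_e$-preimage of the corresponding class in $\M$. The crucial verification is that every relative $\Je$-cell complex lies in $\We$: using the first paragraph, $\Ev_e$ sends each generator $\Fb_f^e \in \Je$ to $f \in \Ja$ and preserves pushouts and transfinite compositions, so the image of a relative $\Je$-cell complex is a relative $\Ja$-cell complex in $\M$, hence a trivial cofibration, hence a weak equivalence. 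Running the small object argument at $\Ie$ and $\Je$ then yields the required factorizations; lifting properties follow by adjunction, and the fibrations come out to be precisely those $\eta$ with $\Ev_e(\eta)$ a fibration in $\M$. Combinatoriality is immediate.

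For the final assertion, the identity $\Id: \Hom(\J,\M)_{easy} \to \Hom(\J,\M)_{proj}$ is left Quillen by direct comparison of generators: by definition $\Ie \subset \I_{proj}$ and $\Je \subset \Ja_{proj}$, so every relative $\Ie$-cell (resp.\ $\Je$-cell) is a relative $\I_{proj}$-cell (resp.\ $\Ja_{proj}$-cell), and closure under retracts shows that easy cofibrations and easy trivial cofibrations are, respectively, projective cofibrations and projective trivial cofibrations. The only potential obstacle in the whole argument is the relative $\Je$-cell condition in the transfer, but this collapses to a one-line check thanks to the identity $\Ev_e \circ \Fb^e \cong \Id_\M$ together with cocontinuity of $\Ev_e$.
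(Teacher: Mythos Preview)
Your proof is correct and amounts to the same verification of the recognition theorem as the paper's, repackaged as a right transfer along $\Fb^e \dashv \Ev_e$. The one substantive difference is in how you check the key condition $\Je\text{-cell} \subseteq \We$: the paper observes that $\Je \subseteq \Ja_{proj}$, so every $\Je$-cell complex is already a projective trivial cofibration and hence a level-wise (a fortiori easy) weak equivalence; you instead argue directly from $\Ev_e \circ \Fb^e \cong \Id_\M$ and the cocontinuity of $\Ev_e$, which pushes a $\Je$-cell complex down to a $\Ja$-cell complex in $\M$. Your route is marginally more self-contained, since it does not presuppose the existence of the projective model structure, and you also spell out the left Quillen comparison $\Ie \subset \I_{proj}$, $\Je \subset \Ja_{proj}$, which the paper leaves to the reader.
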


To prove the theorem we will use the recognition theorem that we recall hereafter as stated in \cite{Hov-model}. 

\begin{thm}\label{thm-recognition}
Suppose $\C$ is a category with all small colimits and limits. Suppose $\W$ is a subcategory of $\C$, and $I$ and $J$ are sets of maps of $\C$. Then there is a cofibrantly generated model structure on $\C$ with $I$ as the set of generating cofibrations, $J$ as the set of generating trivial cofibrations, and $\W$ as the subcategory of weak equivalences if and only if the following conditions are satisfied.
\begin{enumerate}
\item  The subcategory $\W$ has the two out of three property and is closed under retracts.
\item  The domains of I are small relative to I-cell.
\item  The domains of J are small relative to J-cell.
\item  $J$-cell $\subseteq \W \cap I$-cof.
\item  $I$-inj $\subseteq \W \cap J$-inj.
\item  Either $\W \cap I$-cof $\subseteq J$-cof or $\W \cap J$-inj $\subseteq I$-inj.
\end{enumerate}
\end{thm}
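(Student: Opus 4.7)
The plan is to verify Quillen's five axioms with the definitions $\mathrm{Cof} := I\text{-cof}$, $\mathrm{Fib} := J\text{-inj}$, and weak equivalences equal to $\W$. Axiom MC1 is the completeness/cocompleteness assumption on $\C$; MC2 (two-out-of-three for weak equivalences) is condition~(1); MC3 (closure under retracts) holds for $\W$ by~(1), and automatically for $\mathrm{Cof}$ and $\mathrm{Fib}$ because any class characterized by a lifting property is closed under retracts.

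For the factorization axiom MC5, I would invoke the transfinite small object argument, which runs precisely because of the smallness hypotheses (2) and (3). Applied to $I$, it produces a functorial factorization $f = p_I \circ i_I$ with $i_I \in I\text{-cell} \subseteq \mathrm{Cof}$ and $p_I \in I\text{-inj}$; condition~(5) then ensures $p_I \in \W$, giving a (cofibration, trivial fibration) factorization. Applied to $J$, it produces $f = p_J \circ i_J$ with $i_J \in J\text{-cell}$ and $p_J \in J\text{-inj} = \mathrm{Fib}$; condition~(4) forces $i_J \in \W \cap \mathrm{Cof}$, giving a (trivial cofibration, fibration) factorization.

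The main obstacle, as I anticipate it, is the lifting axiom MC4 for trivial cofibrations against fibrations. The (cofibration, trivial fibration) lift is immediate: by~(5) a trivial fibration lies in $I\text{-inj}$, against which $I\text{-cof}$ lifts by definition. For the other lift I must show that $\W \cap \mathrm{Cof} \subseteq J\text{-cof}$, so that every trivial cofibration lifts against $J\text{-inj} = \mathrm{Fib}$. Under the first alternative of~(6) this is already the hypothesis. Under the second alternative, given $f \in \W \cap I\text{-cof}$, I factor $f = p_J \circ i_J$ as above; by~(4) the map $i_J$ lies in $\W$, so two-out-of-three forces $p_J \in \W \cap J\text{-inj} \subseteq I\text{-inj}$ using~(6). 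Since $f \in I\text{-cof}$, it has the left lifting property against $p_J$, and the classical retract argument then exhibits $f$ as a retract of $i_J \in J\text{-cell} \subseteq J\text{-cof}$, as required.

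For the converse direction (necessity of the six conditions), assuming the model structure exists with generating sets $I$ and $J$, conditions~(1)--(3) are built into the very definition of a cofibrantly generated model structure. The standard characterizations $\mathrm{Cof} \cap \W = J\text{-cof}$ and $\mathrm{Fib} \cap \W = I\text{-inj}$, which follow from the uniqueness of weak factorization systems in a model category, then yield~(4),~(5), and both alternatives of~(6) at once.
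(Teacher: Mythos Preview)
The paper does not prove this theorem; it is quoted without proof from Hovey's book \cite{Hov-model} and used as a black box to establish Theorems~\ref{thm-local-proj} and~\ref{first-model-cat-mset}. Your argument is essentially the standard proof that appears in Hovey, and its overall architecture is correct.

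There is one slip. In your treatment of the (cofibration, trivial fibration) lifting you write ``by~(5) a trivial fibration lies in $I\text{-inj}$''. But condition~(5) is the inclusion $I\text{-inj} \subseteq \W \cap J\text{-inj}$, not the reverse; what you need for this lift is $\W \cap J\text{-inj} \subseteq I\text{-inj}$, since a trivial fibration is by definition a map in $\W \cap \mathrm{Fib} = \W \cap J\text{-inj}$. Under the second alternative of~(6) this is exactly the hypothesis, but under the first alternative ($\W \cap I\text{-cof} \subseteq J\text{-cof}$) it requires the dual of the retract argument you already gave for the other lift: given $p \in \W \cap J\text{-inj}$, factor $p = q \circ i$ with $i \in I\text{-cell}$ and $q \in I\text{-inj}$; then $q \in \W$ by~(5), so $i \in \W$ by two-out-of-three, hence $i \in \W \cap I\text{-cof} \subseteq J\text{-cof}$, so $i$ lifts against $p \in J\text{-inj}$, and the retract argument exhibits $p$ as a retract of $q \in I\text{-inj}$. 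With this symmetric step supplied, your proof is complete and matches the standard one.
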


\begin{proof}[Proof of Theorem \ref{thm-local-proj}]
The first three properties are fulfilled as the reader can check.\\

Property $(4)$ is also easy to check. Indeed maps in $\Je$-cell are old projective trivial cofibrations: this gives the inclusion  $\Je$-cell 
$\subseteq I$-cof $\cap \We$. \\

Property $(5)$ is also easy, since any map $\sigma \in \Ie$-inj , is such that $\sigma_e$ is a trivial fibration and the  inclusion $\Ie$-inj $\subseteq \We \cap \Je$-inj follows.\\

Property $(6)$ is clear. Indeed if $\alpha \in \We \cap \Je$-inj, then $\sigma \in \Ja_{proj}$-inj, which means that $\alpha_e$ is a fibration. And since $\alpha \in  \We$, the component $\sigma_e$ is a weak equivalence, thus a trivial fibration which means precisely that $\sigma$ is in $\I_{easy}$-inj.
\end{proof}
\begin{rmk}
Note that by definition a fibrant object in $\Hom(\J, \M)_{easy}$ is just a diagram $\F$ such that the object $\F(e)$ is fibrant in $\M$.
We can increase the set of generating trivial cofibration by simply taking $\Ja_{proj}$. This will give the usual fibrant objects which are object-wise fibrant diagrams.
\end{rmk}
A direct consequence of the previous theorem is:
\begin{cor}\label{model-kx-easy}
For any pair $(A,B)$ of objects of $\sxop$, there is a combinatorial model structure $\Hom[\sx(A,B)^{op}, \M]_{easy}$ on $\Hom[\sx(A,B)^{op}, \M]$, in which the weak equivalences are the easy weak equivalences and the cofibrations are the easy projective cofibrations.\\

The fibrations are the maps $\sigma$ such that the component $\sigma_{(A,B)}$ is a fibration.
 The generating set of trivial cofibrations is $\Je$ and the generating set of cofibration is $\Ie$.\\

The identity functor $\Id: \Hom[\sx(A,B)^{op}, \M]_{easy}\to  \Hom[\sx(A,B)^{op}, \M]_{proj}$ is a left Quillen functor.
\end{cor}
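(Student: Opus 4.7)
The plan is to apply Theorem \ref{thm-local-proj} to the indexing category $\J = \sx(A,B)^{op}$, so the corollary will be essentially a direct specialization. First I would check that the hypotheses of Theorem \ref{thm-local-proj} are met: we need $\J$ to have an initial object $e$ such that $\J \downarrow e$ is reduced to $\Id_e$. Both facts are recorded in the remark preceding Proposition \ref{eval-invar-ab}: the $1$-morphism $(A,B)$ is initial in $\sx(A,B)^{op}$ (because $(A,B)$ corresponds to $\mathbf{1}$ in the decorated copy of $\Depi$, and $\mathbf{1}$ is initial there), and there is no non-identity morphism of $\sx(A,B)^{op}$ whose target is $(A,B)$ (because no surjection $\mathbf{1} \to \mathbf{n}$ exists in $\Depi$ for $\mathbf{n} \geq \mathbf{2}$). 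So $\J \downarrow e = \{\Id_e\}$ and the theorem applies, producing the combinatorial model structure with $\We$, $\Ie$, $\Je$ as claimed.

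Next I would identify the fibrations. A map $\sigma$ is a fibration iff it has the right lifting property against $\Je = \{\Fb_f^{(A,B)} : f \in \Ja\}$. By the adjunction $\Ev_{(A,B)} \dashv \Fb^{(A,B)}$, this lifting condition is equivalent to the component $\sigma_{(A,B)}$ having the right lifting property against $\Ja$, i.e., being a fibration in $\M$. This gives the desired characterization of fibrations.

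Finally, the statement that the identity functor $\Id : \Hom[\sx(A,B)^{op}, \M]_{easy} \to \Hom[\sx(A,B)^{op}, \M]_{proj}$ is left Quillen is contained in Theorem \ref{thm-local-proj} itself: the generating (trivial) cofibrations of the easy structure form a subset of the projective ones (we only keep those indexed by the initial object $e = (A,B)$), so every easy (trivial) cofibration is a projective (trivial) cofibration, and by Proposition \ref{cof-are-easy-weak} together with the observation that every level-wise weak equivalence is an easy weak equivalence, the identity is a left Quillen functor. No step here is a serious obstacle, since the heavy lifting was done in Theorem \ref{thm-local-proj}; the only thing worth being careful about is the verification that $(A,B)$ has the required initiality property in $\sx(A,B)^{op}$, which is immediate from the description of $\sx$ recalled at the start of Section \ref{sec-unital}.
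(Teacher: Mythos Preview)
Your proposal is correct and matches the paper's approach: the paper presents this corollary as an immediate consequence of Theorem~\ref{thm-local-proj} without a separate proof, and your elaboration simply spells out the verification that $(A,B)$ is initial in $\sx(A,B)^{op}$ with trivial over-category, together with the adjunction argument for the fibration description. One small slip: the adjunction direction is $\Fb^{(A,B)} \dashv \Ev_{(A,B)}$ (the free diagram functor is the left adjoint), not the other way around as you wrote, though the lifting-property transfer you use is unaffected.
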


We have another corollary obtained when $\J=[1]=\{0 \to 1 \}$ is the \emph{walking-morphism category}. Note that by definition $\M^{[1]}= \Hom[\J, \M]$ and that the left adjoint $\Fb^{0}: \M \to \M^{[1]}$ is the natural embedding that takes an object $m$ to $\Id_{m}$; it takes a morphism $f: m \to m'$ to the morphism $[f]: \Id_m \to \Id_{m'}$ whose components are both $f$.  
\begin{cor}
There is combinatorial model structure on the category $\M^{[1]}$ of morphisms of $\M$ where a weak equivalence (resp. fibration) is a map $\sigma: \F \to \G$ such that the component $\sigma_0: \F(0) \to \G(0)$ is a weak equivalence (resp. fibration).\\

The set of generating cofibrations is 
$$\{[\alpha]: \Id_U \to \Id_V \}_{\alpha: U \to V \in \I}.$$
The set of generating trivial cofibrations is 
$$\{[\alpha]: \Id_U \to \Id_V \}_{\alpha: U \to V \in \Ja}.$$

We will denote by $\armeasy$ this model structure.
\end{cor}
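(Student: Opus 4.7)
The plan is to apply Theorem \ref{thm-local-proj} directly to the category $\J = [1] = \{0 \to 1\}$. The verification proceeds in three short steps.

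First I would check that $[1]$ satisfies the hypotheses of that theorem: the object $0$ is an initial object, and the slice category $[1]\downarrow 0$ reduces to $\Id_0$ since there is no morphism $1 \to 0$ in $[1]$. So we are in the setting of Theorem \ref{thm-local-proj} (and Remark \ref{rmk-cofib}).

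Next I would identify the left adjoint $\Fb^0: \M \to \M^{[1]}$ explicitly. Using the general formula $\Fb_U^0(j) = \coprod_{0 \to j} U$, one has $\Fb_U^0(0) = U$ and $\Fb_U^0(1) = U$, and the structure map attached to the unique nonidentity morphism $0 \to 1$ in $[1]$ is induced by post-composition $\Hom(0,0) \to \Hom(0,1)$, which gives the identity $\Id_U$. Hence $\Fb_U^0$ is precisely $\Id_U$ viewed as an object of $\M^{[1]}$, and on morphisms $\Fb^0(\alpha) = [\alpha]: \Id_U \to \Id_V$. This identifies $\Ie$ and $\Je$ with the sets of morphisms displayed in the corollary, and the class $\We$ with those $\sigma: \F \to \G$ whose component $\sigma_0$ is a weak equivalence in $\M$. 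Theorem \ref{thm-local-proj} then supplies the combinatorial model structure $\armeasy$ with weak equivalences $\We$, generating cofibrations $\Ie$ and generating trivial cofibrations $\Je$.

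Finally, to recognize the fibrations as those $\sigma$ with $\sigma_0$ a fibration in $\M$, I would invoke the standard cofibrantly generated characterization: fibrations are the maps with the right lifting property against $\Je$. A lifting problem for $[\alpha]: \Id_U \to \Id_V$ against a map $\sigma: \F \to \G$ in $\M^{[1]}$ is completely determined by its component at $0$ (the component at $1$ is then forced by commutativity with the structure map of $\F$), so it reduces to a lifting problem for $\alpha$ against $\sigma_0$. Therefore $\sigma \in \Je\textup{-inj}$ iff $\sigma_0$ has the right lifting property against every $\alpha \in \Ja$, iff $\sigma_0$ is a fibration in $\M$. There is no real obstacle since the corollary is a direct specialization of Theorem \ref{thm-local-proj}; the only minor point requiring care is the explicit computation of $\Fb^0_U = \Id_U$ from the formula for the left adjoint.
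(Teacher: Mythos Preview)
Your proposal is correct and takes essentially the same approach as the paper: the corollary is stated as an immediate specialization of Theorem \ref{thm-local-proj} to $\J=[1]$, and the paper records the identification $\Fb^{0}_{U}=\Id_{U}$ (and $\Fb^{0}(\alpha)=[\alpha]$) in the text immediately preceding the corollary rather than in a separate proof. Your additional verification that fibrations are detected at the component $\sigma_0$ via the adjunction $\Fb^{0}\dashv\Ev_{0}$ is correct and simply makes explicit what the paper leaves implicit.
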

\begin{prop}\label{prop-av}
Let $s=(A,...,B)$ be an object of $\sx(A,B)^{op}$, and let 
$u_s: (A,B) \to s$ be the unique morphism therein. Then the following hold.
\begin{enumerate}
\item The evaluation $\Ev_{u_s}:\Hom[\sx(A,B)^{op}] \to \M^{[1]}$ has a left adjoint
$$\Psi_s: \M^{[1]} \to \Hom[\sx(A,B)^{op}].$$ 
\item This adjunction is moreover a Quillen adjunction between the respective easy model structures.
\item If $\sigma: \F \to \G$ is a morphism in $\M^{[1]}$ such that the component $\sigma_0: \F(0) \to \G(0)$ is an isomorphism (resp. cofibration), then the component $\Psi_s(\sigma)_{(A,B)}: \Psi_s(\F)(A,B) \to \Psi_s(\G)(A,B)$ is also an isomorphism (resp cofibration).
\item For any generating cofibration $\alpha: U \to V$, the canonical map $\xi_\alpha=(\alpha,i_0): \alpha \to i_\alpha$, obtained by the pushout of $\alpha$ along itself, is a cofibration in $\armeasy$.
\[
\xy
(0,18)*+{U}="W";
(0,0)*+{V}="X";
(30,0)*+{V\cup^{U}V }="Y";
(30,18)*+{V}="E";
{\ar@{.>}^-{i_\alpha= i_1}"X";"Y"};
{\ar@{->}^-{\alpha}"W";"X"};
{\ar@{->}^-{\alpha}"W";"E"};
{\ar@{.>}^-{i_0}"E";"Y"};
\endxy
\]
\item The image $\Psi_s(\xi_\alpha)$ is a cofibration in $\Hom[\sx(A,B)^{op}]_{easy}$
\end{enumerate}
\end{prop}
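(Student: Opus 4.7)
The plan is to handle the five parts in order, with (5) being essentially a corollary of (2) and (4). The key conceptual input is that the morphism $u_s$ corresponds to a functor $\iota_s : [1] \to \sx(A,B)^{op}$ sending $0 \mapsto (A,B)$, $1 \mapsto s$, and the non-identity arrow to $u_s$. Then $\Ev_{u_s}$ is simply the restriction $\iota_s^{*}$, and since $\M$ is cocomplete its left Kan extension $\Psi_s := \Lan_{\iota_s}$ exists, which gives (1).

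For (2), I would check that $\Ev_{u_s}$ is right Quillen. By Corollary \ref{model-kx-easy} a fibration (resp.\ trivial fibration) in $\Hom[\sx(A,B)^{op},\M]_{easy}$ is detected on the $(A,B)$-component, while in $\armeasy$ these classes are detected on the $0$-component. Since the $0$-component of $\Ev_{u_s}(\sigma)$ is $\sigma_{(A,B)}$, both classes are preserved and the adjunction is Quillen.

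For (3), I would use the pointwise formula for $\Psi_s = \Lan_{\iota_s}$: the value of $\Psi_s(\F)$ at $(A,B)$ is the colimit over the comma category $(\iota_s \downarrow (A,B))$ of the relevant diagram. By the remark recalled in the text, $(A,B)$ is the initial object of $\sx(A,B)^{op}$ and the only arrow ending at $(A,B)$ is the identity, so this comma category reduces to the single object $(0,\Id_{(A,B)})$. Consequently $\Psi_s(\F)(A,B) \cong \F(0)$ naturally, and $\Psi_s(\sigma)_{(A,B)}$ coincides with $\sigma_0$ up to this canonical isomorphism; the assertions about isomorphisms and cofibrations then follow at once.

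For (4), I would exhibit $\xi_\alpha$ as the level-wise pushout in $\M^{[1]}$ of the generator $[\alpha] : \Id_U \to \Id_V$ in $\Ie$ along the canonical map $\Id_U \to \alpha$ with components $(\Id_U,\alpha)$. Since pushouts in $\M^{[1]}$ are computed pointwise, the pushout is $V \cup^{U} U = V$ at position $0$ and $V \cup^{U} V$ at position $1$, so the pushout object is precisely $i_\alpha$, and the induced map $\alpha \to i_\alpha$ has components $(\alpha, i_0)$, matching $\xi_\alpha$. Hence $\xi_\alpha$ is a cofibration in $\armeasy$. Finally, (5) is immediate from (2) and (4), since a left Quillen functor preserves cofibrations. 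The only delicate point is to justify carefully in (3) that the comma category collapses; I expect that to be the main anchor of the whole argument.
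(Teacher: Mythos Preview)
Your proposal is correct and follows essentially the same route as the paper: $\Psi_s$ is the left Kan extension along $\iota_s:[1]\to\sx(A,B)^{op}$, Quillenness is checked via the right adjoint preserving (trivial) fibrations on the $(A,B)$-component, Assertion~(3) is deduced from the pointwise Kan formula using that $(A,B)$ receives only the identity, and $\xi_\alpha$ is realized as a pushout of the generating cofibration $[\alpha]:\Id_U\to\Id_V$ in $\armeasy$. Your write-up is in fact somewhat more explicit than the paper's, which leaves the comma-category computation in (3) and the pointwise pushout in (4) to the reader.
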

\begin{proof}
$\Psi_s$ is the left Kan extension along the functor $[1] \xrightarrow{u_s} \sx(A,B)^{op}$ that picks $u_s$; this gives Assertion $(1)$. 

The evaluation sends (trivial) fibrations to (trivial) fibrations. In fact it's easily seen that  $\Psi_s$ sends generating (trivial) cofibrations to generating (trivial) cofibrations; we get Assertion $(2)$.

Assertion $(3)$ follows from the formula of the left Kan extension as the reader can check. In fact
$\Psi_s(\F)(A,B) \to \Psi_s(\G)(A,B)$ is isomorphic  to $\sigma_0: \F(0) \to \G(0)$ as objects in $\M^{[1]}$.\\

For Assertion $(4)$ it suffices to observe that we obtained $\xialpha$ as a pushout of the generating cofibration $[\alpha]: \Id_U \to \Id_V$. We illustrate the pushout data in the diagram hereafter.
\[
\xy
%%%%%%%%%%%face arriere%%%%%
(-10,20)*+{U}="A";
(10,20)+(10,0)*+{U}="B";
(-10,0)*+{U}="C";
(10,0)+(10,0)*+{V}="D";
{\ar@{->}^-{\Id}"A";"B"};
{\ar@{->}_-{\Id}"A"+(0,-3);"C"};
{\ar@{->}_-{\alpha}"B";"D"};
{\ar@{->}_-{\alpha}"C";"D"};
%%%%%%%%%%%%%%%%%%%%face avant%%%%
(-10,20)+(-25,-15)*+{V}="E";
(-10,0)+(-25,-15)*+{V}="G";
{\ar@{->}_-{\Id}"E"+(0,-3);"G"};
%%%%%%%%%%% fleches sortantes arrieres vers avant %%%%%%%
{\ar@{->}_-{\alpha}"A";"E"};
{\ar@{->}_-{\alpha}"C";"G"};
(-10,20)+(-25,-15)+(30,0)*+{V}="J";
(-10,0)+(-25,-15)+(30,0)*+{V\cup^{U}V }="K";
{\ar@{.>}^-{\alpha}"B";"J"};
{\ar@{.>}^-{i_0}"D";"K"};
{\ar@{=}^-{}"E";"J"};
{\ar@{.>}_-{i_\alpha}"G";"K"};
{\ar@{.>}^-{i_\alpha}"J";"K"};
\endxy
\]
Assertion $(5)$ is obvious since left Quillen functors transport cofibrations. 
\end{proof}

\begin{nota}
\begin{enumerate}
\item We have two product model structures $\kxproj$ and $\kxeasy$ and the identity $$\Id: \kxproj \to \kxeasy$$ is a left Quillen functor.
\item We will use the same notation and write $\I_{AB}$ (resp. $\Ja_{AB}$) for set of generating cofibrations (resp. trivial cofibrations) for either $\Hom[\sx(A,B)^{op}, \M]_{proj}$ or  $ \Hom[\sx(A,B)^{op}, \M]_{easy}$.
\end{enumerate}
\end{nota}

By lifting properties and adjunction one can clearly have:
\begin{lem}\label{lem-generation}
The sets 
$$ \coprod_{(A,B)} \{ \delta_{AB}(\alpha); \alpha \in \I_{AB} \}$$
$$ \coprod_{(A,B)} \{ \delta_{AB}(\alpha); \alpha \in \Ja_{AB} \} $$
constitute a set of generating cofibrations (resp. trivial cofibrations) of $\kx$.
\end{lem}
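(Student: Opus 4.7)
The plan is to exploit the product structure $\kx = \prod_{(A,B)} \Hom[\sx(A,B)^{op}, \M]$ together with the adjunctions $\delta_{AB} \dashv p_{AB}$. In the product model structure (both in its projective and its easy variant), a map $\sigma = (\sigma_{AB})$ is a (trivial) fibration if and only if every component $\sigma_{AB}$ is one in the corresponding factor; so the lemma reduces to showing that the proposed sets detect component-wise right lifting.

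First I would apply the adjunction transpose: a lifting problem with left edge $\delta_{AB}(\alpha)$ and right edge $\sigma$ in $\kx$ admits a diagonal filler if and only if the transposed problem with left edge $\alpha$ and right edge $p_{AB}(\sigma) = \sigma_{AB}$ does in $\Hom[\sx(A,B)^{op}, \M]$. This is routine naturality of the adjunction. Consequently a map $\sigma \in \kx$ has the right lifting property against the first displayed set iff, for every $(A,B)$, the component $\sigma_{AB}$ has RLP against $\I_{AB}$, iff (by the assumed cofibrant generation of each factor in Corollary \ref{model-kx-easy}, and analogously in the projective case) each $\sigma_{AB}$ is a trivial fibration, iff $\sigma$ is a trivial fibration in the product model structure on $\kx$. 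The same argument with $\Ja_{AB}$ identifies the maps with RLP against the second displayed set as precisely the fibrations of $\kx$.

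It then remains to verify the smallness hypotheses required for the small object argument. Because $\delta_{AB}$ is a left adjoint and transfinite compositions and pushouts in $\kx$ are computed component-wise, the smallness of the domains of $\I_{AB}$ and $\Ja_{AB}$ with respect to the relative cell complexes in $\Hom[\sx(A,B)^{op}, \M]$ transfers verbatim to smallness of the domains of their images under $\delta_{AB}$ with respect to cell complexes in $\kx$. Invoking the recognition theorem (Theorem \ref{thm-recognition}), or equivalently the formal identity $\mathcal{S}\text{-cof} = \tx{llp}(\mathcal{S}\text{-inj})$, yields the conclusion.

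I don't anticipate any real obstacle: the whole argument is a standard assembly of cofibrantly generated product model structures. The only delicate point is being careful that the identification of (trivial) fibrations in $\kx$ really is joint-component-wise, which is exactly what the product model structure encodes, but the reader should notice that the same proof works uniformly for both the projective and easy variants since no specific feature of either model structure on the factors has been used beyond cofibrant generation.
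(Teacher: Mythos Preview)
Your proposal is correct and follows essentially the same approach as the paper, which simply states that the lemma holds ``by lifting properties and adjunction''; you have merely spelled out the details of that hint. The extra care you take with smallness and the recognition theorem is more than the paper bothers to record, but it is the standard verification and raises no issues.
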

The left adjoint $\Gamma$ has a nice property as outlined in the following
\begin{lem}\label{Gamma-no-change-easy}
Let $\alpha: \F \to \G$ be a morphism in $\kx$ and $\Gamma \alpha: \Gamma(\F) \to \Gamma(\G) $ be the image  in $\msx$. Then the following hold.
\begin{enumerate}
\item If $\alpha$ is an easy weak equivalence i.e, $\alpha_{(A,B)}: \F(A,B) \to \G(A,B)$ is a weak equivalence in $\M$,  then so is $\Gamma \alpha$
\item If the component $\alpha_{(A,B)}: \F(A,B) \to \G(A,B)$ is an isomorphism, then the component $[\Gamma \alpha]_{(A,B)}$ is also an isomorphism.
\end{enumerate} 
\end{lem}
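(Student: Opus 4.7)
The plan is to invoke the explicit description of $\Gamma$ recalled just above the statement. Both assertions concern only the component of $\Gamma\alpha$ at a one-morphism of the form $(A,B)$, so it suffices to inspect the value of the functor $\Gamma$ there.

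The key observation is that for such a one-morphism $z=(A,B)$, the formula
$$[\Gamma \F](z) = \F(z) \bigsqcup \Bigl(\coprod_{(s_1,\ldots,s_l) \in \otimes^{-1}(z)} \F(s_1) \otimes \cdots \otimes \F(s_l) \Bigr)$$
reduces to the single summand $\F(A,B)$, because there is no nontrivial subdivision $z = s_1 \otimes \cdots \otimes s_l$ of a chain of degree one into chains with $\degb(s_i)>0$. This is precisely the remark the author makes directly after the definition of $\Gamma$. Since this identification is natural in $\F$, one obtains a canonical isomorphism $[\Gamma\alpha]_{(A,B)} \cong \alpha_{(A,B)}$ in the arrow category $\M^{[1]}$.

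Assertion (2) then follows at once: if $\alpha_{(A,B)}$ is an isomorphism, so is $[\Gamma\alpha]_{(A,B)}$. For Assertion (1), recall that a morphism in $\msx$ (or in $\kx$) is an easy weak equivalence precisely when its component at each initial entry $(A,B)$ is a weak equivalence in $\M$; since $\alpha$ is assumed easy by hypothesis, each $\alpha_{(A,B)}$ is a weak equivalence in $\M$, hence so is each $[\Gamma\alpha]_{(A,B)}$ via the above identification, which means $\Gamma\alpha$ is itself an easy weak equivalence in $\msx$.

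There is essentially no obstacle: the entire content is already packaged in the explicit description of $\Gamma$ and in the fact, recorded in Remark following its definition, that $(A,B)$ is an initial object of $\sx(A,B)^{op}$ with no nonidentity arrow into it, which forces the subdivision indexing set at $(A,B)$ to be empty. The only point to make explicit is that ``easy weak equivalence'' for $\kx$ and for $\msx$ both amount to asking that the components indexed by the initial entries $(A,B)$ be weak equivalences in $\M$, matching the general definition of easy weak equivalence for a conical diagram applied to each $\sx(A,B)^{op}$.
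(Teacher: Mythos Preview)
Your proof is correct and follows essentially the same approach as the paper's own proof: both use the explicit formula for $\Gamma\F(z)$ and the observation that $(A,B)$ admits no nontrivial subdivision, so that $\Gamma(\F)(A,B)=\F(A,B)$ and $\Gamma(\alpha)_{(A,B)}=\alpha_{(A,B)}$, from which both assertions follow immediately.
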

\begin{proof}
Recall again that for any $\F \in \prod_{A,B} \Hom[\sx(A,B)^{\tx{op}}, \ul{M}]$,  the lax functor $\Gamma \F$ is given by the following formula at a generic $1$-morphism $z$ of $\sxop$ (see \cite{COSEC1}).
$$\Gamma\F (z)= \F(z) \sqcup (\coprod_{(s_1,..., s_l);   \otimes(s_i)=z ; s_i\neq z} \F(s_1) \otimes \cdots \otimes \F(s_l)).$$

Similarly on morphisms one has
$$\Gamma (\alpha)_{z} =\alpha_z \sqcup (\coprod_{(s_1,..., s_l);   \otimes(s_i)=z ; s_i\neq z} \alpha_{s_1} \otimes \cdots \otimes \alpha_{s_l}).$$

Now it suffices to observe that $(A,B)$ is \emph{indecomposable} in $\sxop$ i.e, there is no $l$-tuple $(s_1,...s_l)$ such that:
\begin{itemize}
\item $l >1$,
\item   $s_i \neq (A,B)$  $\forall i$ and
\item  $\otimes(s_i)=(A,B)$.
\end{itemize}

Therefore one has:
$ \Gamma (\F)(A,B)= \F(A,B) $
and  $ \Gamma (\alpha)_{(A,B)}= \alpha_{(A,B)}$
and the lemma follows.
\end{proof}

\begin{warn}
We are going to give a lemma that holds for the three left adjoints $$\Gamma: \kx \to \msx,$$ 
$$\Rpt \Gamma : \kx \to \msxpt,$$
$$\Phi\Rpt\Gamma: \kx \to \msxsu.$$  

Therefore we will use a generic notation $\Qs$ for the three functors for simplicity. Similarly we will write $\Ub$ for the various forgetful functor which are the corresponding right adjoint.
\end{warn}
\begin{lem}\label{lem-q-no-change}
Let $\Qs$ be the functor $\Gamma$ (resp. $\Rpt \Gamma$, resp. $\Phi\Rpt\Gamma$).\\
Let $\alpha: \F \to \G$ be a morphism in $\kx$ and $\Qs \alpha: \Qs(\F) \to \Qs(\G) $ be the image by $\Qs$.\\

Then if the component $\alpha_{AB}: \F(A,B) \to \G(A,B)$ is a (trivial) cofibration (resp. an isomorphism) in $\M$ then so is $\Ub(\Qs \alpha)_{(A,B)}$ respectively.  

\end{lem}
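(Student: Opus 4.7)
The plan is to treat the three functors in order, each reducing to the previous one by a short computation on the $(A,B)$-component.

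\textbf{Step 1 (the functor $\Gamma$).} The case of $\Gamma$ is already essentially Lemma \ref{Gamma-no-change-easy}. Indeed, the indecomposability observation there gives the literal equality $[\Gamma\F](A,B)=\F(A,B)$ and $[\Gamma\alpha]_{(A,B)}=\alpha_{(A,B)}$. So whatever property $\alpha_{(A,B)}$ has in $\M$ (isomorphism, cofibration, trivial cofibration, or weak equivalence), the same property is inherited by $[\Gamma\alpha]_{(A,B)}$ tautologically.

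\textbf{Step 2 (the functor $\Rpt\Gamma$).} Recall from the construction of $\Rpt$ that $\Rpt(\F)=I_X\coprod\F$ in $\msx$, and this coproduct is taken pointwise on the $1$-morphisms $(A,B)$. Combined with Step~1 this yields
\[
[\Rpt\Gamma\F](A,B)=\begin{cases}\F(A,B) & \text{if } A\neq B,\\ \F(A,A)\coprod I & \text{if } A=B,\end{cases}
\]
and the component of $\Rpt\Gamma(\alpha)$ is either $\alpha_{(A,B)}$ or $\alpha_{(A,A)}\coprod\Id_I$. For isomorphisms there is nothing to check. For (trivial) cofibrations the key point is that (trivial) cofibrations in any model category are stable under coproducts with an identity map (one can see this as a pushout of $\alpha$ along $\emptyset\to I$, using that $\emptyset\to I$ exists and the pushout of a (trivial) cofibration is again a (trivial) cofibration). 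Hence $\alpha_{(A,A)}\coprod\Id_I$ is a (trivial) cofibration whenever $\alpha_{(A,A)}$ is.

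\textbf{Step 3 (the functor $\Phi\Rpt\Gamma$).} Here I would invoke Theorem \ref{thm-unitalization-preservation}(2), which states that the unit $\eta:\F\to\Phi(\F)$ of the adjunction $\Phi\dashv\Ub$ is an isomorphism on the $(A,B)$-component. Naturality of $\eta$ produces the commutative square
\[
\xy
(-20,10)*+{[\Rpt\Gamma\F](A,B)}="A";
(20,10)*+{[\Rpt\Gamma\G](A,B)}="B";
(-20,-8)*+{[\Phi\Rpt\Gamma\F](A,B)}="C";
(20,-8)*+{[\Phi\Rpt\Gamma\G](A,B)}="D";
{\ar@{->}^-{\Rpt\Gamma(\alpha)_{(A,B)}}"A";"B"};
{\ar@{->}^-{\Phi\Rpt\Gamma(\alpha)_{(A,B)}}"C";"D"};
{\ar@{->}_-{\eta\,\cong}"A";"C"};
{\ar@{->}^-{\eta\,\cong}"B";"D"};
\endxy
\]
whose vertical arrows are isomorphisms in $\M$. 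Thus $\Phi\Rpt\Gamma(\alpha)_{(A,B)}$ is isomorphic, in $\M^{[1]}$, to $\Rpt\Gamma(\alpha)_{(A,B)}$, so Step~2 gives the conclusion for $\Phi\Rpt\Gamma$.

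\textbf{Main obstacle.} The only non-purely-formal point is Step~2, namely that $\alpha\coprod\Id_I$ is a (trivial) cofibration when $\alpha$ is. This is a standard but genuine use of the model structure on $\M$ (in contrast to the Remark after $\Rpt$ where additional hypotheses on $\M$ were needed to propagate \emph{weak equivalences} through $\Rpt$); here we only need that (trivial) cofibrations are stable under pushouts and under coproducts with identities, which holds in any model category. Steps~1 and~3 are essentially bookkeeping built on top of Lemma \ref{Gamma-no-change-easy} and Theorem \ref{thm-unitalization-preservation} respectively.
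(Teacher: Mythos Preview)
Your proof is correct and follows essentially the same three-step route as the paper: identify $[\Gamma\alpha]_{(A,B)}$ with $\alpha_{(A,B)}$ via indecomposability, handle $\Rpt$ by the explicit formula $\alpha_{(A,A)}\sqcup\Id_I$ and closure of (trivial) cofibrations under coproduct with an identity, and finish for $\Phi$ using the isomorphism on $(A,B)$-components from Theorem~\ref{thm-unitalization-preservation}. Your Step~2 justification via a pushout along $\emptyset\to I$ is a slightly more explicit variant of the paper's appeal to closure under coproducts, but the argument is otherwise the same.
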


\begin{proof}
From the proof of the previous lemma and thanks to Theorem \ref{thm-unitalization-preservation} one has the following.
\begin{enumerate}
\item For $\Qs= \Gamma$, $\Ub(\Qs \alpha)_{(A,B)}= \alpha_{(A,B)}$, and the statement is obvious.
\item For $\Qs= \Rpt\Gamma$, we know from the construction of $\Rpt$ that 
$$\Ub(\Qs \alpha)_{(A,B)}= \alpha_{(A,B)} \hspace*{0.2in} \tx{if $A\neq B$};$$
$$\Ub(\Qs \alpha)_{(A,B)}= \alpha_{(A,A)} \sqcup \Id_I \hspace*{0.2in} \tx{if $A= B$}.$$
Since $\Id_I$ is a trivial cofibration and an isomorphism and thanks to the fact that  (trivial) cofibrations (resp. isomorphisms) are closed under coproduct, we see that in both cases, $\Ub(\Qs \alpha)_{(A,B)}$ is a cofibration (resp. isomorphism) which is trivial if $\alpha_{(A,B)}$ is;   and the statement holds also.
\item Finally if $\Qs=\Phi \Rpt \Gamma$, we now that  the unit of the adjunction $$\Phi: \msxpt \leftrightarrows \msxsu: \Ub, $$ gives an an isomorphism in the arrow category $\M^{[1]}$,
$$\Rpt \Gamma(\alpha)_{(A,B)} \cong   \Phi [\Rpt \Gamma(\alpha)]_{(A,B)},$$
and we conclude by the previous cases.
\end{enumerate}
\end{proof}

\begin{prop}\label{easy-pushout}
Let $\Qs$ be the functor $\Gamma$ (resp. $\Rpt \Gamma$, resp. $\Phi\Rpt\Gamma$).
Given a pushout square in $\msx$ (resp. $\msxpt$, resp. $\msxsu$)
\[
\xy
(0,18)*+{\Qs \F}="W";
(0,0)*+{\Qs \G}="X";
(30,0)*+{\Qs\G \cup^{\Qs \F}\Ba}="Y";
(30,18)*+{\Ba}="E";
{\ar@{->}^-{\ol{j}}"X";"Y"};
{\ar@{->}^-{\Qs\alpha}"W";"X"};
{\ar@{->}^-{j}"W";"E"};
{\ar@{->}^-{\ol{\Qs\alpha}}"E";"Y"};
\endxy
\]
where $\alpha : \F \to \G$ is a morphism in $\kx$ the following hold. 
\begin{enumerate}
\item If the component $\alpha_{(A,B)}: \F(A,B) \to \G(A,B)$ is a trivial cofibration in $\M$ then so is the component
$$ \ol{\Qs\alpha}_{(A,B)}: \Ba(A,B) \to [\Qs\G \cup^{\Qs \F}\Ba] (A,B).$$
\item If $\M$ is left proper  and the component $\alpha_{(A,B)}: \F(A,B) \to \G(A,B)$ is a cofibration, and if $j_{(A,B)} :(\Qs\F)(A,B) \to \Ba(A,B) $ is a weak equivalence then 
$$ \ol{j}_{(A,B)} :(\Qs\G)(A,B) \to [\Qs\G \cup^{\Qs \F}\Ba] (A,B) $$ 
is a weak equivalence. 

\item If $\F$, $\G$ and $\Ba$ are objects of $\msx$ (resp. $\msxpt$, $\msxsu$) and if $\sigma: \F \to \G$ is a morphism of precategories (resp. pointed , unital) such that the component $$\sigma_{(A,B)}: \F(A,B) \to \G(A,B),$$ is an isomorphism, then the component 
$$\ol{\sigma}_{(A,B)}: \Ba(A,B) \to [\G \cup^{\F}\Ba] (A,B)$$
in the pushout square below, is an isomorphism; in particular $\ol{\sigma}$ is an easy weak equivalence.
\[
\xy
(0,18)*+{\F}="W";
(0,0)*+{\G}="X";
(30,0)*+{\G \cup^{\F}\Ba}="Y";
(30,18)*+{\Ba}="E";
{\ar@{->}^-{\ol{j}}"X";"Y"};
{\ar@{->}^-{\sigma}"W";"X"};
{\ar@{->}^-{j}"W";"E"};
{\ar@{->}^-{\ol{\sigma}}"E";"Y"};
\endxy
\]
\end{enumerate} 
\end{prop}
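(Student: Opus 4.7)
The strategy is to reduce all three claims to the analogous claims for ordinary pushouts in $\M$ by showing that the pushout square in $\msx$ (resp.\ $\msxpt$, $\msxsu$), when evaluated at the $1$-morphism $(A,B)$, is again a pushout square in $\M$. Once we have this reduction, each assertion is a standard model-categorical fact: trivial cofibrations are closed under pushout, left properness means cofibrations preserve weak equivalences under pushout, and isomorphisms are closed under pushout (and any isomorphism at the initial object of a conical diagram is in particular an easy weak equivalence).

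The first step is the level-wise computation of the pushout at $(A,B)$. For $\msx$ this follows from the construction of colimits of lax functors: colimits are computed level-wise (using that $\M$ is monoidal closed), which we have already invoked in the sketch of the proof of the unitalization functor's existence. For $\msxpt = I_X \downarrow \msx$, the forgetful functor is a right adjoint on a comma category but it creates connected colimits, and in particular pushouts, so again the value at $(A,B)$ is the pushout in $\M$. For $\msxsu$, Proposition \ref{lem-creat-colimit-trois} states exactly that colimits are computed level-wise at the $1$-morphisms $(A,B)$, so the same conclusion holds.

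Next, by Lemma \ref{lem-q-no-change}, the map $\Ub(\Qs\alpha)_{(A,B)}$ is isomorphic in $\M^{[1]}$ to $\alpha_{(A,B)}$ itself, so it inherits the hypothesis of being a (trivial) cofibration or an isomorphism. Now the evaluated pushout in $\M$ takes the form
\[
\xy
(0,18)*+{\Qs\F(A,B)}="W";
(0,0)*+{\Qs\G(A,B)}="X";
(40,0)*+{[\Qs\G \cup^{\Qs\F}\Ba](A,B)}="Y";
(40,18)*+{\Ba(A,B)}="E";
{\ar@{->}^-{\ol{j}_{(A,B)}}"X";"Y"};
{\ar@{->}_-{\Qs(\alpha)_{(A,B)}}"W";"X"};
{\ar@{->}^-{j_{(A,B)}}"W";"E"};
{\ar@{->}^-{\ol{\Qs\alpha}_{(A,B)}}"E";"Y"};
\endxy
\]
with the left vertical map sharing all relevant properties with $\alpha_{(A,B)}$. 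Assertion $(1)$ is then immediate: in any model category the pushout of a trivial cofibration is a trivial cofibration. Assertion $(2)$ is the definition of left properness applied to this pushout square. For assertion $(3)$, the same level-wise analysis applies directly to $\sigma: \F\to\G$ (no need to pass through $\Qs$), and the pushout of an isomorphism along any map is an isomorphism; since $(A,B)$ is the initial object of $\sx(A,B)^{\mathrm{op}}$, an isomorphism there is in particular a weak equivalence at the initial entry, hence $\ol{\sigma}$ is an easy weak equivalence.

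The main subtlety I anticipate is justifying rigorously the level-wise computation of pushouts at $(A,B)$ in $\msxpt$ and $\msxsu$: for $\msxsu$ the unitalization may in principle modify values at longer chains $(A,\ldots,B)$, but Theorem \ref{thm-unitalization-preservation}$(2)$ together with Proposition \ref{lem-creat-colimit-trois} confirms that the $(A,B)$-entry is unaffected, so the argument goes through. Once that point is secured, the rest is mechanical.
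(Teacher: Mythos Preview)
Your proposal is correct and follows essentially the same approach as the paper: reduce to a pushout square in $\M$ at the entry $(A,B)$ via Proposition \ref{lem-creat-colimit-trois}, then invoke standard model-categorical closure properties. One small imprecision: you write that $\Ub(\Qs\alpha)_{(A,B)}$ is ``isomorphic in $\M^{[1]}$ to $\alpha_{(A,B)}$ itself,'' but when $A=B$ and $\Qs$ involves $\Rpt$ it is $\alpha_{(A,A)}\sqcup\Id_I$ rather than $\alpha_{(A,A)}$ (see the proof of Lemma \ref{lem-q-no-change}); this does not affect the argument since Lemma \ref{lem-q-no-change} already guarantees the needed (trivial) cofibration property, and that is all you use.
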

\begin{proof}
This is a direct consequence of Assertion $(2)$ of  Proposition \ref{lem-creat-colimit-trois}, which says that colimits in $\msxsu$ are computed level-wise at the $1$-morphism $(A,B)$. 
\end{proof}

\begin{rmk}
To have an intuition of why the proposition holds, it suffices to observe that  all $1$-morphisms of the form $(A,B)$ are initial and there is no morphism in $\sx$ whose target is $(A,B)$ except the identity; and as outlined earlier in the proof of Lemma \ref{Gamma-no-change-easy}, $(A,B)$ is indecomposable. It follows that 
for a lax diagram $\F$, the object $\F(A,B)$ doesn't receive (non trivial) laxity maps; which means that $\F(A,B)$ is not subject to algebraic constraints. Therefore colimits at the level $(A,B)$ are computed level-wise.
\end{rmk}

\subsection{The model structure for a fixed set of objects}
Let $\kxeasy$ be the model category obtained from Corollary \ref{model-kx-easy}. Denote by $\I_{\kxeasy}$ and $\Ja_{\kxeasy}$ the respective generating sets of cofibrations and trivial fibrations therein.  Let $\Qs$ be the functor $\Gamma$ (resp. $\Rpt \Gamma$, resp. $\Phi\Rpt\Gamma$). 
\begin{thm}\label{easy-model-msx}
There is a combinatorial model structure on $\msx$ (resp. $\msxpt$, resp. $\msxsu$)  in which:
\begin{itemize}[label=$-$]
\item the weak equivalences are the easy weak equivalences; 
\item the fibrations are the maps $\sigma$ such that the component  $\sigma_{(A,B)}$ is a fibration;
\item the trivial fibrations are the maps $\sigma$ such that the component $\sigma_{(A,B)}$ is a trivial fibration. 
\end{itemize}
This model structure if furthermore left proper if $\M$ is.\\

The sets $\Qs\I_{\kxeasy}$ and $\Qs \Ja_{\kxeasy}$ are respectively the sets of generating cofibrations and trivial cofibrations.\\

We will denote by $\msxeasy$ (resp. $\msxpteasy$, resp. $\msxsueasy$) this model structure. The respective monadic adjunctions 
$$\Qs: \kxeasy \rightleftarrows  \msxeasy: \Ub,$$
$$ \Qs: \msxeasy \rightleftarrows  \msxpteasy: \Ub,$$
$$ \Qs: \msxpteasy \rightleftarrows  \msxsueasy: \Ub,$$
are Quillen adjunctions where $\Qs$ is left Quillen and $\Ub$ is right Quillen.
\end{thm}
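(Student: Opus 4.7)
The plan is to apply the recognition theorem (Theorem \ref{thm-recognition}) to each of $\msx$, $\msxpt$, $\msxsu$ with the proposed generators $\Qs\I_{\kxeasy}$ and $\Qs\Ja_{\kxeasy}$, and the class $\We$ of easy weak equivalences. All three categories are locally presentable — for $\msxsu$ this was established in the unitalization proposition, for $\msx$ and $\msxpt$ by earlier work — so smallness of the domains of $I$ and $J$ is automatic once we observe that $\Qs$ is a left adjoint and so preserves compactness. The class $\We$ inherits the two-out-of-three property and retract closure from $\M$, since it is defined by a single condition on the $(A,B)$-component. Conditions $(5)$ and $(6)$ of Theorem \ref{thm-recognition} are formal adjunction arguments: $\sigma\in (\Qs\I_{\kxeasy})\text{-inj}$ iff $\Ub\sigma\in \I_{\kxeasy}\text{-inj}$ iff the $(A,B)$-component of $\sigma$ is a trivial fibration in $\M$, and similarly for $J\text{-inj}$; combining these gives the required inclusions in both directions.

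The main obstacle is condition $(4)$, that every $J$-cell lies in $\We\cap I\text{-cof}$. The inclusion in $I\text{-cof}$ is formal because $\Ja_{\kxeasy}\subseteq \I_{\kxeasy}\text{-cof}$ in $\kx$ and $\Qs$, as a left adjoint, preserves the cell-complex structure. For the inclusion in $\We$, I would proceed in two stages. First, for a single pushout of a map $\Qs\alpha$ with $\alpha\in \Ja_{\kxeasy}$, Proposition \ref{easy-pushout}(1) yields that the $(A,B)$-component of the pushout map is a trivial cofibration in $\M$, so the pushout is in particular an easy weak equivalence. Second, transfinite composites of such maps are easy weak equivalences because Proposition \ref{lem-creat-colimit-trois}(2) (and its analogues in $\msx$ and $\msxpt$) ensures that colimits at the initial $1$-morphism $(A,B)$ are computed level-wise in $\M$; so a transfinite composite of easy weak equivalences is, at $(A,B)$, a transfinite composite of weak equivalences in $\M$ and hence a weak equivalence. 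This is the technically delicate point: the structural work of the preceding sections pays off precisely because $(A,B)$ is initial and indecomposable in $\sx(A,B)^{op}$, so the $(A,B)$-components are decoupled from the rest of the lax data.

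For left properness, I would first argue that every cofibration $\sigma$ in the proposed structure has $\sigma_{(A,B)}$ a cofibration in $\M$. By Lemma \ref{lem-q-no-change} this holds for the generating cofibrations in $\Qs\I_{\kxeasy}$, and level-wise computation of colimits at $(A,B)$ propagates the property through pushouts, transfinite composites, and retracts. Given a cofibration $\sigma:\F\to \G$ and a weak equivalence $w:\F\to \Ba$, restricting the pushout square to the $1$-morphism $(A,B)$ gives a pushout in $\M$ of a weak equivalence along a cofibration, so left properness of $\M$ yields an easy weak equivalence. Finally, the Quillen adjunction assertions are immediate: $\Qs$ is defined so as to send the chosen generating (trivial) cofibrations of the source to those of the target, and for the two adjunctions between $\msxeasy$, $\msxpteasy$, $\msxsueasy$ the forgetful functor $\Ub$ literally preserves $(A,B)$-components and therefore preserves fibrations and trivial fibrations by the very definition of these classes.
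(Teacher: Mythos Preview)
Your argument is correct and follows the route the paper itself flags in the Note after the proof: rather than invoking the Schwede--Shipley transfer lemma for the monadic adjunction $\Qs\dashv\Ub$, you verify the hypotheses of Theorem~\ref{thm-recognition} directly. Both approaches reduce to the same key input, namely Proposition~\ref{easy-pushout}, so there is no substantive difference.

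One small correction: in your treatment of condition~$(4)$ you write that a transfinite composite of easy weak equivalences is, at $(A,B)$, ``a transfinite composite of weak equivalences in $\M$ and hence a weak equivalence.'' That implication is false in general. What you actually established a few lines earlier is stronger: each $(A,B)$-component is a \emph{trivial cofibration} in $\M$, and trivial cofibrations are closed under transfinite composition. Replace ``weak equivalences'' by ``trivial cofibrations'' in that sentence and the argument goes through cleanly. Your left-properness argument is likewise sound and in fact slightly more explicit than the paper's one-line citation of Proposition~\ref{easy-pushout}(2), since you spell out why an arbitrary cofibration (not just a generating one) has cofibrant $(A,B)$-component.
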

\begin{proof}
Thanks to Proposition \ref{easy-pushout}, we know that the pushout of a generating trivial cofibration is an easy weak equivalence and this the key condition for the transfer lemma of Schwede-Shipley \cite{Sch-Sh-Algebra-module} with the respect to the monadic adjunction $ \Qs \dashv \Ub$.

The left properness is given by the second assertion of Proposition \ref{easy-pushout}. \end{proof}
\begin{note}
One can avoid the use of the transfer lemma of \cite{Sch-Sh-Algebra-module} and do everything directly by Theorem \ref{thm-recognition}. With both methods, the proof boils down to check that the pushout along a generating trivial cofibration is a weak equivalence. 
\end{note}
\subsubsection{Hierarchy of precategories}
We take a moment to outline an important class of precategories that will be needed later. If $A,B$ are objects of $\sx$, we will denote by $\sx(A,B)_{\geq 2} \subset \sx(A,B)$ the full  subcategory of $1$-morphisms of degree $\geq 2$. This simply means that we \ul{remove} the $1$-morphism $(A,B)$ which is the co-initial object in $\sx(A,B)$. Similarly we have the opposite category $\sx(A,B)^{op}_{\geq 2}$.\\

As both $\sx(A,B)$ and $\sx(A,B)^{op}$ are Reedy $1$-categories, it's not hard to see that 
$\sx(A,B)$ is isomorphic to the \emph{latching category} of $\sx(A,B)$ at $(A,B)$ and dually $\sx(A,B)^{op}_{\geq 2}$ is isomorphic to the \emph{matching category} of $\sx(A,B)^{op}$ at $(A,B)$.
\begin{df}\label{def-deux-const}
Say that a precategory $\F: \sxop \to \M$ is \ul{$2$-constant} if for every pair $(A,B)$ of objects of $\sx$, the restriction to $\sx(A,B)^{op}_{\geq 2}$  of the component 
$$\F_{AB}: \sx(A,B)^{op} \to \ul{M},$$
is a constant functor.
\end{df}
\begin{ex}\label{ex-deux-constant}
The $2$-constant precategories are the most natural type of co-Segal categories. They appear, for example, when we do homotopy transfer. In fact given any classical $\M$-category $\C$, if we choose (randomly) a weak equivalence $\tld{\C}(A,B) \xrightarrow[f]{\sim}  \C(A,B)$ for each pair $(A,B)$ of objects, e.g a cofibrant replacement functor;  then there is a $2$-constant co-Segal category $\tld{\C}$ which is canonically weakly equivalent to $\C$. We describe very briefly $\tld{\C}$ as follows.\\

The component $\tld{\C}_{AB}:\sx(A,B)^{op} \to \ul{M}$, takes $(A,B)$ to $\tld{\C}(A,B)$; and is constant of value $\C(A,B)$ on 
$\sx(A,B)^{op}_{\geq 2}$. The unique structure map $(A,B) \to (A,...,B)$ is sends to the chosen weak equivalence $\tld{\C}(A,B) \xrightarrow{\sim} \C(A,B).$\\
The laxity map correspond to either one of the following composites.
$$ \C(A,B) \otimes \C(B,C) \to \C(A,C) \hspace*{0.2in} \tx{the composition in $\C$};$$
$$\tld{\C}(A,B) \otimes \tld{\C}(B,C) \xrightarrow{f\otimes f} \C(A,B) \otimes \C(B,C) \to \C(A,C);$$
$$\C(A,B) \otimes \tld{\C}(B,C) \xrightarrow{\Id \otimes f} \C(A,B) \otimes \C(B,C) \to \C(A,C);$$
$$\tld{\C}(A,B) \otimes \C(B,C) \xrightarrow{f\otimes  \Id} \C(A,B) \otimes \C(B,C) \to \C(A,C).$$
The coherence axiom follows from the fact that the composition in $\C$ is associative.  
\end{ex}
The above example has a unital version which we give as a lemma.

\begin{lem}\label{lem-deux-constant}
Let $\C$ be a strict $\M$-category in the usual sense. Assume that for every $(A,B)$ we have a morphism $f: \tld{\C}A,B) \to \C(A,B)$ and that if $A=B$, the unity  $I_A: I \to \C(A,A)$ factorizes through $f: \tld{\C}(A,A) \to \C(A,A)$ as $I_A= f\circ I'_A$, for some map $I'_A: I \to \tld{\C}(A,A).$\\

Then the precategory $\tld{\C}$ constructed previously is a unital precategory. 
\end{lem}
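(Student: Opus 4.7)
The plan is to verify, with the maps $I'_A \colon I \to \tld{\C}(A,A)$ serving as unities on $\tld{\C}$, that the left and right strong invariance diagrams of Definition \ref{unital-precat} commute. Since Example \ref{ex-deux-constant} has already established that $\tld{\C}$ is a well-defined normal lax functor, only these unital axioms remain to be checked. The strategy is to split each verification according to the degree of $s$, namely $\degb(s)=1$ or $\degb(s)\geq 2$, so as to exploit the $2$-constancy of $\tld{\C}$ on $\sx(A,B)^{op}_{\geq 2}$.

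For the left invariance with $s=(A,B)$ of degree $1$, the concatenation $(A,A)\otimes s=(A,A,B)$ has degree $2$, so $\tld{\C}[(A,A)\otimes s]=\C(A,B)$ by $2$-constancy, the structure map $\tld{\C}(\sigma)$ is the chosen $f\colon \tld{\C}(A,B)\to \C(A,B)$, and the laxity map is the ``two $\tld{\C}$-factor'' composite $\tld{\C}(A,A)\otimes \tld{\C}(A,B)\xrightarrow{f\otimes f}\C(A,A)\otimes \C(A,B)\to \C(A,B)$ from Example \ref{ex-deux-constant}. Precomposing with $I'_A\otimes \Id$ and using the hypothesis $I_A = f\circ I'_A$ together with the strict unity of $\C$, this composite reduces to $f$, matching the right-hand branch of the invariance square. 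For $s$ of degree $\geq 2$, both $\tld{\C}(s)$ and $\tld{\C}[(A,A)\otimes s]$ equal $\C(A,B)$ and $\tld{\C}(\sigma)=\Id$; the relevant laxity map is $\tld{\C}(A,A)\otimes \C(A,B)\xrightarrow{f\otimes \Id}\C(A,A)\otimes \C(A,B)\to \C(A,B)$, and precomposing with $I'_A\otimes \Id$ again collapses, via $I_A=f\circ I'_A$ and the strict unity of $\C$, to the identity on $\C(A,B)$.

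The right invariance diagrams are entirely symmetric and dealt with by the same two-case analysis using $I'_B$ in place of $I'_A$ and the right unity of $\C$. The only real work is bookkeeping: one must match the four possible forms of the laxity map listed in Example \ref{ex-deux-constant} against the correct degree pattern for the pair $(s,(A,A)\otimes s)$ (respectively $(s,s\otimes (B,B))$), and observe that the factorization assumption $I_A=f\circ I'_A$ is tailored precisely to push each verification through $f$ into the strict unity of $\C$. No further compatibility is needed because, as emphasized in the remark following Definition \ref{unital-precat}, the endomorphism object $\tld{\C}(A,A)$ receives no algebraic laxity constraints.
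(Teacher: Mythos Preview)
Your proof is correct and follows essentially the same approach as the paper: reduce via the factorization $I_A = f\circ I'_A$ and bifunctoriality of $\otimes$ to the strict left/right unity axiom of $\C$. The paper's proof is terser and only writes out the degree $\geq 2$ case explicitly, whereas you carry out the full case split on $\degb(s)$; both arguments rest on the same computation.
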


\begin{proof}
Using the fact that we have a category $\C$, one checks easily that the following composite is the same as the natural isomorphism $ I \otimes \C(A,B) \xrightarrow[\cong]{l} \C(A,B).$
$$I \otimes \C(A,B) \xrightarrow{I_A'\otimes \Id}\tld{\C}(A,A)\otimes \C(A,B) \xrightarrow{f \otimes \Id}\C(A,A) \otimes \C(A,B) \to \C(A,B).$$

In fact by the bifunctoriality of $\otimes$, the previous composite map is just the same as the following one.
$$I \otimes \C(A,B) \xrightarrow{I_A\otimes \Id} \C(A,A) \otimes \C(A,B) \to \C(A,B).$$

This gives the left invariance diagram for the unity, the right invariance is treated in the same way.
\end{proof}

In the beginning of the paper we mentioned the inclusion $\mcatx \hookrightarrow \msx$ that has a left adjoint $ |-|: \msx \to \mcatx$. Below we outline simply that this adjunction restrict to the inclusion $\mcatx \hookrightarrow \msxsu$.

%%%%%%
\begin{lem}
The inclusion functor $\mcatx \hookrightarrow \msxsu$ has a left adjoint denoted again $|-|: \msxsu \to \mcatx.$
\end{lem}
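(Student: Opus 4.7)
The plan is to build $|-|: \msxsu \to \mcatx$ by restricting the already-established adjunction $|-|: \msx \to \smcatx$ and using the strongly unital structure to supply identities.

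First, I would define the functor on objects. Given $\F \in \msxsu$, set $|\F|(A,B) := \colim \F_{AB}$, exactly as in the nonunital lemma from the beginning of the section; this produces a semi-$\M$-category. The strongly unital data gives, for each $A$, a distinguished map $I_A : I \to \F(A,A)$. Composing with the canonical cocone map $\F(A,A) \to |\F|(A,A)$ yields a morphism $I \to |\F|(A,A)$ that we declare to be the identity of $A$ in $|\F|$. The condition that this actually turns the semi-category into a strict $\M$-category is part of the definition of \emph{relatively unital} (Definition \ref{unital-precat}(1)), and strongly unital implies relatively unital. Thus $|\F| \in \mcatx$.

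Next, functoriality: any $\sigma : \F \to \G$ in $\msxsu$ is, by definition, a morphism in $\msx$ that sends the chosen unit of $\F$ to that of $\G$. Applying the nonunital $|-|$ gives $|\sigma| : |\F| \to |\G|$ in $\smcatx$, and compatibility with $I_A$ through the canonical cocones shows $|\sigma|$ respects identities, hence lies in $\mcatx$. This makes $|-|$ a well-defined functor.

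For the adjunction, I would check directly that for $\F \in \msxsu$ and $\C \in \mcatx$,
\[
\Hom_{\mcatx}(|\F|, \C) \;\cong\; \Hom_{\msxsu}(\F, \C).
\]
Start from the existing natural bijection $\Hom_{\smcatx}(|\F|, \C) \cong \Hom_{\msx}(\F, \C)$ (viewing $\C$ as a semi-$\M$-category, respectively as a precategory via $\mcatx \hookrightarrow \smcatx \hookrightarrow \msx$). A map $|\F| \to \C$ in $\smcatx$ lies in $\mcatx$ iff it commutes with the identity morphisms $I \to |\F|(A,A)$ and $I \to \C(A,A)$. Under the bijection this condition transports to the condition that the corresponding map $\F \to \C$ carry $I_A^{\F}: I \to \F(A,A)$ to $I_A^{\C}$, which is precisely the definition of a morphism in $\msxsu$ (where $\C$ inherits its strongly unital structure from its genuine identities, and these clearly satisfy the left/right invariance diagrams).

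The main thing to verify carefully is that the ``distinguished unit'' viewpoint of $\msxsu$ (a chosen $I_A : I \to \F(A,A)$ preserved by morphisms) is compatible with the ``structural unit'' viewpoint of $\mcatx$ (identities built into the category structure) under the bijection. Once one unwinds that the identity of $A$ in $|\F|$ is tautologically the image of $I_A$, this compatibility is automatic, and the adjunction follows.
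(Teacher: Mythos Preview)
Your overall strategy coincides with the paper's: define $|\F|(A,B)=\colim \F_{AB}$, equip it with the image of $I_A$ as identity, and then restrict the nonunital adjunction. The adjunction half of your argument is fine.

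The gap is in the sentence ``strongly unital implies relatively unital.'' You invoke this implication as if it were already available, but it is exactly the nontrivial content of the lemma, and the paper does \emph{not} establish it anywhere before this point. What must be checked is that the composite
\[
I\otimes |\F|(A,B) \xrightarrow{I_A\otimes\Id} |\F|(A,A)\otimes |\F|(A,B)\xrightarrow{|\varphi|} |\F|(A,B)
\]
equals the canonical isomorphism $I\otimes |\F|(A,B)\cong |\F|(A,B)$. The strongly unital diagrams live at the level of individual $\F(s)$, not at the colimit; passing to $|\F|$ requires an argument. The paper supplies it explicitly as a Claim: one builds two cocones $I\otimes\F(s)\to |\F|(A,B)$ (one via the laxity map and the canonical inclusion of $\F(A,A)$, the other via the isomorphism $I\otimes\F(s)\cong\F(s)$ followed by the canonical map), uses the strongly unital square for each $s$ to see that the two cocones coincide componentwise, and then invokes uniqueness of the induced map out of $\colim(I\otimes\F(s))\cong I\otimes|\F|(A,B)$.

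In short, your proof is structurally the same as the paper's, but you have skipped the one step that actually requires work. Inserting the cocone/uniqueness argument above (or an equivalent verification) would complete it.
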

\begin{proof}
We can use the adjoint functor theorem for locally presentable categories since the inclusion $\mcatx \hookrightarrow \msxsu$ preserves directed colimits and limits. Indeed, in both categories, limits and directed colimits are computed level-wise. \\

Just like in the old adjunction, there is a direct proof that doesn't even requires $\M$ to be locally presentable, but monoidal closed. And the formula is the same i.e, given a pair $(A,B)$ of elements of $X$ define the $\M$-category with hom-object
$$|\F|(A,B)= \colim \F_{AB}.$$

Using exactly the same method as in the proof of Proposition \ref{limit-msxsu} one proves the following claim. 
\begin{claim}
The two maps hereafter are equal.
$$I \otimes |\F|(A,B) \to |\F|(A,A) \otimes |\F|(A,B) \to |\F|(A,B),$$
$$I \otimes |\F|(A,B) \xrightarrow{\cong}|\F|(A,B).$$
\end{claim}
Recall that the argument is the uniqueness of the universal map out of the colimit. More precisely, one uses this argument with respect to the two cocones starting  $ I \otimes \F(s)$ and ending at $\colim \F =|\F |(A,B)$ whose respective component is the following map (while $s$ runs through $\sx(A,B)^{op}$).  

$$I \otimes \F(s) \xrightarrow{\cong} \F(s) \xrightarrow{can} |\F |(A,B); $$
$$I \otimes \F(s) \xrightarrow{\Id_I \otimes can} I \otimes |\F|(A,B) \xrightarrow{I_A \otimes \Id} |\F |(A,A) \otimes  |\F |(A,B) \xrightarrow{|\varphi|} |\F |(A,B).$$

Using the commutativity of the diagrams involving the unity $I_A$ one finds that these two components are the same i.e, we have the same cocone, thus there is a unique map out of the colimit of $I \otimes \F(s)$ that gives the obvious factorizations. But $\colim I \otimes \F(s) \cong  I \otimes |\F|(A,B)$ and we get the left invariance. Proceeding in the same way we get the right invariance. 
\end{proof}
\begin{cor}
If a local model structure on $\mcatx$ exists then the adjunction
$$|-|: \msxsueasy \leftrightarrows \mcatx: \iota,$$
is a Quillen adjunction. 
\end{cor}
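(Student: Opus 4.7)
The plan is to verify the Quillen adjunction by showing that the right adjoint $\iota : \mcatx \hookrightarrow \msxsueasy$ preserves both fibrations and trivial fibrations; since these two conditions together imply preservation of weak equivalences between fibrant objects, checking them is sufficient. By a \emph{local model structure} on $\mcatx$ I mean the one in which a morphism $\sigma : \C \to \D$ is a weak equivalence (resp.~fibration) exactly when each component $\sigma_{AB} : \C(A,B) \to \D(A,B)$ is a weak equivalence (resp.~fibration) in $\M$ — this is the standard meaning in enriched category theory, and it is what the hypothesis ``if a local model structure exists'' refers to.

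The first key ingredient is the explicit description of fibrations and trivial fibrations in $\msxsueasy$ given in Theorem \ref{easy-model-msx}: a map $\tau : \F \to \G$ is a fibration (resp.~trivial fibration) precisely when the component $\tau_{(A,B)} : \F(A,B) \to \G(A,B)$ at the initial $1$-morphism $(A,B)$ is a fibration (resp.~trivial fibration) in $\M$, for every pair $(A,B) \in X^2$. The second key ingredient is the description of the inclusion $\iota$: a strict $\M$-category $\C$ is sent to the locally constant lax functor $\iota(\C): \sxop \to \M$ whose value at any chain $s = (A_0, \dots, A_n)$ is $\C(A_0, A_n)$, and in particular $\iota(\C)(A,B) = \C(A,B)$. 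Consequently, for a morphism $\sigma : \C \to \D$ in $\mcatx$, the component $\iota(\sigma)_{(A,B)}$ coincides with $\sigma_{AB}$.

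Combining these two observations, if $\sigma$ is a fibration in the local model structure on $\mcatx$, then $\sigma_{AB}$ is a fibration in $\M$ for all $(A,B)$, which is exactly the condition for $\iota(\sigma)$ to be a fibration in $\msxsueasy$. The same argument applies to trivial fibrations, since those are characterised by $\sigma_{AB}$ being a trivial fibration in $\M$ for every $(A,B)$, matching the characterisation of trivial fibrations in $\msxsueasy$. Therefore $\iota$ preserves fibrations and trivial fibrations, which is the defining condition for $|-| \dashv \iota$ to be a Quillen adjunction.

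There is essentially no obstacle here: the corollary is a direct bookkeeping consequence of the fact that, on strict $\M$-categories viewed as constant lax diagrams, the ``easy'' notions of fibration and weak equivalence at the initial entry $(A,B)$ reduce to the local ones on hom-objects. The only thing one should make explicit — and which the statement tacitly assumes — is that the local model structure on $\mcatx$ is defined by the standard local fibrations and local weak equivalences, so that the two model structures agree on the image of $\iota$.
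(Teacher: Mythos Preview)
Your proof is correct and takes essentially the same approach as the paper's own one-line proof, which simply observes that a local (trivial) fibration in $\mcatx$ is also a (trivial) fibration in $\msxsueasy$. You have spelled out the details behind that observation --- namely that $\iota(\sigma)_{(A,B)} = \sigma_{AB}$ because $\iota(\C)$ is locally constant --- but the underlying argument is identical.
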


\begin{proof}
Indeed a local (trivial) fibration in $\mcatx$ is also a (trivial) fibration in $\msxsueasy$.
\end{proof}

\begin{rmk}\label{rmk-associated-deux-constant}
Let $\eta: \F \to |\F|$ be the unit of the adjunction. In particular  we have a map $\eta_{(A,B)}: \F(A,B) \to |\F|(A,B)$ which is just the canonical map going to the colimit. Just like in Example \ref{ex-deux-constant} we can build a $2$-constant precategory $\tld{|\F|}$ using the maps $\eta_{(A,B)}$; and one has by definition $\tld{|\F|}(A,B):= \F(A,B)$. This gives a canonical map $\rho: \tld{|\F|} \to |\F|$ whose component at $(A,B)$ is the identity.  We also have a canonical map $\epsilon: \tld{|\F|} \to |\F|$ and we have a factorization of $\F \to |\F|$ as follows. 
$$\F \xrightarrow{\rho} \tld{|\F|} \xrightarrow{\epsilon} |\F|.$$
\end{rmk}

\begin{df}
Define the $2$-constant precategory associated to $\F$ to be the precategory $\tld{|\F|}$.
\end{df}

\begin{prop}\label{prop-equiv-deux-constant}
\begin{enumerate}
\item The map $\rho: \F \to \tld{|\F|}$ is an easy weak equivalence. 
\item Let $ L: \msxsu \to \Ba$ be a functor that takes easy weak equivalences to isomorphisms in $\Ba$. Then $L(\F \xrightarrow{\eta} |\F|)$ is an isomorphism in $\Ba$ if and only if $ L( \tld{|\F|} \xrightarrow{\epsilon} |\F|)$ is an isomorphism in $\Ba$
\end{enumerate}
\end{prop}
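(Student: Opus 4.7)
The plan is to reduce both assertions to a simple identity observation, using only the explicit construction of the associated $2$-constant precategory and the factorization of $\eta$ given in Remark \ref{rmk-associated-deux-constant}.

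For assertion $(1)$, I would unravel the construction of $\tld{|\F|}$ from Example \ref{ex-deux-constant}. By definition $\tld{|\F|}(A,B) = \F(A,B)$ and the map $\rho: \F \to \tld{|\F|}$ is the canonical morphism of precategories whose component at the $1$-morphism $(A,B)$ is (literally) the identity of $\F(A,B)$, while at longer chains $s = (A,\ldots, B)$ it is the structural map $\F(s) \to |\F|(A,B) = \tld{|\F|}(s)$ coming from the universal cocone. Since easy weak equivalences are detected only at the $1$-morphisms of the form $(A,B)$, and since an identity is trivially a weak equivalence in $\M$, the map $\rho$ is an easy weak equivalence.

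For assertion $(2)$, I would use the factorization $\eta = \epsilon \circ \rho$ from Remark \ref{rmk-associated-deux-constant} and the hypothesis on $L$. By $(1)$, $\rho$ is an easy weak equivalence, so $L(\rho)$ is an isomorphism in $\Ba$. Applying $L$ to the factorization gives
\[
L(\eta) \;=\; L(\epsilon) \circ L(\rho),
\]
and since $L(\rho)$ is invertible in $\Ba$, the $2$-out-of-$3$ property for isomorphisms (trivially valid in any category) yields that $L(\eta)$ is an isomorphism if and only if $L(\epsilon)$ is.

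There is no real obstacle here; the substance of the statement is packaged into the construction of $\tld{|\F|}$ itself (one must know that $\rho_{(A,B)}$ is an identity, which requires remembering that $\tld{|\F|}$ is built precisely so that its value at the initial $1$-morphism coincides with $\F(A,B)$). The only minor care needed is to confirm that $\rho$ as defined is indeed a morphism of precategories in $\msxsu$ (i.e.\ compatible with the laxity and unity maps), but this is part of Lemma \ref{lem-deux-constant} applied to $\C := |\F|$ with $f := \eta_{(A,B)}$, and so can be cited rather than reproved.
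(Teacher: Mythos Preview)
Your proof is correct and follows essentially the same route as the paper: both argue that $\rho_{(A,B)}$ is the identity (hence an easy weak equivalence) and then deduce Assertion $(2)$ from the factorization $\eta = \epsilon \circ \rho$ together with the $3$-for-$2$ property of isomorphisms in $\Ba$. Your write-up is slightly more detailed in justifying that $\rho$ is a genuine morphism in $\msxsu$ (via Lemma \ref{lem-deux-constant}), but the substance is identical.
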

\begin{proof}
From the previous remark we know that the component of $\rho$ at $(A,B)$ is the identity and Assertion $(1)$ follows. Assertion $(2)$ is a consequence of Assertion $(1)$ together with the fact that isomorphisms in any category $\Ba$ have the $3$-for-$2$ property.
\end{proof}
\begin{rmk}
Note that the proposition holds also for functors 
from $\msx$ (resp. $\msxpt$) to $\Ba$ that takes easy weak equivalences to isomorphisms.
\end{rmk}
\subsubsection{Some pushouts and lifting problems in $\msxsu$, $\msxpt$ and $\msx$}
As usual we will limit our discussion to the category $\msxsu$ since the methods are the same for the two other categories.

\begin{nota}\label{notation-set-localization}
\begin{enumerate}
\item If $\alpha: U \to V$ is a morphism of $\M$, we will denote by 
$$\alpha_{\downarrow_{\Id_V}}: \alpha \to \Id_V,$$ 
the morphism in the arrow category $\M^{[1]}$ which is identified with the following commutative square. 
\[
\xy
(0,18)*+{U}="W";
(0,0)*+{V}="X";
(30,0)*+{V}="Y";
(30,18)*+{V}="E";
{\ar@{->}^-{\Id}"X";"Y"};
{\ar@{->}^-{\alpha}"W";"X"};
{\ar@{->}^-{\alpha}"W";"E"};
{\ar@{->}^-{\Id}"E";"Y"};
\endxy
\]
\item If $u$ is a $2$-morphism in $\sxop$, denote by $\Ev_u$ the evaluation at $u$
$$\Ev_u:\msxsu \to \M^{[1]},$$
that takes $\F$ to $\F(u)$.
\item The functor $\Ev_u$ has a left adjoint that will be denoted by $\Psi_{u}:  \M^{[1]} \to \msxsu$ or simply $\Psi$ if there is no potential confusion. It follows that if $\alpha$ is a morphism in $\M$ (=object of  $\M^{[1]}$) and $\F \in \msxsu$, we have functorial isomorphism of sets
$$\Hom_{\M^{[1]}}(\alpha, \F(u)) \cong \Hom_{\msxsu}(\Psi(\alpha), \F).$$
For the record if $u \in \sx(A,B)$, $\Psi$ is obtained as a composite of left adjoints as follows. 
$$\M^{1} \to \Hom[\sx(A,B)^{op}, \M] \xrightarrow{\delta_{AB}} \kx \xrightarrow{\Gamma} \msx \xrightarrow{\Rpt} \msxpt \xrightarrow{\Phi} \msxsu.$$
\item We will denote by $\Psi(\av): \Psi(\alpha) \to \Psi(\Id_V)$ the image of $\av$ by $\Psi$.
\item For every $1$-morphism $s=(A,...,B)$ in $\sx(A,B)^{op}$, we have a unique $2$-morphism 
$$u_s: (A,B) \to s.$$ 
For simplicity we will denote again like in Proposition \ref{prop-av} by 
$$\Psi_{s}: \M^{[1]} \to \msxsu$$ the previous left adjoint 
when $u=u_s$.
\end{enumerate}
\end{nota}
By the universal property of the pushout of $\alpha$ along itself, we find a unique map $i : V \cup^{U}V \to V$  that makes everything commutative in the following diagram.
\[
\xy
(0,20)*+{U}="W";
(0,0)*+{V}="X";
(40,0)*+{V}="Y";
(40,20)*+{V}="E";
{\ar@{->}_-{\Id}"X";"Y"};
{\ar@{->}^-{\alpha}"W";"X"};
{\ar@{->}^-{\alpha}"W";"E"};
{\ar@{->}^-{\Id}"E";"Y"};
(20,8)*+{V \cup^{U}V}="Q";
{\ar@{.>}^-{q}"Q";"Y"};
{\ar@{.>}^-{i_1}"X";"Q"};
{\ar@{.>}^-{i_0}"E";"Q"};
\endxy
\]
In Proposition \ref{prop-av} we write $i_{\alpha}$ for $i_1$. The inner square is a morphism in $\M^{[1]}$, that was denoted in that proposition by $\xi_{\alpha}: \alpha \to i_1$. We also have a map $\ell_\alpha: i_1 \to \Id_V$ which we display as the commutative square hereafter.
\[
\xy
(0,18)*+{V}="W";
(0,0)*+{V \cup^{U}V}="X";
(30,0)*+{V}="Y";
(30,18)*+{V}="E";
{\ar@{->}^-{q}"X";"Y"};
{\ar@{->}_-{i_\alpha=i_1}"W";"X"};
{\ar@{->}^-{\Id}"W";"E"};
{\ar@{->}^-{\Id}"E";"Y"};
\endxy
\]

It's easy to see that we have a factorization of $\av= \elalpha \circ \xialpha$. Applying $\Psi_s$ we get the equality
$$\Psi_s(\av)= \Psi_s(\elalpha) \circ \Psi_s(\xialpha).$$ 

\paragraph{A fundamental lemma} The following lemma is important since we shall use it to establish a \emph{strictification theorem}. We use the language of \emph{cell-complex} and we refer the reader to \cite{Hov-model} for a definition.
\begin{lem}\label{lem-cell-complex-av}
Let $L: \msxsu \to \Ba$ be a functor that sends easy weak equivalences to isomorphisms in $\Ba$. Then the following hold. 
\begin{enumerate}
\item $ L[\Psi_s(\av)]$ is an isomorphism in $\Ba$ if and only if $L[\Psi_s(\xialpha)]$ is an isomorphism in $\Ba$.
\item The functor $L$ sends any $\{\Psi_s(\av)\}$-cell complex to an isomorphism if and only if it sends any $\{\Psi_s(\xialpha)\}$-cell complex to an isomorphism.
\end{enumerate}
\end{lem}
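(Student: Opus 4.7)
The plan is to reduce both parts to the single key observation that $\Psi_s(\elalpha)$ is an easy weak equivalence in $\msxsu$, which in turn comes from tracking $(A,B)$-components through the chain of left adjoints defining $\Psi_s$.

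For part (1), I would start from the factorization $\av = \elalpha \circ \xialpha$ in $\M^{[1]}$ already recorded in the text. Since $\Psi_s$ is a functor this gives $\Psi_s(\av) = \Psi_s(\elalpha) \circ \Psi_s(\xialpha)$. The map $\elalpha:i_1 \to \Id_V$ has its $0$-component equal to $\Id_V$, which is an isomorphism. Applying Proposition \ref{prop-av}(3) to the first leg $\M^{[1]} \to \Hom[\sx(A,B)^{op},\M]$ of $\Psi_s$, then chasing through $\delta_{AB}$, $\Gamma$, $\Rpt$ and $\Phi$ via Lemma \ref{Gamma-no-change-easy}, Lemma \ref{lem-q-no-change} and Theorem \ref{thm-unitalization-preservation} (each of which preserves the $(A,B)$-component up to isomorphism), I would conclude that $\Psi_s(\elalpha)_{(A,B)}$ is an isomorphism in $\M$. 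Hence $\Psi_s(\elalpha)$ is an easy weak equivalence in $\msxsu$, so $L[\Psi_s(\elalpha)]$ is an isomorphism in $\Ba$. The three-for-two property of isomorphisms in $\Ba$, applied to the factorization above, then yields the stated equivalence.

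For part (2), I would use the factorization cell by cell: every pushout of $\Psi_s(\av)$ refines uniquely into a pushout of $\Psi_s(\xialpha)$ followed by a pushout of $\Psi_s(\elalpha)$. By Proposition \ref{easy-pushout}(3), applied at the $(A,B)$-level where $\Psi_s(\elalpha)$ is an isomorphism, each $\elalpha$-pushout remains an isomorphism at $(A,B)$, hence is an easy weak equivalence. Since colimits in $\msxsu$ are computed level-wise at $(A,B)$ by Proposition \ref{lem-creat-colimit-trois}, any transfinite composition of $\elalpha$-cells is itself an easy weak equivalence. A transfinite induction along the refined complex then handles the forward direction ``$\xialpha$-cell complexes go to iso $\Rightarrow$ $\av$-cell complexes go to iso'': successor stages combine a $\xialpha$-step (iso by hypothesis) with an $\elalpha$-step (iso as easy weak equivalence), and limit stages again use the level-wise computation of colimits. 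For the reverse direction, given a $\{\Psi_s(\xialpha)\}$-cell complex $X \to Z$, I would enlarge it to a $\{\Psi_s(\av)\}$-cell complex $X \to \tilde Y$ by attaching the associated $\elalpha$-cells; the comparison map $Z \to \tilde Y$ is built entirely from $\elalpha$-pushouts, hence is an easy weak equivalence, and 3-for-2 applied in $\Ba$ to $L(X \to \tilde Y) = L(Z \to \tilde Y)\circ L(X \to Z)$ gives the conclusion.

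The main obstacle I expect is controlling $L$ on the transfinite compositions that appear in the definition of a cell complex: each individual refinement step goes to an isomorphism in $\Ba$, but concluding that $L$ of the entire transfinite composition is an isomorphism relies on the fact that the $\elalpha$-subcomposition is already an easy weak equivalence in $\msxsu$ (via the level-wise colimit computation at $(A,B)$), so that $L$ automatically inverts it without needing to preserve the transfinite composition itself. The $\xialpha$-subcomposition is then handled by the hypothesis on $L$, and packaging these together in the correct order across successor and limit ordinals is the only delicate bookkeeping required.
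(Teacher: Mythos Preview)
Your argument for Part~(1) is exactly the paper's: track the $(A,B)$-component of $\elalpha$ through the chain of left adjoints using Proposition~\ref{prop-av}(3), Lemma~\ref{Gamma-no-change-easy}, and Lemma~\ref{lem-q-no-change} to see that $\Psi_s(\elalpha)$ is an easy weak equivalence, then apply $3$-for-$2$.

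For Part~(2), your single-cell analysis again matches the paper: factor each $\Psi_s(\av)$-pushout as a $\Psi_s(\xialpha)$-pushout followed by a $\Psi_s(\elalpha)$-pushout, and use Proposition~\ref{easy-pushout}(3) to see that the latter is always an easy weak equivalence. The paper stops there, simply asserting that the single-pushout case suffices; you go further and try to handle the transfinite composition honestly, which is commendable since the paper's ``it's enough to show'' is not justified as written.

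However, your first description of the forward direction (``limit stages again use the level-wise computation of colimits'') does not work: level-wise computation of colimits at $(A,B)$ says something about $\msxsu$, not about $L$, and $L$ need not preserve filtered colimits. Your final paragraph correctly identifies that the real argument must produce, in $\msxsu$ itself, a genuine $\{\Psi_s(\xialpha)\}$-cell complex $X \to Z$ together with a comparison map between $Z$ and the colimit $W$ of the $\av$-tower that is an easy weak equivalence, so that one can apply the hypothesis on $L$ to $X\to Z$ and $3$-for-$2$. What you do not say is how to build this parallel $\xialpha$-tower: the attaching maps of the $\av$-complex land in the $X_i$, and transporting them to the $Z_i$ while maintaining a coherent comparison $X_i \to Z_i$ requires using that $\elalpha$ (and hence $\Psi_s(\elalpha)$) admits a section, namely $(\Id_V, i_0):\Id_V \to i_1$, so that one can produce the map $\Psi_s(\Id_V) \to Z_{i+1}$ needed by the universal property of $X_{i+1}$. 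This is the concrete content hiding behind your phrase ``delicate bookkeeping''; once you supply it, both directions go through and your treatment is in fact more complete than the paper's.
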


\begin{proof}
The map $\elalpha$ has the key property that the top component is the identity $\Id_V$ which is a (wonderful) isomorphism.  Now thanks to Assertion $(3)$ of Proposition \ref{prop-av}, we know that it's image in $\Hom[\sx(A,B)^{op}, \M]$ by the left adjoint\footnote{This is a big abuse of notation} to  $\Ev_{u_s}$  is a morphism $\sigma$ with the property that the component $\sigma_{(A,B)}$ is an isomorphism in $\M$. 

In fact $\sigma_{(A,B)}$ is isomorphic to $\Id_V$. Applying $\delta_{AB}$ we have a morphism in $\kx$ which is an \ul{easy weak equivalence} and with the property that the component at every $(A',B')$ is an isomorphism (not only $(A,B)$). This follows from the definition of $\delta_{AB}$; these component are simply the identity $\Id_{\emptyset}$ of the initial object of $\M$. 

Now thanks to Lemma \ref{Gamma-no-change-easy} and Lemma \ref{lem-q-no-change} we know that the image in $\msxsu$ (obtained by $\Psi_s$) has the same property i.e, that the component $$ \Psi_s(\elalpha)_{(A',B')}: [\Psi_s(i_1)](A',B')  \to  [\Psi_s(\Id_V)](A',B') ,$$ 
is an isomorphism in $\M$ (in particular a trivial cofibration). It follows that $\Psi_s(\elalpha)$ is an easy weak equivalence so that it's image by $L$ is an isomorphism in $\Ba$.  Assertion $(1)$ follows by $3$-for-$2$ of isomorphisms in $\Ba$ applied to the equality
$$L[\Psi_s(\av)]= L[\Psi_s(\elalpha)] \circ L[\Psi_s(\xialpha)].$$
\\
\\
For Assertion $(2)$ it's enough to show that a pushout of $\Psi_s(\av)$ along some morphism $\sigma: \Psi_s(\alpha) \to \Ea$ is sent to an isomorphism if and only if the pushout of $\Psi_s(\xialpha)$ along the same $\sigma$ is sent to an isomorphism. To establish that, we're going the following facts. 
\begin{enumerate}
\item We will use the well known fact that \emph{a pushout followed by a pushout is a pushout}; we will refer it as the `concatenation of pushouts'.
\item  Every component of the map $\Psi_s(\elalpha)_{(A',B')}$ is an isomorphism therefore, thanks to Assertion $(3)$ of Proposition \ref{easy-pushout} we know that the pushout of $\Psi_s(\elalpha)$ is alway an easy  weak equivalence. 
\end{enumerate} 

Now consider $\theta_1: \Ea \to \F$, the pushout of $\Psi_s(\xialpha)$ along $\sigma$; it's the canonical map going to the pushout object. Denote by $\ol{\sigma}: \Psi_s(i_1) \to \F$ the other canonical map.\\

Let $\theta_2: \F \to \G$ be the pushout of $\Psi_s(\elalpha)$ along $\ol{\sigma}: \Psi_s(i_1) \to \F$.  By concatenation of pushouts the map  $\eta =\theta_2 \circ \theta_1$ is the pushout of $\Psi_s(\av)$ along $\sigma$.\\
Thanks to the above facts, we know already that $L(\theta_2)$ is an isomorphism since $\theta_2$ is an easy weak equivalence. Then Assertion $(2)$ follows also by $3$-for-$2$ of isomorphisms in $\Ba$ applied to the equality 
$$L(\theta)= L(\theta_2) \circ L(\theta_1).$$ 

\end{proof}

\subsubsection{\emph{co-Segalification} for $2$-constant precategories}
If we want to define a functor $\Sim$ that takes a $2$-constant precategory $\F$ to a precategory that satisfies the co-Segal conditions the natural thing to do is to factorize the map $\F(A,B) \to |\F|(A,B)$ as a cofibration followed by a trivial fibration as follows.
$$\F(A,B) \hookrightarrow m \xtwoheadrightarrow{\sim} |\F|(A,B).$$
After this we would want to set $\Sim(\F)(A,B)=m$ and $|\Sim(\F) |= |\F|$. This gives a $2$-constant diagram that satisfies the co-Segal conditions. The purpose of the following discussion is to show that this is done as $\kb$-injective replacement in $\msxsu$ where $\kb$ is some set of maps that will be defined in a moment.\\  
 The first ingredient we need is the following lemma. 

\begin{lem}\label{lem-deux-fondamental}
Let  $s=(s)$ and $\Psi_{s}: \M^{[1]} \to \msxsu$ be the left adjoint to the evaluation at $u_s: (A,B) \to (s)$. Let $\alpha: U \to V$  be a morphism of $\M$  and $\F$ be a \ul{$2$-constant} precategory.\\

Let $\sigma : \Psi_s(\av) \to \F$ be a morphism in $\msxsu$ and let  $\Ea=\Psi_s(\Id_V) \cup^{\Psi_s(\alpha)} \F$ be the object obtained by the following pushout diagram in $\msxsu$. 

\[
\xy
(0,18)*+{\Psi_s(\alpha)}="W";
(0,0)*+{\Psi_s(\Id_V) }="X";
(40,0)*+{\Psi_s(\Id_V) \cup^{\Psi_s(\alpha)} \F}="Y";
(40,18)*+{\F}="E";
{\ar@{->}^-{}"X";"Y"};
{\ar@{->}^-{\Psi_s(\av)}"W";"X"};
{\ar@{->}^-{\sigma}"W";"E"};
{\ar@{->}^-{\varepsilon}"E";"Y"};
\endxy
\]

Then $\Ea=\Psi_s(\Id_V) \cup^{\Psi_s(\alpha)} \F$ is also a \ul{$2$-constant} precategory and has the following properties.
\begin{enumerate}
\item If $(A',B')\neq (A,B)$ then the natural transformation $\varepsilon: \F_{A'B'} \to \Ea_{A'B'}$ is an isomorphism.
\item The natural transformation $\varepsilon_{\geq 2}:  \F_{A'B',\geq 2} \to \Ea_{A'B', \geq 2}$ in $\Hom[\sx(A',B')^{op}_{\geq 2}, \M]$ is a isomorphism for all $(A',B')$ (including $(A,B)$).
\end{enumerate}
\end{lem}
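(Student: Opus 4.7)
The approach is to exploit that the morphism $\av=\alpha_{\downarrow_{\Id_V}}$ has its $0$-component equal to $\alpha$ and its $1$-component equal to $\Id_V$. After applying $\Psi_s$, the resulting morphism is isomorphic to $\alpha$ at the length-$1$ entry $(A,B)$ and an isomorphism at every other length-$1$ entry. Consequently, the pushout defining $\Ea$ modifies $\F$ only at the $(A,B)$-entry, where it attaches $V$ to $\F(A,B)$ along $\alpha:U\to V$. I organize the argument in two main steps: first a level-wise analysis of $\Psi_s(\av)$ giving Assertion~(1), then an explicit identification of $\Ea$ with a $2$-constant precategory giving Assertion~(2).

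For the first step I would track $\Psi_s$ through its factorization
\[ \M^{[1]}\to\Hom[\sx(A,B)^{op},\M]\xrightarrow{\delta_{AB}}\kx\xrightarrow{\Gamma}\msx\xrightarrow{\Rpt}\msxpt\xrightarrow{\Phi}\msxsu. \]
By Proposition~\ref{prop-av}(3), the image of $\av$ in $\Hom[\sx(A,B)^{op},\M]$ has component at $(A,B)$ isomorphic in $\M^{[1]}$ to $\av_0=\alpha:U\to V$. The functor $\delta_{AB}$ places the constant $\emptyset$-diagram at every other pair $(A',B')$; Lemma~\ref{Gamma-no-change-easy} and Lemma~\ref{lem-q-no-change} ensure that $\Gamma$, $\Rpt$ and $\Phi$ do not disturb the length-$1$ entries beyond (possibly) taking coproduct with $I$ when $A'=B'$. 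Hence $\Psi_s(\av)_{(A',B')}$ is an isomorphism for every $(A',B')\neq(A,B)$. Applying Proposition~\ref{easy-pushout}(3) at each such pair yields Assertion~(1); Proposition~\ref{lem-creat-colimit-trois}(2) additionally shows $\Ea(A,B)\cong V\cup^U\F(A,B)$ by computing the pushout level-wise at $(A,B)$.

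For the second step I would construct an explicit $2$-constant precategory $\tld{\Ea}$ with: the same set of objects $X$; $\tld{\Ea}(A,B)=V\cup^U\F(A,B)$; $\tld{\Ea}(A',B')=\F(A',B')$ for $(A',B')\neq(A,B)$; and agreement with $\F$ on each restricted diagram $\sx(A',B')^{op}_{\geq 2}\to\ul{M}$. The new structure map $\tld{\Ea}(A,B)\to\tld{\Ea}(z)=\F(z)$, for $z\in\sx(A,B)^{op}_{\geq 2}$, is defined via the universal property of the pushout from the pair $V\to\F(s)\to\F(z)$ (where $V\to\F(s)$ is the adjoint transpose of $\sigma$ at $u_s$) and the $\F$-structure map $\F(A,B)\to\F(z)$; compatibility on $U$ is the commutativity of $\sigma$ as a square in $\M^{[1]}$. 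Laxity maps and unit constraints transport from those of $\F$, since $(A,B)$ is indecomposable and does not receive any non-trivial laxity map (remark after Theorem~\ref{thm-unitalization-preservation}). A direct check of the universal property in $\msxsu$, via the adjunction $\Psi_s\dashv\Ev_{u_s}$, identifies $\tld{\Ea}$ with $\Ea$. The $2$-constancy of $\tld{\Ea}$ is by construction, and Assertion~(2) then follows because $\tld{\Ea}$ agrees with $\F$ on each $\sx(A',B')^{op}_{\geq 2}$.

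The step I anticipate to be the most technical is the verification that $\tld{\Ea}$ is strongly unital and that its laxity maps satisfy the required coherence at the modified entry $(A,B)$. Since $(A,B)$ receives no laxity maps, only the maps out of $\tld{\Ea}(A,B)$ must be re-checked; the needed equalities reduce to combining the commutativity of $\sigma$ in $\M^{[1]}$ with the unitality and associativity already enjoyed by $\F$, but this is where most of the diagram chasing lives.
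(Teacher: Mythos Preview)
Your Step~1 over-claims. Proposition~\ref{easy-pushout}(3) and Proposition~\ref{lem-creat-colimit-trois}(2) only tell you that colimits in $\msxsu$ are computed level-wise at the \emph{length-one} $1$-morphisms $(A',B')$, so you may conclude that $\varepsilon_{(A',B')}$ is an isomorphism in $\M$, not that the entire natural transformation $\varepsilon:\F_{A'B'}\to\Ea_{A'B'}$ over $\sx(A',B')^{op}$ is an isomorphism. At higher-degree $1$-morphisms the pushout in $\msxsu$ is genuinely algebraic (laxity maps land there), and you have no level-wise control. Thus Step~1 does not deliver Assertion~(1); it only gives the length-one piece, which is not what is asked.

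Your Step~2 is essentially the paper's approach and is the part that actually proves the lemma: build the explicit $2$-constant candidate $\tld{\Ea}$ and verify that it satisfies the universal property of the pushout, after which Assertions~(1) and~(2) are read off from the construction. The paper does exactly this, using Remark~\ref{rmk-important-deux-constant} to translate a cocone $\Psi_s(\Id_V)\leftarrow\Psi_s(\alpha)\to\F\to\G$ into a lifting problem in $\M$, and Lemma~\ref{lem-deux-constant} to check unitality of $\tld{\Ea}$ from the factorization $I\to\F(A,A)\to\Ea(A,A)\to|\F|(A,A)$. So your Step~2 is correct in outline; you should drop Step~1 (or downgrade it to computing $\Ea(A,B)$ only) and carry the full weight on the explicit-model argument, filling in the universal-property verification along the lines just indicated.
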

Before giving the proof of the lemma, we give the following remark that will help in the proof. 
\begin{rmk}\label{rmk-important-deux-constant}
The reader can check, using the adjunction, that for any $\G\in \msxsu$, we have an equivalence between the following data.
\begin{enumerate}
\item A commutative square as
\[
\xy
(0,18)*+{\Psi_s(\alpha)}="W";
(0,0)*+{\Psi_s(\Id_V) }="X";
(40,0)*+{\G}="Y";
(40,18)*+{\F}="E";
{\ar@{->}^-{}"X";"Y"};
{\ar@{->}^-{\Psi_s(\av)}"W";"X"};
{\ar@{->}^-{\sigma}"W";"E"};
{\ar@{->}^-{\theta}"E";"Y"};
\endxy
\]
\item  A commutative diagram in $\M$ with a lifting as follows. 
\[
\xy
(0,20)*+{U }="A";
(25,20)*+{\F(A,B)}="B";
(0,0)*+{V}="C";
(25,0)*+{|\F|(A,B)}="D";
(50,20)*+{\G(A,B)}="E";
(50,0)*+{\G(s)}="F";
{\ar@{->}^-{}"A";"B"};
{\ar@{->}_-{\alpha}"A"+(0,-3);"C"};
{\ar@{->}_-{}"C"+(3,0);"D"};
{\ar@{->}^-{}_{}"B"+(0,-3);"D"};
{\ar@{->}^-{}"B";"E"};
{\ar@{->}_-{}^-{}"D";"F"};
{\ar@{->}^-{}"E";"F"};
%%%%% lifting%%%%
{\ar@{.>}^-{}"C";"E"};
\endxy
\] 

\end{enumerate}
\end{rmk}
\begin{proof}
We will construct the $2$-constant diagram $\Ea$ and show that it satisfies the universal property of the pushout.\\ 

If $(A',B') \neq (A,B)$ we set  $\Ea_{AB}= \F_{AB}$. And if $(A',B')=(A,B)$ we define 
$$ \Ea_{A'B', \geq 2}= \F_{A'B',\geq 2},$$ which is then a constant diagram of value $|\F|(A,B)$.  It remains to define $\Ea(A,B)$.\\

Note that the map $\F(u_s):  \F(A,B) \to \F(s)$ is  $\F(A,B) \to |\F|(A,B)$  since $\F$ is $2$-constant. Now by adjunction the map $\sigma: \Psi_s(\alpha) \to \F$ corresponds to a unique commutative square (a morphism in $\M^{[1]}$) as follows. 

\[
\xy
(0,18)*+{U}="W";
(0,0)*+{V}="X";
(30,0)*+{\F(A,B)}="Y";
(30,18)*+{|\F|(A,B)}="E";
{\ar@{->}^-{p}"X";"Y"};
{\ar@{->}^-{\alpha}"W";"X"};
{\ar@{->}^-{q}"W";"E"};
{\ar@{->}^-{\F(u_s)}"E";"Y"};
\endxy
\]

Define $\Ea(A,B)$ to be the object we get by forming the pushout of 
$$V \xleftarrow{\alpha} U \xrightarrow{q} \F(A,B).$$

Let $\varepsilon:\F(A,B) \to \Ea(A,B)$ and $i_V: V \to \Ea(A,B)$ be the canonical maps.\\
 
The universal property of the pushout gives a unique map  $\gamma: \Ea(A,B) \to |\F|(A,B)$ such that the factorizations below hold.
$$\F(u_s)=\gamma \circ \varepsilon; \quad p= \gamma \circ i_V.$$

And we obviously have $ \varepsilon \circ q = i_V \circ \alpha$, and the following square commutes.
\[
\xy
(0,18)*+{U}="W";
(0,0)*+{V}="X";
(40,0)*+{\underbrace{|\F|(A,B)}_{=\Ea(s)}}="Y";
(40,18)*+{\Ea(A,B)}="E";
{\ar@{->}^-{p}"X";"Y"};
{\ar@{->}^-{\alpha}"W";"X"};
{\ar@{->}^-{\varepsilon \circ q}"W";"E"};
{\ar@{->}^-{\gamma}"E";"Y"};
{\ar@{.>}^-{i_V}"X";"E"};
\endxy
\]

As shown above we have a (tautological)  lifting given by $i_V$.\\

If we put together the previous observations, we find maps 
$$\gamma: \Ea(A',B') \to |\F|(A',B'); \quad \tx{and} \quad  \varepsilon: \F(A',B') \to \Ea(A',B'),$$
for all $(A',B')$ such that every canonical $\F(A',B') \to |\F|(A',B')$ is the composite $\gamma \circ \varepsilon$. 

In particular for $A'=B'$ we get a map $I_A' : I \to \F(A',A') \to \Ea(A',A')$ which gives by composition with $\gamma: \Ea(A',A') \to |\F|(A',A')$ the unity of the strict $\M$-category $|\F|$.

We find that we are in the situation of Lemma \ref{lem-deux-constant} therefore $\Ea$ is a unital $2$-constant diagram.\\

The reader can check that we have a morphism $\varepsilon: \F \to \Ea$ and $\gamma: \Ea \to |\F|$ and that $\varepsilon$ is as in the statement of the lemma. Note that the map $\Ea(u_s)$ is just $\gamma: \Ea(A,B) \to |\F|(A,B)$. \\

If we expand the above commutative square that has a lifting, we see that we are in the situation of Remark \ref{rmk-important-deux-constant} and therefore, we get a canonical commutative square as shown below.
\[
\xy
(0,18)*+{\Psi_s(\alpha)}="W";
(0,0)*+{\Psi_s(\Id_V) }="X";
(40,0)*+{\Ea}="Y";
(40,18)*+{\F}="E";
{\ar@{->}^-{}"X";"Y"};
{\ar@{->}^-{\Psi_s(\av)}"W";"X"};
{\ar@{->}^-{\sigma}"W";"E"};
{\ar@{->}^-{\varepsilon}"E";"Y"};
\endxy
\]

It remains to show that this square is the universal one i.e, that $\Ea$ equipped with the appropriate maps satisfies the universal property of the pushout. \\

Let $\G$ be an arbitrary unital diagram such that we have commutative diagram as in Remark \ref{rmk-important-deux-constant}. Then as said in that remark, this is equivalent to having the commutative square below that possesses  a lifting. 
\[
\xy
(0,20)*+{U }="A";
(25,20)*+{\F(A,B)}="B";
(0,0)*+{V}="C";
(25,0)*+{|\F|(A,B)}="D";
(50,20)*+{\G(A,B)}="E";
(50,0)*+{\G(s)}="F";
{\ar@{->}^-{}"A";"B"};
{\ar@{->}_-{\alpha}"A"+(0,-3);"C"};
{\ar@{->}_-{}"C"+(3,0);"D"};
{\ar@{->}^-{}_{}"B"+(0,-3);"D"};
{\ar@{->}^-{}"B";"E"};
{\ar@{->}_-{}^-{}"D";"F"};
{\ar@{->}^-{}"E";"F"};
%%%%% lifting%%%%
{\ar@{.>}^-{}"C";"E"};
\endxy
\] 

Since the upper half triangle ending at $\G(A,B)$ is commutative, the universal property of the pushout gives a \ul{unique map} $\zeta: \Ea(A,B) \to \G(A,B)$ 
with the obvious factorizations. Another application of the universal property of the pushout with respect to the whole commutative square ending at  $\G(s)$ gives by uniqueness and equality 
$$ \theta \circ \gamma= \G(u_s) \circ \zeta.$$ 

In terms of commutative square we get the following.
\[
\xy
(0,20)*+{\F(A,B) }="A";
(25,20)*+{\Ea(A,B)}="B";
(0,0)*+{|\F|(A,B)}="C";
(25,0)*+{|\F|(A,B)}="D";
(50,20)*+{\G(A,B)}="E";
(50,0)*+{\G(s)}="F";
{\ar@{->}^-{}"A";"B"};
{\ar@{->}_-{\F(u_s)}"A"+(0,-3);"C"};
{\ar@{->}_-{\Id}"C";"D"};
{\ar@{->}^-{\Ea(u_s)}_{}"B"+(0,-3);"D"};
{\ar@{->}^-{}"B";"E"};
{\ar@{->}_-{}^-{}"D";"F"};
{\ar@{->}^-{}"E";"F"};
%%%%% lifting%%%%
%{\ar@{.>}^-{}"C";"E"};
\endxy
\] 

Note that the lifting $V \to \G(A,B)$ is $\zeta \circ i_V$ and $i_V$ is the lifting for the (universal) square associated to $\Ea$. Since $\Ea=\F$ everywhere except for $\F(A,B)$, we see that if we assemble the map $\zeta$ and the data for the map $\theta : \F \to \G$ we find a unique map  $ \zeta: \Ea \to \G$ such that
$$\theta= \zeta \circ \varepsilon.$$

Using the relation between the two liftings $\zeta \circ i_V$ and $i_V$, we find by uniqueness of the adjoint transpose that the given map $\Psi_s(\Id_V) \to \G$ is the composite of the canonical map $\Psi_s(\Id_V) \to \Ea$ and $\zeta$. 
This completes the proof.  
%%%%%%%%% continuer ici la preuve %%%%%%%%
\end{proof}

We also need the following lemma. 
\begin{lem}\label{lem-wide pushouts-deux-constants}
Let $\{\varepsilon_i: \F \to \Ea_i \}_{i \in S}$ be a small family of morphisms between $2$-constant precategories in $\msxsu$. Assume that each morphism 
$\varepsilon_i: \F \to \Ea_i$ is such that the induced $\M$-functor $|\varepsilon_i|: |\F| \to |\Ea_i| $ is an isomorphism.\\

Let $\Ea_{\infty}$ be the wide pushout in $\msxsu$ of the maps $\varepsilon_i$. Then the following hold.
\begin{enumerate}
\item $\Ea_\infty$ is also a $2$-constant precategory.
\item The canonical maps  $\Ea_i \to \Ea_\infty$  and $ \F \to \Ea_\infty$ induce isomorphisms  between the respective categories.  
$$|\Ea_i|\xrightarrow{\cong} |\Ea_\infty|, \quad |\F| \xrightarrow{\cong} |\Ea_\infty|.$$
\end{enumerate}
 \end{lem}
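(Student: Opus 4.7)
The plan is to construct the wide pushout $\Ea_\infty$ explicitly as a $2$-constant precategory and then verify the universal property. At a degree-$1$ morphism $(A,B)$, Proposition \ref{lem-creat-colimit-trois} tells us colimits in $\msxsu$ are computed level-wise there, so I set $\Ea_\infty(A,B)$ to be the wide pushout in $\M$ of the family $\{\varepsilon_i(A,B) : \F(A,B) \to \Ea_i(A,B)\}_{i \in S}$. At a degree-$\geq 2$ morphism $s \in \sx(A,B)^{op}_{\geq 2}$, I set $\Ea_\infty(s) = |\F|(A,B)$; this is canonical because each $\Ea_i$ is $2$-constant (so $\Ea_i(s) = |\Ea_i|(A,B)$) and by hypothesis $|\varepsilon_i|: |\F|(A,B) \xrightarrow{\cong} |\Ea_i|(A,B)$. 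The internal structure maps over $\sx(A,B)^{op}_{\geq 2}$ are identities; the structure map $\Ea_\infty(A,B) \to |\F|(A,B)$ is induced by the universal property of the pushout applied to the compatible family $\F(A,B) \to |\F|(A,B)$ together with $\Ea_i(A,B) \to |\Ea_i|(A,B) \xrightarrow{|\varepsilon_i|^{-1}} |\F|(A,B)$. Laxity maps are imported from the strict $\M$-category $|\F|$, and the unit $I \to \Ea_\infty(A,A)$ comes from $I \to \F(A,A) \to \Ea_\infty(A,A)$. That $\Ea_\infty$ is then a unital $2$-constant precategory follows from Lemma \ref{lem-deux-constant}.

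Next, I verify the universal property. Given a compatible family of morphisms $\phi_i: \Ea_i \to \G$ in $\msxsu$ with a common precomposition by $\varepsilon_i$, I build a unique $\phi_\infty : \Ea_\infty \to \G$. At $(A,B)$ the map $\phi_\infty(A,B)$ is forced by the universal property of the pushout in $\M$. For $s$ of degree $\geq 2$ I take $\phi_\infty(s)$ to be the common value of the composites $|\F|(A,B) \xrightarrow{|\varepsilon_i|} |\Ea_i|(A,B) = \Ea_i(s) \xrightarrow{\phi_i(s)} \G(s)$; these coincide because $\phi_i \circ \varepsilon_i$ does not depend on $i$. Compatibility of $\phi_\infty$ with structure maps, laxity maps and units then follows from the same properties for each $\phi_i$ and the universal property of the pushout of $\M$-objects at degree $1$.

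For part (2), I compute $|\Ea_\infty|(A,B) = \colim \Ea_\infty|_{AB}$ directly. The diagram $\Ea_\infty|_{AB}$ is constant of value $|\F|(A,B)$ on $\sx(A,B)^{op}_{\geq 2}$ with one extra vertex $\Ea_\infty(A,B)$ at the initial object $(A,B)$ whose unique outgoing structure map is the canonical $\Ea_\infty(A,B) \to |\F|(A,B)$. A standard cocone argument (using that $(A,B)$ is initial and that the restriction to $\sx(A,B)^{op}_{\geq 2}$ already has $|\F|(A,B)$ as its colimit via identities) shows that the whole colimit is $|\F|(A,B)$. Therefore $|\F| \to |\Ea_\infty|$ is the identity at each hom-object, and $|\Ea_i| \to |\Ea_\infty|$ is the isomorphism induced by $|\varepsilon_i|^{-1}$; in particular both are isomorphisms of strict $\M$-categories.

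The main obstacle will be the bookkeeping for laxity and unit conditions in the universal-property step: one must check that the piecewise definition of $\phi_\infty$ is compatible with the laxity maps $\Ea_\infty(s) \otimes \Ea_\infty(t) \to \Ea_\infty(s \otimes t)$, which involves the degree-$1$ pushout factor coupled to the composition in $|\F|$, and must match the corresponding diagrams in $\G$. This reduces to checking commutativity after precomposing with the universal maps from $\F$ and each $\Ea_i$, where it holds because each $\phi_i$ is a morphism in $\msxsu$; the universal property of the pushout in $\M$ then propagates the equality to $\Ea_\infty$.
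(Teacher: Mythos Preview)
Your proposal is correct and follows essentially the same approach as the paper: construct $\Ea_\infty$ explicitly as a $2$-constant precategory and verify the universal property. The only cosmetic differences are that the paper packages the degree-$1$ data as a wide pushout of the maps $\F(u_s)\to\Ea_i(u_s)$ in the arrow category $\M^{[1]}$ (obtaining $\Ea_\infty(A,B)$ and its structure map to $|\Ea_\infty|(A,B)$ simultaneously), and it first declares $|\Ea_\infty|$ to be the wide pushout of the isomorphisms $|\varepsilon_i|$ rather than identifying it with $|\F|$ a posteriori as you do; these are equivalent formulations.
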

\begin{proof}
Take $|\Ea_\infty|$ to be the pushout of isomorphisms $|\varepsilon_i|: |\F| \to |\Ea_i|$. Clearly the canonical maps $ |\Ea_i|\xrightarrow{\cong} |\Ea_\infty|$ and $|\F| \xrightarrow{\cong} |\Ea_\infty|$ are isomorphisms. 

One gets the objects 
$\Ea_{\infty}(A,B)$ together with the unique map $\Ea_{\infty}(A,B) \to |\Ea_\infty|(A,B)$ by taking the wide pushout of $\F(u_s) \to \Ea_i(u_s)$ in the arrow category $\M^{[1]}$. Just like before $\Ea_\infty$ is also a unital precategory. The canonical maps $|\Ea_i|\xrightarrow{\cong} |\Ea_\infty|$ and $|\F| \xrightarrow{\cong} |\Ea_\infty|$ extend to morphisms in $\msxsu$ 
$$\Ea_i \to \Ea_\infty ; \quad \F \to \Ea_\infty.$$

Moreover the canonical map  $\F \to \Ea_\infty$ is the composite of  $\F \to \Ea_i$ and $\Ea_i \to \Ea_\infty$; thus we have a natural cocone (ending at $\Ea_\infty$). The reader can easily check that this cocone is the universal one i.e, $\Ea_\infty$ equipped with this cocone satisfies the universal property of the wide pushout.
\end{proof}
Finally we have a classical lemma in category theory whose proof will be left as an exercise. 

\begin{lem}\label{coproduc-pushout}
Let  $\{ C_i \xleftarrow{p_i} A_i \xrightarrow{h_i} B  \}_{i \in S}$ be a family of pushout data in a cocomplete category $\B$. Let $D_i=C_i \cup^{A_i}B $
be the pushout object for each diagram and denote by $\eta_i: B \to D_i$ the canonical map.\\

Let $\coprod p_i: \coprod A_i \to \coprod C_i$ and $\sum h_i: \coprod A_i \to B$ be the canonical maps induced by universal property of the coproduct. Let $E$  be the pushout object of the diagram $$ \coprod C_i \xleftarrow{\coprod p_i} \coprod A_i \xrightarrow{\sum h_i} B, $$
and denote by $\gamma: B \to E$ the canonical map. Let $\ol{D}$ be the object obtained by taking the wide pushout $\{B \xrightarrow{\eta_i} D_i \}$ and let $\delta: B \to  \ol{D}$ be the canonical map.\\

Then the two maps $\gamma: B \to E$ and $\delta: B \to  \ol{D}$ are isomorphic in the category $\B^{[1]}$ of morphism of $\B$. In particular $E \cong \ol{D}$.
\end{lem}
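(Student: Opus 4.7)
The plan is to identify both $E$ and $\ol{D}$ as representing the same functor on $\B$, namely the functor $F \colon \B \to \Set$ sending an object $X$ to the set of families
\[
F(X) = \bigl\{\, (f \colon B \to X,\ (g_i \colon C_i \to X)_{i\in S}) \ \bigm|\ g_i \circ p_i = f \circ h_i \text{ for all } i \in S \,\bigr\}.
\]
The forgetful map $F(X) \to \Hom_{\B}(B,X)$ sending $(f,(g_i))$ to $f$ will correspond under Yoneda to the under-$B$ structure, which is what is needed to compare $\gamma$ and $\delta$ in $\B^{[1]}$.

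First I would unwind the universal property of $E$. By the universal property of the coproduct, a map $\coprod C_i \to X$ is the same as a family $(g_i \colon C_i \to X)_{i \in S}$; similarly a map $\coprod A_i \to X$ is a family $(A_i \to X)$. The pushout description of $E$ then identifies $\Hom_{\B}(E,X)$ with pairs $(f,(g_i))$ such that the two induced maps $\coprod A_i \rightrightarrows X$ coincide, which is exactly the condition $g_i \circ p_i = f \circ h_i$ for every $i$. The canonical map $\gamma \colon B \to E$ corresponds to the forgetful assignment $(f,(g_i)) \mapsto f$.

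Next I would unwind the universal property of $\ol{D}$. A map $\ol{D} \to X$ is the data of maps $B \to X$ and $D_i \to X$ whose precompositions with $\eta_i \colon B \to D_i$ all agree with the chosen map $B \to X$. By the universal property of each pushout $D_i = C_i \cup^{A_i} B$, a map $D_i \to X$ extending a given $f \colon B \to X$ is the same as a map $g_i \colon C_i \to X$ with $g_i \circ p_i = f \circ h_i$. Thus $\Hom_{\B}(\ol{D},X)$ is again naturally in bijection with $F(X)$, and the canonical $\delta \colon B \to \ol{D}$ corresponds to the same forgetful assignment $(f,(g_i)) \mapsto f$.

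The two bijections $\Hom_{\B}(E,X) \cong F(X) \cong \Hom_{\B}(\ol{D},X)$ are natural in $X$ and compatible with the maps to $\Hom_{\B}(B,X)$, so Yoneda furnishes a unique isomorphism $E \xrightarrow{\cong} \ol{D}$ under $B$, which is precisely an isomorphism between $\gamma$ and $\delta$ in $\B^{[1]}$. There is no real obstacle: the only thing to be careful about is to check that the two universal cocones are assembled in a compatible way, i.e.\ that the component $C_i \to E$ of the universal cocone out of $E$ corresponds via the isomorphism to the composite $C_i \to D_i \to \ol{D}$; this is immediate from chasing the canonical inclusions through the pushout diagrams.
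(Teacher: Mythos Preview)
Your proof is correct. The paper does not actually give a proof of this lemma: it states that the result is ``a classical lemma in category theory whose proof will be left as an exercise.'' Your Yoneda argument is a clean and standard way to carry out that exercise, identifying both $E$ and $\ol{D}$ as objects corepresenting the functor $F$ you wrote down, with the maps $\gamma$ and $\delta$ both corresponding to the forgetful projection $F(X)\to \Hom_{\B}(B,X)$.
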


 Define the \emph{minimal localizing set} for $2$-constant precategories as:
$$\kb_2=\bigsqcup_{(A,B)\in X^2} \bigsqcup_{s \in \sx(A,B)^{op}| \degb(s)=2} \{\Psi_s(\av); \quad \alpha \in \I  \}.$$

Recall that we've also introduced maps $\Psi_s(\elalpha)$ and $\Psi_s(\xialpha) $ and we have an equality $\Psi_s(\av)= \Psi_s(\elalpha) \circ \Psi_s(\xialpha).$ Below we use the \emph{Gluing construction} and the \emph{Small object argument}. There are numerous references in the literature (see for example \cite{Dwyer_Spalinski,Hov-model}).

\begin{pdef}\label{pdef-deux-constant}
Let $\Sim_2: \msxsu \to \msxsu$ be the $\kb_2$-injective replacement functor obtained by applying the gluing construction and the small object argument. Denote by $\eta: Id \to \Sim$ the induced natural transformation. Then the following hold.

\begin{enumerate}
\item If $\F$ is a $2$-constant precategory then so is $\Sim(\F)$; and the map $|\eta|: |\F| \to |\Sim(\F)|$ is an isomorphism of $\M$-categories.
\item $\Sim(\F)$ satisfies the co-Segal conditions for every $2$-constant precategory $\F$. And the canonical map $\Sim(\F) \to |\Sim(\F)|$ is an easy weak equivalence.
\item If $L: \msxsu\to  \Ba$ is a functor that sends easy weak equivalences to isomorphisms and takes any pushout of $\Psi_s(\xialpha) $ to an isomorphism, then for all $\F \in \msxsu$, the image of $\F \to |\F|$ by $L$ is an isomorphism in $\Ba$. 
\end{enumerate}
The functor $\Sim$ will be called the \emph{ $2$-constant co-Segalification functor}. 
\end{pdef}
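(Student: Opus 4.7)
The plan is to understand $\Sim = \Sim_2$ as a small-object-argument replacement with respect to the set $\kb_2$: starting from $\F_0 = \F$, the stage $\F_{k+1}$ is the pushout in $\msxsu$ of $\coprod_i \Psi_{s_i}(\alpha_{\downarrow_{\Id}}) \to \F_k$ indexed by all lifting squares with source a $\Psi_{s_i}(\alpha_i)$, and limit stages are directed colimits, with $\eta : \F \to \Sim(\F)$ the induced transfinite composite.

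For Assertion $(1)$, I would show by transfinite induction that each $\F_k$ is $2$-constant and that the canonical map $|\F_0| \to |\F_k|$ is an isomorphism of $\M$-categories. At a successor step I use Lemma \ref{coproduc-pushout} to rewrite the coproduct pushout as the wide pushout of the individual pushouts along maps $\Psi_s(\av)$; by Lemma \ref{lem-deux-fondamental} each single pushout preserves $2$-constancy and leaves the $\sx(A,B)^{op}_{\geq 2}$ part of the diagram unchanged, so it induces an isomorphism on $|\cdot|$; Lemma \ref{lem-wide pushouts-deux-constants} then extends this to the wide pushout. At limit stages, $|\cdot|$ preserves directed colimits (it is a left adjoint) and $2$-constancy is a level-wise condition preserved under directed colimits.

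For Assertion $(2)$, the adjunction $\Psi_s \dashv \Ev_{u_s}$ and the shape of the square $\alpha_{\downarrow_{\Id_V}}$ translate $\kb_2$-injectivity of $\Sim(\F)$ into: for every $\alpha \in \I$ and every $s$ with $\degb(s) = 2$, the structure map $\Sim(\F)(u_s) : \Sim(\F)(A,B) \to \Sim(\F)(s)$ has the right lifting property against $\alpha$, i.e.\ is a trivial fibration in $\M$. Since $\Sim(\F)$ is $2$-constant by $(1)$, every structure map from $(A,B)$ to some $(A,\ldots,B)$ of degree $\geq 2$ either is this trivial fibration or factors through it by identities; this yields the co-Segal condition. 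The canonical map $\Sim(\F) \to |\Sim(\F)|$ is then the above trivial fibration at $(A,B)$ and an identity on the constant part, hence an easy weak equivalence.

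For Assertion $(3)$, I first reduce to the case where $\F$ is $2$-constant: by Remark \ref{rmk-associated-deux-constant} and Proposition \ref{prop-equiv-deux-constant}, we have the factorization $\F \xrightarrow{\rho} \tld{|\F|} \xrightarrow{\epsilon} |\F|$ with $\rho$ an easy weak equivalence and $|\tld{|\F|}| = |\F|$, so it suffices to prove $L(\epsilon)$ is an isomorphism. For $2$-constant $\F$, Lemma \ref{lem-cell-complex-av}$(2)$ converts the hypothesis on pushouts of $\Psi_s(\xi_\alpha)$ into the statement that $L$ sends any $\kb_2$-cell complex to an isomorphism; in particular $L(\eta : \F \to \Sim(\F))$ is an isomorphism. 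Consider now the commutative square with top $\eta$, vertical canonical maps to $|\F|$ and $|\Sim(\F)|$, and bottom the isomorphism $|\F| \xrightarrow{\cong} |\Sim(\F)|$ from $(1)$; by $(2)$ the right-hand vertical is an easy weak equivalence, so $L$ makes three of the four sides isomorphisms. The $3$-for-$2$ property of isomorphisms in $\Ba$ forces $L(\F \to |\F|)$ to be an isomorphism as well.

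I expect the main obstacle to be in Assertion $(1)$: guaranteeing that the wide pushout in the successor step of the small object argument simultaneously preserves $2$-constancy and keeps $|\cdot|$ fixed up to isomorphism, which requires a careful combination of Lemmas \ref{lem-deux-fondamental}, \ref{lem-wide pushouts-deux-constants} and \ref{coproduc-pushout}. A subsidiary subtlety in Assertion $(3)$ is that $L$ must be shown to send the entire transfinite composite $\eta$, not merely the individual pushouts, to an isomorphism; here I rely on the cell-complex formulation of Lemma \ref{lem-cell-complex-av}$(2)$ as the precise vehicle.
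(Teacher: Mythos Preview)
Your proposal is correct and follows essentially the same route as the paper: the same three lemmas (\ref{lem-deux-fondamental}, \ref{lem-wide pushouts-deux-constants}, \ref{coproduc-pushout}) drive the transfinite induction for Assertion~$(1)$, the adjunction $\Psi_s \dashv \Ev_{u_s}$ together with Proposition~\ref{lem-lifting-inject} gives Assertion~$(2)$, and for Assertion~$(3)$ the paper also reduces via $\rho$ to the $2$-constant case and then uses Lemma~\ref{lem-cell-complex-av}$(2)$; the only cosmetic difference is that the paper writes the final step as a chain $\F \xrightarrow{\rho} \tld{\F} \xrightarrow{\eta} \Sim(\tld{\F}) \xrightarrow{\sim} |\tld{\F}| = |\F|$ rather than your commutative square, but the content is identical.
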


\begin{proof}
The full subcategory of $2$-constant precategories is closed under directed colimits (and limits) since they are computed level-wise. From Lemma \ref{lem-deux-fondamental}, we know that if $\F$ is $2$-constant, then the pushout of any $\Psi_s(\av)$ along any  $\Psi_s(\alpha) \to \F$ is a morphism of $2$-constant precategories which is moreover an isomorphism on the \emph{underlying categories} (obtained by the left adjoint $|-|$).

Using Lemma \ref{lem-wide pushouts-deux-constants} and Lemma \ref{coproduc-pushout} with respect to any pushout data 
$$(\coprod_{(A,B)\in X^2} \coprod_{s \in \sx(A,B)^{op}| \degb(s)=2} \Psi_s(\Id_V)) \xleftarrow{\coprod \Psi_s(\av)} (\coprod_{(A,B)\in X^2} \coprod_{s \in \sx(A,B)^{op}| \degb(s)=2} \Psi_s(\alpha)) \to \F, $$
we find that the canonical map $\F \to \F_1$ going to the pushout-object is again a map of $2$-constant precategories. Moreover the induce map $|\F| \to |\F_1|$ is an isomorphism of $\M$-categories.  But these pushouts are precisely the one we use to construct $\Sim(\F)$. 
It follows that $\Sim(\F)$ is a $2$-constant precategory as a directed colimit therein;  and the map 
$ \eta: \F \to \Sim(\F)$ is a  $\kb_2$-cell complex with the property that the induce map $|\eta|: |\F| \to |\Sim(\F)|$ is an isomorphism of $\M$-categories. This proves Assertion $(1)$. \\

Assertion $(2)$ follows from the fact that $\Sim(\F)$ in $\kb_2$-injective, and by adjunction we find that the unique map $\Sim(A,B) \to |\Sim|(A,B)$, viewed as an object of $\M^{[1]}$, is $\av$-injective for all generating cofibration $\alpha$. But this in turn simply means that we have a lifting to any problem defined by $\alpha$ and  $\Sim(A,B) \to |\Sim|(A,B)$ (see Proposition \ref{lem-lifting-inject} below). Consequently $\Sim(A,B) \to |\Sim|(A,B)$ is a trivial fibration, in particular a weak equivalence, therefore $\Sim(\F)$ is a co-Segal category. This also proves at the same time, in a tautological way, that the canonical map $\Sim(F) \to |\Sim(\F)|$, whose components are precisely the maps $\Sim(A,B) \to |\Sim|(A,B)$ is an easy weak equivalence.\\
 
Now for Assertion $(3)$  is suffices to use the factorization $\F \to |\F|$ given in Remark \ref{rmk-associated-deux-constant} and observe that we can factorize again as follows.

$$\F \xrightarrow{\rho} \tld{\F} \xrightarrow{\eta} \Sim(\tld{\F}) \xrightarrow{\sim} |\tld{\F}|= |\F|.$$

Under the assumptions of the proposition, we get from the second assertion of Lemma  \ref{lem-cell-complex-av}, that every $\kb_2$-cell complex is also send to isomorphism, in particular $L(\eta)$ is an isomorphism. The third map is an easy weak equivalence by the previous assertion we've just proved. Now the map $\rho$ is also an easy weak equivalence by Proposition \ref{prop-equiv-deux-constant} (actually it's by construction of $\tld{\F}$). In the end we find that the image of  $\F \to |\F|$ by $L$ is also an isomorphism.
\end{proof}

%%%%%%%%%%
\subsubsection{Changing the set of objects}
Given a function  $f: X \to Y$, we have a pullback functor $\fstar: \msy \to \msx$. This functor preserve any level-wise property e.g, limit directed limits, etc. 

If $\M$ is locally presentable then so are $\msxsu$ and $\msysu$; and thanks to the fact that $\fstar$ preserves directed colimits, we get from the adjoint functor of locally presentable (see \cite{Adamek-Rosicky-loc-pres}) the following. 
\begin{prop}
The functor $\fstar$ has a left adjoint $\fex: \msxsu \to \msysu$, called  \emph{the push forward}.
\end{prop}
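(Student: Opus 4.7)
The plan is to apply the adjoint functor theorem for locally presentable categories from \cite{Adamek-Rosicky-loc-pres}: a functor between locally presentable categories admits a left adjoint if and only if it preserves small limits and is accessible. Both $\msxsu$ and $\msysu$ were shown earlier in the section to be locally presentable, so this criterion is available.

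Before invoking it, I would check that $\fstar$ really restricts to a functor $\msysu \to \msxsu$. The function $f: X \to Y$ induces a strict $2$-functor $\sx \to \sy$ sending a chain $s=(A_0,\dots,A_n)$ to $(fA_0,\dots,fA_n)$ and each $2$-morphism $\sigma$ in $\sxop$ to its image $f(\sigma)$ in $\syop$. Consequently $(\fstar \G)(s) = \G(f\circ s)$, and the unities $I_{fA}: I \to \G(fA,fA)$ of $\G$ serve as the unities $I_A: I \to (\fstar \G)(A,A)$. The left and right invariance diagrams of Definition~\ref{unital-precat} for $\fstar \G$ at a $2$-morphism $\sigma: s \to (A,A)\otimes s$ are precisely the corresponding diagrams for $\G$ at $f(\sigma): f\circ s \to (fA,fA)\otimes (f\circ s)$, hence commute. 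Thus $\fstar \G$ lies in $\msxsu$, and the restriction $\fstar: \msysu \to \msxsu$ is well-defined and functorial.

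Finally I would verify the preservation properties. By Proposition~\ref{limit-msxsu}, small limits and directed colimits in $\msxsu$ (and in $\msysu$) are computed level-wise at each $1$-morphism using the corresponding operations in $\M$. Since $(\fstar \G)(s) = \G(f\circ s)$ is a pure level-wise reindexing, $\fstar$ commutes with every level-wise operation; in particular it preserves all small limits and all directed colimits (so it is finitary, hence accessible). The adjoint functor theorem then yields the left adjoint $\fex: \msxsu \to \msysu$. I expect no serious obstacle: the argument is purely formal once local presentability and the level-wise nature of limits and directed colimits in $\msxsu$ are in hand; the one point to guard against is overlooking the verification that $\fstar$ preserves the unitality structure, which is exactly what the second paragraph covers.
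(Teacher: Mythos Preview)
Your proposal is correct and follows essentially the same route as the paper: the paper also appeals to the adjoint functor theorem for locally presentable categories from \cite{Adamek-Rosicky-loc-pres}, using that $\msxsu$ and $\msysu$ are locally presentable and that $\fstar$ preserves level-wise properties such as limits and directed colimits. Your version is more careful in spelling out why $\fstar$ lands in $\msxsu$, but the core argument is identical.
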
 
Because (trivial) fibrations in $\msxsueasy$ are characterized level-wise, it follows that:
\begin{prop}\label{quillen-pair-fstar-fex}
The functor $\fstar$ preserves fibrations and trivial fibration, therefore the adjunction $\fex \dashv \fstar$ is a Quillen adjunction.
\end{prop}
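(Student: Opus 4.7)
The plan is short because the proposition follows almost tautologically from the level-wise characterization of (trivial) fibrations established in Theorem \ref{easy-model-msx}. Recall that by that theorem, a morphism $\sigma$ in $\msysueasy$ is a fibration (resp.\ trivial fibration) if and only if, for every pair $(A',B') \in Y^2$, the component $\sigma_{(A',B')}$ is a fibration (resp.\ trivial fibration) in $\M$; and similarly for $\msxsueasy$.

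First, I would unwind the definition of $\fstar$. Since $f: X \to Y$ induces a $2$-functor $\sx \to \sy$ sending a $1$-morphism $(A_0,\ldots,A_n)$ to $(fA_0,\ldots,fA_n)$, the pullback $\fstar \G$ of a normal lax functor $\G: \syop \to \M$ satisfies $(\fstar \G)(A_0,\ldots,A_n) = \G(fA_0,\ldots,fA_n)$, and for a morphism $\sigma: \G \to \G'$ in $\msysu$ we have $(\fstar \sigma)_{(A_0,\ldots,A_n)} = \sigma_{(fA_0,\ldots,fA_n)}$. Note in particular that the unity datum of $\fstar \G$ at $A \in X$ is the unity datum of $\G$ at $fA$, so $\fstar$ indeed factors through $\msxsu$.

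Next, suppose $\sigma: \G \to \G'$ is a fibration in $\msysueasy$. For each $(A,B) \in X^2$ the component $(\fstar \sigma)_{(A,B)} = \sigma_{(fA,fB)}$ is a fibration in $\M$ by the level-wise characterization applied to $\sigma$. Invoking the same characterization on the $X$-side, we conclude that $\fstar \sigma$ is a fibration in $\msxsueasy$. The identical argument with ``trivial fibration'' in place of ``fibration'' shows that $\fstar$ preserves trivial fibrations.

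Finally, since $\fstar$ is a right adjoint to $\fex$ that preserves both fibrations and trivial fibrations, the pair $(\fex, \fstar)$ is by definition a Quillen adjunction, with $\fex$ left Quillen and $\fstar$ right Quillen. There is no genuine obstacle here; the only subtlety is to confirm that $\fstar$ lands in $\msxsu$ (i.e.\ that it preserves the unitality conditions), which is immediate because the left/right invariance diagrams for $\fstar \G$ at a $1$-morphism $s=(A,\ldots,B)$ are literally the invariance diagrams for $\G$ at $(fA,\ldots,fB)$.
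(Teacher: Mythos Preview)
Your proof is correct and follows essentially the same approach as the paper: both rely on the level-wise characterization of (trivial) fibrations in the easy model structure, from which the preservation by $\fstar$ is immediate. You simply spell out a bit more detail (unwinding $\fstar$ on components and noting it preserves unitality), but the argument is the same.
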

\subsection{Local \emph{easy} model structure for all precategories}
In what follows we are going to show that the category $\mset$ (resp. $\mset_{\ast}$, resp. $\mset_{su}$)  of all precategories (resp. pointed, resp. unital) has a model structure in which the weak equivalences and fibration are respectively the \emph{local easy  weak equivalences} and \emph{local easy fibrations} (see below).\\
\begin{warn}
Every definition, lemma, proposition and Theorem will hold for the three categories $\mset$, $\mset_{\ast}$, $\mset_{su}$. But in order to avoid saying each time ``respectively'' we will simply give every statement for $\mset_{su}$ which is the category we are much interested in.
\end{warn}

Recall for $\F \in \msxsu$ and $\G \in \msysu$, a map $\sigma: \F \to \G$ in $\mset_{su}$ is a pair $(f,\sigma)$ where $f: X \to Y$ is a function and $\sigma: \F \to \fstar \G$ is a morphism in $\msxsu$. 
\begin{df}
Let $\F \in \msxsu$ and $\G \in \msysu$ be objects of $\mset_{su}$. Say that a map $\sigma=(f,\sigma): \Fa \to \Ga$ in $\mset_{su}$ is:
\begin{enumerate}
\item a \emph{local easy fibration} if  the induced map $\F \to \fstar \G$  is a fibration in $\msxsueasy$.
\item a \emph{local easy weak equivalence} if the map 
$\F \to \fstar \G$ is a weak equivalence in $\msxsueasy$ i.e, if the component $\sigma: \F(A,B) \to \G(fA,fB)$
is a weak equivalence for all pair of objects $(A,B)$ of $\sx$.
\end{enumerate}
\end{df}  
\subsubsection{The generating sets}
\paragraph{Some natural $\S$-diagrams} 
The discussion we present here follows Simpson's considerations in \cite[13.2]{Simpson_HTHC}.\ \\

Let $\cn$ be the indiscrete category associated to the set $\{0,...,n \}$. In the $2$-category $\S_{\cn}$, there is a special $1$-morphism from $0$ to $n$  corresponding to the $n+1$-tuple $(0,...,n)$. It is the maximal nondegenerate simplex in the nerve of $\cn$.  We will denote this $1$-morphism by $s_n$. Let $\Fb^{s_n}_{-}: \M \to \Hom[\S_{\cn}(0,n)^{op}, \M] $ be the left adjoint of the evaluation at $s_n$. \ \\

We have as usual the categories $\msn, \msn_{\ast}, \msn_{su}$ and $\kn$ with the monadic adjunction  $\Qs \dashv \Ub$;
where $\Qs \in \{\Gamma, \Rpt\Gamma, \Phi \Rpt \Gamma \}$.\\

Each adjunction is moreover a Quillen adjunction with the respective easy model structure. For the record $\kn= \prod_{(i,j) \in \Ob(\cn)^2} \Hom[\S_{\cn}(i,j)^{\op}, \M] $. \\

We will use the following notation.
\begin{nota}
\begin{enumerate}
\item For $V \in \Ob(\M)$ we will denote by $\delta(s_n, V)$ the object of $\kn$ given by:
 
 \begin{equation*}
\delta(s_n, V)_{ij} =
  \begin{cases}
   \Fb^{s_n}_{V}  & \text{if $i=0, j=n$} \\
    (\emptyset, \Id_{\emptyset})  & \text{the constant functor otherwise}.
  \end{cases}
\end{equation*}
\item For $V \in \Ob(\M)$ and $\Qs= \Gamma$ (resp. $\Rpt\Gamma$, resp. $\Phi \Rpt \Gamma$),   define  $$\hb(\cn; V)=\Qs \delta(s_n, V)$$
be the corresponding adjoint in  $\msn$ (resp. $ \msn_{\ast}$, resp. $\msn_{su}$) 
\end{enumerate}
\end{nota}

\begin{lem}\label{gen-set}
For any $V \in \Ob(\M)$ and $\Fa \in \msysu$ the following are equivalent.
\begin{enumerate}
\item A morphism $\sigma : \hb(\cn; V) \to \Fa$ in $\mset_{su}$.
\item A sequence of elements  $(A_0, ..., A_n)$ of $Y$ together with a morphism $V \to \Fa(A_0,....,A_n)$ in $\M$.
\end{enumerate}
\end{lem}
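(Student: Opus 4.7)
The plan is to unwind the definition of a morphism in $\mset_{su}$ and then compose the chain of adjunctions that was used to build $\hb(\cn;V)$.

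First, by definition of the total category, a morphism $\sigma : \hb(\cn;V) \to \Fa$ in $\mset_{su}$ consists of a function $f: \{0,\ldots,n\} \to Y$ together with a morphism $\hb(\cn;V) \to f^{\star}\Fa$ in $\msn_{su}$. The datum of $f$ is tautologically the same as a length-$(n+1)$ sequence $(A_0,\ldots,A_n) = (f(0),\ldots,f(n))$ of elements of $Y$, so our remaining task is to identify
\[
\Hom_{\msn_{su}}(\hb(\cn;V),\, f^{\star}\Fa) \;\cong\; \Hom_{\M}(V,\, \Fa(A_0,\ldots,A_n)).
\]

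Second, since $\hb(\cn;V) = \Qs\,\delta(s_n,V)$ for the appropriate left adjoint $\Qs$ (here $\Qs = \Phi\Rpt\Gamma$ in the unital case, and analogously in the other two cases), the monadic adjunction $\Qs \dashv \Ub$ gives
\[
\Hom_{\msn_{su}}(\Qs\,\delta(s_n,V),\, f^{\star}\Fa) \;\cong\; \Hom_{\kn}(\delta(s_n,V),\, \Ub f^{\star}\Fa).
\]
Third, the category $\kn$ is a product over pairs $(i,j)$ of objects of $\cn$ of the functor categories $\Hom[\S_{\cn}(i,j)^{op},\M]$; and $\delta(s_n,V)$ is concentrated on the factor $(0,n)$ where it equals $\Fb^{s_n}_V$, all other factors being constant with value the initial object $\emptyset$. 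Since maps out of $\emptyset$ are unique, projection onto the $(0,n)$-factor yields
\[
\Hom_{\kn}(\delta(s_n,V),\, \Ub f^{\star}\Fa) \;\cong\; \Hom_{\Hom[\S_{\cn}(0,n)^{op},\M]}\!\bigl(\Fb^{s_n}_V,\, (f^{\star}\Fa)_{0n}\bigr).
\]

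Fourth, by the adjunction $\Fb^{s_n}_{-} \dashv \Ev_{s_n}$ this last hom-set is
\[
\Hom_{\M}\bigl(V,\, (f^{\star}\Fa)(s_n)\bigr) \;=\; \Hom_{\M}\bigl(V,\, \Fa(f(0),\ldots,f(n))\bigr) \;=\; \Hom_{\M}\bigl(V,\,\Fa(A_0,\ldots,A_n)\bigr),
\]
where the first equality uses that the pullback $f^{\star}$ sends the maximal nondegenerate simplex $s_n$ to the chain $(f(0),\ldots,f(n))$. Chaining these bijections (and noting that they are all natural) yields the equivalence of data stated in the lemma. No real obstacle arises; the only point to keep straight is that the identification factors through the function $f$, so one must bundle $f$ together with the $\M$-morphism to recover the whole datum of $\sigma$ in $\mset_{su}$.
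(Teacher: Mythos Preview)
Your proof is correct and follows essentially the same approach as the paper: both unwind the definition of a morphism in $\mset_{su}$ as a function $f$ together with a morphism in $\msn_{su}$, and then compose the chain of adjunctions $\Qs \dashv \Ub$, the Dirac-mass projection, and $\Fb^{s_n}_{-} \dashv \Ev_{s_n}$ to arrive at $\Hom_{\M}(V,\Fa(A_0,\ldots,A_n))$. Your version is slightly more explicit about why the $\delta(s_n,V)$ step reduces to the $(0,n)$-factor, but otherwise it matches the paper's argument step for step.
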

The proof is simply a consequence of the various adjunctions mentioned previously. We include it here for completeness.
\begin{proof}[Sketch of proof]

A morphism $\sigma=(f,\sigma) : \hb(\cn; V) \to \Fa$ is by definition a function $$f : \{0,...,n\} \to Y$$ together with a morphism $\sigma: \hb(\cn; V) \to \fstar \Fa$ in $\msn_{su}$. Setting $A_i= f(i)$ we get $f s_n= (A_0,...,A_n)$ and  by adjunction we have:

\begin{equation*}
\begin{split}
\Hom_{\mset_{su}}[(\hb(\cn; V), \Fa]
&=\Hom_{\msn_{su}}[(\hb(\cn; V),\fstar \Fa ]\\
&= \Hom_{\msn_{su}}[\Qs \delta(s_n, V),\fstar \Fa ]  \\
&\cong \Hom_{\kn}[\delta(s_n, V), \Ub (\fstar \Fa) ]   \\
&\cong \Hom[\Fb^{s_n}_{V},\fstar \Fa_{A_0 A_n} ] \\
&\cong \Hom[V, \Fa_{A_0 A_n} (f s_n) ]\\
&= \Hom[V, \Fa(A_0,....,A_n)].\\
\end{split}
\end{equation*}
\end{proof}

Recall that we assume that $\M$ is cofibrantly generated with a set $\I$ (resp. $\Ja$) of  generating cofibrations (resp. trivial cofibrations). In fact we also ask $\M$ to be a locally presentable category.

As a corollary of the lemma one has the following proposition. 
\begin{prop}\label{lifting-mset}
Let $z=(A_0,...,A_n)$ be a $1$-morphism of  $\sx$ of degree $n$ and  $ \F \to \G$ be a morphism in $\mset_{su}$ with $\F \in \msxsu$ and $\G \in \msysu$. Then the following hold.
\begin{enumerate}
\item The component $\sigma_z: \F(z) \to \G(f(z))$ is a fibration in $\M$ if and only if $\sigma$ has the RLP with respect to all maps in the following set.
$$\{ \hb(\cn; U) \xrightarrow{\hb(\cn; \alpha)} \hb(\cn; V)  \}_{\alpha:U \to V \in \Ja}.$$  
\item Similarly, $\sigma_z: \F(z) \to \G(f(z))$ is a trivial fibration in $\M$ if and only if $\sigma$ has the RLP with respect to all maps in the following set.
$$\{ \hb(\cn; U) \xrightarrow{\hb(\cn; \alpha)} \hb(\cn; V)  \}_{\alpha:U \to V \in \I}.$$  
\end{enumerate} 
\end{prop}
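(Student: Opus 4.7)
The plan is to prove both assertions uniformly by translating the RLP in $\mset_{su}$ into componentwise lifting problems in $\M$ via Lemma \ref{gen-set}. Since $\hb(\cn;V)=\Qs\delta(s_n,V)$ is built by applying a chain of left adjoints to Dirac-type data, an adjunction calculation should show that a lifting problem against $\hb(\cn;\alpha)$ is the same as a lifting problem against $\alpha$ at a component $\sigma_z$ for some $z$ of degree $n$. Once this correspondence is in place, the standard characterization of (trivial) fibrations in the cofibrantly generated model category $\M$ finishes the argument, with the equivalence read as ranging uniformly over all $z$ of degree $n$.

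First I would consider an arbitrary commutative square in $\mset_{su}$ with left vertical $\hb(\cn;\alpha)$ and right vertical $\sigma=(f,\sigma)$. By Lemma \ref{gen-set}, the top horizontal corresponds to a sequence $z=(A_0,\dots,A_n)\in X^{n+1}$ together with a map $\tld h: U \to \F(z)$ in $\M$; the bottom horizontal corresponds to a sequence $w=(B_0,\dots,B_n)\in Y^{n+1}$ together with a map $\tld k: V \to \G(w)$. The key observation is that $\hb(\cn;\alpha)$ lives over the identity function on $\{0,\dots,n\}$, since the construction $\Qs\delta(s_n,-)$ does not alter the underlying object set of $\cn$. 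Hence commutativity of the outer square forces $w=f(z):=(fA_0,\dots,fA_n)$ and reduces the original square to a commutative $\M$-square with verticals $\alpha$ and $\sigma_z$.

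Next I would argue, by naturality of the bijection in Lemma \ref{gen-set}, that diagonal fillers for the $\mset_{su}$-square correspond bijectively to diagonal fillers $V\to \F(z)$ of this derived $\M$-square. Assembled over all $\alpha\in\Ja$ (resp.\ $\I$) and over all sequences $z$ of length $n+1$, this yields: $\sigma$ has the RLP against the displayed set iff for every $z$ of degree $n$ the component $\sigma_z$ has the RLP against $\Ja$ (resp.\ $\I$), i.e.\ iff every such $\sigma_z$ is a fibration (resp.\ trivial fibration). The statement of the proposition, read uniformly in $z$, then follows.

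The main technical point, and essentially the only one requiring care, is to verify that not only the horizontal maps but also the commutative squares and their diagonal fillers correspond bijectively under Lemma \ref{gen-set}. This amounts to checking naturality of the chain of adjunctions $\Fb^{s_n}\dashv \Ev_{s_n}$, $\delta_{AB}\dashv p_{AB}$, and $\Qs\dashv \Ub$ appearing in the proof of Lemma \ref{gen-set}. No genuine obstacle arises once that naturality is in hand; the rest is routine.
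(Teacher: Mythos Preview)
Your proposal is correct and follows essentially the same approach as the paper's own proof: both reduce the lifting problem against $\hb(\cn;\alpha)$ to a lifting problem against $\alpha$ at a component $\sigma_z$ via the adjunction bijection of Lemma~\ref{gen-set}. Your version is more explicit about why commutativity forces $w=f(z)$ and why diagonal fillers correspond, and you rightly flag that the statement should be read uniformly over all $z$ of degree $n$; the paper's proof leaves these points implicit but is otherwise identical in strategy.
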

\begin{proof}
Using Lemma \ref{gen-set}, we have that any lifting problem in $\mset_{su}$ defined by 
$$\hb(\cn; U) \xrightarrow{\hb(\cn; \alpha)}  \hb(\cn; V)\quad \tx{and}\quad \sigma: \F \to \G,$$
is equivalent to a lifting problem in $\M$ defined by 
$$ \alpha: U \to V \quad \tx{and} \quad \sigma_z: \F(z) \to \G(f(z)).$$

Therefore a solution to one problem is equivalent to a solution for the other one.
\end{proof}
\subsubsection{The main theorem}
\begin{warn}
In the following we will use the previous material with $\n=\1$. Therefore we have the corresponding objects $\delta(s_1, \alpha), \hb([\1]; \alpha)$, etc.
\end{warn}
\begin{nota}
We will use the following notation.
\begin{enumerate}
\item $\W_{\mset_{su}}=$ the class of local easy weak equivalences.
\item $\I_{\mset_{su}}= \{ \hb([\1]; \alpha): \hb([\1]; U) \to \hb([\1]; V) \}_{\alpha :U \to V \in \I}$.
\item $\Ja_{\mset_{su}}= \{ \hb([\1]; \alpha): \hb([\1]; U) \to \hb([\1]; V) \}_{\alpha :U \to V \in \Ja}.$
%\item $\I_{\kset}= \coprod_{n\geq 1} \{ \delta(s_n, \alpha): \delta(s_n, A)\to \delta(s_n, V) \}_{\alpha :U \to V \in \I}$.
%\item $\Ja_{\kset}= \coprod_{n\geq 1} \{ \delta(s_n, \alpha): \delta(s_n, A)\to \delta(s_n, V) \}_{\alpha :U \to V \in \Ja} $.
\end{enumerate}
We also have similar sets for $\mset$ and $\mset_{\ast}$.
\end{nota}

In virtue of Theorem \ref{thm-recognition} we have the following.
\begin{thm}\label{first-model-cat-mset}
For a combinatorial monoidal model category $\M$, there is a combinatorial model structure on the category $\mset_{su}$ (resp. $\mset$, resp. $\mset_{\ast}$) in which the weak equivalences and fibrations are precisely the local easy weak equivalences and local easy fibrations, respectively. \\
\\
The generating set of cofibrations is $\I_{\mset_{su}}$ (resp.$\I_{\mset}$ , resp. $\I_{\mset_{\ast}}$).\\
The generating set of trivial cofibrations is $\Ja_{\mset_{su}}$ (resp. $\Ja_{\mset}$, resp. $\Ja_{\mset_{\ast}}$).\\
\\
We will denote by $\mset_{su,easy}$ (resp. $\mset_{easy}$, resp. $\mset_{\ast}$) this model category. 
\end{thm}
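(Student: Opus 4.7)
The plan is to verify the six hypotheses of the recognition theorem (Theorem \ref{thm-recognition}) applied to $\mset_{su}$ with the class $\W_{\mset_{su}}$ of local easy weak equivalences and the sets $\I_{\mset_{su}}$, $\Ja_{\mset_{su}}$; the arguments for $\mset$ and $\mset_{\ast}$ are identical. First I would dispatch the formal preliminaries. The category $\mset_{su}$ is complete and cocomplete because the forgetful functor to $\Set$ is a bifibration whose fibers $\msxsu$ are locally presentable, with limits computed fiberwise over the limit in $\Set$ and colimits built using the left adjoints $\fex$ from Proposition \ref{quillen-pair-fstar-fex}. The class $\W_{\mset_{su}}$ inherits the two-out-of-three property and closure under retracts from $\M$, since being a local easy weak equivalence is tested componentwise at each pair $(A,B)$. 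The smallness Conditions (2) and (3) follow from local presentability of $\M$ and the fact that $\hb([\1]; -) = \Qs \delta(s_1, -)$ is a chain of left adjoints, hence preserves $\kappa$-smallness for some regular $\kappa$.

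Next I would use Proposition \ref{lifting-mset} specialized to $\n = \1$ to translate lifting problems in $\mset_{su}$ into componentwise lifting problems in $\M$: a map $\sigma : \F \to \G$ has the right lifting property against $\I_{\mset_{su}}$ (respectively $\Ja_{\mset_{su}}$) if and only if every component $\sigma_{(A,B)}$ is a trivial fibration (respectively fibration) in $\M$. Condition (5) is then immediate, since any $\sigma \in \I_{\mset_{su}}$-inj has every $\sigma_{(A,B)}$ a trivial fibration, hence $\sigma$ is simultaneously a local easy weak equivalence and a local easy fibration. For Condition (6) I would verify the second alternative: any $\sigma \in \W_{\mset_{su}} \cap \Ja_{\mset_{su}}$-inj has each $\sigma_{(A,B)}$ both a weak equivalence and a fibration in $\M$, hence a trivial fibration, giving $\sigma \in \I_{\mset_{su}}$-inj.

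The main obstacle is Condition (4): every $\Ja_{\mset_{su}}$-cell complex must be a local easy weak equivalence (the containment in $\I_{\mset_{su}}$-cof being automatic). The key reduction is that a pushout along $\hb([\1]; \alpha)$ does not alter the set of objects, because the induced pushout $\{0,1\} \cup^{\{0,1\}} Y = Y$ in $\Set$ is the identity on $Y$. Consequently such a pushout is computed inside the fiber $\msysu$ along $\fex \hb([\1]; \alpha)$. I would then check that every component of $\fex \hb([\1]; \alpha)$ at a pair $(A,B) \in Y^2$ is a trivial cofibration in $\M$. Tracing through the explicit description of $\delta(s_1, \alpha)$, whose $(0,1)$-entry is the free diagram $\Fb^{s_1}_\alpha$ taking the value $\alpha$ at the initial object of $\S_{\cn}(0,1)^{op}$, together with Lemma \ref{Gamma-no-change-easy} and Lemma \ref{lem-q-no-change}, shows that this component is isomorphic to $\alpha$ (coproducted with the identity of $I$ in the diagonal case) for the pair $(fA_0, fA_1)$ and to an identity of the initial object or of $I$ otherwise. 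Proposition \ref{easy-pushout}(1) then yields that the resulting pushout is a componentwise trivial cofibration at every $(A,B)$, in particular a local easy weak equivalence.

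Finally, passing from single pushouts to arbitrary $\Ja_{\mset_{su}}$-cell complexes (transfinite compositions of coproducts of such pushouts) is routine, because colimits in $\msxsu$ are computed levelwise at the $1$-morphisms $(A,B)$ by Proposition \ref{lem-creat-colimit-trois}, and directed colimits of trivial cofibrations in the combinatorial model category $\M$ are again trivial cofibrations. This completes the verification of the six hypotheses, and the combinatoriality of the resulting model structure is inherited from that of $\M$.
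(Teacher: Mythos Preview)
Your proposal is correct and follows essentially the same route as the paper: both verify the six conditions of Theorem~\ref{thm-recognition}, use Proposition~\ref{lifting-mset} at $\n=\1$ for Conditions~(5) and~(6), and reduce Condition~(4) to a pushout computation inside a single fiber $\msxsu$ after applying $\fex$. The only difference is cosmetic: for Condition~(4) the paper invokes the Quillen adjunction $\fex \dashv \fstar$ of Proposition~\ref{quillen-pair-fstar-fex} directly (so $\fex \hb([\1];\alpha)$ is a trivial cofibration in $\msxsueasy$ and pushouts along trivial cofibrations are trivial cofibrations), whereas you unpack the components of $\fex \hb([\1];\alpha)$ by hand via Lemmas~\ref{Gamma-no-change-easy} and~\ref{lem-q-no-change} before appealing to Proposition~\ref{easy-pushout}(1); note that to complete your unpacking you implicitly need the pushforward lemma of \S4.2.1 (which computes $(\fex\F)(C,D)$ as a coproduct over $f^{-1}(C)\times f^{-1}(D)$) to pass from the components of $\hb([\1];\alpha)$ to those of $\fex\hb([\1];\alpha)$.
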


\begin{proof}
The proof is \textbf{exactly} the same as of Theorem \ref{thm-local-proj}. We will only give the proof for $\mset_{su}$, the method remains the same for $\mset$ and $\mset_{\ast}$. We shall verify briefly that all conditions of Theorem \ref{thm-recognition} are fulfilled.
\begin{itemize}[label=$-$]
\item Conditions $1,2,3$ are clear as the reader can check.
\item Condition $5$ is given by the second assertion of Proposition \ref{lifting-mset}.
\item Condition $4$ i.e, the inclusion $\Ja_{\mset_{su}}\tx{-cell} \subseteq \W_{\mset_{su}} \cap \I_{\mset_{su}}\tx{-cof}$, follows from two steps. First the inclusion $\Ja_{\mset_{su}}\tx{-cell} \subseteq \I_{\mset_{su}}\tx{-cof}$ is clear because in $\M$ we have $\Ja \tx{-cell} \subseteq \I\tx{-cof}$ and everything is transported by adjunction.

Given a pushout datum $$\hb([\1]; V) \xleftarrow{\hb([\1]; \alpha)} \hb([\1]; U) \xrightarrow{(f,\sigma)} \F,$$
where $\hb([\1]; \alpha) \in \Ja_{\mset_{su}}$ and $\F \in \msxsu$, one computes the pushout in $\msxsu$ by pushing forward through $\fex$. Since $\hb([\1]; \alpha)$ is a trivial cofibration in $\msn_{su,easy}$  we find that  $\fex \hb([\1]; \alpha) $ is a trivial cofibration in $\msxsueasy$ because $\fex \dashv \fstar$ is a Quillen pair with $\fex$ left Quillen (Proposition \ref{quillen-pair-fstar-fex}).

Therefore the pushout in $\msxsueasy$ along $\fex \hb(\cn; \alpha)$ is a trivial cofibration in $\msxsueasy$, in particular a weak equivalence and the inclusion $\Ja_{\mset_{su}}\tx{-cell} \subseteq \W_{\mset_{su}}$ follows.
\item To get Condition $(6)$ we simply show that we have an inclusion 
$$\Ja_{\mset_{su}}\tx{-inj} \cap \W_{\mset_{su}} \subseteq \I_{\mset_{su}}\tx{-inj}.$$

By definition maps in $\I_{\mset_{su}}\tx{-inj}$ are the maps $\sigma$ whose component $\sigma_{(A,B)}$ is a   trivial fibrations: this is the second assertion of Proposition \ref{lifting-mset}. 

It follows that if $\sigma: \F \to \G$ is in $ \Ja_{\mset_{su}}\tx{-inj} \cap\W_{\mset_{su}}$, then in one hand $\sigma_{(A,B)}$  is fibration since $\sigma$ is $\hb([\1] \alpha)$-injective, for all $\alpha \in \Ja$ (Proposition \ref{lifting-mset}). On the other hand $\sigma_{(A,B)}$ is also a weak equivalence since $\sigma$ is an easy weak equivalence by hypothesis. The combination of the two facts tells us that $\sigma \in \I_{\mset_{su}}\tx{-inj}$.     
\end{itemize} 
\end{proof}
\begin{rmk}
One can show that we have a similar theorem for $\kset$, thus a model structure $\kset_{\tx{-}easy}$. And the following adjunction that can be found in \cite{COSEC1} is a Quillen adjunction.
$$\Gamma: \kset_{\tx{-}easy} \leftrightarrows \mset_{su,easy}: \Ub.$$
\end{rmk}
\begin{cor}
If a local model structure on $\mcat$ exists, we have a Quillen adjunction
$$|-|: \mset_{su,easy} \leftrightarrows \mcat: \iota.$$
\end{cor}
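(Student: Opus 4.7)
The plan is to first assemble the fixed-object adjunctions into a global one, and then observe that the Quillen property is formal, exactly as in the fixed-object corollary proved just above.

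For the first step, I would use that both $\mset_{su}$ and $\mcat$ are fibered over $\Set$ (via the object-set functor), with base change given by $\fstar$ in both cases. The inclusion $\iota$ commutes strictly with $\fstar$: a strict $\M$-category $\C \in \mcat_{Y}$ is viewed as a locally constant lax functor, and pulling back along $f: X \to Y$ produces a locally constant lax functor on $X$ whose value at $(A,B)$ is $\C(fA, fB)$, which is exactly $\iota(\fstar \C)$. On each fiber we have the adjunction $|-|: \msxsu \leftrightarrows \mcatx : \iota$ established in the Lemma preceding this corollary. Since each fiber inclusion $\iota$ has a left adjoint and commutes with $\fstar$, the induced Beck--Chevalley map $|\fstar \G| \to \fstar |\G|$ composed with $|\sigma|$ assembles the fiberwise left adjoints into a global functor $|-|: \mset_{su} \to \mcat$ that is left adjoint to $\iota$.

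For the second step, I would check the Quillen condition directly from the definitions. By Theorem \ref{first-model-cat-mset}, a (trivial) fibration in $\mset_{su, easy}$ is a map $(f,\sigma): \F \to \G$ whose every component $\sigma_{(A,B)}: \F(A,B) \to \G(fA,fB)$ is a (trivial) fibration in $\M$. By hypothesis, a local (trivial) fibration in $\mcat$ is characterized by the same condition on its components. Since $\iota$ does not alter the objects or the component $\C(A,B)$ of a strict $\M$-category (it is the ``initial entry'' of the locally constant lax functor $\iota\C$), the functor $\iota$ sends local (trivial) fibrations in $\mcat$ to local easy (trivial) fibrations in $\mset_{su, easy}$. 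Hence $\iota$ is right Quillen.

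The only mildly delicate point is the compatibility of the fiberwise adjunctions with the base change $\fstar$ needed for the global adjunction; everything else simply transports the argument of the earlier fixed-$X$ corollary, where the proof was one line (``a local (trivial) fibration is a (trivial) fibration in $\msxsueasy$'') because both model structures are characterized at the same components $(A,B)$.
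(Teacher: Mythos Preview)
Your proof is correct and, for the Quillen part, essentially identical to the paper's: both arguments reduce to the observation that local (trivial) fibrations in $\mcat$ are exactly maps whose components at each $(A,B)$ are (trivial) fibrations in $\M$, which is the same criterion defining (trivial) fibrations in $\mset_{su,easy}$. The paper phrases this by invoking Proposition~\ref{lifting-mset}, you by invoking Theorem~\ref{first-model-cat-mset}; the content is the same one-line check that $\iota$ is right Quillen.

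The difference is that you spend effort assembling the global adjunction $|-| \dashv \iota$ from the fiberwise ones via a Beck--Chevalley argument, whereas the paper simply takes the global adjunction as already in hand. This is reasonable caution on your part, but it can be done more cheaply: the formula $|\F|(A,B)=\colim \F_{AB}$ from the fixed-$X$ lemma works verbatim over varying $X$ (the object set is unchanged by $|-|$), or alternatively one observes that $\iota:\mcat\hookrightarrow\msetsu$ preserves limits and directed colimits between locally presentable categories and applies the adjoint functor theorem. Either route avoids having to verify the Beck--Chevalley compatibility explicitly.
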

\begin{proof}
From Proposition \ref{lifting-mset} it's clear that a local (trivial) fibration of $\M$-categories is also a (trivial) fibration in $\msetsu$.
\end{proof}

\section{Fibred localization}\label{sec-fibre-loc}
\subsection{Localizing sets and \emph{co-Segalification}}
If $s=(A,...,B)$ is a $1$-morphism in $\sx(A,B)^{op}$, we mentioned in Notation \ref{notation-set-localization} that there is a unique morphism $u_s: (A,B) \to s$ in $\sx(A,B)^{op}$. We also mentioned that the evaluation at $u_s$ has a left adjoint $\Psi_s: \M^{[1]} \to \msxsu$.

Let $\alpha: U \to V$ be a cofibration in $\M$ and let $\Psi_s(\av): \Psi_s(\alpha) \to \Psi_s(\Id_V)$ be the image of the morphism $\av: \alpha \to \Id_V$ of $\M^{[1]}$ that has been introduced in Notation \ref{notation-set-localization}.

\begin{df}
\begin{enumerate}
\item Define the \emph{localizing set} for $\msxsu$ as
$$\kb_{\msxsu}:= \{\coprod_{s \in 1-Mor(\sx)} \{\Psi_s(\av); \quad \alpha \in \I  \} \}$$
\item Define the \emph{localizing set} for $\msetsu$ as
$$\kb_{\msetsu}:= \{  \coprod_{\n \geq 1} (\coprod_{s \in 1-Mor(\sn)} \{\Psi_s(\av); \quad \alpha \in \I  \}) \}.$$
\item Let $\ast$ be the coinitial (or terminal) object of $\msetsu$. If $\sigma$ is a map in $\msetsu$, say that an object $\F \in \msetsu$ is $\sigma$-injective if the unique map $\F \to \ast$ has the RLP with respect to $\sigma$. 
\item Similarly say that $\F$ is $\kb_{\msetsu}$-injective (resp. $\kb_{\msxsu}$-injective) if it's $\sigma$-injective for all $\sigma \in \kb_{\msetsu}$ (resp. $\kb_{\msxsu}$).
\end{enumerate}
\end{df}

One can easily establish the following proposition.
\begin{prop}\label{lem-lifting-inject}
Let $\theta=(f,g): \alpha \to p$ be a morphism in $\M^{[\1]}$ which is represented by the following commutative square.

\[
\xy
(0,20)*+{U}="A";
(20,20)*+{X}="B";
(0,0)*+{V}="C";
(20,0)*+{Y}="D";
{\ar@{->}^{f}"A";"B"};
{\ar@{->}_{\alpha}"A";"C"};
{\ar@{->}^{p}"B";"D"};
{\ar@{->}^{g}"C";"D"};
\endxy
\]  

Then the following are equivalent. 
\begin{itemize}[label=$-$]
\item There is a lifting in the commutative square above i.e there exists $k: V \to X$  such that: $k \circ \alpha =f$, $p \circ k=g$.
\item There is a lifting in the following square of $\M^{[\1]}$.
\[
\xy
(0,20)*+{\alpha}="A";
(20,20)*+{p}="B";
(0,0)*+{\Id_V}="C";
(20,0)*+{\ast}="D";
{\ar@{->}^{\theta}"A";"B"};
{\ar@{->}_{\av}"A";"C"};
{\ar@{->}_{}"C";"D"};
{\ar@{->}_{}"B";"D"};
\endxy
\]  

That is, there exists $\beta=(k,l): \Id_V \to p$ such that $\beta \circ \av= \theta$. 
\end{itemize}
\end{prop}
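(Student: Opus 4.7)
The plan is a straightforward unwinding of the definition of a morphism in $\M^{[1]}$. Recall that an object of $\M^{[1]}$ is a morphism of $\M$ and that a morphism between two such objects is precisely a commutative square in $\M$. So specifying $\beta : \Id_V \to p$ in $\M^{[1]}$ amounts to giving a pair of arrows $(k,l)$ with $k : V \to X$ and $l : V \to Y$ such that $p \circ k = l \circ \Id_V = l$; in other words, $l$ is redundant and $\beta$ is fully determined by a single arrow $k : V \to X$ satisfying no further constraint, since $l := p \circ k$ is automatic.

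Next I unwind the morphism $\av : \alpha \to \Id_V$ (as in Notation \ref{notation-set-localization}): its top component is $\alpha$ and its bottom component is $\Id_V$. Hence the composite $\beta \circ \av$ in $\M^{[1]}$ has top component $k \circ \alpha$ and bottom component $(p\circ k) \circ \Id_V = p \circ k$. Comparing with $\theta = (f,g)$, the equation $\beta \circ \av = \theta$ is equivalent to the two equalities $k \circ \alpha = f$ and $p \circ k = g$, which is precisely the condition that $k$ is a lift in the original square.

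Finally I check that the triangle to $\ast$ is vacuous: since $\ast$ is terminal in $\M^{[1]}$ (being the coinitial/terminal object of the ambient category $\msetsu$ corresponds via the relevant adjunctions to the terminal object of $\M^{[1]}$ in this local check; more directly, the square with right edge $p \to \ast$ simply asserts the existence of $\beta$ with $\beta \circ \av = \theta$, the composite to $\ast$ being unique). Combining the two directions of the correspondence $k \leftrightarrow \beta$ gives the bijection between solutions to the two lifting problems, establishing the equivalence.

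The only potential subtlety is the bookkeeping around the automatic bottom component $l = p \circ k$ and confirming that no compatibility with the map $\Id_V \to \ast$ adds a real constraint. This is routine but is the one spot where I would be careful in the write-up.
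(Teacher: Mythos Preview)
Your proof is correct and is exactly the straightforward unwinding the paper has in mind; the paper itself does not give a proof beyond stating that ``one can easily establish'' the proposition. The only cosmetic point is your parenthetical about $\ast$: here $\ast$ is simply the terminal object of $\M^{[1]}$ (namely $\Id$ on the terminal object of $\M$), so the lower triangle commutes automatically; the detour through $\msetsu$ is unnecessary and slightly muddled, and I would drop it in the final write-up.
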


Using that proposition, the adjunction, and the fact that trivial fibrations are the $\I$-injective maps; we get the following. 
\begin{lem}\label{k-inj-cosegal}
Let $\F$ be an object of $\msetsu$. Then the following hold.
\begin{enumerate}
\item $\F$ is $\kb_{\msetsu}$-injective if and only if for every $s=(A_0,...,A_n)$ the map $$\F(u_s): \F(A,B) \to \F(s),$$ is a trivial fibration in $\M$. In particular $\F$ is a co-Segal category.
\item Every  a strict $\M$-category $\F$  is $\kb_{\msetsu}$-injective.
\end{enumerate}
\end{lem}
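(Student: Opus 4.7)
The plan is to translate the $\kb_{\msetsu}$-injectivity condition into a lifting condition for the structure maps $\F(u_s)$, using the adjunction $\Psi_s \dashv \Ev_{u_s}$ and Proposition \ref{lem-lifting-inject}.

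For Assertion $(1)$, observe that saying $\F\to\ast$ has the RLP with respect to $\Psi_s(\av)$ means that every morphism $\Psi_s(\alpha)\to\F$ factors through $\Psi_s(\av):\Psi_s(\alpha)\to\Psi_s(\Id_V)$. By the adjunction, such morphisms correspond to morphisms $\theta:\alpha\to\F(u_s)$ in $\M^{[1]}$, and the factorization condition corresponds to a factorization $\theta = \beta\circ\av$ in $\M^{[1]}$. By Proposition \ref{lem-lifting-inject}, this is precisely the existence of a lifting in the commutative square determined by $\alpha$ and $\F(u_s)$. Asking this for all $\alpha\in\I$ says exactly that $\F(u_s)$ is $\I$-injective in $\M$, i.e.\ a trivial fibration. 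This proves the first part of $(1)$.

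To see that $\F$ is in fact co-Segal, I must check that each $\F_{AB}:\sx(A,B)^{op}\to\M$ lands in weak equivalences. Since $(A,B)$ is initial in $\sx(A,B)^{op}$ and $\F_{AB}$ is a strict functor, any morphism $\sigma:s\to s'$ in $\sx(A,B)^{op}$ satisfies $u_{s'} = \sigma\circ u_s$ by uniqueness of maps out of the initial object. Applying $\F_{AB}$ gives $\F_{AB}(u_{s'}) = \F_{AB}(\sigma)\circ\F_{AB}(u_s)$. By the first part, both $\F_{AB}(u_s)$ and $\F_{AB}(u_{s'})$ are trivial fibrations, hence weak equivalences, so $3$-for-$2$ yields that $\F_{AB}(\sigma)$ is a weak equivalence.

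For Assertion $(2)$, if $\F$ is a strict $\M$-category, then each component $\F_{AB}:\sx(A,B)^{op}\to\M$ is a constant diagram. In particular $\F(u_s)$ is an identity for every $s$, and identities are always trivial fibrations. By $(1)$, $\F$ is $\kb_{\msetsu}$-injective. The only delicate point is just to keep track of the adjunction $\Psi_s\dashv\Ev_{u_s}$ and to observe that the auxiliary object $\Id_V$ used in the localizing set is exactly what Proposition \ref{lem-lifting-inject} is designed to handle; everything else is routine.
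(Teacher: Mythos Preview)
Your proof is correct and follows essentially the same route as the paper: translate $\kb_{\msetsu}$-injectivity through the adjunction $\Psi_s\dashv\Ev_{u_s}$, invoke Proposition~\ref{lem-lifting-inject} to identify the lifting condition with $\I$-injectivity of $\F(u_s)$, and for $(2)$ note that constant diagrams give identity structure maps. Your explicit $3$-for-$2$ argument for the co-Segal conclusion is a welcome addition, since the paper simply asserts ``In particular $\F$ is a co-Segal category'' without spelling out that an arbitrary $\F_{AB}(\sigma)$ is a weak equivalence.
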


\begin{proof}
If $\F$ is $\kb_{\msetsu}$-injective, by definition, $\F$ is $\Psi_s(\av)$-injective for all generating cofibration in $\M$. And by adjunction we find that $\F(u_s)$ is $\av$-injective thanks to the previous proposition. This is equivalent to saying that any lifting problem defined by $\alpha$ and $\F(u_s)$ has a solution. Consequently $\F(u_s)$ has the RLP with respect to all maps in $\I$ and we find that $\F(u_s)$ is a trivial fibration as claimed. This proves Assertion $(1)$.

Assertion $(2)$ is a corollary of Assertion $(1)$ since categories are the constant lax diagrams, therefore $\F(u_s)$ is an identity, in particular a trivial fibration.
\end{proof}

\subsection{Localization of $\msxsueasy$}
\subsubsection{Preliminary observations}
\paragraph{A pushforward lemma}
\begin{lem}
Let $f:X \to Y$ be a function and $\sigma: \F \to \G$ be a morphism in $\msxsu$ such that for every pair $(A,B) \in X^2$ the component 
$$\sigma_{(A,B)}: \F(A,B) \to \G(A,B),$$
is a (trivial) cofibration in $\M$.\\

Then for every pair $(C,D) \in Y^2$ the component 
$$\fex(\sigma)_{(A,C)}: (\fex \F)(C,D) \to (\fex \G)(C,D),$$
is also a (trivial) cofibration in $\M$.
\end{lem}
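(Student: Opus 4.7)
The plan is to reduce the statement to a computation of the pair-level value $(\fex\F)(C,D)$ and then invoke closure of (trivial) cofibrations under coproducts and pushouts. The key idea is that, just like in Lemma \ref{Gamma-no-change-easy} and Lemma \ref{lem-q-no-change}, the values at pair-level $1$-morphisms in $\sx$ and $\sy$ are minimally constrained: the object $(A,B)$ is initial in $\sx(A,B)^{op}$, there are no non-identity morphisms into it, and it receives no non-trivial laxity maps. Consequently, pair-level data is essentially \emph{free} in $\F$, up to the unique unit $I\to \F(A,A)$ in the diagonal case.

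First, I would establish an explicit description of $\fex$ at pair level. By the universal property of $\fex$ and the adjunction $\fex\dashv \fstar$, a morphism out of $\fex\F$ evaluated at $(C,D)$ is the same as a family of morphisms out of $\F(A,B)$, compatible with the point $I\to \F(A,A)$ when $A=B$, for all $A\in f^{-1}(C)$, $B\in f^{-1}(D)$. Combined with the fact that $\Phi$ and $\Gamma$ do not alter pair-level values (Theorem \ref{thm-unitalization-preservation} and Lemma \ref{Gamma-no-change-easy}), one gets the formulas
\[ (\fex\F)(C,D) \cong \coprod_{A\in f^{-1}(C),\, B\in f^{-1}(D)} \F(A,B) \quad \text{if } C\neq D, \]
\[ (\fex\F)(C,C) \cong I \bigsqcup_{\coprod_{A\in f^{-1}(C)} I} \Bigl(\coprod_{A,A'\in f^{-1}(C)} \F(A,A')\Bigr), \]
where in the diagonal case the unit $I\to (\fex\F)(C,C)$ identifies the points $I\to \F(A,A)$ coming from each preimage $A\in f^{-1}(C)$.

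Second, since $\fex$ is a left adjoint it preserves colimits; therefore the induced map $\fex(\sigma)_{(C,D)}$ is, under the formulas above, a coproduct (when $C\neq D$) or a pushout under $\coprod_{A} \Id_I$ (when $C=D$) of the components $\sigma_{(A,B)}$. Since (trivial) cofibrations in $\M$ are closed under coproducts, and since pushouts of (trivial) cofibrations along any map are (trivial) cofibrations, and the map $\coprod \Id_I$ is itself an isomorphism hence a trivial cofibration, the hypothesis that every $\sigma_{(A,B)}$ is a (trivial) cofibration in $\M$ propagates to the conclusion that each $\fex(\sigma)_{(C,D)}$ is a (trivial) cofibration.

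The main obstacle is the rigorous justification of the explicit formula for $(\fex\F)(C,C)$ on the diagonal in the unital setting: one must verify that the iterated unitalization process of Theorem \ref{thm-unitalization-preservation} does not modify pair-level values beyond the unit-identification recorded above, and that this identification is indeed a pushout in $\M$ (not some more complicated colimit). This is a careful bookkeeping argument, using that the monadic projection $\Phi$ only alters values at $1$-morphisms of degree $\geq 2$, together with the description of coproducts in $\msysu$ supplied by Proposition \ref{lem-creat-colimit-trois}.
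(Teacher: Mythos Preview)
Your approach is essentially the same as the paper's: both compute the pair-level value $(\fex\F)(C,D)$ explicitly and then conclude by closure of (trivial) cofibrations under the relevant colimit. The paper states the simpler formula
\[
(\fex\F)(C,D)\;\cong\;\coprod_{(A,B)\in f^{-1}(C)\times f^{-1}(D)}\F(A,B)
\]
uniformly for all $(C,D)$, justified by the observation that one may freely modify a unital precategory at pair level provided the new value is pointed when $C=D$, and then concludes by closure under coproducts.

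You are in fact being more careful than the paper on the diagonal. When $|f^{-1}(C)|>1$, the unit of the adjunction $\F\to\fstar\fex\F$ must preserve units, so each composite $I\to\F(A,A)\to(\fex\F)(C,C)$ must equal the single unit $I_C$; this forces the identification you record as a pushout under $\coprod_{A\in f^{-1}(C)} I$. The paper's bare coproduct, read literally, does not achieve this. Your formula is the correct one in $\msysu$, and your argument that the induced map between the two pushouts is a (trivial) cofibration (being a pushout of $\coprod\sigma_{(A,A')}$) is valid. The aside that ``$\coprod\Id_I$ is itself an isomorphism'' is not what is needed here and can be dropped; what matters is that $\coprod\sigma_{(A,A')}$ is a (trivial) cofibration and that (trivial) cofibrations are stable under pushout.
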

\begin{proof}
We start by showing that:
\begin{claim}
 For any object $\F \in \msxsu$, we have  
$$(\fex \F)(C,D)= \coprod_{(A,B) \in f^{-1}(C) \times f^{-1}(D)} \F(A,B).$$
\end{claim}

To see this, we simply use the fact that for a classical diagram $F: \J \to \M$  indexed by a directed category having an initial object $e$, given any morphism $m \to F(e)$ we can define a new diagram $F': \J \to \M$ with $F'(e)=m$ and $F'(j)= F(j)$.\\

The same thing happens for lax diagrams indexed by locally direct $2$-categories like $\sxop$. But in the unital case we have to choose our map $m \to F(e)$ so that if $F(e)$ is pointed then so is $m$.\\

With that observation in mind, assume that $\fex \F$ is constructed. The unit of the adjunction $\fex \dashv \fstar$ gives a map 
$$\F(A,B) \to (\fex\F)(C,D), \quad \forall (A,B) \in f^{-1}(C) \times f^{-1}(D).$$  

And the universal property of the coproduct says that there is a unique map $$\varepsilon: \coprod_{(A,B) \in f^{-1}(C) \times f^{-1}(D)} \F(A,B) \to (\fex\F)(C,D),$$
with the obvious factorizations. Note that the coproduct is pointed when $C=D$ since in that case all $\F(A,A)$ are also pointed. So we can modify $\fex \F$ into a new object $\Ea$ of $\msysu$ by only changing the value of $(C,D)$ 
$$\Ea(C,D)= \coprod_{(A,B) \in f^{-1}(C) \times f^{-1}(D)} \F(A,B).$$

Doing this for all $(C,D)$  we find that $\Ea$ is canonically equipped with a unique map $\varepsilon:\Ea \to \fex\F$ and another map $\iota: \F \to \fstar \Ea$ such that the unit $\F \to \fstar \fex \F$ is $ \fstar(\varepsilon) \circ \iota$. This shows that $\Ea$ is as much universal as $\fex \F$ so they must be isomorphic.\\

We have a similar formula for the component of $\fex \sigma$ and thanks to the fact that (trivial) cofibrations are closed under coproduct, we get our lemma. 

\end{proof}

\paragraph{A key lemma}
The following lemma says that the left Quillen functor $$|-|: \msxsueasy \to \mcatx,$$ sends elements in $\kb_{\msxsu} $ to trivial cofibration, so in particular to weak equivalences.
\begin{lem}\label{lem-excellent}
Let $s=(A,...,B)$ be a $1$-morphism in $\sx$ and let $\alpha: U \to V$ be a generating cofibration  of $\M$.  Then the image in $\mcatx$ of the map $\Psi_s(\av): \Psi_s(\alpha) \to \Psi_s(\Id_V)$ by the functor
$$|-|: \msxsu \to \mcatx,$$
is a trivial cofibration in the local model structure on $\mcatx$. In particular it's a trivial cofibration in the local  model structure on $\mcat$. 
\end{lem}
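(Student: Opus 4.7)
The plan is to show that $|\Psi_s(\av)|$ is in fact an isomorphism in $\mcatx$; since an isomorphism is a trivial cofibration in any model structure, and the inclusion $\mcatx \hookrightarrow \mcat$ preserves isomorphisms, this will also yield the \emph{in particular} clause. The whole argument is a Yoneda computation built from two adjunctions.

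First, combine the adjunction $\Psi_s \dashv \Ev_{u_s}$ between $\M^{[1]}$ and $\msxsu$ with the adjunction $|-| \dashv \iota$ between $\msxsu$ and $\mcatx$: for every strict $\M$-category $\C$ and every $X \in \M^{[1]}$ one obtains a natural isomorphism
$$\Hom_{\mcatx}(|\Psi_s(X)|,\, \C) \;\cong\; \Hom_{\M^{[1]}}(X,\, \Ev_{u_s}(\iota \C)).$$
I then compute $\Ev_{u_s}(\iota\C)$: because $\C$ is a strict $\M$-category, its image under $\iota$ is the lax functor whose component $(\iota\C)_{AB}: \sx(A,B)^{op} \to \ul{M}$ is constant with value $\C(A,B)$; hence $(\iota\C)(u_s) = \Id_{\C(A,B)}$ in $\M^{[1]}$.

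With this identification both $\Hom_{\M^{[1]}}(\alpha, \Id_{\C(A,B)})$ and $\Hom_{\M^{[1]}}(\Id_V, \Id_{\C(A,B)})$ are naturally in bijection with $\Hom_{\M}(V, \C(A,B))$: a commutative square into $\Id_{\C(A,B)}$ is determined by its bottom arrow $g: V \to \C(A,B)$ (the top arrow being forced to equal $g\alpha$ in the first case and $g$ in the second). Tracing the precomposition by $\av$, whose top and bottom components are $\alpha$ and $\Id_V$ respectively, one sees directly that the induced map
$$\Hom_{\M^{[1]}}(\Id_V, \Id_{\C(A,B)}) \;\longrightarrow\; \Hom_{\M^{[1]}}(\alpha, \Id_{\C(A,B)})$$
corresponds to the identity on $\Hom_{\M}(V, \C(A,B))$. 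By the Yoneda lemma, $|\Psi_s(\av)|$ is an isomorphism in $\mcatx$, which is what I wanted.

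The only obstacle is bookkeeping: chaining the two adjunctions so that the question reduces to a comparison of Hom sets in $\M^{[1]}$ into $\Id_{\C(A,B)}$. The conceptual content is that, because strict $\M$-categories correspond to lax functors that are constant along each $\sx(A,B)^{op}$, the left adjoint $|-|$ cannot distinguish between $\alpha$ and $\Id_V$ inside $\Psi_s$—only the target $V$ survives—so the induced map is forced to be an isomorphism, \emph{a fortiori} a trivial cofibration.
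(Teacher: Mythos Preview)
Your proof is correct and follows essentially the same route as the paper: both arguments chain the adjunctions $|-|\dashv\iota$ and $\Psi_s\dashv\Ev_{u_s}$ and use the key observation that $(\iota\C)(u_s)=\Id_{\C(A,B)}$ for a strict $\M$-category $\C$. The paper phrases the conclusion as ``$|\Psi_s(\av)|$ has the LLP against every map in $\mcatx$'' (hence is a trivial cofibration), whereas you package the same computation via Yoneda to conclude directly that $|\Psi_s(\av)|$ is an isomorphism; these are equivalent, and your formulation is arguably sharper.
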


\begin{proof}
We're going to show that $|\Psi_s(\av)|$ has the left lifting property with respect to any local fibration in $\mcat$. In fact it has the LLP with respect to any functor in $\mcat$.\\

Let $p: \Za \to \T$ be a local fibration in $\mcat$. Without loss of generality we can assume that $p$ is a morphism in $\mcatx$, since the inclusion $\mcatx \hookrightarrow \mcat$ is compatible with the respective homotopy theories. By adjunction, a lifting problem in $\mcatx$
\[
\xy
(0,20)*+{|\Psi_s(\alpha)|}="A";
(30,20)*+{\Za}="B";
(0,0)*+{|\Psi_s(\Id_V)|}="C";
(30,0)*+{\T}="D";
{\ar@{->}^{}"A";"B"};
{\ar@{->}_{|\Psi_s(\av)|}"A";"C"};
{\ar@{->}^{}"B";"D"};
{\ar@{->}^{}"C";"D"};
\endxy
\]  
is equivalent to a lifting problem in $\msxsu$:
\[
\xy
(0,20)*+{\Psi_s(\alpha)}="A";
(30,20)*+{\iota(\Za)}="B";
(0,0)*+{\Psi_s(\Id_V)}="C";
(30,0)*+{\iota(\T)}="D";
{\ar@{->}^{}"A";"B"};
{\ar@{->}_{\Psi_s(\av)}"A";"C"};
{\ar@{->}^{}"B";"D"};
{\ar@{->}^{}"C";"D"};
\endxy
\]

And one of them has a solution if and only if the other one has a solution. Again, using the adjunction 
$$\Psi_s: \M^{[1]} \leftrightarrows \msxsu: \Ev_{u_s},$$ the previous lifting problem in $\msxsu$ is equivalent to the following one in $\M^{[1]}$.  
\[
\xy
(0,20)*+{\alpha}="A";
(30,20)*+{\Za(u_s)= \Id_{\Za(A,B)}}="B";
(0,0)*+{\Id_V}="C";
(30,0)*+{\T(u_s)=\Id_{\T(A,B)}}="D";
{\ar@{->}^{}"A";"B"};
{\ar@{->}_{\av}"A";"C"};
{\ar@{->}^{}"B";"D"};
{\ar@{->}^{}"C";"D"};
\endxy
\]

As shown above $\Za(u_s)= \Id_{\Za(A,B)}$, and similarly for $\T$ since they are categories (therefore locally constant). The morphism $\alpha \to \Id_{\Za(A,B)}$ is simply given by two maps $f: U \to \Za(A,B)$ and $g: V \to \Za(A,B)$ with a factorization $f= g \circ \alpha$.  The map $\Id_V \to \Id_{\T(A,B)}$ is simply a map $h: V \to \T(A,B)$, and we have two equalities $p \circ f=  h \circ \alpha$ and $h= p \circ g$. \\

Clearly the map $\Id_V \to \Id_{\Za(A,B)}$ given by $g$ is a lifting to our problem and  $|\Psi_s(\alpha)|$ is a trivial cofibration as desired.
\end{proof}
Let $\coscan: \msxsu \to \msxsu$ be the $\kb_{\msxsu}$-injective replacement functor obtained by the small object argument. We have a natural transformation $\eta_{can}: \Id \to \coscan$ whose component $\F \to \coscan(\F)$ is a $\kb_{\msxsu}$-cell complex.\\
 
As a consequence of the previous lemma we get:
\begin{prop}
If a local model structure on $\mcatx$ exists then the image  
$$|\eta_{can}|: | \F| \to |\coscan(\F)|,$$ is a trivial cofibration of strict $\M$-categories.
\end{prop}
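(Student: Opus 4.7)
The strategy is to exploit the fact that $|-|\colon \msxsu \to \mcatx$ is a left adjoint and therefore preserves all small colimits; combined with Lemma \ref{lem-excellent}, this lets us transport the cell structure of $\eta_{can}$ through $|-|$ into a cell structure whose cells are trivial cofibrations of $\M$-categories.

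First I would recall that, by construction, $\coscan$ is the $\kb_{\msxsu}$-injective replacement obtained from the small object argument. Consequently, for each $\F$ the map $\eta_{can}\colon \F \to \coscan(\F)$ is a (transfinite) $\kb_{\msxsu}$-cell complex, i.e.\ a transfinite composition
\[
\F = \F_0 \to \F_1 \to \cdots \to \F_\beta \to \cdots \to \colim_\beta \F_\beta = \coscan(\F),
\]
where each successor step $\F_\beta \to \F_{\beta+1}$ is obtained as a pushout of a coproduct of maps from $\kb_{\msxsu}$, and limit stages are computed as directed colimits.

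Next I would apply $|-|$. Since $|-|$ is a left adjoint (to the inclusion $\mcatx \hookrightarrow \msxsu$), it commutes with pushouts, coproducts, and directed colimits, so $|\eta_{can}|\colon |\F| \to |\coscan(\F)|$ is again a transfinite composition of pushouts of coproducts of images of maps from $\kb_{\msxsu}$ under $|-|$. By Lemma~\ref{lem-excellent}, each such image $|\Psi_s(\av)|$ is a trivial cofibration in $\mcatx$ (this is exactly where the hypothesis that the local model structure on $\mcatx$ exists is used). In any model category, the class of trivial cofibrations is closed under coproducts, pushouts, and transfinite composition, so $|\eta_{can}|$ is a trivial cofibration in $\mcatx$, and hence also a trivial cofibration in $\mcat$ via the inclusion $\mcatx \hookrightarrow \mcat$.

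The only mildly subtle point, and where I would be careful, is to ensure that the colimit in $\msxsu$ that defines $\coscan(\F)$ is actually transported to the corresponding colimit in $\mcatx$; this is guaranteed because $|-|\colon \msxsu \to \mcatx$ is a left adjoint, hence cocontinuous, so the transfinite construction commutes with $|-|$. Once this is observed, the rest of the argument is bookkeeping about closure properties of the weak factorization system generated by $|\kb_{\msxsu}|$ inside $\mcatx$.
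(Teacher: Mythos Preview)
Your proof is correct and follows exactly the approach the paper has in mind: the proposition is stated as an immediate consequence of Lemma~\ref{lem-excellent}, and your argument spells out precisely that implicit reasoning, using that $|-|$ is a left adjoint (hence cocontinuous) to transport the $\kb_{\msxsu}$-cell structure of $\eta_{can}$ into a transfinite composition of pushouts of trivial cofibrations in $\mcatx$. There is nothing to add.
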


\subsubsection{Enlarging the cofibrations}

In the following we would like to have a left proper model structure $\msxsu$ such that the set of generating cofibrations contains the localizing set $\kb_{\msxsu}$ introduced above.\\
Denote by $\I_{\msxsu}^{+}$ the set
$$\I_{\msxsu} \bigsqcup \kb_{\msxsu}.$$

\begin{lem}\label{petit-lem-cofib}
Given any pair $(A,B) \in X^{2}$, for all $\sigma \in \I_{\msxsu}^{+}$ the component  $\sigma_{(A,B)}$ is a cofibration.
\end{lem}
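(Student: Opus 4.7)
The plan is to reduce everything to Lemma \ref{lem-q-no-change}, which propagates the properties ``component at $(A,B)$ is a cofibration'' and ``component at $(A,B)$ is an isomorphism'' through the left adjoint $\Qs = \Phi\Rpt\Gamma$. Explicitly, for each $\sigma \in \I_{\msxsu}^{+}$ I would exhibit a morphism $\tau$ in $\kx$ with $\sigma = \Qs(\tau)$, such that the component $\tau_{(A,B)}$ is, for every pair $(A,B) \in X^{2}$, either a cofibration of $\M$ or the identity of the initial object $\emptyset$. Since identities are isomorphisms (hence cofibrations), Lemma \ref{lem-q-no-change} applied componentwise will then give the desired conclusion.

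For $\sigma \in \I_{\msxsu} = \Qs\I_{\kxeasy}$ the witness is already built into the construction: by Lemma \ref{lem-generation} and Corollary \ref{model-kx-easy}, every element of $\I_{\kxeasy}$ has the form $\tau = \delta_{A'B'}(\Fb^{(A',B')}_{\alpha})$ for some pair $(A',B') \in X^{2}$ and some $\alpha : U \to V$ in $\I$. The Dirac mass $\delta_{A'B'}$ places the constant initial diagram at every factor distinct from $(A',B')$, so $\tau_{(A,B)} = \Id_{\emptyset}$ whenever $(A,B) \neq (A',B')$. On the remaining factor, the defining coproduct of $\Fb^{(A',B')}_{\alpha}$ at the initial object $(A',B') \in \sx(A',B')^{op}$ reduces to a single summand, giving $\tau_{(A',B')} = \alpha$, a cofibration of $\M$.

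For $\sigma = \Psi_s(\av) \in \kb_{\msxsu}$ with $s \in \sx(A_0,B_0)^{op}$ and $\alpha \in \I$, I would unfold the composite definition of $\Psi_s$ from Notation \ref{notation-set-localization} to write $\sigma = \Qs(\delta_{A_0 B_0}(\Lan_{u_s}(\av)))$. The Dirac mass argument again forces $\tau_{(A,B)} = \Id_{\emptyset}$ for $(A,B) \neq (A_0, B_0)$, so the only nontrivial component is $\tau_{(A_0,B_0)} = \Lan_{u_s}(\av)((A_0,B_0))$. Because $(A_0,B_0)$ is the initial object of $\sx(A_0,B_0)^{op}$ with only the identity as a morphism into it, the comma category $u_s / (A_0,B_0)$ consists of the single object $(0, \Id)$, and the colimit formula for $\Lan_{u_s}$ collapses to the $0$-component of $\av$, which is $\alpha$ by the very construction of $\av$ in Notation \ref{notation-set-localization}.

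The argument is essentially bookkeeping and I expect no serious obstacle; the only step demanding a moment of attention is the evaluation of $\Lan_{u_s}(\av)$ at the initial object $(A_0,B_0)$, and this is handled cleanly by the initiality of $(A_0,B_0)$ in $\sx(A_0,B_0)^{op}$.
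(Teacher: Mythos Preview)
Your proposal is correct and follows essentially the same approach as the paper: both identify the component $\sigma_{(A,B)}$ by tracing through the composite $\Psi_s = \Phi\Rpt\Gamma \circ \delta_{A_0B_0} \circ \Lan_{u_s}$ and noting that the component at $(A_0,B_0)$ of $\Lan_{u_s}(\av)$ is $\alpha$ while all other components are $\Id_\emptyset$. The paper's proof simply states the explicit values $\alpha$, $\Id_I$, or $\Id_I \coprod \alpha$ (the latter two arising from $\Rpt$ when $A=B$) and cites Proposition~\ref{prop-av}, whereas you package the passage through $\Qs$ via Lemma~\ref{lem-q-no-change}; the substance is the same.
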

\begin{proof}
The statement is clear if $\sigma \in \I_{\msxsu}$. If $\sigma \in \kb_{\msxsu}$, the component $\sigma_{(A,B)}$ is either $\alpha$, $\Id_I$ or $\Id_I \coprod \alpha$ with $\alpha \in \I$ (see Proposition \ref{prop-av}).
\end{proof}
\paragraph{The model structure}
We show below that there is a left proper combinatorial model structure on $\msxsu$ with $\I_{\msxsu}^{+}$  as the set of generating cofibrations and $\W_{\msxsueasy}$ as the class of weak equivalences.\\

We use Smith's recognition Theorem for combinatorial model categories (see for example Barwick \cite[Proposition 2.2]{Barwick_localization}). This theorem allows constructing a combinatorial model category out of two data consisting of a class $\W$ of morphisms whose elements are called \emph{weak equivalences}; and a set $\I$ of \emph{generating cofibrations}.\\

Our method is classical and the argument is present in Pellissier's PhD thesis \cite{Pel}; it is also used by Lurie \cite{Lurie_HTT}, Simpson \cite{Simpson_HTHC} and others. But in doing so, we actually reprove (implicitly) a derived version of Smith's theorem that has been outlined by Lurie \cite[Proposition A.2.6.13]{Lurie_HTT}. This version asserts that the resulting combinatorial model structure is automatically left proper. So we will just use that proposition  that we recall hereafter with the same notation as in Lurie's book. 
\begin{prop}\label{Smith-Lurie}
Let $\bf{A}$ be a presentable category. Suppose we are given a class $W$ of morphisms of A, which we will call weak equivalences, and a (small) set $C_0$ of morphisms of $\bf{A}$, which we will call generating cofibrations. Suppose furthermore that the following assumptions are satisfied:
\begin{itemize}
\item[$(1)$] The class $W$ of weak equivalences is perfect (\cite[Definition A.2.6.10]{Lurie_HTT}). 
\item[$(2)$] For any diagram
\[
\xy
(0,20)*+{X}="A";
(20,20)*+{Y}="B";
(0,0)*+{X'}="C";
(20,0)*+{Y'}="D";
%%%%%%%
(0,-20)*+{X''}="X";
(20,-20)*+{Y''}="Y";
{\ar@{->}^-{f}"A";"B"};
{\ar@{->}_-{}"A";"C"};
{\ar@{->}^-{}"B";"D"};
{\ar@{->}^-{}"C";"D"};
{\ar@{->}^-{}"X";"Y"};
{\ar@{->}^-{g}"C";"X"};
{\ar@{->}^-{g'}"D";"Y"};
%%%%%%
\endxy
\] 

in which both squares are coCartesian (=pushout square), $f$ belongs to $C_0$, and $g$ belongs $W$, the map $g'$ also below to $W$.
\item[$(3)$] If $g: X \to Y$ is a morphism in $\bf{A}$ which has the right lifting property with respect to every morphism in $C_0$, then $g$ belongs to $W$.
\end{itemize}

Then there exists a left proper combinatorial model structure on $\bf{A}$ which
may be described as follows:
\begin{itemize}
\item[$(C)$] A morphism $f : X \to  Y$ in $\bf{A}$ is a cofibration if it belongs to the weakly
saturated class of morphisms generated by $C_0$.
\item[$(W)$]  A morphism $f:X\to Y$ in $\bf{A}$ is a weak equivalence if it belongs to $W$.
\item[$(F)$] A morphism $f:X\to Y$ in $\bf{A}$ is a  fibration if it has the right lifting property with respect to every map which is both a cofibration and a weak equivalence.
\end{itemize}
\end{prop}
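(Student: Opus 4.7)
The plan is to follow the standard proof of Smith's recognition theorem (as exposed in Barwick and in Lurie's appendix), with the observation that condition $(2)$ above delivers left properness essentially for free at the end.

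First, I would declare cofibrations to be the weakly saturated class $\cof(C_0)$ generated by $C_0$, and trivial fibrations to be the elements of $C_0\text{-inj}$. Since $\bf{A}$ is presentable, the small object argument applied to $C_0$ produces a functorial factorization of every morphism as a cofibration followed by a trivial fibration. Condition $(3)$ ensures that trivial fibrations lie in $W$; together with the two-out-of-three property and retract closure coming from perfectness (condition $(1)$), this pins down the candidate class of trivial cofibrations as $W \cap \cof(C_0)$.

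The main obstacle is producing a small set $J$ of generating trivial cofibrations whose weak saturation equals $W \cap \cof(C_0)$, equivalently whose injective maps are exactly the $C_0$-injective maps that lie in $W$. This is Smith's original contribution and rests crucially on perfectness of $W$: since $W$ is $\kappa$-accessible and closed under $\kappa$-filtered colimits for some regular $\kappa$, and after enlarging $\kappa$ relative to the sources and targets of $C_0$, a careful bounded-cardinality selection among relative $C_0$-cell complexes that happen to lie in $W$ produces such a set $J$. Applying the small object argument to $J$ then supplies the second Quillen factorization and defines fibrations as $J\text{-inj}$.

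The remaining axioms follow routinely: two-out-of-three and retract closure of $W$ come directly from perfectness; the two lifting axioms follow from the two factorizations via the standard retract argument. For left properness one invokes condition $(2)$ as follows: every cofibration is, up to retract, a transfinite composition of pushouts of elements of $C_0$; the class $W$ is closed under retracts and transfinite compositions by perfectness; and each single pushout of an element of $C_0$ preserves $W$ by condition $(2)$. Concatenating these three facts shows that the pushout of any weak equivalence along any cofibration is again a weak equivalence, which is precisely left properness. Throughout, the genuinely hard step is the accessibility argument producing $J$ --- the heart of Smith's theorem; everything else is bookkeeping.
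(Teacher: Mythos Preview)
The paper does not prove this proposition at all: it is quoted verbatim from Lurie's \emph{Higher Topos Theory} (Proposition A.2.6.13) as a black-box input, immediately after the sentence ``So we will just use that proposition that we recall hereafter with the same notation as in Lurie's book.'' There is therefore no paper proof to compare your attempt against.

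That said, your outline is a faithful sketch of the standard Smith--Lurie argument. One small point worth tightening in your left-properness paragraph: you want to show that pushout along an arbitrary cofibration preserves $W$, and a cofibration is only a \emph{retract} of a transfinite composition of pushouts of maps in $C_0$. So after handling $C_0$-cell complexes via condition~$(2)$ and closure of $W$ under filtered colimits, you still need the observation that a pushout square along a retract of $f'$ is itself a retract (in the arrow category) of the pushout square along $f'$; then retract-closure of $W$ finishes the job. This is routine, but the phrase ``concatenating these three facts'' slightly obscures where the retract step actually enters.
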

\begin{note}
Here \emph{perfectness} is a property of stability under filtered colimits and a generation by a small set $W_0$ (which is more often the intersection of $W$ and the set of maps between presentable objects). The reader can find the exact definition in \cite[Definition A.2.6.10]{Lurie_HTT}.
\end{note}

\begin{warn}
We've used so far the letters $f, g$ as functions so to avoid any confusion we will use $\sigma,\sigma'$ instead.
\end{warn}

Applying the previous proposition we get the following theorem.
\begin{thm}\label{enlarging-msx}
Let $\M$ be a combinatorial monoidal model category which is left proper. Then for any set $X$ there exists a combinatorial model structure on $\msxsu$ which is left proper and having the following properties. 
\begin{enumerate}
\item A map $\sigma: \F \to \G$ is a weak equivalence if it's an easy weak equivalence i.e, if it's in $\W_{\msxsueasy}$.
\item A map $\sigma: \F \to \G$  is a cofibration if it belongs to the weakly
saturated class of morphisms generated by $ \I_{\msxsu}^{+}$.
\item A morphism $\sigma: \F \to \G$ is a fibration if it has the right lifting property with respect to every map which is both a cofibration and a weak equivalence
%\item If a local model structure on $\mcatx$ exists then the adjunction $$ |-|: \msxsu \leftrightarrows \mcatx: \iota, $$
%remains a Quillen adjunction.
\end{enumerate}
We will denote this model category  by $\msxsuplus$. The identity functor $$ \Id: \msxsueasy \to \msxsuplus, $$ is a left Quillen functor.
\end{thm}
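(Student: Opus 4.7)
The plan is to apply Lurie's version of Smith's recognition theorem (Proposition \ref{Smith-Lurie}) to the category $\msxsu$ with the class $W = \W_{\msxsueasy}$ of easy weak equivalences and the generating set $C_0 = \I_{\msxsu}^{+}$. Since this proposition automatically yields left properness, it suffices to verify the three hypotheses $(1)$--$(3)$. The underlying category $\msxsu$ is locally presentable by the results of Section \ref{sec-unital}, so the ambient assumption of Proposition \ref{Smith-Lurie} is met.

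For condition $(1)$, the perfectness of $W$, I would argue as follows. An easy weak equivalence is by definition a morphism $\sigma \colon \F \to \G$ in $\msxsu$ whose components $\sigma_{(A,B)} \colon \F(A,B) \to \G(A,B)$ at the initial $1$-morphisms are weak equivalences in $\M$. Since colimits in $\msxsu$ are computed level-wise at each $(A,B)$ by Proposition \ref{lem-creat-colimit-trois}, and since the class of weak equivalences of a combinatorial $\M$ is perfect, one deduces that $W$ is closed under filtered colimits and is generated by its intersection with the morphisms between $\lambda$-presentable objects for some regular cardinal $\lambda$. This gives perfectness in the sense of \cite[Definition A.2.6.10]{Lurie_HTT}.

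Condition $(2)$ is the main step, and it is where left properness of $\M$ is used. I would fix $f \in \I_{\msxsu}^{+}$ and a weak equivalence $g \in W$, and show that in any pushout the resulting map $g'$ lies in $W$. The key ingredient is Lemma \ref{petit-lem-cofib}, which asserts that every component $f_{(A,B)}$ is a cofibration in $\M$ (this is the whole reason the set $\kb_{\msxsu}$ was designed with maps of the form $\Psi_s(\av)$, whose components at $(A,B)$ are built from generating cofibrations $\alpha$ together with identities, as recorded in Proposition \ref{prop-av}). Combined with Assertion $(2)$ of Proposition \ref{easy-pushout}, which says that pushouts in $\msxsu$ along morphisms whose $(A,B)$-component is a cofibration preserve weak equivalences at $(A,B)$ provided $\M$ is left proper, this yields that $g'_{(A,B)}$ is a weak equivalence in $\M$ for every $(A,B) \in X^2$, i.e.\ $g' \in W$.

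Finally, condition $(3)$ is essentially tautological: if $\sigma$ has the right lifting property with respect to every element of $\I_{\msxsu}^{+} \supseteq \I_{\msxsu}$, then $\sigma$ is in particular a trivial fibration in the easy projective model structure $\msxsueasy$ established in Theorem \ref{easy-model-msx}, hence an easy weak equivalence. This gives the inclusion $(\I_{\msxsu}^{+})\text{-inj} \subseteq W$ required by hypothesis $(3)$. Applying Proposition \ref{Smith-Lurie} produces the model structure $\msxsuplus$, automatically left proper. The last assertion, that $\Id \colon \msxsueasy \to \msxsuplus$ is left Quillen, is immediate: the two model structures share the same class of weak equivalences, and every easy cofibration belongs to the weakly saturated class of $\I_{\msxsu} \subseteq \I_{\msxsu}^{+}$, hence is a cofibration in $\msxsuplus$. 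The main obstacle is really the verification of condition $(2)$ for the new maps in $\kb_{\msxsu}$, and this is precisely resolved by Lemma \ref{petit-lem-cofib} together with Proposition \ref{easy-pushout}.
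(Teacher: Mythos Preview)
Your proposal is correct and follows essentially the same route as the paper: apply Proposition \ref{Smith-Lurie} with $W=\W_{\msxsueasy}$ and $C_0=\I_{\msxsu}^{+}$, dispatch conditions $(1)$ and $(3)$ quickly, and for condition $(2)$ use Lemma \ref{petit-lem-cofib} together with the fact that pushouts in $\msxsu$ are computed level-wise at each $(A,B)$, so that left properness of $\M$ applies component-wise. One small remark: your paraphrase of Proposition \ref{easy-pushout}(2) (``pushouts along morphisms whose $(A,B)$-component is a cofibration preserve weak equivalences at $(A,B)$'') is slightly broader than the literal statement of that proposition, which is phrased only for maps of the form $\Qs\alpha$; in the two-square setup of condition $(2)$ one must push along a \emph{pushout} of a map in $\I_{\msxsu}^{+}$, not the map itself. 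The fix is immediate and is exactly what the paper does: cite Proposition \ref{lem-creat-colimit-trois}(2) directly to see that the $(A,B)$-components of both squares are genuine pushouts in $\M$, so the intermediate map still has cofibration $(A,B)$-components and left properness applies.
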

\begin{proof}
Condition $(1)$ is straight forward because $\W_{\msxsueasy}$ is the class of weak equivalence in the combinatorial model category $\msxsueasy$. We also have Condition $(3)$ since a map $\sigma$ in $\I_{\msxsu}^{+}\tx{-inj}$ is in particular in $\I_{\msxsu}\tx{-inj}$, therefore it's a trivial fibration in $\msxsueasy$ and thus an easy weak equivalence.\\

It remains to check that Condition $(2)$ is also satisfied. Consider the following diagram as in the proposition.
\[
\xy
(0,10)*+{\F}="A";
(20,10)*+{\G}="B";
(0,0)*+{\F'}="C";
(20,0)*+{\G'}="D";
%%%%%%%
(0,-10)*+{\F''}="X";
(20,-10)*+{\G''}="Y";
{\ar@{->}^-{\sigma}"A";"B"};
{\ar@{->}_-{}"A";"C"};
{\ar@{->}^-{}"B";"D"};
{\ar@{->}^-{}"C";"D"};
{\ar@{->}^-{}"X";"Y"};
{\ar@{->}^-{\theta}"C";"X"};
{\ar@{->}^-{\theta'}"D";"Y"};
%%%%%%
\endxy
\]

 If $\sigma: \F \to \G$ is in $\I_{\msxsu}^{+}$, we have from Lemma \ref{petit-lem-cofib} that each top-component 
$$\sigma_{(A,B)}: \F(A,B) \to \G(A,B),$$ 
is a cofibration in $\M$. Now as mentioned several times in the paper, colimits in $\msxsu$ are computed level-wise at each $1$-morphisms $(A,B)$. It follows that the top components in that diagram are obtained by pushout in $\M$; and since $\M$ is left proper we get that every top-component $\theta'_{(A,B)}$ is a weak equivalence, which means that $\theta'$ is an easy weak equivalence as desired.\\
\end{proof}

\subsubsection{Changing the set of objects}
Let $f: X \to Y$ be a function and consider the two model categories $\msxsuplus$ and $\msysuplus$. 
\begin{prop}\label{prop-adjunction-av-gen}
We have a Quillen adjunction
$$\fex: \msxsuplus \leftrightarrows \msysuplus: \fstar.$$
\end{prop}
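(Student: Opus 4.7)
The plan is to verify that $\fex$ preserves cofibrations and trivial cofibrations with respect to the $(-)^{+}$ model structures; the right adjoint $\fstar$ then automatically preserves fibrations and trivial fibrations. Since the cofibrations of $\msxsuplus$ form the weakly saturated class generated by $\I_{\msxsu}^{+} = \I_{\msxsu} \cup \kb_{\msxsu}$, and $\fex$ preserves all colimits, for the first step it suffices to send the generating set $\I_{\msxsu}^{+}$ into cofibrations of $\msysuplus$.

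For maps in $\I_{\msxsu}$ this is immediate from Proposition \ref{quillen-pair-fstar-fex}, since $\fex : \msxsueasy \to \msysueasy$ is already left Quillen and, by Theorem \ref{enlarging-msx}, every cofibration of $\msysueasy$ is also a cofibration of $\msysuplus$. For a map $\Psi_s(\av)$ in $\kb_{\msxsu}$, with $s \in 1\text{-}\mathrm{Mor}(\sx)$, one observes that the pullback $f^\star$ satisfies $(\fstar\G)(u_s) = \G(f u_s) = \G(u_{fs})$ for every $\G \in \msysu$, because $f$ sends the canonical $2$-morphism $u_s : (A,B) \to s$ to $u_{fs} : (fA,fB) \to fs$. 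Hence $\Ev_{u_s} \circ \fstar = \Ev_{u_{fs}}$, and passing to left adjoints one obtains a canonical isomorphism $\fex \circ \Psi_s \cong \Psi_{fs}$. It follows that $\fex(\Psi_s(\av)) \cong \Psi_{fs}(\av)$, which lies in $\kb_{\msysu} \subseteq \I_{\msysu}^{+}$ and is therefore a generating cofibration of $\msysuplus$.

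For the second step, let $\phi : \F \to \G$ be a trivial cofibration in $\msxsuplus$. By Lemma \ref{petit-lem-cofib}, each generator in $\I_{\msxsu}^{+}$ has an $\M$-cofibration as its component at every $1$-morphism $(A,B)$, and since colimits in $\msxsu$ at $(A,B)$ are computed level-wise (Proposition \ref{lem-creat-colimit-trois}), the same property passes to every cofibration in $\msxsuplus$. Because $\phi$ is also an easy weak equivalence, each component $\phi_{(A,B)}$ is a trivial cofibration in $\M$. The pushforward lemma from the subsection \emph{Changing the set of objects} then gives that each component $\fex(\phi)_{(C,D)}$ is a trivial cofibration in $\M$; in particular $\fex(\phi)$ is an easy weak equivalence, and combined with the first step it is a trivial cofibration in $\msysuplus$.

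The only delicate point is the identification $\fex \circ \Psi_s \cong \Psi_{fs}$: it is a formal consequence of the uniqueness of left adjoints, but one must track the composite definition of $\Psi_s$ through $\delta_{AB}$, $\Gamma$, $\Rpt$ and $\Phi$ and confirm that $\fstar$ commutes strictly with the evaluation at $u_s$. Everything else is a direct application of the preceding lemmas, in particular the level-wise character of colimits at the $1$-morphisms $(A,B)$ and the pushforward lemma for components.
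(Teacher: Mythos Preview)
Your argument is correct and follows essentially the same route as the paper: first checking that $\fex$ sends the generating set $\I_{\msxsu}^{+}$ into $\I_{\msysu}^{+}$ via the identification $\fex\Psi_s(\av)\cong\Psi_{fs}(\av)$, and then verifying that a trivial cofibration has trivial-cofibration components at each $(A,B)$ so that the pushforward lemma gives the easy weak equivalence on the $Y$ side. Your justification of $\fex\circ\Psi_s\cong\Psi_{fs}$ by the equality $\Ev_{u_s}\circ\fstar=\Ev_{u_{fs}}$ and uniqueness of left adjoints is in fact cleaner than unpacking the full composite defining $\Psi_s$, and your explicit mention that level-wise colimits at $(A,B)$ propagate Lemma~\ref{petit-lem-cofib} from generators to arbitrary cofibrations makes a step explicit that the paper leaves implicit.
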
 

\begin{proof}
It's enough to show that we have the following inclusions
$$\fex(\I_{\msxsu}^{+}) \subseteq \I_{\msysu}^{+}, \quad \fex[\cof(\I_{\msxsu}^{+}) \cap \W] \subseteq \cof(\I_{\msysu}^{+}) \cap \W.$$ 

Using the various adjunctions, one has that for every $s$ there is an isomorphism
$$\fex [\Psi_s(\av)] \cong  \Psi_{f(s)}(\av).$$

This means that  $\fex( \kb_{\msxsu}) \subseteq \kb_{\msysu}$ and we get the first inclusion since from the old Quillen adjunction we have the inclusion
$$\fex(\I_{\msxsu}) \subseteq \I_{\msysu}.$$

For the second inclusion it's suffices to observe for any $\sigma \in \cof(\I_{\msxsu}^{+}) \cap \W$, we know already that $\fex \sigma \in \cof(\I_{\msysu}^{+})$ so it remains to show that $\fex \sigma$ is an easy weak equivalence. But for this one simply remembers that every top component $\sigma_{(A,B)}$ is a trivial cofibration and thanks to Lemma \ref{petit-lem-cofib}, we deduce that every component $\fex (\sigma)_{(A,B)}$ is also a trivial cofibration and in particular a weak equivalence.
\end{proof}

\subsubsection{The localized model category}

Let $\Sim$ be a \emph{$\kb_{\msxsu}$-localization functor} obtained by the small object argument and denote by $\eta: \Id \to \Sim$ the induced natural transformation. We refer the reader to Hirschhorn \cite{Hirsch-model-loc} for a description of such functor.
\begin{thm}\label{main-thm-1}
Let $\M$ be a combinatorial monoidal model category which is left proper. Then for any set $X$ there exists a combinatorial model structure on $\msxsu$ which is left proper and having the following properties. 
\begin{enumerate}
\item A map $\sigma: \F \to \G$ is a weak equivalence if and only if the induced map $$\Sim(\sigma):\Sim(\F) \to \Sim(\G),$$ is a level-wise weak equivalence. 
\item A map $\sigma: \F \to \G$ is cofibration if it's a cofibration in $\msxsuplus$.
\item Any fibrant object $\F$ is a co-Segal category.
%\item If a local model structure on $\mcatx$ exists then the adjunction $$ |-|: \msxsu \leftrightarrows \mcatx: \iota, $$ descends to a Quillen equivalence.
\end{enumerate}
We will denote this model category  by $\msxsuc$. The identity functor $$ \Id: \msxsuplus \to \msxsuc, $$ is a left Quillen functor.\\ 

This model structure is the left Bousfield localization of $\msxsueasy$ with the respect to the set $\kb_{\msxsu}$. \end{thm}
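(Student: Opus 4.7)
The plan is to apply the standard existence theorem for left Bousfield localizations of a left proper combinatorial model category, taking the input to be $\msxsuplus$ of Theorem~\ref{enlarging-msx} rather than $\msxsueasy$ itself. Although the two model structures have the same class of weak equivalences (and are Quillen equivalent via the identity), $\msxsueasy$ is not known to be left proper, whereas $\msxsuplus$ was constructed precisely to achieve left properness and to promote every element of $\kb_{\msxsu}$ to a generating cofibration. Invoking the standard existence result (Hirschhorn~\cite{Hirsch-model-loc}, or equivalently a derivation via Proposition~\ref{Smith-Lurie}), one obtains a left proper combinatorial model structure $L_{\kb_{\msxsu}} \msxsuplus$, which I will declare to be $\msxsuc$.

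Property~$(2)$ is immediate, since left Bousfield localization leaves cofibrations untouched. For property~$(3)$, let $\F$ be fibrant in $\msxsuc$. Each $\Psi_s(\av) \in \kb_{\msxsu}$ is a cofibration in $\msxsuplus$ (by Lemma~\ref{petit-lem-cofib}, this being precisely the point of the enlargement) and is by construction a weak equivalence in $\msxsuc$, hence a trivial cofibration in $\msxsuc$. Therefore $\F \to \ast$ has the RLP with respect to every map in $\kb_{\msxsu}$, which is exactly strict $\kb_{\msxsu}$-injectivity; Lemma~\ref{k-inj-cosegal} then yields that $\F$ is a co-Segal category.

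For property~$(1)$, construct $\Sim$ as a functorial fibrant replacement in $\msxsuc$ by applying the small object argument to $\kb_{\msxsu}$ together with the generating trivial cofibrations of $\msxsuplus$, so that $\F \to \Sim(\F)$ is a trivial cofibration in $\msxsuc$ with $\Sim(\F)$ fibrant. By the standard characterisation of $S$-local equivalences in a left Bousfield localization, $\sigma$ is a weak equivalence in $\msxsuc$ iff $\Sim(\sigma)$ is a weak equivalence in $\msxsuplus$, i.e.\ an easy weak equivalence. Since $\Sim(\F)$ and $\Sim(\G)$ are fibrant in $\msxsuc$, property~$(3)$ shows they are co-Segal categories, and as noted in the introduction an easy weak equivalence between co-Segal categories is the same as a level-wise weak equivalence, giving the stated formulation. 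The left Quillen property of $\Id: \msxsuplus \to \msxsuc$ is formal, since cofibrations agree and the class of weak equivalences only grows under Bousfield localization.

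The main obstacle is conceptual rather than technical: one cannot literally Bousfield-localize $\msxsueasy$ by the classical existence theorems because left properness is unavailable there. The workaround is the enlargement carried out in Theorem~\ref{enlarging-msx}, which simultaneously supplies left properness and guarantees that the domains and codomains of maps in $\kb_{\msxsu}$ are cofibrant; the latter is what makes homotopical $\kb_{\msxsu}$-locality of a fibrant object coincide with strict $\kb_{\msxsu}$-injectivity, so that Lemma~\ref{k-inj-cosegal} may be applied to deduce the co-Segal property. Once this preparatory step is in place, the argument becomes essentially formal, and the claim that $\msxsuc$ is \emph{the} left Bousfield localization of $\msxsueasy$ should be read in this pragmatic sense: it plays the role that the (possibly non-existent) Bousfield localization of $\msxsueasy$ at $\kb_{\msxsu}$ would play.
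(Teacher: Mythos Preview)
Your proof is essentially correct and follows the same route as the paper: localize $\msxsuplus$ using Smith's theorem, deduce property~$(3)$ from the fact that elements of $\kb_{\msxsu}$ become trivial cofibrations and invoke Lemma~\ref{k-inj-cosegal}, then deduce property~$(1)$ from the standard characterization of local equivalences together with the observation that easy and level-wise weak equivalences coincide on co-Segal objects.

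There is, however, a factual error in your motivation. You assert that $\msxsueasy$ is not known to be left proper, but Theorem~\ref{easy-model-msx} states explicitly that it \emph{is} left proper whenever $\M$ is. The genuine reason for passing to $\msxsuplus$ is not to gain left properness but to make the maps in $\kb_{\msxsu}$ into honest cofibrations: in $\msxsueasy$ they need not be cofibrations, so after localizing $\msxsueasy$ a fibrant object would only be \emph{homotopically} $\kb_{\msxsu}$-local, not strictly $\kb_{\msxsu}$-injective, and Lemma~\ref{k-inj-cosegal} could not be applied directly. You do mention this point in your final paragraph (``makes homotopical $\kb_{\msxsu}$-locality of a fibrant object coincide with strict $\kb_{\msxsu}$-injectivity''), but you frame it as secondary when it is in fact the entire purpose of the enlargement. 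Relatedly, the statement already fixes $\Sim$ as the $\kb_{\msxsu}$-injective replacement functor defined just before the theorem; you should use that $\Sim$ rather than redefining it, noting that $\Sim(\F)$ is $\kb_{\msxsu}$-injective and hence co-Segal by Lemma~\ref{k-inj-cosegal}, which is all that is needed for the argument in~$(1)$.
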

\begin{df}
Define a \emph{co-Segalification functor} for $\msxsu$ to be any fibrant replacement functor in $\msxsuc$.
\end{df}

\begin{proof}[Proof of Theorem \ref{main-thm-1}]
The existence of the left Bousfield localization and the left properness is guaranteed by Smith's theorem on left Bousfield localization for combinatorial model categories. We refer the reader to Barwick \cite[Theorem 4.7]{Barwick_localization} for a precise statement. This model structure is again combinatorial.\\

For the rest of the proof we will use the following facts on Bousfield localization and the reader can find them in Hirschhorn's book \cite{Hirsch-model-loc}. 
\begin{enumerate}
\item A weak equivalence in $\msxsuc$ is a \emph{$\kb_{\msxsueasy}$-local weak equivalence}; we will refer them as \emph{new weak equivalence}. And any easy weak equivalence (old one) is a new weak equivalence.
\item New cofibrations are the same as the old ones and therefore trivial fibrations are just the olds ones too. In particular trivial fibrations are easy weak equivalences. 
\item Fibrant objects are the  $\kb_{\msxsu}$-local objects that are fibrant in the original model structure.
\item Every map in $\kb_{\msxsu}$ becomes a weak equivalence in $\msxsuc$, therefore an isomorphism in the homotopy category.
\end{enumerate}

Let $\F$ be a fibrant object in $\msxsuc$, this means that the unique map $\F \to \ast$ has the RLP with respect to any trivial cofibration. But since every element in $\kb_{\msxsu}$ was an old cofibration and become a weak equivalence and therefore a new trivial cofibration. So we find that $\F$ must be $\kb_{\msxsu}$-injective. Therefore $\F$ is a co-Segal category thanks to Lemma \ref{k-inj-cosegal}; this gives Assertion $(3)$.\\

By definition of the functor $\Sim$, for every $\F$, $\Sim(\F)$ is automatically fibrant in the new model structure, therefore by the previous argument $\Sim(\F)$ is a co-Segal category for all $\F$. Now it's classical that in the localized category, a map $\sigma$ is a weak equivalence if and only if any localization of $\sigma$ is an old weak equivalence.

It follows that $\sigma$ is a new weak equivalence if and only if $\Sim(\sigma)$ is an easy weak equivalence; but easy weak equivalence between co-Segal categories is the same thing as a level-wise weak equivalence and Assertion $(1)$ follows. 
\end{proof}
\begin{thm}\label{strict-thm}
For any set $X$ and any $\F \in \msxsu$, the canonical map 
$\F \to |\F|$ is an equivalence in $\msxsuc$.   
\end{thm}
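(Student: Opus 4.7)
The plan is to apply Proposition-Definition \ref{pdef-deux-constant}(3) to the canonical localization functor $L : \msxsu \to \Ho(\msxsuc)$ that inverts the weak equivalences of $\msxsuc$. That proposition exactly delivers the conclusion $L(\F \to |\F|) = $ iso, i.e. $\F \to |\F| \in \W_{\msxsuc}$, provided one checks its two hypotheses on $L$: (i) $L$ sends easy weak equivalences to isomorphisms, and (ii) $L$ inverts every pushout of a map of the form $\Psi_s(\xi_\alpha)$. Hypothesis (i) is immediate: $\msxsuc$ is obtained by left Bousfield localization of the easy model structure (via $\msxsuplus$) at $\kb_{\msxsu}$, so every easy weak equivalence is already a weak equivalence in $\msxsuc$, hence inverted by $L$.

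For hypothesis (ii) I would argue as follows. Each map $\Psi_s(\av) \in \kb_{\msxsu}$ is a generating cofibration of $\msxsuplus$ (it sits in $\I_{\msxsu}^{+}$), and cofibrations agree in $\msxsuplus$ and in its Bousfield localization $\msxsuc$; by construction of the localization it is simultaneously a weak equivalence in $\msxsuc$. Thus $\Psi_s(\av)$ is a trivial cofibration in $\msxsuc$, and since trivial cofibrations are closed under pushouts and transfinite compositions in any model category, every $\{\Psi_s(\av)\}$-cell complex is a weak equivalence in $\msxsuc$, hence sent to an isomorphism by $L$. Now I invoke Lemma \ref{lem-cell-complex-av}(2): its standing assumption on $L$ (that easy weak equivalences are inverted) has just been verified, and so the equivalence furnished by that lemma lets me transfer the cell-complex property from $\Psi_s(\av)$ to $\Psi_s(\xi_\alpha)$. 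In particular $L$ inverts any pushout of $\Psi_s(\xi_\alpha)$, which is hypothesis (ii).

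Plugging both hypotheses into Proposition-Definition \ref{pdef-deux-constant}(3) then yields that $L(\F \to |\F|)$ is an isomorphism in $\Ho(\msxsuc)$ for every $\F$, which is the desired statement. The genuinely substantive input is upstream and has already been absorbed by the paper: the crossing-lemma factorization $\Psi_s(\av) = \Psi_s(\ell_\alpha) \circ \Psi_s(\xi_\alpha)$ with $\Psi_s(\ell_\alpha)$ an easy weak equivalence (used inside Lemma \ref{lem-cell-complex-av}), together with the explicit $2$-constant co-Segalification $\Sim$ that powers Proposition-Definition \ref{pdef-deux-constant}(3) via the factorization $\F \xrightarrow{\rho} \tld{|\F|} \xrightarrow{\eta} \Sim(\tld{|\F|}) \xrightarrow{\sim} |\F|$. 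At the level of this theorem, then, there is no real obstacle beyond this bookkeeping; the only point deserving care is verifying that $\Psi_s(\av)$ really is a cofibration in $\msxsuc$, which is just the observation that cofibrations are unchanged under left Bousfield localization, together with its membership in $\I_{\msxsu}^{+}$ from Theorem \ref{enlarging-msx}.
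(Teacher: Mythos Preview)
Your proof is correct and follows essentially the same strategy as the paper: both verify the hypotheses of Proposition-Definition \ref{pdef-deux-constant}(3) for the localization functor $L:\msxsu \to \Ho(\msxsuc)$, using that easy weak equivalences and the localizing maps $\Psi_s(\av)$ are inverted in $\msxsuc$. The only minor difference is in how the second hypothesis is checked: the paper invokes Proposition \ref{prop-av} to observe directly that $\Psi_s(\xi_\alpha)$ is an \emph{old} (easy) cofibration, hence a new trivial cofibration whose pushouts are weak equivalences; you instead first argue that $\Psi_s(\av)$ is a new trivial cofibration (as a member of $\I_{\msxsu}^{+}$ that becomes a weak equivalence under localization) and then transfer along Lemma \ref{lem-cell-complex-av}(2). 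Both routes are valid and amount to the same bookkeeping.
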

\begin{proof}
Every element $\Psi_s(\av) \in \kb_{\msxsu}$ becomes a weak equivalence in $\msxsuc$, and every easy weak equivalence is a new weak equivalence, therefore every $\Psi_s(\xi_{\alpha})$ becomes a weak equivalence too (Lemma \ref{lem-cell-complex-av}).

Now thanks to Proposition \ref{prop-av}, we know that every such $\Psi_s(\xi_{\alpha})$ is an old cofibration, therefore it's a new trivial cofibration. In particular any pushout of $\Psi_s(\xi_{\alpha})$ is a trivial cofibration and in particular a weak equivalence. The theorem follows from  Assertion $(3)$ of Proposition \ref{pdef-deux-constant}.
\end{proof}

\begin{rmk}
Let $F: \A \to \B$ be a left Quillen functor between model categories and let $K$ be a set of maps in $\A$. If the left Bousfield localization of $\A$ (resp. $\B$)  with respect to $K$ (resp.  $F(K)$) exists then then there is an induced left Quillen functor 
$$F^+: \A^+ \to \B^+,$$
where $\A^+$ and $\B^+$ are the respective Bousfield localizations.
\end{rmk}
Applying this remark to our previous Quillen adjunction $\fex \dashv \fstar$ we get:
\begin{prop}
For any function $f:X \to Y$ we have an induced Quillen adjunction 
$$\fex: \msxsuc \leftrightarrows \msysuc: \fstar.$$
\end{prop}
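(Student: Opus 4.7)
The plan is to invoke the preceding remark: a left Quillen functor $F : \A \to \B$ that carries a localizing set $K \subset \A$ into $F(K) \subset \B$ (hence into the new weak equivalences of the localization of $\B$ at $F(K)$) descends to a left Quillen functor between the Bousfield localizations $\A^+ \to \B^+$. Here $\A = \msxsuplus$, $\B = \msysuplus$, $F = \fex$, and $K = \kb_{\msxsu}$.

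First I would recall that $\fex \dashv \fstar$ is already a Quillen adjunction between $\msxsuplus$ and $\msysuplus$ by Proposition~\ref{prop-adjunction-av-gen}. Hence, by construction of the Bousfield localization, it is also a Quillen adjunction between $\msxsueasy$ and $\msysueasy$ (since cofibrations and trivial fibrations are preserved under the addition of the localizing set, and the easy weak equivalences coincide with those of the ``plus'' structures before localization).

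Next, I would verify the key compatibility between the two localizing sets. As was observed in the proof of Proposition~\ref{prop-adjunction-av-gen}, for each $1$-morphism $s$ of $\sx$, the various adjunctions $\Psi_s \dashv \Ev_{u_s}$ and $\fex \dashv \fstar$ give a canonical isomorphism
\[
\fex\!\left[\Psi_s(\av)\right] \;\cong\; \Psi_{f(s)}(\av),
\]
which implies the inclusion $\fex(\kb_{\msxsu}) \subseteq \kb_{\msysu}$. By Theorem~\ref{main-thm-1}, every element of $\kb_{\msysu}$ is a (trivial) cofibration in $\msysuc$, and in particular a weak equivalence there. Consequently $\fex$ sends the generators of the localizing set $\kb_{\msxsu}$ to weak equivalences in $\msysuc$.

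Finally, I would apply the universal property of left Bousfield localization (see Hirschhorn \cite{Hirsch-model-loc}): since $\fex \dashv \fstar$ is a Quillen adjunction between $\msxsuplus$ and $\msysuplus$, and since $\fex$ carries $\kb_{\msxsu}$ into the new weak equivalences of $\msysuc$, the adjunction descends to a Quillen adjunction $\fex : \msxsuc \leftrightarrows \msysuc : \fstar$, as claimed. There is no real obstacle here; the content of the proof lies entirely in the identification $\fex \Psi_s(\av) \cong \Psi_{f(s)}(\av)$, which was already recorded, and in the formal mechanism of Bousfield localization. The only point to be slightly careful about is that the underlying Quillen adjunction is taken between the ``plus'' model structures (which have the same cofibrations as the localized ones), not the ``easy'' ones; once this is granted, the extension is automatic.
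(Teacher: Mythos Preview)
Your proof is correct and follows essentially the same approach as the paper: the paper simply says ``Applying this remark to our previous Quillen adjunction $\fex \dashv \fstar$ we get'' the proposition, and you have spelled out what that application consists of (namely, the inclusion $\fex(\kb_{\msxsu}) \subseteq \kb_{\msysu}$ already recorded in Proposition~\ref{prop-adjunction-av-gen}, together with the universal property of Bousfield localization). One small remark: your aside about deducing the Quillen adjunction between the \emph{easy} model structures from the one between the \emph{plus} structures is backwards (the easy adjunction is Proposition~\ref{quillen-pair-fstar-fex} and comes first), but this digression plays no role in your main argument.
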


\subsection{Fiber wise localized model structure on $\msetsu$.}
Let $\Set_{min}$ be the model structure on $\Set$ where the weak equivalences are the isomorphisms, and every map is a cofibration (resp. fibration). 
\begin{df}
Let $\F \in \msxsu$, $\G \in \msysu$ be objects of $\msetsu$  and let $f:X \to Y$ a function. Say that a map $(\sigma, f): \F \to \G$ is: 
\begin{itemize}[label=$-$]
\item a strong weak equivalence if $f : X \to Y$ is an isomorphism and the map $\sigma: \F \to \fstar \G$ is a weak equivalence in $\msxsuc$;
\item a strong cofibration if $\fex \F \to \G$ is a cofibration in $\msysuc$;
\item a strong fibration if $\sigma: \F \to \fstar \G$ is a fibration in $\msxsuc$.
\end{itemize}
\end{df}

Clearly if $f: X \to Y$ is an isomorphism, then for any $\F$, the map $\F \to \fstar \fex \F$ is an isomorphism in particular a weak equivalence in $\msxsuc$. In fact an isomorphism of sets $f: X \to Y$ induces an isomorphism of (model) categories $f: \msxsuc \to \msysuc$; so in particular it preserves and reflects weak equivalences. Since we also have a Quillen pair $(\fex, \fstar)$ for every $f$, we can apply Stanculescu-Roig's theorem \cite{Stanculescu_bifib} and obtain the following result. 
\begin{thm}\label{fibred_localized}
Let $\M$ be a combinatorial monoidal model category which is left proper. Then  there exists a model structure on $\msetsu$ which has  the following properties. 
\begin{enumerate}
\item A map $\sigma: \F \to \G$ is a weak equivalence if it's a strong weak equivalence.
\item A map $\sigma: \F \to \G$ is a fibration if it's a strong fibration.
\item A map $\sigma: \F \to \G$ is a cofibration if it's a strong cofibration.
\item  Any fibrant object $\F$ is a co-Segal category.
\end{enumerate}
We will denote this model category  by $\msetsuc_{,fib}$. 
\end{thm}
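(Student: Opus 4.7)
The plan is to apply the Stanculescu--Roig theorem \cite{Stanculescu_bifib} on model structures on total categories of bifibrations of model categories. The forgetful functor $p: \msetsu \to \Set$ sending each precategory to its set of objects is a Grothendieck bifibration whose fiber over $X$ is $\msxsu$; the pullback along $f:X\to Y$ is the functor $\fstar$, whose left adjoint $\fex$ exists by the adjoint-functor arguments used earlier in the paper. On the base $\Set$ we place the minimal model structure $\Set_{\min}$, whose weak equivalences are the bijections and in which every map is simultaneously a cofibration and a fibration.

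The hypotheses of Stanculescu--Roig to be verified are three. First, each fiber $\msxsu$ must carry a combinatorial (and, for good measure, left proper) model structure; this is precisely $\msxsuc$ of Theorem \ref{main-thm-1}. Second, for every $f:X\to Y$ the pair $(\fex,\fstar)$ must be a Quillen adjunction between the respective fibers; this is the content of the preceding proposition on changing the set of objects after passing to the localized structures. Third, whenever $f$ is a weak equivalence in $\Set_{\min}$ (i.e.\ a bijection), the adjunction $(\fex,\fstar)$ must be a Quillen equivalence. This last point is immediate: a bijection $f:X\to Y$ induces an isomorphism of categories $\msxsu\cong\msysu$ which preserves the fiberwise cofibrations, weak equivalences and fibrations on the nose, because all three classes are defined via conditions on values at $1$-morphisms of $\sxop$, and $f$ carries such $1$-morphisms bijectively to those of $\sy^{op}$.

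With these three ingredients in hand, Stanculescu--Roig produces a model structure on $\msetsu$ whose weak equivalences, fibrations and cofibrations are by construction the strong weak equivalences, strong fibrations and strong cofibrations of the definition; this gives properties (1), (2) and (3) at a single stroke. For property (4), suppose $\F\in\msxsu$ is fibrant in $\msetsuc_{,fib}$. Then the unique map from $\F$ to the terminal object of $\msetsu$ is a strong fibration; restricting along the relevant $f$ to its fiber component shows that $\F$ is fibrant in $\msxsuc$, and so is a co-Segal category by Theorem \ref{main-thm-1}(3).

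The one step I expect to require genuine care is the verification that the assignment $X\mapsto\msxsuc$, $f\mapsto(\fex,\fstar)$ assembles into a pseudofunctor $\Set \to \mathbf{ModCat}$ of the form needed by Stanculescu--Roig: the composition of push-forwards only agrees with the push-forward of the composite up to canonical isomorphism arising from universal properties of colimits in the fibers. These isomorphisms are coherent for abstract reasons, so the hypothesis is satisfied, but writing this out explicitly is the most tedious part of the verification and is the principal obstacle to be navigated.
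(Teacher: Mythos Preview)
Your proposal is correct and follows essentially the same approach as the paper: both apply the Stanculescu--Roig theorem to the bifibration $\msetsu \to \Set$ with $\Set_{\min}$ on the base, verifying the fiber model structures from Theorem~\ref{main-thm-1}, the Quillen adjunctions $(\fex,\fstar)$, and the Quillen equivalence condition for bijections, then deduce property~(4) by noting that fibrancy in the total category restricts to fibrancy in the fiber. The paper's own argument is terser and does not dwell on the pseudofunctoriality coherence you flag at the end, but otherwise the content is the same.
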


\begin{proof}
The existence of the model structure and characterization of the three classes of maps is given by Stanculescu-Roig's theorem. 

If $\F \in \msxsuc$ is fibrant in that model category, then by definition the unique map $\F \to \ast$ going to the terminal object is a fibration; which means that the map $\F \to \fstar (\ast)$ is a fibration in $\msxsuc$. But $\fstar (\ast)$ is the terminal object in $\msxsuc$, therefore $\F$ is fibrant in $\msxsuc$. And thanks to Theorem \ref{main-thm-1}, we know that $\F$ is a co-Segal category. 
\end{proof}

\section{Tensor product of co-Segal categories}\label{sec-monoidal-coseg}
\subsection{The monoidal category $(\msetsu, \otimes_{\S}, \Un )$ }
Given a small category $\Ca$, by construction there is a degree (or length) strict $2$-functor $\degb : \S_{\Ca} \to \S_{\1}$ where $\1$ is the unit category and $\S_{\1} \cong (\Depi, +,\0)$.  If $\D$ is another category we can form the genuine fiber product of $2$-categories $\S_{\Ca} \times_{\S_{\1}} \S_{\D}$. 

\begin{prop}
There is an isomorphism of $2$-categories: $ \S_{\Ca \times \D} \cong \S_{\Ca} \times_{\S_{\1}} \S_{\D}$. In particular for any sets $X$,$Y$ we have
$$\sx\times_{\S_{\1}} \sy \cong \S_{\ol{X \times Y}} \cong \S_{\ol{X} \times \ol{Y}}.$$
\end{prop}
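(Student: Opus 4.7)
The plan is to construct an explicit $2$-functor $\Phi: \S_{\Ca \times \D} \to \S_{\Ca} \times_{\S_{\1}} \S_{\D}$ and check that it is an isomorphism by producing an inverse $\Psi$. Recall that a $1$-morphism in $\S_{\Ca}$ from $A$ to $B$ is a finite chain $(A_0,\dots,A_n)$ with $A_0=A$ and $A_n=B$, whose degree is $n$; a $2$-morphism is a surjective order-preserving map of the underlying ordinals in $\Depi$. So a $1$-morphism of $\S_{\Ca\times \D}$ from $(A,C)$ to $(B,D)$ is a chain $((A_0,C_0),\dots,(A_n,C_n))$ in $\Ca\times \D$. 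On objects, $\Phi$ is the identification $\Ob(\Ca\times\D)=\Ob(\Ca)\times_{\ast}\Ob(\D)$. On $1$-morphisms, send
\[
((A_0,C_0),\dots,(A_n,C_n)) \longmapsto \big((A_0,\dots,A_n),\,(C_0,\dots,C_n)\big),
\]
and on $2$-morphisms send a surjection $[n]\to [m]$ to itself in each factor. Both components have the same length $n$, so the pair lies in the strict fiber product over $\S_{\1}\cong (\Depi,+,\0)$.

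Next, I would define $\Psi$ by zipping: a compatible pair $\bigl((A_0,\dots,A_n),(C_0,\dots,C_n)\bigr)$ of equal degree $n$ is sent to $((A_0,C_0),\dots,(A_n,C_n))$, and similarly on $2$-cells (a surjection in the fiber product is, by definition, the same surjection on each side). The equalities $\Phi\Psi=\Id$ and $\Psi\Phi=\Id$ on each hom-set are tautological from these formulas. The main verification is that both $\Phi$ and $\Psi$ are genuine strict $2$-functors, i.e.\ they respect horizontal composition (concatenation of chains) and vertical composition of surjections. But concatenation in $\S_{\Ca\times\D}$ is performed component-wise on pairs, matching the component-wise concatenation in $\S_{\Ca}\times_{\S_{\1}}\S_{\D}$; and vertical composition of the surjective reindexing maps is the same in both pictures since the ordinal structures agree. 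There is nothing deep here, only bookkeeping.

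For the particular case $\Ca=\ol{X}$, $\D=\ol{Y}$, observe that the indiscrete category $\ol{X}$ is determined by its object set: there is exactly one morphism between any two objects. Consequently $\ol{X}\times\ol{Y}$ has object set $X\times Y$ and a unique morphism between any two pairs, so $\ol{X}\times\ol{Y}\cong \ol{X\times Y}$. Substituting into the general isomorphism just proved yields
\[
\S_{\ol{X\times Y}}\cong \S_{\ol{X}\times \ol{Y}}\cong \sx\times_{\S_{\1}}\sy.
\]

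The only step requiring any care is the compatibility of $\Phi$ with the composition $\otimes$ (concatenation of chains) and with the $2$-morphism structure, since $\S_{\Ca}$ is only an ``almost $2$-category'' in the sense recalled earlier and one must be sure the partial composition on each side is tracked correctly by $\Phi$. However, because the degree functor $\degb$ is a strict $2$-functor and concatenation preserves degree additively in both factors, the two partial composition laws are literally the same under the identification, so no real obstacle arises.
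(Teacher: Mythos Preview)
Your proposal is correct and is precisely the straightforward verification the paper has in mind; the paper's own proof consists of the single word ``Exercise''. One minor inaccuracy: $\S_{\Ca}$ is a genuine strict $2$-category, not merely an ``almost $2$-category''---that terminology in the paper refers only to the truncations $\S_{\ol{X},\leq \n}$---so your closing caveat about partial composition is unnecessary, though harmless.
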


\begin{proof}
Exercise
\end{proof}
\subsubsection{Tensor product of $\S$-diagrams}
Let $\M=(\ul{M}, \otimes , I)$ be a symmetric monoidal (model) category. Let $\Un: (\Depi, +)^{op} \to \M$ be the discrete category with a single element i.e, $\Un= I_{\ast}$.\\

Given $\Fa: (\S_{\Ca})^{\tx{$2$-op}}  \to \M$   and $\Ga:(\S_{\D})^{\tx{$2$-op}}  \to \M$ we define 
$\Fa \otimes_{\S} \Ga: (\S_{\Ca \times \D})^{\tx{$2$-op}} \to \M $ to be the lax functor described as follows. 
\renewcommand{\theenumi}{\arabic{enumi}}
\begin{enumerate}
\item For a $1$-morphism $(s,s') \in (\S_{\Ca \times \D})$ we set $(\Fa \otimes_{\S} \Ga) (s,s'):= \Fa(s) \otimes \Ga(s')$,
\item The laxity map $\varphi_{\Fa \otimes_{\S} \Ga}: (\Fa \otimes_{\S} \Ga) (s,s') \otimes (\Fa \otimes_{\S} \Ga) (t,t') \to (\Fa \otimes_{\S} \Ga) (s \otimes t,s' \otimes t')$ is obtained as the composite:
$$\Fa(s) \otimes \Ga(s') \otimes \Fa(t) \otimes  \Ga(t')\xrightarrow{ \Id \otimes sym \otimes \Id} \Fa(s) \otimes \Fa(t) \otimes \Ga(s') \otimes  \Ga(t') \xrightarrow{\varphi_{\Fa} \otimes \varphi_{\Ga}} \Fa(s\otimes t) \otimes \Ga(s' \otimes t')$$
where $sym$ is the symmetry isomorphism in $\M$ (we have $sym:\Ga(s') \otimes \Fa(t) \xrightarrow{\cong}  \Fa(t) \otimes \Ga(s')$).
\item One easily sees that if $f: \Ca' \to \Ca  $ and $g: \D' \to \D$ then $(f\times g)^{\star} \Fa \otimes_{\S} \Ga \cong \fstar \Fa \os \gstar \Ga$.
\item If $\sigma=(\sigma,f) \in \Hom_{\mset}(\Fa, \Ga)$ and $\gamma=(\gamma,g) \in \Hom(\Fa',\gstar \Ga')$ we define 
$$\sigma \os \gamma=(\sigma \otimes \gamma, f \times g)   \in \Hom_{\mset}[\Fa \os \Ga, \Fa' \os \Ga']$$ to be the morphism whose component at $(s,s')$ is $\sigma_s \otimes \sigma_{s'}$. 
\end{enumerate}
\ \\
We leave the reader to check that:
\begin{enumerate}
\item $\os$ is a bifunctor and is associative,
\item we have a canonical symmetry: $\Fa \os \Ga \cong \Ga \os \Fa$, 
\item for any $\Fa$ we have a natural isomorphism $\Fa \os \Un \cong  \Fa$.
\end{enumerate}
From the previous discussion one has:
\begin{prop}
For any symmetric monoidal category $\M$, we have a symmetric monoidal category $(\msetsu, \os, \Un)$.
\end{prop}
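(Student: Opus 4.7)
The plan is to verify that the data assembled above---the bifunctor $\os$, unit $\Un$, together with associator, symmetry, and unit isomorphisms---satisfy Mac Lane's pentagon, triangle and hexagon axioms. The first step is to notice that all the required structural isomorphisms can be built componentwise from those of $\M$. Given $\Fa \in \msxsu$, $\Ga \in \msysu$, and $\H \in \M_{\S}(Z)_{\tx{$su$}}$, the associator $(\Fa \os \Ga) \os \H \to \Fa \os (\Ga \os \H)$ has component at a $1$-morphism $(s,s',s'')$ of $\S_{\ol{X \times Y \times Z}}$ given by the associator of $\M$:
\[
(\Fa(s) \otimes \Ga(s')) \otimes \H(s'') \xrightarrow{\cong} \Fa(s) \otimes (\Ga(s') \otimes \H(s'')).
\]
The unit constraints $\Un \os \Fa \cong \Fa$ and $\Fa \os \Un \cong \Fa$ at a $1$-morphism $(\ast,s)$ (respectively $(s,\ast)$) are the left (resp.\ right) unit of $\M$ applied to $I \otimes \Fa(s)$, using that $\Un$ is single-object discrete with every value equal to $I$. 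The symmetry $\Fa \os \Ga \cong \Ga \os \Fa$ has component at $(s',s)$ the symmetry $sym_{\Fa(s),\Ga(s')}$ of $\M$, with the underlying function being the canonical swap $X\times Y \cong Y \times X$.

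The second step is to check that each of these componentwise families is actually a morphism in $\msetsu$, i.e.\ that it commutes with the laxity maps and respects the chosen units. For the associator this amounts to a direct diagram chase: after unwinding $\varphi_{\Fa \os (\Ga \os \H)}$ and $\varphi_{(\Fa \os \Ga) \os \H}$, both composites reduce to the same assemblage of associators and symmetries of $\M$ applied to $\varphi_{\Fa} \otimes \varphi_{\Ga} \otimes \varphi_{\H}$, and Mac Lane's coherence for $\M$ forces them to agree. Compatibility with the unities $I_A: I \to \Fa(A,A)$ is straightforward because the unity of $\Fa \os \Ga$ at a diagonal $1$-morphism $((A,C),(A,C))$ is, by construction, $I \cong I \otimes I \xrightarrow{I_A \otimes I_C} \Fa(A,A)\otimes \Ga(C,C)$.

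Finally, the coherence axioms themselves are verified one layer up. Because every structural isomorphism in $\msetsu$ is pointwise a structural isomorphism in $\M$, Mac Lane's pentagon for $\msetsu$ evaluates at each $1$-morphism to Mac Lane's pentagon in $\M$, and similarly for the triangle and the two hexagon axioms for the symmetry. The only step with any real subtlety is the hexagon, since the symmetry of $\Fa\os\Ga$ interacts non-trivially with the twist $sym$ already present in the definition of $\varphi_{\Fa \os \Ga}$; I expect this to be the main obstacle, though disentangling the two uses of $sym$ reduces the verification directly to the symmetric coherence already available in $\M$.
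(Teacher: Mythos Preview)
Your proposal is correct and in fact considerably more detailed than what the paper provides: the paper does not give a proof at all, it merely lists (just before the proposition) the three items to be checked---that $\os$ is an associative bifunctor, that there is a canonical symmetry $\Fa \os \Ga \cong \Ga \os \Fa$, and that $\Fa \os \Un \cong \Fa$---and then states the proposition as following ``from the previous discussion''. Your write-up supplies exactly the componentwise verification the paper leaves to the reader, including the point that the coherence diagrams reduce levelwise to those of $\M$.
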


\begin{rmk}
Although this tensor product is natural, it turns out that this is not the correct one for homotopy theory purposes. But for the moment we will continue our discussion with it.
\end{rmk}

Recall that given $\F \in \msxsu$, a morphism of unital precategories $\Un \to \F$ is the same thing as a function $1 \to X$, that is an element $A \in X$. Therefore the comma category $\Un \downarrow \msetsu$ is the category of \emph{marked precategories}. Instead of using a specific letter $A$ we will use the generic notation $\ast$ for the selected object.  We will write $(\F,\ast)$ for a marked precategory $\F$.

\begin{rmk}\label{mon-struct-comma}
Since $\Un \os \Un \cong \Un$, as in any monoidal category ,we have an induced monoidal structure on $\Un \downarrow \msetsu$  that takes $\Un \to \F$ and $\Un \to \G$ to:
$$\Un \cong \Un \os \Un  \to \F \os \G.$$

We have a natural fibred category $p:\Un \downarrow \msetsu \to \Set_{\ast}$, where $\Set_{\ast}$ is the category of pointed sets.
\end{rmk}
\begin{df}
Define a monoidal co-Segal precategory $\F$, as a unital $\msetsu$-precategory with one object, that is a normal lax functor
$$\F: (\Depiop,+,0) \to  (\msetsu, \os, \Un),$$
that is unital.

Say that $\F$ is \emph{a monoidal co-Segal $\M$-category} if $\F(\1) \in \mset$ is a co-Segal $\M$- category and for every $\n$, the map $\F(\1) \to \F(\n)$ is a weak equivalence of precategories in $\msetsu$ 
\end{df}
The definition is equivalent to saying that $\F$ is a co-Segal monoid of $(\msetsu, \os, \Un)$ such that $\F(\1)$ is a co-Segal $\M$-category e.g, a fibrant object in the previous model structure on $\msetsu$. 

\begin{rmk}
The previous definition is somehow too strong since the weak equivalences in the fibred model structure must have an isomorphism on the set of objects. But philosophically this is not an issue because of the hypothetical strictification theorem.
\end{rmk}

Given a function $f: 1 \to X$, we have a pullback functor $\fstar: \msxsu \to \M_{\S}([\1])_{su}$ that takes $\F$ to $\fstar(\F)$. Using these various functors, one has:
\begin{prop}
With the previous notation the following hold. 
\begin{enumerate}
\item There is a functor 
$$\End(\ast):\Un \downarrow \msetsu \to \M_{\S}([\1])_{su},$$
that takes $f: \Un \to \F$ to $\fstar \F$. 
This functor sends fibrant object to fibrant object with the respective localized model structures.
\item The functor $\End(\ast)$ extends to a monoidal functor.
\item The inclusion $\iota:\mcat \hookrightarrow \msetsu$ is a monoidal functor. 
\end{enumerate}
\end{prop}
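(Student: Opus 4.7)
The plan is to treat the three assertions in order, each being a mild reformulation of constructions already established.

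For Assertion $(1)$, I will first set up the functor on objects: an element of $\Un \downarrow \msetsu$ is a morphism $f: \Un \to \F$, and since $\Un \in \M_{\S}(\{\ast\})_{su}$, such an $f$ is determined (using Lemma \ref{gen-set} for $\n=\0$, or directly from $\Un \cong I_\ast$) by the choice of a single object $A = f(\ast) \in \Ob(\F)$, i.e. by a function $f: \{\ast\} \to \Ob(\F)$. Setting $\End(\ast)(\F,f) := \fstar \F$ gives an object of $\M_{\S}(\{\ast\})_{su}$, and functoriality follows at once from $(fg)^\star = g^\star f^\star$ together with the fact that a morphism $(\F,f) \to (\G,g)$ in the comma category is a morphism of marked precategories, so pullback is well-defined. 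For preservation of fibrant objects, recall from Proposition \ref{quillen-pair-fstar-fex} and its upgrade after the proof of Theorem \ref{main-thm-1} that $\fex \dashv \fstar$ is a Quillen adjunction for both the easy and the localized model structures; hence $\fstar$ preserves fibrant objects. A fibrant object in $\Un \downarrow \msetsu$ (with the under model structure inherited from $\msetsuc_{,fib}$) is a map $\Un \to \F$ with $\F$ fibrant; by Theorem \ref{main-thm-1} any such $\F$ is a co-Segal category, and $\fstar \F$ inherits the co-Segal property level-wise, so $\End(\ast)(\F,f)$ is fibrant in $\M_{\S}(\{\ast\})_{su}^{\mathbf{c}}$.

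For Assertion $(2)$, I will produce the coherent isomorphisms
\[
\End(\ast)\bigl((\F,f) \os (\G,g)\bigr) \cong \End(\ast)(\F,f) \os \End(\ast)(\G,g), \qquad \End(\ast)(\Un, \Id_\Un) \cong \Un.
\]
By construction $(\F,f) \os (\G,g) = (\F \os \G,\; f \os g : \Un \cong \Un \os \Un \to \F \os \G)$, and the underlying function of $f \os g$ is $f \times g: \{\ast\} \to \Ob(\F) \times \Ob(\G)$. Hence the first isomorphism is exactly item $(3)$ in the construction of $\os$ at the start of Section \ref{sec-monoidal-coseg}, namely $(f\times g)^\star(\F \os \G) \cong \fstar \F \os \gstar \G$. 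The unit condition is trivial since $\Id^\star \Un = \Un$. I will also observe that the associator and left/right unit constraints transport because they are componentwise given by the symmetric monoidal data of $\M$, which are respected by pullback.

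For Assertion $(3)$, let $\Ca, \D$ be strict $\M$-categories. Viewing them in $\msetsu$ as locally constant lax functors, the component $\iota(\Ca)_{AB}: \sx(A,B)^{op} \to \M$ is constant at $\Ca(A,B)$, and similarly for $\D$. On the product set $\Ob(\Ca) \times \Ob(\D)$ and a $1$-morphism $(s,s')$ of $\S_{\Ob(\Ca)\times\Ob(\D)}$ ending at $((A,A'),(B,B'))$, one has by the formula in Section \ref{sec-monoidal-coseg}
\[
\bigl(\iota(\Ca) \os \iota(\D)\bigr)(s,s') = \iota(\Ca)(s) \otimes \iota(\D)(s') = \Ca(A,B) \otimes \D(A',B'),
\]
which is precisely the value at $((A,A'),(B,B'))$ of $\iota(\Ca \otimes \D)$; the laxity maps agree because both reduce to the tensor of compositions in $\Ca$ and $\D$ (whiskered by the symmetry of $\M$). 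The units match since $\iota(I_{\{\ast\}}) \cong \Un$. Coherence is inherited from $\M$.

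The main obstacle is the fibrant-object statement in Assertion $(1)$: it requires that the under-category model structure on $\Un \downarrow \msetsu$ be compatible with $\End(\ast)$ at the level of fibrations, and in particular that $\fstar$ remains right Quillen after localizing by $\kb_{\mset_{su}}$. This is essentially guaranteed by the pushforward Quillen pair of Proposition \ref{prop-adjunction-av-gen} combined with the characterization of fibrant objects in Theorem \ref{main-thm-1}, but some care is needed because the fibred model structure of Theorem \ref{fibred_localized} mixes different fibres; the key point is that $\fstar$, applied to a fibrant $\F$, gives an object whose unique hom-component $\F(A,A) \to \F(s)$ is still a weak equivalence, so the co-Segal condition passes to the single-object restriction.
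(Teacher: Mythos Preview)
Your proof is correct and follows essentially the same line as the paper's: for Assertion~$(1)$ you use that $\fstar$ is right Quillen (after localization), for Assertion~$(2)$ the identity $(f\times g)^\star(\F \os \G)\cong \fstar\F \os \gstar\G$, and for Assertion~$(3)$ the componentwise check that $\iota(\Ca\boxtimes\D)\cong\iota(\Ca)\os\iota(\D)$. Your final paragraph worries about a subtlety that the paper dispatches in one line; the Quillen adjunction $\fex\dashv\fstar$ is already established at the localized level (Proposition just before \S\ref{sec-fibre-loc}.3), so no extra argument is needed.
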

\begin{proof}[Sketch of Proof]
Assertion $(1)$ is clear. For any function $f$, $\fstar$ is a right Quillen functor therefore it sends fibrant object to fibrant object.\\

Assertion $(2)$ is a consequence of Remark \ref{mon-struct-comma} together with the fact that
$$(f\times g)^{\star} (\Fa \otimes_{\S} \Ga) \cong \fstar \Fa \os \gstar \Ga.$$
For Assertion $(3)$, one observes that given a monoidal $\M$-category $(\C, \otimes, U)$ with unit $U$, then by definition $U$ is a morphism $\Un \to \C$. Finally it's not hard to see that given two strict $\M$-categories, we have:
$$\iota(\C \boxtimes \D) \cong \iota(\C) \os \iota(\D),$$
where $\boxtimes$ is the tensor product in $\mcat$. 
\end{proof}

\begin{df}
Define a co-Segal $2$-monoid in $\M$ to be a lax diagram
 $$\F: (\Depiop,+,0) \to (\M_{\S}([\1])_{su}, \os, \Un),$$
that is unital and such that $\F(\1)$ satisfies the co-Segal conditions and for every $\n$ the map 
$\F(\1) \to \F(\n)$ is a weak equivalence of precategories.   
\end{df}

As a corollary of the proposition we get
\begin{cor}\label{cor-deligne}
Let $\F \in  \msxsu$ be a monoidal co-Segal $\M$-category. Then  $f:\End(\ast)= \fstar \F$ is a co-Segal $2$-monoid in $\M$.
\end{cor}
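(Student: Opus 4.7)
The plan is to argue that post-composition with the monoidal functor $\End(\ast)$ transports the monoid structure, and then verify separately that the two co-Segal conditions (co-Segal at $\1$; weak equivalences $\F(\1)\to \F(\n)$) survive the pullback.

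First I would unwind the data. By definition, a monoidal co-Segal $\M$-category $\F$ is a unital lax functor $\F:(\Depiop,+,\0)\to (\msetsu,\os,\Un)$, so in particular the unit map $\Un\to\F(\1)$ picks out the object $\ast$ which lies in some set $X=\Ob(\F(\1))$, and each structural map $\F(\1)\to\F(\n)$ in $\msetsu$ comes equipped with a function on objects $g_\n:X\to X_\n$. Composing with the marking of $\F(\1)$ yields a point $f_\n:1\to X_\n$, and it is exactly this choice of points that is used to apply $\End(\ast)$ coherently to all $\F(\n)$ at once. This part is routine: it is the content of Assertion $(1)$ (definition of $\End(\ast)$) and Assertion $(2)$ (monoidality) of the last Proposition.

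Next I would invoke the general fact that a lax monoidal functor sends monoids to monoids. Since $\End(\ast)$ is monoidal, the composite $\End(\ast)\circ\F:(\Depiop,+,\0)\to(\M_\S([\1])_{su},\os,\Un)$ is again a unital lax functor. Concretely, the laxity morphisms are obtained from the $\varphi_\F$ of $\F$ by pulling back along the appropriate product functions and using the isomorphism $(f\times g)^{\star}(\Fa\os\Ga)\cong \fstar\Fa\os\gstar\Ga$ stated in point $(3)$ of the construction of $\os$. So this step produces the underlying lax datum of a $2$-monoid for free.

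It then remains to verify the two co-Segal conditions. For the condition at $\1$: by hypothesis $\F(\1)$ is a co-Segal $\M$-category, meaning each structural map $\F(\1)(A,B)\to\F(\1)(s)$ in $\msx$ is a weak equivalence in $\M$. Since $f^{\star}$ is computed level-wise on $1$-morphisms, the structural maps of $f^{\star}\F(\1)$ are just the structural maps of $\F(\1)$ at $(\ast,\ast)$ and along sequences $(\ast,\dots,\ast)$, so they remain weak equivalences; hence $(\End(\ast)\circ\F)(\1)$ is a co-Segal category. Equivalently one can argue by fibrancy and use Assertion $(1)$: $\End(\ast)$ sends fibrant objects to fibrant objects, and by Theorem \ref{main-thm-1} fibrant objects are co-Segal.

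For the weak equivalence condition, I would apply $\End(\ast)$, suitably interpreted, to each $\F(\1)\to\F(\n)$. This is the step where I expect the main (small) obstacle: the map $\F(\1)\to\F(\n)$ in $\msetsu$ is a morphism over the function $g_\n:X\to X_\n$, so its image under $\End(\ast)$ is the composite
\[
f^{\star}\F(\1)\longrightarrow f^{\star}g_\n^{\star}\F(\n)\cong (g_\n f)^{\star}\F(\n)=f_\n^{\star}\F(\n),
\]
and one must check that this is a weak equivalence in $\M_\S([\1])_{su}^{\mathbf c}$. But since $f^{\star}$ acts level-wise, at each $s\in\S_{[\1]}$ the induced map is simply the component of $\F(\1)\to\F(\n)$ at the sequence $(\ast,\dots,\ast)$ (viewed in $X$ on the source and $X_\n$ on the target), which is a weak equivalence in $\M$ by the co-Segal monoid assumption. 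This yields condition $(2)$ of the definition of a co-Segal $2$-monoid and completes the verification.
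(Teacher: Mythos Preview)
Your proposal is correct and follows essentially the same approach as the paper: both argue that $\End(\ast)\circ\F$ is the composite of (lax) monoidal functors, hence is again a unital lax functor into $(\M_\S([\1])_{su},\os,\Un)$. The paper's proof stops there, leaving the verification of the two co-Segal conditions implicit; your additional steps checking that $f^{\star}$ preserves the co-Segal property of $\F(\1)$ (because $f^{\star}$ is computed level-wise) and that each $\F(\1)\to\F(\n)$ remains a weak equivalence after applying $\End(\ast)$ are a welcome elaboration rather than a departure from the paper's argument.
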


\begin{proof}
$\F$ is canonically marked by its unit object $\Un \to \F$; so we get an object of $\Un \downarrow \msetsu$. $\End(A)$ is by definition the composite
$$(\Depiop,+,0) \xrightarrow{\F} (\Un \downarrow\msetsu, \os, \Id_{\Un}) \xrightarrow{\End(\ast)} (\M_{\S}([\1])_{su}, \os, \Un).$$  

Each of this functor is a (lax) monoidal functor therefore their composite is also a lax monoidal functor which is unital. 
\end{proof}

\subsubsection{Monoidal category by homotopy transfer}
Let $\C$ be a strict $\M$-category. For any pair $(A,B)$ of objects of $\C$, choose an object $\tld{\C}(A,B)$ together with a weak equivalence $\epsilon:\tld{\C}(A,B) \xrightarrow{\sim} \C(A,B)$. Assume that each unity $I \to \C(A,A)$ factorizes through the map $\epsilon: \tld{\C}(A,A) \to \C(A,A)$. We showed in Lemma \ref{lem-deux-constant} that we have a co-Segal category $\tld{\C}$ whose \emph{underlying category} is just $\C$. Moreover the various map $\epsilon$ define a canonical weak equivalence of co-Segal $\M$-categories:
$$\epsilon: \tld{\C} \xrightarrow{\sim} \C.$$  

We are in a situation of a homotopy transfer for monoids, therefore we can establish that:
\begin{lem}\label{transfer-monoidal}
With the above notation, if $\C$ is a monoidal $\M$-category, then $\tld{\C}$ is a co-Segal monoidal $\M$-category.
\end{lem}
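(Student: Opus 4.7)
The plan is to apply Lemma \ref{lem-deux-constant} \emph{one level up}, this time inside the symmetric monoidal category $(\msetsu, \os, \Un)$. First I would recast the data: a monoidal $\M$-category $\C$ is equivalently a monoid in $(\mcat, \boxtimes, \Un)$, and the monoidal inclusion $\iota : \mcat \hookrightarrow \msetsu$ transports this to a monoid in $(\msetsu, \os, \Un)$. Such a monoid is the same data as a strict $\msetsu$-category $\D$ with a single object $\star$, hom-object $\D(\star,\star) = \iota(\C)$, composition given by the monoidal product of $\C$, and unit $I_\star : \Un \to \iota(\C)$ picking out the unit object $U \in \ob(\C)$.

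Next I would verify that Lemma \ref{lem-deux-constant} can be applied in this $\msetsu$-enriched setting, with $\tld{\D}(\star,\star) := \tld{\C}$ and $f := \epsilon : \tld{\C} \to \iota(\C)$. The only nontrivial hypothesis is the factorization $I_\star = \epsilon \circ I'_\star$ of the unit; but this is automatic because $\ob(\tld{\C}) = \ob(\C)$, so the unit object $U$ already lives in $\tld{\C}$, and the map $I'_\star : \Un \to \tld{\C}$ picking it out composes with $\epsilon$ to give $I_\star$. The lemma then produces a $2$-constant unital precategory $\tld{\D}$ in the sense of Example \ref{ex-deux-constant} at the level of $(\msetsu, \os, \Un)$; since there is only one object $\star$, this is exactly a unital normal lax functor
$$\tld{\F} : (\Depiop, +, \0) \to (\msetsu, \os, \Un)$$
with $\tld{\F}(\1) = \tld{\C}$, $\tld{\F}(\n) = \iota(\C)$ for $\n \geq \2$, all structure maps equal to $\epsilon$, and laxity maps obtained from the monoid operation $\iota(\C) \os \iota(\C) \to \iota(\C)$ composed, on the appropriate factors, with $\epsilon$; lax coherence reduces to associativity of $\boxtimes$.

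To finish I would check the two extra conditions in the definition of a monoidal co-Segal $\M$-category: $\tld{\F}(\1) = \tld{\C}$ is a co-Segal $\M$-category by construction (Example \ref{ex-deux-constant}), and every structure map $\tld{\F}(\1) \to \tld{\F}(\n)$ equals $\epsilon$, which is a weak equivalence in $\msetsu$. The hard part, such as it is, would be convincing oneself that Lemma \ref{lem-deux-constant}, stated for the ground monoidal category $\M$, really applies inside $(\msetsu, \os, \Un)$; however its proof is a purely formal manipulation in a symmetric monoidal category that only uses the unit factorization through $f$, which we have verified above.
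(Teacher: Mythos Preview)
Your proposal is correct and follows essentially the same approach as the paper: the paper's proof simply says that one replays Lemma \ref{lem-deux-constant} (and the construction of Example \ref{ex-deux-constant}) with $\M$ replaced by $(\msetsu,\os,\Un)$ and a single object, which is exactly what you have spelled out in detail, including the verification of the unit factorization and the co-Segal conditions.
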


\begin{proof}
This is a general principle of homotopy transfer in any monoidal category with weak equivalence. The proof is the same as the one of Lemma \ref{lem-deux-constant} when we replace $\M$ by $\msetsu$ and consider a single object.\\

One shows that indeed $\tld{\C}$ is a co-Segal monoid of $\msetsu$ where the \emph{quasi-multiplication} is given by the following zig-zag.

$$\tld{\C} \os \tld{\C} \xrightarrow{\boxtimes \circ (\epsilon \os \epsilon)} \C \xleftarrow[\sim]{\epsilon} \tld{\C}.$$  
\end{proof}
%%%%%%%%%%%%%%%%%%%%%%%%%%%ù Jusqu'ici ok%%%%%%%%%%%%%%%%%%%%%%%%%%%%%%%%%%%

\section{Natural transformations of co-Segal functors}\label{sec-intern-hom}
The main purpose in this section is to define an \emph{internal hom} for co-Segal precategories. This internal hom doesn't exist in an obvious way as we would expect. In fact for many cases there are two ways of establishing the existence of an internal hom in a monoidal category $(\A, \otimes)$:
\begin{itemize}[label=$-$]
\item Either we construct explicitly the internal hom; or
\item $\A$ is locally presentable and for each object $U \in \A$ , $U \otimes -$ preserves colimits. Indeed the adjoint functor functor theorem for locally presentable categories will guarantee the existence of a right adjoint which will be the internal hom.
\end{itemize}
But in our case we are unable to prove that $\F \otimes_\S-$ preserves colimits. This is one of the reasons why this product doesn't seem to be the correct one. The only option left is to construct explicitly an object that will be an approximation of the internal hom we want.\\

Although unsatisfactory, it's not surprising to have theses obstacles. Indeed, co-Segal precategories are algebraic which means that every notion should involve some explicit \emph{equation} (commutative diagram). And since we are in \emph{higher category theory}  we do not write explicit equations; but instead everything is encoded in the properties and conditions that define the objects. But still we are going to mimic the existing constructions for strict categories to continue our discussion. We begin our discussion with the definition of natural transformation we will work with.\\

Recall that given a pair $(A,B)$ of objects in a precategory $\F$ we denote by $|\F|(A,B)$ the colimit of 
$$\F_{AB}: \sx(A,B)^{op} \to \M.$$

\begin{df}
Let $\F$ and $\G$ be objects of $\msxsu$ and $\msysu$ respectively. Let $$(\sigma_i;f_i): \F \to \G, \quad i\in \{1,2,...n \},$$ be $n$ morphisms from $\F$  to $\G$ with $f_i: X \to Y$.\\

A \emph{relative natural tranformation} $\eta : \sigma_1 \nrightarrow 
\cdots \sigma_i \nrightarrow \cdots \sigma_n$ through $\sigma_2, ..., \sigma_{n-1}$ is given by the following data and axiom.\\
\ \\
\textbf{\ul{Data:}} A morphism $\eta_A: I \to \G(f_1A,...,f_nA)$ for each $A \in X$.\\
\ 
\\
\textbf{\ul{Axiom:}} For every $1$-morphism $s=(A,...,B)$, the following commutes.
\[
\xy
(-30,10)*+{\F(s)}="A";
(0,18)*+{I \otimes \F (s)}="W";
(0,0)*+{\F(s) \otimes I}="X";
(30,0)*+{\G f_n(s) \otimes  \G \alpha_A}="Y";
(30,18)*+{\G \alpha_B \otimes \G f_1(s)}="E";
(60,18)*+{\G[\alpha_B \otimes f_1(s)]}="B";
(60,0)*+{\G[f_n(s) \otimes \alpha_A]}="D";
(95,10)*+{|\G|(f_1A,f_nB)}="F";
{\ar@{->}^-{\sigma_n \otimes \eta_A}"X";"Y"};
{\ar@{->}^-{\eta_B \otimes \sigma_1}"W";"E"};
{\ar@{->}^-{\cong}"A";"W"};
{\ar@{->}^-{\cong}"A";"X"};
{\ar@{->}^-{}"Y";"D"};
{\ar@{->}^-{}"E";"B"};
{\ar@{->}^-{}"B";"F"};
{\ar@{->}^-{}"D";"F"};
\endxy
\]

In the above diagram $\alpha_A=(f_1A,...,f_nA)$ and $\alpha_B=(f_1B,...,f_nB)$.
\end{df}

\begin{df}
Define the classifying object of relative natural transformations from $\sigma_1$ to $\sigma_n$ through  $\sigma_2, ..., \sigma_{n-1}$ to be the equalizer of the following diagram. 
$$\int_{\sigma_1}^{\sigma_n} \G \alpha  \to  \prod_{A \in \Ob(\sx)} \G \alpha_A \rightrightarrows \prod_{A \xrightarrow{s} B} \HOM_{\M}[\F (s),|\G|(f_1A,f_nB)]$$
The two parallel maps being induced by the adjoint transpose of the following ones. 
$$ (\prod_{A \in \Ob(\sx)} \G \alpha_A) \otimes  \F(s) \to \G \alpha_B \otimes  \F(s) \to \G[\alpha_B \otimes f_1(s)]   \to |\G|(f_1A,f_nB)]$$
$$  \F(s) \otimes  (\prod_{A \in \Ob(\sx)} \G \alpha_A)  \to \F(s) \otimes \G \alpha_A   \to \G[ f_n(s) \otimes \alpha_A]   \to |\G|(f_1A,f_nB)]$$

\end{df}
\begin{rmk}\label{rmk-internal-hom}
\begin{enumerate}
\item Given a precategory $\F: \sxop  \to \M$ and $3$ composable $1$-morphisms $r,s,t$ the \emph{associativity axiom} on $\F$ says that the two ways of going from 
$\F(r)\otimes \F(s) \otimes \F(t)$ to $\F (r \otimes s \otimes t)$, trough the various  laxity maps, are equal.
\item If $\G: \syop \to \M$ is another precategory and $(\sigma,f): \F \to \G$ is a morphism, then the \emph{compatibility axiom} on $\sigma$ preserve the above associativity; and the various ways of going from $\F(r)\otimes \F(s) \otimes \F(t)$ to $ \G f(r \otimes s \otimes t)$ are all equal. 
\item In particular the composite $\F \to \G \to |\G|$ obeys to the same rule; therefore all different ways of going from $\F(r)\otimes \F(s) \otimes \F(t)$ to $ |\G| f(r \otimes s \otimes t)$ are all the same. This can be summarized in a big commutative diagram that we choose not to include here.
\item Now observe that $|\G|$ is a locally constant diagram obtained by taking a colimit, therefore $|\G| f(r \otimes s \otimes t)$ doesn't depend on $r \otimes s \otimes t$; neither does $|\G| f(r \otimes s)$. It turns out that if we have a morphism $\gamma: \alpha_A \to \alpha'_A$ is a $2$-morphism of $\sxop$ then we have a commutative triangle
\[
\xy
(0,18)*+{ \F(s) \otimes \G \alpha_A}="W";
(0,0)*+{\F(s) \otimes \G \alpha'_A}="X";
%(40,0)*+{V}="Y";
(40,18)*+{ |\G|(f_1A,f_nB)]}="E";
%{\ar@{->}^-{\Id}"X";"Y"};
{\ar@{->}^-{}"W";"X"};
{\ar@{->}^-{}"W";"E"};
{\ar@{->}^-{}"X";"E"};
\endxy
\]
\end{enumerate}
\end{rmk}
A direct consequence of the above is that  by the same techniques as in \cite[Lemma 6.1]{Calc_lax}, one can establish the following.
\begin{prop}
Let $(\eta_1,\alpha)=\sigma_1 \nrightarrow \cdots \sigma_n$ and $(\eta_2,\beta):\sigma_n \nrightarrow \cdots \sigma_k$ be two relative transformations. 
\begin{enumerate}
\item Then there is an induced composite $(\eta_2 \otimes \eta_1,\beta \otimes \alpha)$ whose component at $A$ is:
$$ I\cong I \otimes I \to \G(f_1A,...,f_nA) \otimes \G(f_nA,...,f_kA)\to \G(f_1A,...,f_nA,...,f_kA)  $$

\item There exists a canonical map in $\M$:
$$ (\int_{\sigma_n}^{\sigma_k} \G \beta) \otimes ( \int_{\sigma_1}^{\sigma_n} \G \alpha)  \to \int_{\sigma_1}^{\sigma_k} \G (\beta \otimes \alpha).$$
\item These maps satisfy the `associativity' coherence condition i.e, the following commutes 
\[
\xy
(-60,10)*+{(\int_{\sigma_k}^{\sigma_p}\G \gamma)  \otimes (\int_{\sigma_n}^{\sigma_k} \G \beta) \otimes ( \int_{\sigma_1}^{\sigma_n} \G \alpha)}="A";
(0,18)*+{(\int_{\sigma_n}^{\sigma_p} \G \gamma \otimes \beta) \otimes ( \int_{\sigma_1}^{\sigma_n} \G \alpha)}="W";
(0,0)*+{(\int_{\sigma_k}^{\sigma_p} \G \gamma) \otimes ( \int_{\sigma_1}^{\sigma_k} \G \beta \otimes \alpha)}="X";
(50,10)*+{\int_{\sigma_1}^{\sigma_p}\G(\gamma \otimes \beta \otimes \alpha)} ="B";
{\ar@{->}^-{}"A";"W"};
{\ar@{->}^-{}"A";"X"};
{\ar@{->}^-{}"X";"B"};
{\ar@{->}^-{}"W";"B"};
\endxy
\]

\end{enumerate}
\end{prop}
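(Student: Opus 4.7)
The plan is to verify the three assertions in order, each one reducing, in the end, to the associativity of the laxity maps of $\G$ together with the bifunctoriality of $\otimes$ and the defining axiom of $\eta_1$ and $\eta_2$.

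For Assertion $(1)$, I would define $(\eta_2\otimes\eta_1)_A$ by the formula in the statement and check the relative-naturality axiom directly. Given a $1$-morphism $s=(A,\dots,B)$, the required big hexagon for $(\eta_2\otimes\eta_1,\beta\otimes\alpha)$ can be split into the horizontal concatenation of two hexagons, one for $\eta_1$ and one for $\eta_2$, glued along the common row $\F(s)\otimes\G\alpha_B \to \G[\alpha_B\otimes f_n(s)]$ (and its opposite on the other side). The fact that these two pieces really do glue is exactly Remark \ref{rmk-internal-hom}: once we land in $|\G|(f_1A,f_kB)$, the various routes obtained by choosing different laxity bracketings agree. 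So the composite axiom holds.

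For Assertion $(2)$, I would construct the map from the universal property of the equalizer defining $\int_{\sigma_1}^{\sigma_k}\G(\beta\otimes\alpha)$. Tensoring the two universal maps
\[
\int_{\sigma_n}^{\sigma_k}\G\beta \to \prod_A \G\beta_A,\qquad \int_{\sigma_1}^{\sigma_n}\G\alpha \to \prod_A \G\alpha_A,
\]
and postcomposing with the diagonal-and-laxity map
\[
\prod_A\G\beta_A\otimes \prod_A\G\alpha_A \to \prod_A(\G\beta_A\otimes \G\alpha_A)\xrightarrow{\prod \varphi}\prod_A \G(\beta\otimes\alpha)_A,
\]
produces a candidate map into $\prod_A \G(\beta\otimes\alpha)_A$. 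The content is to check that this candidate equalizes the two parallel arrows into $\prod_{s}\HOM_\M[\F(s),|\G|(f_1A,f_kB)]$. This is where Assertion $(1)$ is used: for a fixed $s$, the two composites into $|\G|(f_1A,f_kB)$ can be reorganized, using associativity of $\varphi_\G$ and bifunctoriality of $\otimes$, into expressions which factor respectively through the axiom for $\eta_2$ and through the axiom for $\eta_1$; both factorizations collapse to the same element in $|\G|$ once we invoke the remark about locally-constant values and independence of bracketing. The main obstacle is precisely this equalization check, since it requires a careful diagram chase in which several instances of the laxity associativity axiom are inserted in the right order.

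For Assertion $(3)$, with the pairing in $(2)$ available, the coherence square is a consequence of the associativity coherence of the lax functor $\G$ applied to the three composable $1$-morphisms $\gamma_A$, $\beta_A$, $\alpha_A$, together with the associativity of $\otimes$ in $\M$. Because both ways around the square are built out of the same iterated laxity maps $\G\gamma\otimes\G\beta\otimes\G\alpha\to \G(\gamma\otimes\beta\otimes\alpha)$, the uniqueness coming from the universal property of the equalizer $\int_{\sigma_1}^{\sigma_p}\G(\gamma\otimes\beta\otimes\alpha)$ forces them to coincide. I would finish by invoking this uniqueness rather than writing out the full pentagon, exactly as is done in the analogous lemma of \cite{Calc_lax} cited just before the statement.
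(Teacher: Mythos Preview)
Your proposal is correct and in fact more detailed than what the paper offers: the paper does not give a proof of this proposition at all, stating only that it follows ``by the same techniques as in \cite[Lemma 6.1]{Calc_lax}'', which is precisely the reference you invoke at the end. Your outline---verifying the axiom for the composite transformation via Remark~\ref{rmk-internal-hom}, building the pairing through the universal property of the equalizer, and deducing associativity from the coherence of the laxity maps of $\G$ together with uniqueness into the equalizer---is exactly the expected expansion of that citation, so there is nothing to correct.
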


Let $\F$ and $\G$ be two precategories as above and consider the set $[\F,\G]=\Hom(\F,\G)$. Let $(\S_{\ol{[\F,\G]}})^{2\tx{op}}$ be the associated $2$-category. Recall that a $1$-morphism of $(\S_{\ol{[\F,\G]}})^{2\tx{op}}$ is a sequence $(\sigma_1, ...., \sigma_n)$ where $\sigma_i=(\sigma_i, f_i)$. For each $A \in X$ we have an induced sequence $\alpha_A=(f_1A,...,f_nA)$ that corresponds to a $1$-morphism in $\syop$. A $2$-morphism $$(\sigma_1, ...., \sigma_n) \to (\sigma_1, ...., \sigma_n),$$
 in $(\S_{\ol{[\F,\G]}})^{2\tx{op}}$ is an operation that insert new transformations $\sigma$ to form a new sequence; and these insertions are governed by simplicial coface maps. 

If we denote by $\alpha'_A=(f_1A,...,f_nA)$  the new sequence, then we have an induced $2$-morphism $\alpha_A \to \alpha'_A$ in $\syop$. Thanks to Remark \ref{rmk-internal-hom} above and the universal property of the equalizer we can establish that:
\begin{lem}
For any $2$-morphism $(\sigma_1, ...., \sigma_n) \to (\sigma_1, ...., \sigma_n)$ there is a a unique map in $\M$
$$ \int_{\sigma_1}^{\sigma_n} \G \alpha \to  \int_{\sigma_1}^{\sigma_n} \G \alpha'.$$

These maps are compatible and form a functor  $\S_{\ol{[\F,\G]}}(\sigma_1, \sigma_n)^{op} \to \M$, that takes $(\sigma_1, ...., \sigma_n)$ to 
$ \int_{\sigma_1}^{\sigma_n} \G \alpha$.\\

We will denote this functor by $\Homco(\F,\G)$; it's an object of $\M_\S([\F,\G])$.
\end{lem}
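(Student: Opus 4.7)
The plan is to produce the arrow between the two ends as the unique factorization furnished by the universal property of the equalizer on the right, and then deduce functoriality from the same uniqueness principle.

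First I would unpack the given $2$-morphism $\tau:(\sigma_{1},\dots,\sigma_{n})\to(\sigma_{1},\dots,\sigma_{n})'$ in $\S_{\ol{[\F,\G]}}(\sigma_1,\sigma_n)^{op}$. Since such a $2$-morphism preserves the endpoints $\sigma_{1}$ and $\sigma_{n}$ and acts by insertions governed by the coface maps (as explained just before the lemma), for every $A\in X$ it induces a canonical $2$-morphism $\tau_{A}\colon \alpha_{A}\to\alpha'_{A}$ in $\syop$ between the associated chains of objects of $Y$. Applying the lax functor $\G$ gives morphisms $\G(\tau_{A})\colon \G\alpha_{A}\to\G\alpha'_{A}$ in $\M$, and taking the product over $A\in\mathrm{Ob}(\sx)$ we obtain a canonical morphism
$$\prod_{A}\G(\tau_{A})\colon \prod_{A\in\mathrm{Ob}(\sx)}\G\alpha_{A}\longrightarrow\prod_{A\in\mathrm{Ob}(\sx)}\G\alpha'_{A}.$$

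Next I would precompose with the canonical map $\int_{\sigma_{1}}^{\sigma_{n}}\G\alpha\to \prod_{A}\G\alpha_{A}$ coming out of the equalizer, and check that the two parallel arrows defining $\int_{\sigma_{1}}^{\sigma_{n}}\G\alpha'$ agree on the resulting composite. The two parallel arrows come by adjoint transpose from $\prod_{A}\G\alpha'_{A}\otimes\F(s)\to|\G|(f_{1}A,f_{n}B)$ on one side and from $\F(s)\otimes\prod_{A}\G\alpha'_{A}\to|\G|(f_{1}A,f_{n}B)$ on the other. Their equality on the composite is exactly the content of Remark~\ref{rmk-internal-hom}(4): the targets $|\G|(f_{1}A,f_{n}B)$ do not depend on the chain $\alpha$, and the triangle diagram displayed there ensures that replacing $\alpha_{A}$ by $\alpha'_{A}$ (through $\G(\tau_{A})$) commutes with the laxity and structural maps used to form the two parallel arrows. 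Once both parallel arrows agree, the universal property of the equalizer $\int_{\sigma_{1}}^{\sigma_{n}}\G\alpha'$ produces a unique factorization
$$\G(\tau)\colon\int_{\sigma_{1}}^{\sigma_{n}}\G\alpha\;\longrightarrow\;\int_{\sigma_{1}}^{\sigma_{n}}\G\alpha',$$
which is the map asserted by the lemma.

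Finally, functoriality is almost automatic from uniqueness: if $\tau'\colon\alpha'\to\alpha''$ is another $2$-morphism, then both $\G(\tau'\circ\tau)$ and $\G(\tau')\circ\G(\tau)$ factor the same composite $\int_{\sigma_{1}}^{\sigma_{n}}\G\alpha\to\prod_{A}\G\alpha''_{A}$ through the equalizer $\int_{\sigma_{1}}^{\sigma_{n}}\G\alpha''$, hence are equal; similarly $\G(\Id)=\Id$ follows because the identity factorization is unique. This provides a well-defined functor $\Homco(\F,\G)\colon \S_{\ol{[\F,\G]}}(\sigma_{1},\sigma_{n})^{op}\to\M$ sending $(\sigma_{1},\dots,\sigma_{n})$ to $\int_{\sigma_{1}}^{\sigma_{n}}\G\alpha$. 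The only real verification — and the one that could be called the main obstacle — is the commutativity check in the second step, since it requires chasing the big coherence diagram implicit in Remark~\ref{rmk-internal-hom}, combining associativity of $\F$, the compatibility of each $\sigma_{i}$ with the laxity maps, and the fact that the canonical maps to the colimit $|\G|$ coequalize the different bracketings; however this is exactly the same style of verification carried out in \cite[Lemma 6.1]{Calc_lax}, so I would simply invoke that argument to conclude.
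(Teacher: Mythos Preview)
Your proposal is correct and follows essentially the same approach as the paper: the paper simply asserts the lemma ``thanks to Remark~\ref{rmk-internal-hom} above and the universal property of the equalizer,'' and your sketch spells out exactly that argument, producing the map via the factorization through the equalizer after checking compatibility with the two parallel arrows using the commutative triangle of Remark~\ref{rmk-internal-hom}(4), and deducing functoriality from uniqueness.
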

\begin{rmk}
It's not hard to see that if $\F$ and $\G$ are strict $\M$-categories then $\Homco(\F,\G)$ is a locally constant diagram; that is a category. 
\end{rmk}

\begin{df}
Let $\F$ and $\G$ be two precategories that satisfy the co-Segal conditions.  Define thhe\footnote{Drinfeld called `thhe' the homotopy version of `the'} co-Segal category of morphism from $\F$ to $\G$ to be a fibrant replacement $\rhom$ of $\Homco(\F,\G)$ in the model category $\M_\S([\F,\G])^{\cb}$. 
\end{df}
\section{Distributors}\label{sec-distrib}
We discuss here the corresponding notion of \emph{distributor} also called \emph{bimodule} or even simply \emph{module}. Among the distributors we have the ones that correspond to the Yoneda functor associated to an object. The idea behind our definition is the notion of \emph{join of categories} that we've considered already in \cite{SEC1} under the terminology \emph{bridge}. The join construction is also behind the definition of comma categories for quasicategories  given by Joyal \cite{Joyal_qcat} and used by Lurie \cite{Lurie_HTT}.

\begin{df}
Let $\F \in \msx$ and $\G \in \msy$ be two precategories.  A \emph{distributor} $\E: \F \nrightarrow \G$ or \emph{$(\F, \G)$-bimodule} is an object 
$\E \in \M_\S(X \sqcup Y)$ such that:  
\begin{enumerate}
\item If $A \in X$ and $U \in Y$ then the diagram
 $$\E: \S_{\ol{X \sqcup Y}}(A,U)^{op} \to \M,$$
is the constant diagram of value the initial object $\emptyset$ of $\M$; and
\item $(i_X)^{\star}\E \cong \F$ and $(i_Y)^{\star}\E \cong \G$, where
$$i_X: X \hookrightarrow X \sqcup Y, \quad \tx{and} \quad i_Y: Y \hookrightarrow X \sqcup Y,$$ 
are the canonical inclusions
\end{enumerate}
An $(\F,\Un)$-bimodule $\E \in \M_\S(X \sqcup \{\ast\})$ will be simply called an \emph{$\F$-module}. If $\F$ and $\G$ satisfy the co-Segal conditions then \emph{a co-Segal distributor} is a distributor satisfying also the co-Segal conditions.
\end{df}

The first definition of an $(\F,\G)$-bimodule that we considered was a lax functor
 $$\E: (\S_{\ol{X} < \ol{Y}})^{2\tx{-$op$}} \to \M,$$ such that it restriction to $X$ (resp. $Y$) is $\F$ (resp. $\G$). Here $\ol{X} < \ol{Y}$ represents the join category: there is no morphism whose source is an element of $Y$; and there is exactly a single morphism from an element of $X$ to an element of $Y$.\\
But the only problem is that $\ol{X} < \ol{Y}$ is not of the form $\ol{Z}$ which means that $\E$ is not an object of $\mset$. However we will use this vision to establish some of the results. 

\begin{rmk}
There is a canonical functor $\ol{X} < \ol{Y} \to \ol{X \sqcup Y}$ and we get an induced functor 
$$ \Lax[(\S_{\ol{X \sqcup Y}})^{2\tx{-$op$}}, \M] \to \Lax[(\S_{\ol{X} < \ol{Y}})^{2\tx{-$op$}}, \M].$$
This functor has a left adjoint and any bimodule viewed as an object of $\Lax[(\S_{\ol{X} < \ol{Y}})^{2\tx{-$op$}}, \M]$ can also be viewed as a bimodule in the sense of the previous definition.
\end{rmk}
\subsection{Yoneda module}
Let $\F \in \msx$ be a precategory and let $A$ be an object of $\F$, and let  $\ol{X}<1$ the join category.  We have two canonical embeddings:

\begin{itemize}[label=$-$]
\item  $i_X: \sxop \hookrightarrow \sxdop$
\item $i_1: (\S_{\1})^{2\tx{-op}} \hookrightarrow  \sxdop$
\end{itemize} 
For any lax functor $\E: \sxdop \to \M$ we have two restrictions 
$$ (i_X)^{\star} \E: \sxop \to \M  \ \ \ \tx{and} \ \ \ \ \ \ (i_1^{\star}) \E: (\S_{\1})^{2\tx{-op}} \to \M.$$

In what follows we define a co-Segal module  that we interpret as being the Yoneda functor `$\F(-,A)$'. Since we know that $\F_A$ satisfies the boundary conditions: ${\F_A}_{|\sxop}= \F$ and ${\F_A}_{|\Depiop}=\Un$ then the only data we need to specify are :
\begin{itemize}[label=$-$]
\item the components $\F_{A, B \ast}: \S_{\xdiez}(B,\ast)^{op} \to \M$;
\item the laxity maps : $\F_{A, BC}  \otimes \F_{A, C\ast} \to \F_{A, B \ast}$
\end{itemize}

\paragraph{Description of $\S_{\xdiez}(B, \ast)$} In the category $\S_{\xdiez}(B, \ast)$ there a two type of morphisms:
\begin{enumerate}
\item the ones of the form $(B,...,C, \ast)$ with only one instance of $\ast$ (i.e $C \neq \ast$);
\item and the ones which are degeneracies of the previous i.e with multiple $\ast$ e.g $(B,..., C, \ast ,\ast, ..., \ast)$. 
\end{enumerate}
Define the \emph{support} of a morphism $(B,..., C, \ast ,\ast, ..., \ast)$ to be $(B,..., C, \ast)$ that is the chain where we only keep a single $\ast$ at the end. Taking the support defines a function from the set of morphisms of the form $(2)$ above, to the set of morphisms of  the form $(1)$. We define $\F_{A, B \ast}$ on the objects by declaring that the image of a chain $(B,..., C, \ast ,\ast, ..., \ast)$ is the image of its support, that is:
$$\F_A(B,..., C, \ast, \ast,...,\ast)= \F_A(B,...,C, \ast):= \F(B,...,C,A).$$

It remains to define $\F_{A, B \ast}$ on morphisms of $\S_{\xdiez}(B, \ast)$ and check that it is a functor. To define properly this we need to observe some  facts about $\sx$ and $\S_{\xdiez}$.
\begin{rmk}\ \
\begin{enumerate}
\item We leave the reader to check that in $\S_{\xdiez}(B, \ast)$, the full subcategory of chains of the form $(1)$ is isomorphic to $\sx(B,A)$. The isomorphism is clear: `replace $A$ by $\ast$ and vice versa'.  We therefore have an embedding $\iota: \sx(B,A) \hookrightarrow \S_{\xdiez}(B,\ast)$.
\item Now as $\S_{\xdiez}(\ast, \ast) \cong \Depi$,  if we use the composition in $\S_{\xdiez}$ and the previous embedding $\iota$ we get a functor:  
$$\sx(B,A) \times \Depi \xhookrightarrow{} \S_{\xdiez}(B,\ast) \times \S_{\xdiez}(\ast, \ast) \xrightarrow{comp} \S_{\xdiez}(B,\ast).$$  
The composition in both $\sx$ and $\S_{\xdiez}$ is the concatenation of chains side by side and is governed by the ordinal addition in $\Depi$. It's not hard to see that the ordinal addition in $\Depi$ is faithful (injective on morphisms) on the two variables. Consequently the composition functor in both   $\sx$ and $\S_{\xdiez}$ is also a faithful  bifunctor. 
\end{enumerate}
\end{rmk} 

% Recall that in $\Depi$ the morphism are generated by the coface maps $\sigma_n^i: \n+\1 \to \n$ ($\sigma_n^i(i)=\sigma_n^i(i+1)$, $i\leq n-1$). Since the $2$-morphisms in $\sx$ and $\S_{\xdiez}$ are parametrized by the one of $\Depi$, it turns out the morphism in 

Going back to our original problem to define $\F_{A, B \ast}$ on morphisms of $\S_{\xdiez}(B,\ast)$, we proceed as follows. First we need to keep in mind  that the morphisms in $\S_{\xdiez}(B,\ast)$ are parametrized by the one of $\Depi$; they consist to delete letters in chains while keeping the extremities $B$ and $\ast$ fixed. \ \\

But in $\Depi$ the morphism are generated by the coface maps $\sigma_n^i: \n+\1 \to \n$ ($\sigma_i^n(i)=\sigma_i^n(i+1)$, $i\leq n-1$). These maps govern the morphisms in $\S_{\xdiez}(B,\ast)$ that consist to delete a single letter or $\ast$ in a chain; moreover they generate by composition all the other maps. 
Consequently it's sufficient to define $\F_{A, B \ast} (\sigma_i^{op})$ where $\sigma_i:  (B,..., ..., \ast,...,\ast) \to (B,..., ..., \ast,...,\ast)$ delete exactly one letter or one $\ast$.  We distinguish few cases below.

\begin{enumerate}
\item If $\sigma_i$ is in the image of the faithful functor $\sx(B,A) \times \Depi \hookrightarrow\S_{\xdiez}(B,\ast)$:
\begin{itemize}
\item then either $\sigma_i$ deletes a letter, hence $\sigma_i= \iota(\sigma_l') \otimes \Id_{\k}$ with $\sigma_l':(B,...,C,A) \to (B,..., C', A)$ a (unique) morphism in $\sx(B,A)$. In that case we set:
$$\F_{A, B \ast} (\sigma_i^{op}):= \F(\sigma_l'^{op}): \F(B,...,C',A) \to \F(B,...,C,A).$$

\item or $\sigma_i$ deletes an $\ast$, hence $\sigma_i= \iota(\Id_{(B,...,C,A)}) \otimes f $ with $f \in \Depi$. In that case we set:
 $$\F_{A, B \ast} (\sigma_i^{op}) := \Id_{\F(B,...,C,A)}.$$
\end{itemize} 
\item If $\sigma_i$ is not in the image of $\sx(B,A) \times \Depi \hookrightarrow\S_{\xdiez}(B,\ast)$ then this means that $\sigma_i$ deletes a $\ast$ which is between a letter and another $\ast$, e.g $(B, \ast, \ast) \to (B,\ast)$.  In that case we set:

$$\F_{A, B \ast} (\sigma_i^{op}) := \Id_{\F(B,...,C,A)}$$ 
where  $(B,...,C,\ast)$ is the support of the source and target of $\sigma_i$
\end{enumerate}
\ \\
For a general morphism $\alpha: (B,...,\ast) \to (B, ..., \ast)$ which is parametrized by a morphism $\Le(\alpha): \n \to \m$ of $\Depi$, one can write
in a unique way $\alpha =\sigma_{j_1} \circ \cdots \circ \sigma_{j_{n-m}}$; where the string of subscripts $j$ satisfy:
$$0 \leq j_1 < \cdots < j_{m-n}< n-1.$$ 
This follows from the fact that each morphism $f: \n \to \m$ of $\Depi$ has a \emph{unique presentation} $f= \sigma_{j_1} \circ \cdots \circ \sigma_{j_{n-m}}$  with 
$0 \leq j_1 < \cdots < j_{m-n}< n-1.$ (see \cite[p.177]{Mac}). \ \\

With these notations we define $\F_{A, B \ast} (\alpha^{op}):= \F_{A, B \ast}(\sigma_{j_{n-m}}^{op}) \circ \cdots \circ \F_{A, B \ast}(\sigma_{j_{1}}^{op}).$
\begin{lem}
The above data define a functor $\F_{A, B \ast}: \S_{\xdiez}(B,\ast)^{op} \to \M$.
\end{lem}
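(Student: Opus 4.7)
The plan is to exhibit $\F_{A, B \ast}$ as a composite of two functors
$$\F_{A, B \ast} \;=\; \F_{BA} \circ \pi,$$
where $\F_{BA}: \sx(B,A)^{op} \to \M$ denotes the $(B,A)$-component of $\F$ (which is given to be a functor) and $\pi: \S_{\xdiez}(B,\ast)^{op} \to \sx(B,A)^{op}$ is a \emph{support functor} to be constructed. Once $\pi$ is established as a functor, the composite is automatically functorial, which is exactly the statement of the lemma.

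On objects, $\pi$ sends a chain $(B,\ldots,C,\ast,\ldots,\ast)$ to its support $(B,\ldots,C,A)\in\sx(B,A)$. On a coface-generator $\sigma_i^{op}$, $\pi$ sends it to the coface map $\sigma_l'^{op}$ in $\sx(B,A)^{op}$ of case (1a) when $\sigma_i$ deletes an $X$-letter, and to the identity morphism in cases (1b) and (2) when $\sigma_i$ deletes an $\ast$ (trailing or boundary). On a general morphism $\alpha^{op}$, $\pi$ is extended using the unique normal-form presentation
$$\alpha \;=\; \sigma_{j_1}\circ\cdots\circ\sigma_{j_{n-m}}, \qquad 0 \leq j_1 < \cdots < j_{n-m} < n-1,$$
by setting $\pi(\alpha^{op}) := \pi(\sigma_{j_{n-m}}^{op}) \circ \cdots \circ \pi(\sigma_{j_1}^{op})$. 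Comparing with the three defining cases of $\F_{A, B \ast}$ above, the factorization $\F_{A, B \ast} = \F_{BA}\circ\pi$ holds by construction.

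To prove $\pi$ is a functor, the preservation of identities is immediate: the identity morphism of any chain corresponds to the identity of $\Depi$, whose normal form is the empty word, and hence maps to the empty composite, which is the identity in $\sx(B,A)^{op}$. The substantive step is compositionality: given composable $\alpha,\beta$, concatenating their normal forms yields a possibly non-normal word of $\sigma$'s representing $\alpha\circ\beta$, which can be brought into normal form using finitely many applications of the simplicial identities among codegeneracies in $\Depi$. Hence it suffices to verify that $\pi$ respects each such identity --- and this is the only real obstacle.

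This check reduces to a short case analysis on whether each of the two generators involved deletes an $X$-letter or an $\ast$. When both delete letters, the identity pulls back along the faithful embedding $\sx(B,A)\times\Depi \hookrightarrow \S_{\xdiez}(B,\ast)$ and holds already in $\sx(B,A)^{op}$ because that category is itself parametrized by $\Depi$ and its coface maps satisfy the same simplicial identities. When both generators delete an $\ast$, both sides of the identity collapse to the identity morphism and the relation is trivial. When exactly one deletes a letter and the other an $\ast$, both sides reduce to the same single coface-generator of $\sx(B,A)^{op}$, the other factor being an identity, so the relation is again immediate. With the simplicial identities verified, $\pi$ is a functor, and therefore so is $\F_{A, B \ast} = \F_{BA}\circ\pi$.
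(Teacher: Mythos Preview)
Your proof is correct and follows essentially the same approach as the paper: both reduce functoriality to checking that the simplicial identities $\sigma_j \circ \sigma_i = \sigma_i \circ \sigma_{j+1}$ are respected, via the same case analysis (letter--letter handled by functoriality of $\F_{BA}$, and the cases involving $\ast$ by the triviality $h \circ \Id = \Id \circ h$). Your explicit factorization $\F_{A,B\ast} = \F_{BA} \circ \pi$ through a support functor is a clean way to package the argument, but it is the same verification the paper sketches.
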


\begin{proof}[Sketch of proof]
We have to check that $\F_{A, B \ast}((\alpha \circ \beta)^{op})= \F_{A, B \ast}(\beta^{op}) \circ \F_{A, B \ast}(\alpha^{op})$. But this boils down to checking that $\F_{A, B \ast}$ respect the simplicial identities `$\sigma_j \circ \sigma_i = \sigma_i \circ \sigma_{j+1} (i\leq j)$' i.e., 
$$ \F_{A, B \ast}(\sigma_i^{op}) \circ  \F_{A, B \ast}(\sigma_j^{op})= \F_{A, B \ast}(\sigma_{j+1}^{op}) \circ \F_{A, B \ast}(\sigma_i^{op}).$$
This is an easy exercise and is left to the reader. The only thing one needs to use is the fact that the component $\F_{BA}: \sx(B,A) \to \M$ is a functor, thus distributes over the composition; and the fact that we have an equality $h \circ \Id= \Id \circ h$ for any morphism $h$ of $\M$. 
\end{proof}

The laxity maps $\F_A(B, ..., C) \otimes \F_A(C, ..., D, \ast, ...,\ast) \to \F_A(B, ..., D, \ast, ...,\ast)$ are given by the one of $\F$:

$$\F(B, ..., C) \otimes \F(C, ..., D, A) \to \F(B,...,D,A)= \F_A(B, ..., D, \ast,...,\ast).$$

It's easy to check that the laxity map satisfy the coherence conditions. Furthermore since  $\F_{A, B \ast}$ clearly takes its values in the subcategory of weak equivalences (as $\F_{BA}$ does), then we've just proved that 
\begin{prop}
$\F_A: \sxdop \to \M$ is a lax functor that satisfies the co-Segal conditions.
\end{prop}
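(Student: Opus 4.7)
My plan is to organize the proof around the two assertions in the statement: (i) $\F_A$ is a normal lax functor, and (ii) each component $\F_{A,B\ast}:\S_{\xdiez}(B,\ast)^{op}\to \M$ takes values in weak equivalences. The previous lemma already did the main work on $(i)$ by showing that $\F_{A,B\ast}$ is a functor on each hom category. What remains for $(i)$ is (a) to describe laxity maps on composable pairs of $1$-morphisms of $\sxdop$, (b) to check naturality in $2$-morphisms, and (c) to verify the cocycle/associativity coherence for a triple. For $(ii)$ I would simply unwind the case analysis of the definition.

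For (a), I would define the laxity map $\F_A(s)\otimes \F_A(t)\to \F_A(s\otimes t)$ by cases on the shape of the composable pair $(s,t)$ in $\sxdop$. When $s$ and $t$ both lie in $\sxop$, use the laxity of $\F$. When $t=(C,\ldots,D,\ast,\ldots,\ast)$ and $s=(B,\ldots,C)\in\sxop$, define the map to be $\F(B,\ldots,C)\otimes\F(C,\ldots,D,A)\to\F(B,\ldots,D,A)$, i.e.\ the laxity map of $\F$ computed on the supports; this is exactly the formula the author proposes. When $s$ or $t$ lies entirely in the copy $(\S_{\1})^{2\tx{-op}}$, the composite is a degeneracy, and the laxity map is the identity on the support (pushed through the canonical iso $I\otimes -\cong -$ whenever the $\Un$-factor enters, which is allowed because $\F_A$ is normalized on $\ast$). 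With these choices $\F_A$ is normal by construction.

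For (b) and (c), the key remark is that by the \emph{Description of $\S_{\xdiez}(B,\ast)$} paragraph, every $2$-morphism $\sigma$ of $\sxdop$ factors uniquely as a concatenation of coface maps, each of which either lies in the image of $\sx(B,A)\times \Depi \hookrightarrow \S_{\xdiez}(B,\ast)$ or only deletes repeated $\ast$'s. In the first case the map on $\F_A$ is by definition $\F$ applied to the support, and in the second case it is the identity. Naturality of the laxity maps with respect to $\sigma$ therefore reduces, by pushing the $\ast$-deletions through (where they act as identities), to the naturality of the laxity maps of $\F$ on the supports. Likewise, given a composable triple $(r,s,t)$, the two parenthesizations in $\F_A$ become, after passing to supports, the two parenthesizations of the corresponding triple in $\F$, which agree by the lax coherence of $\F$. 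The only verification that requires a little care is that repeated $\ast$'s contribute only identity factors under $\otimes$, and this is where I expect the main (though still bookkeeping-level) obstacle to sit: carefully matching the unique $\Depi$-presentation $\sigma=\sigma_{j_1}\circ\cdots\circ\sigma_{j_{n-m}}$ to the parenthesization of the laxity maps so that the associativity diagram does collapse to the one for $\F$.

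For (ii), the co-Segal condition, I would argue directly from the case-by-case definition: every generator $\F_{A,B\ast}(\sigma_i^{op})$ is either $\Id_{\F(B,\ldots,C,A)}$ (when $\sigma_i$ deletes an $\ast$) or $\F(\sigma_l'^{op})$ for some $\sigma_l'$ in $\sx(B,A)^{op}$ (when $\sigma_i$ deletes a letter). Identities are weak equivalences, and by the co-Segal hypothesis on $\F$ the component $\F_{BA}$ lands in weak equivalences; since weak equivalences are closed under composition, the value $\F_{A,B\ast}(\alpha^{op})$ of an arbitrary morphism $\alpha$ is a composite of weak equivalences, hence a weak equivalence. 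Combined with $(i)$ this gives the desired conclusion.
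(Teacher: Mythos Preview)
Your proposal is correct and follows the same approach as the paper: reduce the laxity coherence to that of $\F$ via supports (treating $\ast$-deletions as identities), and deduce the co-Segal condition from the fact that every structure map of $\F_{A,B\ast}$ is either an identity or a structure map of $\F_{BA}$. The paper itself records this only as ``it's easy to check that the laxity maps satisfy the coherence conditions'' and ``$\F_{A,B\ast}$ clearly takes its values in weak equivalences (as $\F_{BA}$ does)'', so your write-up simply supplies the bookkeeping the paper omits.
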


\begin{df}
Define the Yoneda module of $\F$ at $A$ to be the image of $\F_A$ in $\M_\S(\ol{X \sqcup Y})$ by the left adjoint of 
$$\M_\S(\ol{X \sqcup \{\ast\}})  \to \Lax[(\S_{\ol{X} < 1})^{2\tx{-$op$}}, \M].$$
\end{df}

Some modules arise as morphisms $\E: \F^{op} \to \M$ where $\M$ is considered as strict co-Segal category enriched over itself. This class of modules will play an important role in the upcoming papers.
\newpage
\appendix
\section{Lemmata}
We include here some of the proofs of the paper. 
\subsection{Proof of Proposition \ref{limit-msxsu}}\label{ap-limit-msxsu}
\begin{proof}
For the three assertions we need to establish that the following commutes.
\begin{equation}
\begin{gathered}
\xy
(0,18)*+{I \otimes F_\infty(s)}="W";
(0,0)*+{F_\infty(A,A) \otimes F_\infty(s) }="X";
(50,0)*+{F_\infty[(A,A) \otimes s]}="Y";
(50,18)*+{F_\infty(s)}="E";
{\ar@{->}^-{\varphi_\infty}"X";"Y"};
{\ar@{->}^-{I_A \otimes \Id}"W";"X"};
{\ar@{->}_-{\cong}^{l}"W";"E"};
{\ar@{->}^-{F_\infty(\sigma)}"E";"Y"};
\endxy
\end{gathered}
\end{equation}

This means that we want to prove the following equality.
\begin{equation}\label{eq2}
F_\infty(\sigma) \circ l= \varphi_\infty \circ (I_A \otimes \Id).
\end{equation}

Since $F_j$ is unital we know that we have this equality:
\begin{equation}
F_j(\sigma) \circ l= \varphi_j \circ (I_A \otimes \Id).
\end{equation}
For Assertion $(1)$ we use the fact that in $\msx$, and $\msxpt$ limits a are computed object-wise. In fact this is true in any category of lax diagrams in general. Let $p_j: F_\infty \to F_j$ be the canonical projection for each $j$.
By construction the laxity map $\varphi_\infty$ is the \ul{unique} map that gives the following equality for every $j$.
\begin{equation}\label{eq5}
\varphi_j \circ (p_j \otimes p_j)= p_j \circ \varphi_\infty
\end{equation}

In order to guide the reader we display only once the meaning of this type of equality. That equality is equivalent to saying that the commutative diagram hereafter commutes.

\begin{equation} 
\begin{gathered}
\xy
(0,18)*+{F_\infty(A,A) \otimes F_\infty(s)}="W";
(0,0)*+{F_j(A,A) \otimes F_j(s) }="X";
(50,0)*+{F_j[(A,A) \otimes s]}="Y";
(50,18)*+{F_\infty[(A,A) \otimes s]}="E";
{\ar@{->}^-{\varphi_j}"X";"Y"};
{\ar@{->}^-{p_j \otimes p_j}"W";"X"};
{\ar@{-->}^-{\varphi_\infty}"W";"E"};
{\ar@{->}^-{p_j}"E";"Y"};
\endxy
\end{gathered}
\end{equation}
The unity $I_A: I \to F_j(A,A)$ induces a unique map $I_A: I \to F_\infty(A,A)$ with the obvious factorizations $p_j \circ I_A= I_A$. Since there is a danger of confusion we will write $I_j$ and $I_\infty$ the respective unity. Then with this notation, that factorization becomes
\begin{equation}
p_j \circ \Idinf= I_j.
\end{equation}

Similarly we will write $l_j$ and $l_\infty$ the components of the natural isomorphism $$l:I \otimes- \xrightarrow{\cong} \Id.$$  

From the bifunctoriality of $\otimes$ and  $l$, we have some commutative squares that we represent by the equalities hereafter.
\begin{equation}\label{eq3}
p_j \circ \linf= l_j \circ (\Id \otimes p_j) 
\end{equation}
\begin{equation}\label{eq1}
p_j \otimes p_j \circ (\Idinf \otimes \Id) = (I_j \otimes \Id) \circ (\Id \otimes p_j) 
\end{equation}

Finally we have a third commutative diagram given by the naturality of $p_j$. We represent it by the following equality.
\begin{equation}\label{eq4}
F_j(\sigma) \circ p_j= p_j \circ \Finf(\sigma) 
\end{equation}

We form a compatible cone starting at $I \otimes F_\infty(s)$ and that projects to each $F_j[(A,A) \otimes s]$ by the map $\pi_1:I \otimes F_\infty(s) \to F_j[(A,A) \otimes s]$ given by
$$\pi_1= \varphi_j \circ (p_j \otimes p_j) \circ (\Idinf \otimes \Id).$$
In order to see what's happening we display that map below.
$$I \otimes F_\infty(s) \to F_\infty(A,A) \otimes F_\infty(s) \xrightarrow{p_j \otimes p_j} F_j(A,A) \otimes F_j(s) \xrightarrow{\varphi_j} F_j[(A,A) \otimes s].$$ 

From the universal property of the limit, there is a unique map 
$$\varepsilon_\infty: I \otimes F_\infty(s) \to   \Finf[(A,A) \otimes s],$$ such that 
$\pi_1= p_j \circ \varepsilon_\infty$ where  $p_j: \Finf[(A,A) \otimes s] \to F_j[(A,A) \otimes s]$ is the canonical projection. 

\begin{claim}
The two maps $F_\infty(\sigma) \circ l_\infty$  and  $\varphi_\infty \circ (I_\infty \otimes \Id)$ gives this factorization. Consequently they are equal by uniqueness and $\Finf$ is strongly unital.
\end{claim}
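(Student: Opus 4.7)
The plan is to treat the four assertions uniformly: in each case the underlying object $F_\infty$ (a limit, colimit of the stated type, reflexive coequalizer, or directed colimit) is constructed in $\msx$ (or $\msxpt$), and the only thing to check is that it inherits a strongly unital structure. Since the paper reduces everything to the left invariance diagram, I focus on that.

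First, for Assertion $(1)$, limits in $\msx$ are computed level-wise. Given $F:\J\to\msxsu$, form $F_\infty$ as the level-wise limit in $\msx$, with projections $p_j: F_\infty \to F_j$. The family of unities $I_j: I \to F_j(A,A)$ is compatible under the structure maps of $F$ (morphisms in $\msxsu$ preserve units), hence defines a unique $I_\infty: I \to F_\infty(A,A)$ with $p_j \circ I_\infty = I_j$. To see $F_\infty$ is strongly unital, I would show $F_\infty(\sigma)\circ l_\infty = \varphi_\infty \circ (I_\infty \otimes \Id)$ by showing both sides project via $p_j$ to the same map into $F_j[(A,A)\otimes s]$; on the left one uses naturality of $p_j$ and $l$, on the right one uses the defining equation $p_j\circ \varphi_\infty = \varphi_j \circ (p_j\otimes p_j)$ together with $p_j\circ I_\infty = I_j$, and then invokes the strong unitality of $F_j$. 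Uniqueness in the universal property of the limit then forces the equality.

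Assertion $(2)$ is the dual and is the heart of the argument. Let $q_j: F_j \to F_\infty$ be the canonical maps of the cocone, and define $I_\infty: I\to F_\infty(A,A)$ as the common value $q_j \circ I_j$ (well-defined since $J$ may be taken connected componentwise and morphisms preserve units). By monoidal closedness of $\M$, the functor $I \otimes -$ is a left adjoint and commutes with colimits, so the hypothesis that $F_\infty(s)$ is a level-wise colimit gives $I \otimes F_\infty(s) \cong \colim_j I \otimes F_j(s)$. Hence to prove $F_\infty(\sigma)\circ l_\infty = \varphi_\infty \circ (I_\infty \otimes \Id)$, it suffices to verify that the two composites agree after precomposition with $\Id_I \otimes q_j$ for every $j$. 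A short computation using naturality of $l$, naturality of $q_j$, the equality $\varphi_\infty \circ (q_j \otimes q_j) = q_j \circ \varphi_j$, and $I_\infty = q_j \circ I_j$, reduces both sides to $q_j \circ F_j(\sigma) \circ l_j$ and $q_j \circ \varphi_j \circ (I_j \otimes \Id)$ respectively, which are equal by strong unitality of $F_j$. Universality then yields the claimed identity.

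For Assertions $(3)$ and $(4)$, I would simply re-apply the recipe of Assertion $(2)$. Since $\M$ is monoidal closed, $I\otimes -$ (and more generally $U \otimes -$ for any $U$) preserves all colimits, in particular reflexive coequalizers and directed colimits. Thus the diagrams $I \otimes F_\infty(s)$, $F_\infty(A,A)\otimes F_\infty(s)$, and $F_\infty[(A,A)\otimes s]$ that appear in the left invariance square are themselves the corresponding colimits of the level-wise diagrams, and the same ``test against each $q_j$'' argument shows the unity square commutes. This simultaneously shows that the constructed object is strongly unital and that it satisfies the universal property in $\msxsu$, proving the forgetful functor creates these colimits. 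The main obstacle is bookkeeping: one must be careful that the chosen $I_\infty$ is really independent of $j$ (using connectedness of the indexing shape in $(3)$--$(4)$, which holds for reflexive pairs and directed diagrams), and that the level-wise hypothesis in $(2)$ is applied only at the $1$-morphism $(A,A)\otimes s$ where the composite $\varphi_\infty$ lives. Once these points are settled, the verification is mechanical, which is why the detailed execution is relegated to Appendix~\ref{ap-limit-msxsu}.
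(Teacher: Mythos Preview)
Your proposal is correct and follows essentially the same approach as the paper: for the limit case you verify the equality $F_\infty(\sigma)\circ l_\infty = \varphi_\infty \circ (I_\infty \otimes \Id)$ by showing that both sides yield the same composite after applying each projection $p_j$, using naturality of $l$ and $p_j$ on one side and the defining equation $p_j\circ\varphi_\infty=\varphi_j\circ(p_j\otimes p_j)$ plus $p_j\circ I_\infty=I_j$ on the other, then invoking strong unitality of $F_j$ and uniqueness in the universal property of the limit. This is exactly the paper's argument, and your treatment of the dual colimit case (Assertion~(2)) and the reduction of (3)--(4) to (2) via level-wise computation also matches the paper's strategy.
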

To see why the claim holds, we use the equalities $(\ref{eq1})$, $(\ref{eq2})$, $(\ref{eq3})$, $(\ref{eq4})$, in this order from the top to the bottom and we establish the following.

\begin{equation*}
\begin{split}
\pi_1&= \varphi_j \circ \underbrace{(p_j \otimes p_j) \circ (\Idinf \otimes \Id)}_{=(I_j \otimes \Id) \circ (\Id \otimes p_j)}\\
&= \underbrace{\varphi_j \circ (I_j \otimes \Id )}_{=F_j(\sigma) \circ l_j}\circ (\Id_I \otimes p_j)\\
&= F_j(\sigma) \circ (\underbrace{l_j \circ(\Id_I \otimes p_j)}_{=p_j \circ l_\infty})  \\
&= [\underbrace{F_j(\sigma) \circ p_j}_{=p_j  \circ \Finf(\sigma)} ] \circ  l_\infty \\
&= [ p_j  \circ \Finf(\sigma)] \circ  l_\infty \\
&= p_j  \circ [\Finf(\sigma) \circ  l_\infty] \quad \checkmark \\
\end{split}
\end{equation*}

The other equality is easily seen to hold using the equality $(\ref{eq5})$.

\begin{equation*}
\begin{split}
\pi_1&= \underbrace{\varphi_j \circ (p_j \otimes p_j)}_{=p_j \circ \varphi_\infty} \circ (\Idinf \otimes \Id)\\
&=p_j \circ [\varphi_\infty \circ (\Idinf \otimes \Id)] \quad \checkmark
\end{split}
\end{equation*}
This completes the proof of Assertion $(1)$.\\

For Assertion $(2)$ the idea is the same. Since $\M$ is monoidal closed, colimits distribute over $\otimes$; in particular $I \otimes -$ preserves colimits. Therefore $I \otimes \Finf(s)$ is the colimit (in $\M$) of the diagram of the $I \otimes F_j(s)$. Note that by hypothesis $\Finf[(A,A) \otimes s]$ is the colimit of the $F_j[(A,A) \otimes s]$ . We will denote by $\rho_j: F_j \to \Finf$ the canonical map.\\

We form a compatible cocone ending at $\Finf[(A,A) \otimes s]$ where the canonical map for each $j$, is the map $\gamma_j: I \otimes F_j(s) \to \Finf[(A,A) \otimes s]$ defined as the following composite.
$$I \otimes F_j(s)\to F_j(A,A)\otimes F_j(s) \xrightarrow{\varphi_j} F_j[(A,A) \otimes s] \xrightarrow{\rho_j} \Finf[(A,A) \otimes s].$$

From the universal property of the colimit, there is a unique map
$$\eta_1: I \otimes \Finf(s) \to  \Finf[(A,A) \otimes s]$$ such that for each $j$ the following factorization holds. 
$$\gamma_j= \eta_1 \circ (\Id_I \otimes \rho_j).$$

Just like before we have
\begin{claim}
The two maps $F_\infty(\sigma) \circ l_\infty$  and  $\varphi_\infty \circ (I_\infty \otimes \Id)$ gives this factorization. And by uniqueness they are equal, consequently $\Finf$ is strongly unital 
and each $\rho_j$ becomes a map of unital precategories. Furthermore $\Finf$ if the colimit in $\msxsu$ of the original diagram. 
\end{claim}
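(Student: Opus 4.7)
The plan is to mimic the argument for Assertion~(1), but with arrows reversed so that uniqueness out of a colimit replaces uniqueness into a limit. Since $\M$ is monoidal closed the functor $I \otimes -$ preserves colimits, so $I \otimes \Finf(s)$ is the colimit of $\{I \otimes F_j(s)\}_j$; by hypothesis $\Finf[(A,A)\otimes s]$ is the colimit of $\{F_j[(A,A)\otimes s]\}_j$. To identify two maps out of $I \otimes \Finf(s)$ it therefore suffices to show that both induce the same cocone $\{\gamma_j\}_j$ when precomposed with the structural maps $\Id_I \otimes \rho_j$.

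First I would verify that $\varphi_\infty \circ (I_\infty \otimes \Id)$ extends $\gamma_j$. Bifunctoriality of $\otimes$ gives $(I_\infty \otimes \Id) \circ (\Id_I \otimes \rho_j) = I_\infty \otimes \rho_j$, and the identity $I_\infty = \rho_j \circ I_j$ (valid because the cocone is pointed) rewrites this as $(\rho_j \otimes \rho_j) \circ (I_j \otimes \Id)$. The cocone-compatibility relation $\varphi_\infty \circ (\rho_j \otimes \rho_j) = \rho_j \circ \varphi_j$ for the laxity maps then yields
\[
\varphi_\infty \circ (I_\infty \otimes \Id) \circ (\Id_I \otimes \rho_j) = \rho_j \circ \varphi_j \circ (I_j \otimes \Id) = \gamma_j.
\]
For $F_\infty(\sigma) \circ l_\infty$, naturality of $l$ at $\rho_j$ gives $l_\infty \circ (\Id_I \otimes \rho_j) = \rho_j \circ l_j$, and naturality of $\rho_j$ with respect to $\sigma \in \sxop$ gives $F_\infty(\sigma) \circ \rho_j = \rho_j \circ F_j(\sigma)$. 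Using the strong unitality of $F_j$ in the form $F_j(\sigma) \circ l_j = \varphi_j \circ (I_j \otimes \Id)$, I would chain these to obtain
\[
F_\infty(\sigma) \circ l_\infty \circ (\Id_I \otimes \rho_j) = \rho_j \circ F_j(\sigma) \circ l_j = \rho_j \circ \varphi_j \circ (I_j \otimes \Id) = \gamma_j.
\]
Uniqueness of the factorization through the colimit $I \otimes \Finf(s)$ then forces $\varphi_\infty \circ (I_\infty \otimes \Id) = F_\infty(\sigma) \circ l_\infty$, so the left invariance axiom holds; the right invariance axiom is obtained by the symmetric argument, and $\Finf \in \msxsu$. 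The equation $I_\infty = \rho_j \circ I_j$ simultaneously records that each $\rho_j$ preserves the selected unity, hence is a morphism in $\msxsu$.

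For the universal property inside $\msxsu$, any cocone $\{F_j \to \G\}$ in $\msxsu$ is a cocone in $\msxpt$, so it factors uniquely through a morphism $\zeta : \Finf \to \G$ in $\msxpt$. The only additional point is that $\zeta$ preserves the unity, and this follows because $\zeta \circ I_\infty$ and $I_\G$ both provide the unique factorization of the compatible family $\{I \to F_j(A,A) \to \G(A,A)\}_j$ through the pointed colimit $\Finf(A,A)$. The main obstacle is purely bookkeeping: one must keep the bifunctoriality of $\otimes$, the naturality of $l$, and the compatibility of $\rho_j$ with $\varphi$ and with $F(\sigma)$ in the right order. There is no conceptual difficulty, only the familiar discipline of tracking universal properties through a compound diagram.
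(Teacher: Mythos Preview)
Your argument is correct and is essentially identical to the paper's own proof: you use the same cocone $\gamma_j=\rho_j\circ\varphi_j\circ(I_j\otimes\Id)$ and the same four ingredients (bifunctoriality of $\otimes$, naturality of $l$, the relation $\varphi_\infty\circ(\rho_j\otimes\rho_j)=\rho_j\circ\varphi_j$, and strong unitality of $F_j$) to verify that both candidate maps factor each $\gamma_j$ through $\Id_I\otimes\rho_j$. Your additional paragraph checking that the induced map $\zeta$ preserves the unity is a welcome detail that the paper leaves implicit.
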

One establishes the claims just like in the previous using the various commutative diagrams that come with the definition of each morphism $\rho_j: F_j \to \Finf$, and the functoriality of $\otimes$, $l$. We give the different steps to get the desired equalities below and leave the reader to check the details. In each step the expression in the square brackets is the one that we replace to get the next line below.\\
\begin{equation*}
\begin{split}
\gamma_j&= \rho_j \circ [\varphi_j \circ (I_j \otimes \Id)]\\
&=[\rho_j \circ F_j(\sigma)] \circ l_j \\
&=\Finf(\sigma) \circ [\rho_j \circ l_j] \\
&=\Finf(\sigma) \circ [l_\infty \circ (\Id_I \otimes \rho_j)] \\
&=[\Finf(\sigma) \circ l_\infty] \circ (\Id_I \otimes \rho_j) \quad \checkmark\\
\end{split}
\end{equation*}
\\
Similarly we establish the following. 
%%%%%%%
\begin{equation*}
\begin{split}
\gamma_j&= [\rho_j \circ \varphi_j] \circ (I_j \otimes \Id)\\
&= \varphi_\infty \circ [(\rho_j \otimes \rho_j) \circ (I_j \otimes \Id)]\\
&= \varphi_\infty \circ [(\Idinf \otimes \Id) \circ (\Id_I \otimes \rho_j)]\\
&= [\varphi_\infty \circ (\Idinf \otimes \Id)] \circ (\Id_I \otimes \rho_j) \quad \checkmark\\
\end{split}
\end{equation*}
%%%%%%%
Assertion $(3)$ is a corollary of Assertion $(2)$ because we showed in \cite{COSEC1}, based on ideas of Linton \cite{Linton-cocomplete} and Wolff \cite{Wo}, that for lax diagrams in general, coequalizer of reflexive pairs are computed level-wise. The same holds for directed colimits. Therefore given a reflexive pair in $\msxsu$, the coequalizer exists in $\msx$ and is computed level-wise.
\end{proof}

\subsection{Proof of Proposition \ref{lem-creat-colimit-trois}}\label{ap-lem-creat-colimit-trois}
To prove the Proposition we will use the language of locally Reedy $2$-categories (see \cite{These_Hugo,COSEC1}). 
\begin{proof}
Let $\Ea: \J \to \msxsu$ be a diagram in $\msxsu$. Thanks to Proposition \ref{limit-msxsu}, we know already that the colimit exists and we will denote it by $\Ea_\infty$. Below we will construct a unital precategory $\Za$ that satisfies the universal property of the colimit, this way we will have $\Ea_\infty \cong \Za$.\\

Let $\zn$ be the colimit of the truncated diagram $\Ea_{\leq \n}= \tau_{\n}(\Ea)$ and let $\rho_j: \tau_\n(\Ea_j) \to \zn$ be the canonical map for each $j$. The idea is to extend inductively $\zn$ into a $\znplus$ and so on. \\

Let $s \in \sx(A,B)^{op}$ be a $1$-morphism of degree $\n+\1$. Let $T= \laxlatch(\zn, s)$ be the \emph{lax-latching object} of  $\zn$ at $s$ (see \cite{These_Hugo,COSEC1}). Each canonical map $\rho_j$ induces by functoriality of the lax-latching object, a map as follows.
$$\rho_j: \laxlatch(\tau_\n(\Ea_j), s) \to  \laxlatch(\zn, s).$$

We also have a canonical map $\iota_j:  \laxlatch(\tau_\n(\Ea_j), s) \to \Ea_j(s)$ which is functorial in $j$. This means  that we have a functor $\iota: \J \to \Ar(\M)$ that takes $j$ to $\iota_j$.
%%%%%%%%%%% Rajouter quelque chose ici %%%%%%%%%%%%%
Using the previous maps, one forms a diagram consisting of spans (or pushout data) as follows.
$$\Ea_j(s)  \xleftarrow{\iota_j} \laxlatch(\tau_\n(\Ea_j), s) \xrightarrow{\rho_j} \laxlatch(\zn, s).$$

These pushout data are connected from one $j$ to another by the functor $\iota$ and the canonical cocone that defines the colimit $\zn$. Let $m_s$ be the colimit of that diagram.\\
When we arrive here we need to distinguish four cases for $s$ as follows. 
\begin{enumerate}
\item Either $s= (A,A) \otimes t$, for some $t=(A,...,B)$;
\item or $s=t \otimes (B,B)$; 
\item or $s$ is both i.e, $s=(A,A) \otimes t \otimes (B,B)$ for some $t$;
\item or finally $s$ has none of the previous form.
\end{enumerate}

If we are in the fourth case, we set right away $\znplus(s)=m_s$. If we are in the first or second case, we need to force the unity conditions. And since the method is the same for both cases we will only outline the operation in the first case.\\

By definition, the object $m_s$ comes equipped with a canonical map $can_\sigma: \zn(t) \to  m_s$, as $\sigma$ runs through the set of all morphisms from $t$ to $s$. The morphisms $\sigma$ are objects of the usual latching category of $\sx(A,B)^{op}$ at $s$; and this latching category is in turn part of the lax-latching category of $\sxop$ at $s$. We also have a (unique) laxity map which is just the canonical map going to the colimit $$\varphi : \zn(A,A)\otimes \zn(t) \to m_s.$$

Then we define  $\znplus(s)$ to be the (simultaneous) coequalizer of the following parallel maps for all $\sigma: t \to s$.
 $$I \otimes \zn(t) \to \zn(A,A) \otimes \zn(t) \xrightarrow{\varphi} m_s;$$ 
$$I \otimes \zn(t) \to \zn(t) \xrightarrow{can_\sigma} m_s.$$

We have a canonical map $\delta_{A,t}: m_s \to \znplus(s)$. If we are in the second case we will have a map $\delta_{t,B}: m_s \to \znplus(s)$.  If we precompose each map with the respective map $can_\sigma: \zn(t) \to m_s$, we find a map $ \zn(t) \to \znplus (s)$ that we define to be $\znplus(\sigma)$. We have also a laxity map:
$$ \zn(A,A) \otimes \zn(t) \xrightarrow{\varphi} \znplus(s).$$

Now if we are the third case, we proceed like before but this time we don't define yet $\znplus(s)$ as the coequalizer. Denote by $\delta_{A,t}: m_s \to P_{t,A}$ and  $\delta_{t,B}: m_s \to P_{B,t}$, the respective canonical maps going to the respective \emph{coequalizing-object}. Then we define $\znplus(s)$ as the pushout object obtained by forming the following pushout.
$$P_{A,t} \xleftarrow{\delta_{A,t}} m_s \xrightarrow{\delta_{t,B}} P_{t,B}.$$
 
Proceeding like this for all $s$ of degree $\n+\1$ we find an object $\znplus \in \Lax[\sxopnp, \M]_{su}$ and by induction one finds an object $\Za_{\infty} \in \msxsu$. It's clear from the construction that every level $\zn$ satisfies the universal property of  the colimit of $\tau_\n(\Ea)$ therefore $\Za_\infty$ is indeed the colimit of $\Ea$ as desired. This proves Assertion $(1)$.\\

Assertion $(2)$ is a consequence of Assertion $(1)$ together with the fact that $\Lax[\sxopun, \M]_{su}$ is isomorphic to the category of pointed $\M$-graphs with vertices $X$:
 $$\Lax[\sxopun, \M]_{su} \cong \Lax[\sxopun, \M]_{\ast} \cong I_X \downarrow \mgraphx.$$
\end{proof}

\subsection{Proof of Lemma \ref{lem-mon-proj-left}}\label{ap-lem-mon-proj-left}
\begin{proof}
The proof is based on the following crucial facts.
\begin{itemize}[label=$-$]
\item Any functorial operation in $\msx, \msxpt$ and $\msxaaub$ is done level-wise at the one morphism $(A,B)$ e.g colimits. As we said previously this is because the $1$-morphism $(A,B)$ is not submitted to any algebraic constraint so that $\F(A,B)$ does \ul{not} receive any laxity map.
\item Directed colimits in $\msx, \msxpt$ and in  $\msxaaub$ are computed level-wise (and also limits).
\end{itemize}

Assertion $(1)$ is a consequence of Proposition \ref{eval-invar-ab}. In fact if we write down the pushout defining $\F_1$, and look at the diagram at the $1$-morphism $(A,B)$, one gets the following.
\[
\xy
%%%%%%%%%%%face arriere%%%%%
(-10,20)*+{\emptyset}="A";
(10,20)+(15,0)*+{\F(A,B)}="B";
(-10,0)*+{\emptyset}="C";
%(10,0)+(15,0)*+{\F_k[(A,A)\otimes s]}="D";
{\ar@{->}^-{}"A";"B"};
{\ar@{->}_-{\Upsilon_{[(A,A)\otimes s]}^{\Id}(j_k)}"A"+(0,-3);"C"};
\endxy
\xy
%%%%%%%%%%%face arriere%%%%%
(-10,20)*+{I}="A";
(10,20)+(15,0)*+{\F(A,A)}="B";
(-10,0)*+{I}="C";
%(10,0)+(15,0)*+{\F_k[(A,A)\otimes s]}="D";
{\ar@{->}^-{}"A";"B"};
{\ar@{->}_-{\Upsilon_{[(A,A)\otimes s]}(j_k)}^{\Id}"A"+(0,-3);"C"};
\endxy
\]

It's clear that in both cases the pushout is just the object $\F(A,B)$ itself (or isomorphic to it). So the component $\delta_1: \F(A,B) \to \F_1(A,B)$, is an isomorphism; and by induction one has that every component $\delta_k: \F_k(A,B)\to \F_{k+1}(A,B)$ is an isomorphism which gives the assertion. Assertion $(2)$ is a tautology, and is a corollary of Assertion $(1)$.\\

To prove Assertion $(3)$ we proceed in the following manner. Thanks to the \emph{crossing lemma} (Lemma \ref{cross-lem}) we know that $m_k$ and $\F_k[(A,A)\otimes s]$ have the same colimit in the sense that the maps $j_k: \F_k[(A,A)\otimes s]\to m_k $ and $\xi_k: m_k \to \F_{k+1}[(A,A)\otimes s]$ induce maps between the respective colimit that are inverse each other:
$$j_{\infty}: \colim \F_k[(A,A)\otimes s] \to \colim m_k,$$ 
$$ \xi_{\infty}: \colim m_k \to \colim  \F_k[(A,A)\otimes s] \ \ \ \tx{with} \ \ \  \xi_{\infty} = (j_{\infty})^{-1}.$$
 
 But since directed colimits are computed level-wise we have precisely that 
 $$P_{s}^l(\F)[(A,A)\otimes s]= \colim  \F_k[(A,A)\otimes s],$$
 and similarly 
 $$P_{s}^l(\F)(s)= \colim  \F_k(s).$$
 
If we put this in the diagram in Step $6$ of the construction, we get the following commutative diagram.
 \[
\xy
%%%%%%%%%%%face arriere%%%%%
%(-10,20)*+{P_{s}^l(\F)(s)}="A";
(10,13)+(3,0)*+{\colim m_k}="B";
(-10,0)*+{P_{s}^l(\F)[(A,A)\otimes s]}="C";
(20,0)+(20,0)*+{P_{s}^l(\F)[(A,A)\otimes s]}="D";
%{\ar@{->}^-{\alpha_\infty}"A";"B"};
%{\ar@{->}_-{P_{s}^l(\F)(\sigma)}"A"+(0,-3);"C"};
{\ar@{->}^-{\xi_\infty}_{\cong}"B";"D"};
{\ar@{->}_-{\Id}"C";"D"};
{\ar@{->}^-{j_\infty}_{\cong}"C";"B"};
\endxy
\]

On the other hand if we take the colimit in the coequalizer diagram of Step $2$ we get the following commutative square.
\[
\xy
(0,18)*+{I \otimes P_{s}^l(\F)(s)}="W";
(0,0)*+{P_{s}^l(\F)(A,A) \otimes P_{s}^l(\F)(s) }="X";
(110,0)*+{\colim m_k}="Y";
(50,18)*+{P_{s}^l(\F)(s)}="E";
(60,0)*+{P_{s}^l(\F)[(A,A)\otimes s]}="G";
(110,18)*+{P_{s}^l(\F)[(A,A)\otimes s]}="F";
{\ar@{.>}^-{j_\infty}"G";"Y"};
{\ar@{->}^-{I_A \otimes \Id}"W";"X"};
{\ar@{->}^-{\cong}"W";"E"};
{\ar@{->}^-{\varphi}"X";"G"};
{\ar@{.>}^-{j_\infty}"F";"Y"};
{\ar@{->}^-{P_{s}^l(\F)(\sigma)}"E";"F"};
\endxy
\]

It's now clear that if we extend that commutative square with the map 
$$ \xi_\infty: \colim m_k \to P_{s}^l(\F)[(A,A)\otimes s],$$
 we get our desired commutative diagram.
This means that $P_{s}^l(\F) \in \msxaaub$.\\

It remains to show that this is indeed a left adjoint.  Let $h: \F \to \G$ be a map in $\msxpt$ with\footnote{We should write $\Ub\G$ instead of $\G$} $\G \in \msxaaub$. Being a map in $\msxpt$, implies that if we use the component $h: \F[(A,A)\otimes s] \to \G[(A,A)\otimes s]$,
the two parallel maps below become equal in $\G[(A,A)\otimes s]$.
$$I \otimes \F_k(s) \xrightarrow{\cong} \F_k(s)\xrightarrow{\F_k(\sigma)} \F_k[(A,A)\otimes s],$$
$$I \otimes \F_k(s) \xrightarrow{I_A \otimes \Id} \F_k(A,A) \otimes \F_k(s) \xrightarrow{\varphi} \F_k[(A,A)\otimes s].$$

Therefore there is a unique map $ p: m_0 \to G[(A,A)\otimes s]$ such that $h= p \circ j_0$ where $$j_0: \F[(A,A)\otimes s] \to m_0,$$ is the canonical map going to the coequalizer. But this factorization of $h$ yields by adjunction the following commutative square. 
\[
\xy
%%%%%%%%%%%face arriere%%%%%
(-10,20)*+{\Upsilon_{[(A,A)\otimes s]}(\F[(A,A)\otimes s])}="A";
(10,20)+(15,0)*+{\F}="B";
(-10,0)*+{\Upsilon_{[(A,A)\otimes s]}(m_0)}="C";
(10,0)+(15,0)*+{\G}="D";
{\ar@{->}^-{}"A";"B"};
{\ar@{->}_-{\Upsilon_{[(A,A)\otimes s]}(j_0)}"A"+(0,-3);"C"};
{\ar@{->}^-{h}"B";"D"};
{\ar@{->}_-{}"C";"D"};
\endxy
\]

Now since $\F_1$ is the pushout of
 $$ \Upsilon_{[(A,A)\otimes s]}(m_0) \leftarrow \Upsilon_{[(A,A)\otimes s]}(\F[(A,A)\otimes s]) \to \F,$$
 there is a unique map $h_0: \F_1 \to \G$ such that $h= h_1 \circ \delta_0$. Proceeding by induction, one finds a unique map $h_k: \F_{k+1} \to \G$ such that
$h_{k-1}= h_{k} \circ \delta_k$. The universal property of the colimit gives a unique $h_\infty: P_{s}^l(\F) \to \G$ such that $h_k= h_\infty \circ can_k$, where $can_k: \F_k \to P_{s}^l(\F)$ is the canonical map (in particular $can_0= \eta$). And we have $h= h_\infty \circ \eta$ and Assertion $(3)$ follows.\\

We shall now check that Assertion $(4)$ holds. First, we know that $P_{s}^l(\F)$ is an object of $\msxaaub$; therefore to prove that we have a monadic projection, we have to show that  for $\F \in \msxaaub$, the maps $\eta: \F \to P_{s}^l(\F)$ is an isomorphism and that the map $P_{s}^l(\eta)$ is also an isomorphism.\\

But if $\F \in \msxaaub$ then the coequalizer in Step $(2)$ is just the identity i.e, $m_0= \F[(A,A)\otimes s]$ and $j_0= \Id$. From there it's clear that taking the pushout defining $\F_1$ doesn't change anything, so that $\delta_0:\F \to  \F_1$ is an isomorphism (and can be chosen to be the identity).  This means that the process stops because $\delta_k: \F_k \to \F_{k+1}$ is an isomorphism for all $k$.\\

The fact that $P_{s}^l(\eta)$ is an isomorphism is a combination of two facts. The first one is that since $P_{s}^l(\F)$ is in $\msxaaub$ then by the previous argument,  the construction stops for $P_{s}^l(\F)$ i.e, $P_{s}^l(P_{s}^l(\F))\cong P_{s}^l(\F)$.

The other reason is that the construction is functorial therefore we can apply it to the morphism $\eta$ and we have a commutative diagram below. 
\begin{align*}
\xy
(-60,0)+(-24,0)*+{\F}="X";
(-60,0)+(24,0)*+{P_{s}^l(\F)}="A";
{\ar@{>}^-{\eta}"X";"A"};
%%%%%% dot dot dot 
(-84,-20)*+{\F_1=push(m_0,\F)}="S";
(-36,-20)*+{push(m_0,P_{s}^l(\F))}="T";
{\ar@{->}^-{}"X";"S"};
{\ar@{->}^-{\cong}"A";"T"};
{\ar@{-->}^-{}"S";"T"};
(-84,-40)+(0,-5)*+{P_{s}^l(\F)}="L";
(-15,0)+(-33,-40)+(12,-5)*+{P_{s}^l(P_{s}^l(\F))}="M";
%(20,0)+(0,0)+(-20,-40)+(0,-5)*+{Q(Q(\F))}="N";
%%%%%%%%%%%%% morphisms fictifs
{\ar@{.>}^-{}"S";"L"};
{\ar@{.>}^-{}"T";"M"};
{\ar@{-->}^-{P_{s}^l(\eta)}"L";"M"};
{\ar@/_4.8pc/@{.>}_{\eta_{\F}}"X"; "L"};
{\ar@/^4.8pc/@{.>}^{\eta_{P_{s}^l(\F)}}"A"; "M"};
\endxy
%%%%%%%%%%%%%%%%%%%%% dernier morphisms verticaux
\end{align*}

Now observe that on the one hand the map $\eta_{P_{s}^l(\eta_F)}$ is an isomorphism since $P_{s}^l(F)$ is in $\msxaaub$. On the other hand, the resulting map 
$h:\F \to P_{s}^l(P_{s}^l(\F))$ in the diagram, is a map in $\msxpt$ going to an object of $\msxaaub$; therefore the universality of the adjoint says that there is a \ul{unique} map $h_\infty:  P_{s}^l(\F) \to P_{s}^l(P_{s}^l(\F))$ inside $\msxaaub$ such that $h=h_\infty \circ \eta$.\\

But since $\msxaaub$ is a \ul{full} subcategory of $\msxpt$, the two maps 
$P_{s}^l(\eta)$ and $\eta_{P_{s}^l(\F)}$ of $\msxpt$ are also maps in $\msxaaub$ and both give a factorization of $h$ through $\eta$; thus they are equal by uniqueness. From the above, $\eta_{P_{s}^l(\F)}$ is an isomorphism  and we get the result. The fact that $P_{s}^l$ preserves directed colimits is straightforward since every operation appearing in its construction preserves directed colimits: coequalizer, pushout, directed colimits, etc.
\end{proof}

\bibliographystyle{plain}
\bibliography{Bibliography_These}
\end{document}